\newcommand{\A}{{\mathcal{A}}}
\newcommand{\B}{{\mathcal{B}}}
\newcommand{\C}{{\mathcal{C}}}
\newcommand{\D}{{\mathcal{D}}}
\newcommand{\F}{{\mathcal{F}}}
\newcommand{\fct}{{\mathbf{Fct}}}
\newcommand{\pol}{{\mathcal{P}ol}}
\newcommand{\E}{{\mathcal{E}}}
\newcommand{\s}{{\mathcal{S}}}
\newcommand{\I}{{\mathcal{I}}}
\newcommand{\M}{{\mathcal{M}}}
\newcommand{\G}{{\mathcal{G}}}
\newcommand{\col}{{\rm colim}\,}
\newcommand{\U}{{\mathcal{U}}}
\newcommand{\V}{{\mathcal{V}}}
\newcommand{\Md}{\text{-}\mathbf{Mod}}
\newcommand{\Mpol}{\text{-}\mathbf{Pol}}
\newcommand{\md}{\text{-}\mathbf{mod}}
\newcommand{\Si}{\mathfrak{S}}
\newcommand{\FF}{\mathbb{F}}
\newcommand{\Irr}{\mathrm{Irr}}
\newcommand{\lnc}{\textbf{\textsc{lnc}}}
\newcommand{\pfa}{\textbf{\textsc{pfa}}}
\newcommand{\tfc}{\textbf{\textsc{tfa}}}
\newcommand{\ph}{$\mathrm{Hom}$-polynomial}
\newcommand{\phs}{$\mathrm{Hom}$-polynomiaux}
\newcommand{\df}{\mathrm{df}}
\newcommand{\op}{\mathrm{op}}
\newcommand{\GL}{\operatorname{GL}}
\newcommand{\SL}{\operatorname{SL}}
\title{Décompositions à la Steinberg sur une catégorie additive}
\author{Aur\'elien Djament\thanks{CNRS, Univ. Lille, UMR 8524 - Laboratoire Paul Painlevé, F-59000 Lille, France ; aurelien.djament@univ-lille.fr.}, Antoine Touz\'e\thanks{Univ. Lille, CNRS, UMR 8524 - Laboratoire Paul Painlevé, F-59000 Lille, France ; antoine.touze@univ-lille.fr.} et Christine Vespa\thanks{Université de Strasbourg, CNRS, IRMA UMR 7501, F-67000 Strasbourg, France ; vespa@math.unistra.fr.}}
\newtheorem{thi}{Th\'eor\`eme}
\newtheorem{pri}[thi]{Proposition}
\newtheorem{pbi}[thi]{Problème}
\newtheorem{cji}[thi]{Conjecture}
\newtheorem{thm}{Th\'eor\`eme}[section]
\newtheorem{pr}[thm]{Proposition}
\newtheorem{cor}[thm]{Corollaire}
\newtheorem{lm}[thm]{Lemme}
\newtheorem{conj}[thm]{Conjecture}
\theoremstyle{definition}
\newtheorem{defi}[thm]{D\'efinition}
\newtheorem{nota}[thm]{Notation}
\newtheorem{sit}[thm]{Situation}
\theoremstyle{remark}
\newtheorem{rem}[thm]{Remarque}
\newtheorem{ex}[thm]{Exemple}
\begin{document}

\maketitle

\begin{abstract}
Nous donnons une description des foncteurs simples à valeurs de dimensions finies allant d'une petite catégorie additive dans la catégorie des espaces vectoriels sur un corps assez gros (par exemple algébriquement clos). Cette description est analogue à des théorèmes de décomposition tensorielle de Steinberg en théorie des représentations des groupes. Nos résultats reposent sur la notion de foncteur polynomial introduite par Eilenberg et MacLane. Nous en donnons des applications aux représentations des groupes linéaires et aux propriétés de finitude des catégories de foncteurs.
\end{abstract}

{\selectlanguage{english}
{\begin{abstract}
We give a description of simple functors taking finite dimensional values, from a small additive category to the category of vector spaces over a field which is big enough (for example algebraically closed). This description is analogous to Steinberg's tensor product theorems in group representation theory. Our results depend upon the notion of polynomial functor introduced by Eilenberg and MacLane. We give applications to representations of general linear groups and to finiteness properties of functor categories.
\end{abstract}
}}

\noindent
{\em Mots-clefs : } représentations et foncteurs polynomiaux, représentations et foncteurs simples, catégories additives, décomposition tensorielle, présentation finie.

\medskip

\noindent
{\em Classification MSC 2010 : } 18A25, 18E05, 18E15, 20C99, 20G05, 20J99.

\section*{Introduction}

\subsubsection*{Représentations des petites catégories}
Pour $\C$ une catégorie essentiellement petite et $K$ un corps commutatif, on considère la catégorie $\F(\C;K)$ des foncteurs de $\C$ vers les $K$-espaces vectoriels. Cette catégorie abélienne est analogue à celle des représentations $K$-linéaires d'un groupe ou, plus généralement, d'un monoïde (correspondant au cas d'une catégorie $\C$ à un seul objet), si bien qu'elle est également  appelée  catégorie des représentations $K$-linéaires de $\C$.

En topologie algébrique, les foncteurs entre catégories de modules sont notamment reliés aux modules sur l'algèbre de Steenrod \cite{HLS,Ku1,Ku2,Ku3}, à l'homologie de Hochschild topologique (THH) \cite{PiraWald} ou à l'homologie stable des groupes \cite{FFSS,Betley,DV,DjaR}.
En théorie des représentations, les foncteurs \textit{additifs} d'une catégorie additive vers une catégorie abélienne sont depuis longtemps utilisés de façon systématique. Ils apparaissent par exemple dans l'étude des représentations indécomposables des algèbres de dimension finie, comme dans les travaux d'Auslander \cite{Aus-rec,Aus82} ou de Gabriel \cite{Gab75}, ou dans l'étude des propriétés de pureté pour les modules sur des anneaux généraux \cite{Kra}. Plus récemment, l'étude de la catégorie $\F(\mathrm{FI};K)$, où $\mathrm{FI}$ est la catégorie des ensembles finis avec injections, liée aux représentations des groupes symétriques, s'est avérée fructueuse -- voir par exemple \cite{CEF, FarbICM}.

Ainsi, il existe de profondes interactions entre la théorie des représentations des groupes ou des monoïdes et celle des petites catégories. Une question centrale dans l'étude de la catégorie $\F(\C;K)$ est la suivante :

\begin{pbi}\label{pb-descrisimples} Comment décrire les objets simples de la catégorie abélienne $\F(\C;K)$ ?
\end{pbi}

L'objectif premier de cet article est de répondre à cette question pour une catégorie source \emph{additive}, par exemple, la catégorie $\mathbf{P}(A)$ des modules à gauche projectifs de type fini sur un anneau $A$.

La proposition~\ref{intro-prf} ci-après (page~\pageref{intro-prf}) rappelle les liens étroits existant entre foncteurs simples de $\F(\C;K)$ et représentations $K$-linéaires simples des groupes d'automorphismes des objets de $\C$, lorsque les ensembles de morphismes entre deux objets donnés de $\C$ sont \emph{finis}.
En l'absence d'hypothèse de finitude, de grandes difficultés structurelles, liées à des phénomènes sauvages, apparaissent pour les représentations des petites catégories, comme pour celles des groupes ou des monoïdes. Ainsi, la grande majorité de cet article ne concerne que des foncteurs de $\F(\C;K)$ dont les valeurs sont des $K$-espaces vectoriels \emph{de dimensions finies}. Nous noterons $\F^{\df}(\C;K)$ la sous-catégorie pleine de ces foncteurs. Lorsque les ensembles de morphismes sont finis dans $\C$, tous les foncteurs simples (ou, plus généralement, de type fini) de $\F(\C;K)$ appartiennent à $\F^{\df}(\C;K)$, mais c'est loin d'être le cas en général.

Dans toute la suite de l'introduction, \textbf{$\A$ désigne une catégorie additive essentiellement petite et $K$ un corps commutatif, algébriquement clos}. Cette dernière hypothèse sur $K$ est souvent affaiblie pour les résultats donnés dans le corps de l'article, mais elle permet de simplifier plusieurs énoncés de cette introduction.

 Nos deux résultats principaux sur la structure des foncteurs simples de $\F(\A;K)$ sont des théorèmes de décomposition tensorielle. Ils constituent des analogues de décompositions tensorielles établies par R. Steinberg, dont nous rappelons maintenant les énoncés.

\subsubsection*{Deux théorèmes classiques de décomposition tensorielle}
  
 Dans \cite{RSt}, Steinberg démontre qu'une représentation complexe $M$ de dimension finie de $\SL_n(\mathbb{Z})$ ($n\ge 3$) est simple si et seulement si elle se décompose sous la forme:
\begin{align}M\simeq S\otimes T\label{eqn-2}\end{align}
où $S$ est une représentation simple factorisant par un quotient fini $\SL_n(\mathbb{Z}/m\mathbb{Z})$ de $\SL_n(\mathbb{Z})$ et $T$ est une représentation polynomiale simple. Une telle décomposition tensorielle est alors unique.  Des cas particuliers de cette décomposition tensorielle avaient été obtenus préalablement par Bass-Milnor-Serre \cite{BMS} et Serre \cite{Serre} ; les théorèmes de super-rigidité de Margulis \cite[chap.~VII, §\,5]{Marg} donnent des résultats nettement plus généraux en ce sens, à l'aide de méthodes beaucoup plus complexes.

Un autre théorème dû à Steinberg \cite{St-TPT} concerne les groupes de Chevalley $G(\FF_q)$ comme $\GL_n(\FF_q)$ ou $\SL_n(\FF_q)$, où $q=p^r$ avec $p$ premier. Plus précisément, si $K$ contient $\FF_q$, une représentation $K$-linéaire $M$ de $G(\FF_q)$ est simple si et seulement si elle se décompose comme un produit tensoriel 
\begin{align}M\simeq M_0^{\{0\}}\otimes M_1^{\{1\}}\otimes \dots\otimes M_{r-1}^{\{r-1\}}\label{eqn-1}\end{align}
où les $M_i$ sont des représentations simples $p$-restreintes du groupe algébrique $G(K)$, et l'exposant $^{\{i\}}$ indique que l'on a restreint la représentation $M_i$ le long 
du morphisme de groupes $G(\FF_q)\to G(K)$ induit par le plongement $\FF_q\to K$, $x\mapsto x^{p^i}$. Dans le cas de $\SL_n(\FF_q)$ les représentations $M_i$ sont uniquement déterminées, dans celui de $\GL_n(\FF_q)$ elles sont uniquement déterminées à tensorisation par des puissances du déterminant près.

\subsubsection*{Décomposition tensorielle fonctorielle globale}
 
Notre premier résultat est un analogue fonctoriel de la décomposition \eqref{eqn-2}. Il ramène la compréhension des simples de $\F^{\df}(\A;K)$ à celle de deux classes fondamentales de foncteurs. 

La première, celle des  
\emph{foncteurs polynomiaux}, fut  introduite dès les années 1950 par Eilenberg et MacLane \cite{EML} à des fins de topologie algébrique ; elle est reliée à la notion de représentation polynomiale des groupes linéaires. Les foncteurs polynomiaux constituent une généralisation naturelle des foncteurs additifs (les foncteurs polynomiaux de degré $1$ sont, à un terme constant près, les foncteurs additifs). Nous renvoyons le lecteur à la section~\ref{paragraphe-pol} pour plus de détails sur cette notion classique.
 
La deuxième classe fondamentale de foncteurs, celle des foncteurs \emph{antipolynomiaux}, ne semble pas avoir reçu d'attention particulière jusqu'à présent. Afin de la définir, nous introduisons à la définition \ref{def-CategKtriv} la notion de catégorie \emph{$K$-triviale} : c'est une petite catégorie additive $\B$ telle que, pour tous objets $x$ et $y$ de $\B$, le groupe abélien $\B(x,y)$ est fini et d'ordre inversible dans $K$. Nous dirons qu'un foncteur de $\F(\A;K)$ est \emph{antipolynomial} s'il se factorise à travers un foncteur additif $\A\to\B$ dont le but est une catégorie $K$-triviale (voir la définition \ref{def-antipol}). 

Foncteurs polynomiaux et foncteurs antipolynomiaux forment deux classes de foncteurs essentiellement disjointes : par la proposition~\ref{prel-antipol}, seuls les foncteurs constants sont simultanément polynomiaux et antipolynomiaux. 
Le terme <<~antipolynomial~>> se justifie par le fait que cette classe de foncteurs possède des propriétés très différentes des foncteurs polynomiaux. Par exemple, les simples antipolynomiaux sont automatiquement à valeurs de dimensions finies (voir la proposition \ref{prel-antipol}) et leurs fonctions de dimensions sont -- au moins conjecturalement -- des polynômes d'exponentielles (voir la proposition~\ref{pr-kud} et la conjecture~\ref{conj-dim}), alors que les fonctions de dimensions des foncteurs polynomiaux simples sont des polynômes.   

Le résultat suivant montre qu'une large classe de foncteurs de $\F^{\df}(\A;K)$ est construite à partir des foncteurs polynomiaux et antipolynomaiux:

\begin{thi}[Corollaire~\ref{cor-tftcf}]\label{intro-thm-gl} Soit $F$ un foncteur de longueur finie de $\F^{\df}(\A;K)$. Alors il existe un unique bifoncteur $B$ de $\F^{\df}(\A\times\A;K)$ qui est polynomial par rapport à la première variable et antipolynomial par rapport à la seconde tel que $F$ soit isomorphe à la composée de $B$ et de la diagonale $\A\to\A\times\A$.
\end{thi}

On en déduit notre premier théorème fondamental sur la structure des foncteurs simples. Dans cet énoncé, le produit tensoriel de foncteurs est pris sur $K$ et se calcule au but : $(S\otimes T)(a)=S(a)\otimes_K T(a)$.

\begin{thi}[Corollaire~\ref{cor-tens-St1}] \label{intro-thm1}
Un foncteur $F$ de $\F^{\df}(\A;K)$ est simple si et seulement s'il est isomorphe à un produit tensoriel 
$$F\simeq S\otimes T$$ 
où $S$ est un foncteur simple 
polynomial de $\F^{\df}(\A;K)$ et $T$ un foncteur simple antipolynomial de $\F(\A;K)$.

De plus, $S$ et $T$ sont uniquement déterminés par $F$, à isomorphisme près.
\end{thi}

Bien que la décomposition tensorielle \eqref{eqn-2} et le théorème \ref{intro-thm1} soient formellement similaires, les relations entre ces deux énoncés ne sont pas complètement comprises (voir les remarques~\ref{rq-disc-Stein} et~\ref{rem-comparaison-SL-GL} pour plus de détails). 

Nos résultats montrent en particulier que si l'une des deux classes fondamentales de foncteurs de $\F^{\df}(\A;K)$ (foncteurs polynomiaux ou antipolynomiaux) est réduite aux foncteurs constants, alors tous les foncteurs vérifiant des hypothèses raisonnables de finitude appartiennent à l'autre classe fondamentale. Il existe également des situations d'intérêt où les deux classes fondamentales sont simultanément réduites aux foncteurs constants -- ceci se produit par exemple lorsque $\A$ est une catégorie linéaire sur un corps infini de caractéristique différente de celle de $K$. Dans ce cas, on peut déduire de nos résultats que tous les foncteurs de $\F^{\df}(\A;K)$ sont constants, voir le corollaire~\ref{cor-dfcst}.

\subsubsection*{Décomposition tensorielle des foncteurs polynomiaux simples}  Rappelons que les représentations polynomiales simples des groupes $\GL_n(K)$ s'obtiennent par des constructions fonctorielles, c'est-à-dire qu'elles sont de la forme $F(K^n)$ où $F$ est un endofoncteur simple des $K$-espaces vectoriels. Nous appelons \emph{foncteurs élémentaires} les foncteurs $F$ qui sont des quotients simples des puissances tensorielles. Nous renvoyons à la section~\ref{sect-stein-equi} et l'appendice~\ref{app-elt} pour des descriptions plus explicites, et mentionnons seulement que si $K$ est de caractéristique nulle, les foncteurs élémentaires sont les foncteurs de Schur classiques \cite[Chap.~6.1]{FH}.

Notre deuxième théorème principal de structure des foncteurs simples est un analogue fonctoriel de la décomposition tensorielle \eqref{eqn-1}. Dans ce qui suit, on note $\pi^* F$ la composée $F\circ\pi$ de deux foncteurs, et on prend pour convention que tous les facteurs d'un produit tensoriel infini sont égaux à $K$, sauf un nombre fini.

\begin{thi}[Théorème~\ref{steinberg-pol-gal}] \label{intro-thm2}
Un foncteur polynomial $F$ de $\F^{\df}(\A;K)$ est simple si et seulement s'il est isomorphe à un produit tensoriel
\[F\simeq\bigotimes_\pi\pi^*\mathrm{E}_\pi\]
indexé par un ensemble complet de représentants des classes d'isomorphisme de foncteurs additifs simples $\pi$ de $\F^{\df}(\A;K)$, où les $\mathrm{E}_\pi$ sont des endofoncteurs élémentaires des $K$-espaces vectoriels. De plus, les $\mathrm{E}_\pi$ sont uniques à isomorphisme près.
\end{thi} 

Pour $\A=\mathbf{P}(A)$, la catégorie des $A$-modules à gauche projectifs de type fini,
les foncteurs additifs de $\F(\A;K)$ s'identifient aux $(K,A)$-bimodules : le théorème précédent fournit une classification des foncteurs polynomiaux simples de $\F^{\df}(\mathbf{P}(A);K)$ à partir de celle des $(K,A)$-bimodules simples de dimension finie sur $K$.

Les théorèmes~\ref{intro-thm1} et~\ref{intro-thm2} sont valables pour toutes les petites catégories additives $\A$, sans restriction. Nous illustrons ces théorèmes dans trois cas particuliers.
\begin{enumerate}
\item Si $\A=\mathbf{P}(\mathbb{Z})$, le foncteur $\pi$ d'extension des scalaires est le seul foncteur additif simple de $\A$ vers les $K$-espaces vectoriels. Les foncteurs simples à valeurs de dimensions finies sont donc les produits tensoriels $S\otimes \pi^*E$, où $S$ est un foncteur simple factorisant par une catégorie $\mathbf{P}(\mathbb{Z}/m\mathbb{Z})$ avec $m$ inversible dans $K$, et $E$ est un foncteur élémentaire.
\item En prenant $\A=\mathbf{P}(\FF_q)$ et $K=\FF_q$, on retrouve un théorème de Kuhn (voir l'exemple~\ref{ex-kuhn}).   
\item Pour $\A=A\md$, la catégorie des modules de type fini sur une $K$-algèbre $A$ de dimension finie, nos théorèmes montrent que les foncteurs simples à valeurs de dimensions finies sont les produits tensoriels de la forme $\pi_1^*E_1\otimes\dots\otimes \pi^*_nE_n$ où les $E_i$ sont des foncteurs élémentaires et les $\pi_i:A\md\to K\md$ sont des foncteurs additifs simples. Ces foncteurs additifs simples ont été décrits par Auslander \cite{Aus82}. Nos théorèmes permettent donc d'étendre la classification d'Auslander aux foncteurs  simples non additifs.
\end{enumerate}

\subsubsection*{Structure des foncteurs simples antipolynomiaux}

 Au vu de leur définition, l'étude des foncteurs simples antipolynomiaux se ramène à la question suivante :
 
\begin{pbi}\label{intro-pb-antipol} Que dire de la structure des foncteurs simples de $\F(\A;K)$ lorsque $\A$ est une catégorie $K$-triviale ?
\end{pbi}

Une réduction élémentaire réside dans la propriété suivante, où l'on note $_{(p)}V$ la composante $p$-primaire d'un groupe abélien fini $V$, pour un nombre premier $p$. On désigne par $_{(p)}\A$ la sous-catégorie de $\A$ avec les mêmes objets que $\A$ et dont les groupes abéliens de morphismes sont donnés par $(_{(p)}\A)(x,y)={_{(p)}(\A(x,y))}$. On a un foncteur canonique (qui est l'identité sur les objets et qui envoie un morphisme sur sa composante $p$-primaire) $\pi_p:\A\to {_{(p)}\A}$.

\begin{pri}[Proposition~\ref{dec-prim}]
Soit $S$ un foncteur de $\F(\A;K)$. Si les groupes abéliens de morphismes $\A(x,y)$ sont finis pour tous objets $x$ et $y$ de $\A$, alors $S$ est simple si et seulement s'il existe, pour tout nombre premier $p$, un simple $S_p$ de $\F({_{(p)}}\A;K)$, de sorte que
\[S\simeq \bigotimes_p \pi_p^*S_p\]
(le produit tensoriel étant indexé par l'ensemble des nombres premiers $p$).

De plus, dans une telle décomposition tensorielle, les $S_p$ sont uniquement déterminés à isomorphisme près.
\end{pri}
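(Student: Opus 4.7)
My plan rests on the primary decomposition of the finite rings $\operatorname{End}_\A(x)$ and the classical structure of simple modules over tensor products of finite-dimensional algebras over an algebraically closed field.

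\emph{Structural preliminary.} Let $e_p(x) := \pi_p(\mathrm{id}_x) \in {}_{(p)}\A(x,x)$. I would first verify that the finite ring $\operatorname{End}_\A(x)$ is a direct product of rings $\prod_p {}_{(p)}\A(x,x)$, with $\mathrm{id}_x = \sum_p e_p(x)$ and each $e_p(x)$ a central orthogonal idempotent. A short computation, using that ${}_{(p)}\A(x,y)$ is $p$-primary as an abelian group, shows that $e_p(y) \cdot f = f \cdot e_p(x) = \pi_p(f)$ for every $f \in \A(x,y)$, whence $\A(x,y) = \bigoplus_p {}_{(p)}\A(x,y)$ is $p$-graded and composition respects the grading. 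The crucial algebraic consequence for the proof is the tensor factorization of the multiplicative monoid algebras
\[
K\bigl[\operatorname{End}_\A(x), \cdot\bigr] \;\simeq\; \bigotimes_p K\bigl[{}_{(p)}\A(x,x), \cdot\bigr],
\]
which follows from the fact that the monoid algebra functor sends products of monoids to tensor products of $K$-algebras.

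\emph{Pointwise simplicity and the ``if'' direction.} A key observation is that whenever $S \in \F(\A;K)$ is simple, each value $S(x)$ is a simple $K[\operatorname{End}_\A(x), \cdot]$-module: any submodule $W \subseteq S(x)$ generates a subfunctor $T$ of $S$ with $T(x) = W$ (since $W$ is stable under the $\operatorname{End}_\A(x)$-action), and by simplicity of $S$ we must have $W = 0$ or $W = S(x)$. Finiteness of $\A(x,y)$ moreover forces each $S(y)$ to be finite-dimensional, since a nonzero vector $v \in S(x)$ generates $S$ and its image in $S(y)$ has dimension at most $|\A(x,y)|$. The structure theorem for simple modules over tensor products of finite-dimensional algebras over an algebraically closed field then yields a decomposition $S(x) \simeq \bigotimes_p V_p(x)$, unique up to isomorphism, with each $V_p(x)$ simple over $K[{}_{(p)}\A(x,x), \cdot]$. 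Conversely, given simples $S_p \in \F({}_{(p)}\A; K)$, the same structure theorem shows that $\bigotimes_p S_p(x)$ is simple over $K[\operatorname{End}_\A(x), \cdot]$ at every object $x$; a propagation argument using the morphisms of $\A$ and finite-dimensionality then shows that $\bigotimes_p \pi_p^* S_p$ is simple in $\F(\A;K)$.

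\emph{Functoriality and uniqueness.} The main obstacle is lifting the pointwise decomposition of $S(x)$ to a functorial decomposition and thereby defining the functors $S_p$ on ${}_{(p)}\A$. The $p$-grading of composition is the key tool: from the identity $S(e_p(y)) \circ S(f) = S(\pi_p(f))$, one isolates the ``$p$-component'' of $S(f)$ compatibly with the tensor factorization $\operatorname{Hom}_K(S(x), S(y)) \simeq \bigotimes_p \operatorname{Hom}_K(V_p(x), V_p(y))$. This produces a canonical map $\phi_p(f) \colon V_p(x) \to V_p(y)$ depending only on $\pi_p(f)$, and setting $S_p(\pi_p(f)) := \phi_p(f)$ assembles the $V_p$ into a well-defined simple functor $S_p \in \F({}_{(p)}\A; K)$, with the natural isomorphism $S \simeq \bigotimes_p \pi_p^* S_p$ following from the pointwise decompositions. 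Uniqueness of the $S_p$ up to isomorphism is inherited from the uniqueness in the structure theorem applied at any object where $S$ is nonzero.
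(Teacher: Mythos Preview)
Your structural preliminary is correct, and the pointwise tensor decomposition $S(x) \simeq \bigotimes_p V_p(x)$ over $K[\operatorname{End}_\A(x)] \simeq \bigotimes_p K[{}_{(p)}\A(x,x)]$ follows as you say. But the passage to functoriality has a real gap. The identity $S(e_p(y)) \circ S(f) = S(\pi_p(f))$ does not isolate a $p$-component of $S(f)$: in the decomposition $S(y) \simeq \bigotimes_q V_q(y)$, the operator $S(e_p(y))$ acts as the identity on the factor $V_p(y)$ but as the action of $[0_q] \in K[{}_{(q)}\A(y,y)]$ on each factor $V_q(y)$ with $q \neq p$, and $[0_q]$ is a central idempotent that kills every simple module on which $0_q$ does not act as the identity --- that is, every $V_q$ coming from a non-constant $S_q$. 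So as soon as two distinct primes occur nontrivially in $S$, every $S(e_p(y))$ is the zero map and your relation collapses to $S(\pi_p(f)) = 0$, revealing nothing about the putative factor $\phi_p(f)$. The ``if'' direction has a parallel weakness: value-wise simplicity of $\bigotimes_p S_p(x)$ over $K[\operatorname{End}_\A(x)]$ does not by itself force functor-wise simplicity, and you do not say how your propagation argument uses the simplicity of the $S_p$ \emph{as functors}.

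The paper bypasses both difficulties with a single categorical step: the functor $\prod_p \pi_p \colon \A \to \bigoplus_p {}_{(p)}\A$ is fully faithful and every object of the target is a direct summand of one in its image, so precomposition gives an equivalence $\F\bigl(\bigoplus_p {}_{(p)}\A; K\bigr) \simeq \F(\A;K)$. The statement then reduces to the description of simples on a product category (Corollary~\ref{cor-cas-sympa-tens}, resting on Propositions~\ref{abs-semi-simple} and~\ref{simples-cat-prod}), which delivers existence, simplicity, and uniqueness at once. Your observation that composition respects the $p$-grading is precisely what makes $\prod_p \pi_p$ a functor, so you are essentially one reformulation away from the paper's route.
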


On peut donc se ramener, dans le problème~\ref{intro-pb-antipol}, au cas où $\A$ est $p$-primaire pour un nombre premier $p$ distinct de la caractéristique de $K$. L'archétype de cette situation est le cas où $\A=\mathbf{P}(A)$ pour un $p$-anneau fini $A$, c'est-à-dire un anneau fini de caractéristique une puissance de $p$.

Hélas, même dans un cas aussi simple que $A=\mathbb{Z}/p^2\mathbb{Z}$ et $K=\mathbb{C}$, la classification des simples semble hors d'atteinte. En effet, de manière générale, sans hypothèse d'additivité, la finitude des ensembles de morphismes de la source d'une catégorie de foncteurs permet de relier précisément le problème~\ref{pb-descrisimples} à un problème de représentations de groupes finis, grâce à la proposition suivante, dans laquelle $\Irr(\E)$ désigne l'ensemble des classes d'isomorphisme d'objets simples d'une catégorie abélienne avec générateur $\E$ et $\mathrm{Iso}(\C)$ l'ensemble des classes d'isomorphisme d'objets d'une petite catégorie $\C$.

\begin{pri}[Proposition~\ref{simples-sfin}]\label{intro-prf} Soit $\C$ une petite catégorie dont les idempotents se scindent, et telle que $\C(x,x)$ soit un ensemble fini pour tout objet $x$. On a une bijection
\[\bigsqcup_{[x]\in\mathrm{Iso}(\C)}\Irr(K[\mathrm{Aut}_\C(x)]\Md)\xrightarrow[]{\simeq}\Irr(\F(\C;K))\;.\]
\end{pri}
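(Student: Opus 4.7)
The plan is to construct an explicit inverse $(x,V) \mapsto S_{x,V}$ to the asserted map, and then verify surjectivity and injectivity up to isomorphism. The crucial preliminary is that in any finite monoid, right-invertibility coincides with invertibility (by pigeonhole on powers), so the non-units $\mathrm{End}_\C(x) \setminus \mathrm{Aut}_\C(x)$ form a two-sided ideal, giving a surjection of $K$-algebras $K[\mathrm{End}_\C(x)] \twoheadrightarrow K[\mathrm{Aut}_\C(x)]$. I pull a simple $K[\mathrm{Aut}_\C(x)]$-module $V$ back along this surjection to a simple $K[\mathrm{End}_\C(x)]$-module $\tilde V$ on which non-units act as zero, and define
\[P_{x,V}(y) := K[\C(x,y)] \otimes_{K[\mathrm{End}_\C(x)]} \tilde V,\]
the left adjoint of evaluation at $x$ applied to $\tilde V$. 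Since subfunctors of $P_{x,V}$ vanishing at $x$ are closed under arbitrary sums, there is a maximal such subfunctor $R$; I set $S_{x,V} := P_{x,V}/R$, which satisfies $S_{x,V}(x) = V$. Simplicity is routine: any nonzero subfunctor $T \subseteq S_{x,V}$ has $T(x) \neq 0$ by maximality of $R$; then $T(x) \subseteq V$ is a $K[\mathrm{End}_\C(x)]$-submodule, equivalently a $K[\mathrm{Aut}_\C(x)]$-submodule (since non-units act as zero), hence equals $V$, and consequently $T = S_{x,V}$ because $P_{x,V}$ is generated by $V$ at $x$.

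For surjectivity, let $S \in \F(\C;K)$ be simple. Any nonzero value $S(y)$ is a simple module over the finite-dimensional algebra $K[\mathrm{End}_\C(y)]$, hence finite-dimensional. I select $x$ by a two-step minimality: take $x_0$ with $S(x_0) \neq 0$ of minimal dimension, then iteratively split non-trivial idempotents $e \in \mathrm{End}_\C(x_0)$ for which $S(e) \neq 0$. Such a splitting $e = ir$ through $x_1$ preserves the dimension of $S$ (by minimality and injectivity of $S(i)$) but strictly shrinks the finite monoid $\mathrm{End}_\C$ (since $\mathrm{id}_{x_0} \notin e\,\mathrm{End}_\C(x_0)\,e$ when $e \neq \mathrm{id}_{x_0}$), so the process terminates at some $x$ on which every non-trivial idempotent of $\mathrm{End}_\C(x)$ acts as zero on $V := S(x)$. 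For such $x$, every non-unit $f \in \mathrm{End}_\C(x)$ has $f^N$ a non-trivial idempotent (else $f$ would be invertible), hence $f$ acts nilpotently on $V$; a standard argument on nilpotent ideals in simple modules shows the whole ideal $K[\mathrm{End}_\C(x) \setminus \mathrm{Aut}_\C(x)]$ kills $V$, so $V = \tilde V$ and $V$ is simple over $K[\mathrm{Aut}_\C(x)]$. The inclusion $V \hookrightarrow S(x)$ yields by adjunction a nonzero, thus surjective, map $P_{x,V} \to S$ factoring through $S_{x,V}$, giving $S \cong S_{x,V}$.

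The hard part will be uniqueness: given $S_{x,V} \cong S_{x',V'}$, I must produce an isomorphism $\phi : x \xrightarrow{\sim} x'$ in $\C$ under which $V \cong V'$. By the construction above, each pair is recovered as a simple $\mathrm{Aut}$-submodule of $S$ on which non-units act trivially, and once the object is fixed this submodule is determined up to isomorphism by $S(x)$ and $S(x')$. Using that $S$ is generated by $V$ at $x$ and by $V'$ at $x'$, I produce morphisms $f : x \to x'$ and $g : x' \to x$ such that $S(gf)$ and $S(fg)$ act non-trivially on $V$ and $V'$ respectively. The main obstacle is to promote this into a genuine isomorphism in $\C$: combining the selection criteria that non-trivial idempotents at $x$ and $x'$ act as zero on $V$ and $V'$, with idempotent splitting applied to high powers of $gf$ and $fg$, should force these composites to be automorphisms of $x$ and $x'$, from which the desired iso $\phi$ and then the matching $V \cong V'$ follow by evaluating the functor isomorphism at $x$.
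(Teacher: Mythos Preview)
Your approach is essentially the paper's: both pass through the quotient $K[\mathrm{End}_\C(x)] \twoheadrightarrow K[\mathrm{Aut}_\C(x)]$, build the simple functor via (what amounts to) an intermediate extension, and establish uniqueness by a mutual-retract argument --- the paper phrases the last point as the observation that $T_a(r_a^*M)(y)\neq 0$ forces $a$ to be a direct summand of $y$. The paper compresses the surjectivity step by citing \cite[Thm~5.5]{BSt} to produce an idempotent $e$ with $eS(x)\neq 0$ such that non-invertibles of $e\,\mathrm{End}_\C(x)\,e$ act by zero; you instead give a hands-on iterative construction.

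There is, however, a real gap in your surjectivity argument. After arranging that every non-identity idempotent of $\mathrm{End}_\C(x)$ annihilates $V$, you correctly deduce that each non-unit $f$ acts nilpotently (its idempotent power $f^N$ is non-trivial). But the assertion that ``a standard argument on nilpotent ideals'' then forces the whole ideal $K[N]$ (with $N=\mathrm{End}_\C(x)\setminus\mathrm{Aut}_\C(x)$) to kill $V$ is unjustified: $K[N]$ is only \emph{spanned} by operators acting nilpotently, and sums of nilpotent operators need not be nilpotent --- for instance in $M_2(\mathbb{F}_2)$ the identity is the sum of the three nilpotent matrices $E_{12}$, $E_{21}$ and $E_{11}+E_{12}+E_{21}+E_{22}$. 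What is actually needed is a finite-semigroup fact: in the semigroup $N$, every sufficiently long product $f_1\cdots f_m$ factors as $a\,e\,b$ through some non-identity idempotent $e$ (because the Rees quotient of $N$ by the ideal generated by its idempotents has $0$ as its only idempotent, hence is a nilpotent semigroup). This yields $K[N]^m\cdot V=0$, and \emph{then} the standard argument applies: $K[N]V$ is a submodule, and were it equal to $V$ one would get $K[N]^m V=V\neq 0$. This is precisely the content the paper imports from \cite{BSt}.
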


On obtient ainsi une correspondance entre les classes d'isomorphisme de foncteurs simples de $\F(\mathbf{P}(\mathbb{Z}/p^r\mathbb{Z});K)$ (où $p$ est un nombre premier et $r>0$ un entier) et les classes d'isomorphisme de représentations $K$-linéaires simples des différents groupes $\GL_n(\mathbb{Z}/p^r\mathbb{Z})$. Mais lorsque $K$ est de caractéristique $0$, la description de ces représentations simples est un problème sauvage pour $r>1$ (cf. \cite[§\,4]{VS-survol}).

La classification des simples étant hors d'atteinte, on peut se tourner vers la recherche de propriétés plus globales et qualitatives des foncteurs simples antipolynomiaux. À cette fin, nous formulons dans l'article la conjecture suivante, qui propose des éléments de réponse au problème~\ref{intro-pb-antipol}. Nous démontrons cette conjecture dans le cas où l'anneau $A$ est semi-simple (proposition~\ref{pr-kud}) grâce à un résultat de Kuhn \cite{Ku-adv}.
\begin{cji}[Conjecture~\ref{conj-dim}]\label{intro-conj-dim}
Soit $A$ un $p$-anneau fini. On suppose $\mathrm{car}(K)\neq p$. Alors pour tout foncteur simple $F$ de $\F(\mathbf{P}(A);K)$, il existe une fonction polynomiale $f : \mathbb{Z}\to\mathbb{Z}$ telle que $\forall n\in\mathbb{N}\quad\dim_K F(A^n)=f(p^n)$.
\end{cji}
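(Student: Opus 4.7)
The plan is to combine the parametrization of Proposition~\ref{intro-prf} with a stratification of $\mathrm{Hom}_A(P, A^n)$ that generalizes the Harish-Chandra / Schubert-cell analysis underlying Kuhn's result in the semisimple case (Proposition~\ref{pr-kud}). First I apply Proposition~\ref{intro-prf} to write $F \cong F_{P,V}$ for a unique pair $(P, V)$, where $P$ is a finitely generated projective $A$-module and $V$ is a simple $K[\mathrm{Aut}_A(P)]$-module. The simple functor $F_{P,V}$ is realized as the unique simple quotient of the summand of the projective functor $\widetilde{P}_P = K[\mathrm{Hom}_A(P, -)]$ cut out by $V$; concretely, $F_{P,V}(A^n)$ is a subquotient of the permutation representation $K[\mathrm{Hom}_A(P, A^n)]$ obtained by extracting the $V$-isotypic part of the $K[\mathrm{Aut}_A(P)]$-action by precomposition, modulo contributions coming from proper direct summands of $A^n$.

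The second step is a counting step. Since $\mathrm{Hom}_A(P, A^n) \cong (P^*)^n$ as sets, with $P^* = \mathrm{Hom}_A(P, A)$ a finite abelian $p$-group of order $p^{e(P)}$ say, we get $|\mathrm{Hom}_A(P, A^n)| = (p^n)^{e(P)}$, a monomial in $p^n$. To sharpen this into the desired polynomial formula for $\dim_K F_{P,V}(A^n)$, I would stratify $\mathrm{Hom}_A(P, A^n)$ according to the isomorphism type of the image submodule of $A^n$, and for each stratum compute both the cardinality and the multiplicity of $V$ in the associated permutation representation of $\mathrm{Aut}_A(P)$. The aim is to show that each stratum contributes a polynomial in $p^n$, the $V$-isotypic extraction only introducing a rational multiplier coming from stabilizer representation theory that is independent of $n$.

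The main obstacle --- and the reason the statement is only conjectural --- lies in the passage from semisimple to non-semisimple $A$, already visible for $A = \mathbb{Z}/p^2\mathbb{Z}$. When $A$ is semisimple (a product of matrix algebras over finite fields), the $GL_n(A)$-orbits on $\mathrm{Hom}_A(P, A^n)$ are indexed by a dimension vector and each orbit cardinality is a Gaussian-binomial-type expression in $p^n$, recovering Proposition~\ref{pr-kud} via the argument of \cite{Ku-adv}. In general the image of a homomorphism $P \to A^n$ need not be a direct summand, and its isomorphism type involves finer invariants, such as the Jordan type of its filtration by $\mathrm{rad}(A)^i$. My strategy would be to exploit the $\mathrm{rad}(A)$-adic filtration of $A^n$: each graded piece is a module over the semisimple quotient $A/\mathrm{rad}(A)$, permitting stratum-by-stratum comparison with the semisimple case. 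The kernel of $GL_n(A) \twoheadrightarrow GL_n(A/\mathrm{rad}(A))$ is a $p$-group, so since $\mathrm{char}(K) \neq p$, Clifford theory in principle controls the simple $K$-representations of $GL_n(A)$ in terms of those of $GL_n(A/\mathrm{rad}(A))$ together with twists by characters of a radical pro-$p$ part. Converting this group-theoretic control into a statement at the \emph{functorial} level, uniform in $n$ so as to produce a single polynomial $f$ with $\dim_K F(A^n) = f(p^n)$, is the substantial difficulty that leaves the conjecture open.
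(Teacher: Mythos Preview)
This statement is a \emph{conjecture}, not a theorem: the paper does not prove it in general, only in the semisimple case (Proposition~\ref{pr-kud}), by reduction to Kuhn's results on $\F(\mathbb{F}_q,K)$. You correctly recognize this and do not claim a proof; your proposal is a heuristic discussion of why the problem is hard and what an attack might look like. In that sense there is no gap to flag, because neither the paper nor your proposal offers a proof of the general case, and you are transparent about this.

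Your outline is reasonable as far as it goes. The parametrization via Proposition~\ref{intro-prf} is valid (idempotents split in $\mathbf{P}(A)$), and the stratification of $\mathrm{Hom}_A(P,A^n)$ by the isomorphism type of the image is the natural generalization of the orbit decomposition underlying the semisimple case. The obstruction you isolate --- that for non-semisimple $A$ the image of $P\to A^n$ need not be a direct summand and carries $\mathrm{rad}(A)$-adic invariants beyond a dimension vector --- is precisely the point where the Kuhn-type argument breaks down. Your Clifford-theory remark (the congruence kernel of $GL_n(A)\twoheadrightarrow GL_n(A/\mathrm{rad}\,A)$ is a $p$-group, hence innocuous over $K$ of characteristic $\neq p$) is correct at the level of representations for a \emph{fixed} $n$, but as you note, the whole difficulty is uniformity in $n$ at the functorial level. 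The paper's remark following Proposition~\ref{pr-kud} points to Nagpal's work on the category of $\mathbb{F}_q$-vector spaces with injections as a related setting where such uniformity has been achieved by different methods; the additive-source case remains open.
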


Dans la suite de cette introduction, nous décrivons quelques applications de nos résultats sur la structure des foncteurs simples.
On peut classer ces applications en deux familles.

D'une part, l'évaluation sur $A^n$ d'un foncteur de  $\F(\mathbf{P}(A);K)$ est naturellement une représentation du monoïde multiplicatif de matrices $\M_n(A)$. Le foncteur
d'évaluation $\F(\mathbf{P}(A);K)\to  K[\M_n(A)]\Md$ admet une section, le prolongement intermédiaire, qui se comporte bien vis-à-vis des propriétés de simplicité. On peut donc (sous des hypothèses techniques adéquates) transférer nos résultats et obtenir des informations sur la structure des représentations simples de $\M_n(A)$. Par restriction, nous obtenons aussi des résultats sur les représentations simples de $\GL_n(A)$ et $\SL_n(A)$.

D'autre part, nos résultats de décomposition permettent de ramener l'étude de propriétés de finitude des catégories de foncteurs $\F(\A;K)$ et $\F^{\df}(\A;K)$ à des cas particuliers plus accessibles. Nous pouvons ensuite étudier ces cas particuliers par un examen direct ou à l'aide de la littérature existante, et ainsi obtenir des résultats de finitude généraux sur les catégories de foncteurs.

\subsubsection*{Représentations simples de $\GL_n(A)$ ($A$ anneau fini)}

Dans le cas d'un  $p$-anneau fini $A$, où $p$ est la caractéristique de $K$, nous pouvons obtenir à partir des théorèmes~\ref{intro-thm1} et~\ref{intro-thm2} un théorème de décomposition tensorielle pour les groupes finis $\GL_n(A)$.
(Le théorème \ref{classif-St} est légèrement plus précis sur la propriété d'unicité.)  
\begin{thi}[Théorème \ref{classif-St}]\label{thm-intro-3}
Si $K$ est de caractéristique $p\ne 0$, soient $A$ un $p$-anneau fini, et $\{B_1,\dots,B_m\}$ un système complet de $(K,A)$-bimodules simples.

Soit $M$ une représentation $K$-linéaire de $\GL_n(A)$. Alors $M$ est simple si et seulement si elle peut s'écrire comme un produit tensoriel 
$$M\simeq M_1^{[B_1]}\otimes\dots\otimes M_m^{[B_m]}$$
où les $M_i$ sont des représentations polynomiales simples $p$-restreintes des groupes $\GL_{n d_i}(K)$, où $d_i=\dim_K B_i$ et l'exposant $^{[B_i]}$ signifie que l'on restreint la représentation $M_i$ le long du morphisme de groupes $\GL_n(A)\to \GL_{nd_i}(K)$ induit par le morphisme $A\to \mathcal{M}_{d_i}(K)$ correspondant au bimodule $B_i$. De plus, les $M_i$ sont uniquement déterminées à tensorisation par une puissance du déterminant près.
\end{thi}

Le théorème~\ref{thm-intro-3} constitue une généralisation aux anneaux finis arbitraires du théorème du produit tensoriel de Steinberg (décomposition tensorielle \eqref{eqn-1}), auquel il se réduit lorsque $A$ est un corps. Les méthodes de notre article donnent donc une nouvelle démonstration de ce résultat fondamental.

\subsubsection*{Structure des représentations simples EML-polynomiales}

Nos théorèmes de décomposition tensorielle des foncteurs simples ne s’appliquent qu’aux foncteurs à valeurs de dimensions finies. Pour pouvoir transférer ces théorèmes à une représentation simple $M$ de $G_n(A) =\M_n(A)$, $\GL_n(A)$ ou $\SL_n(A)$, il faut donc s’assurer que le prolongement intermédiaire de $M$, c’est-à-dire le foncteur simple $F$ de $\F(\mathbf{P}(A);K)$ tel que $F(A^n)\simeq M$, soit à valeurs de dimensions finies. Si $A$ est un anneau fini, cette condition de finitude est automatiquement vérifiée, mais si $A$ est infini, il peut exister des foncteurs simples ne possédant qu’une seule valeur non nulle de dimension finie -- nous en donnons des exemples à l’appendice~\ref{apa-1}. Nous démontrons au lemme~\ref{lm-vdf} que pour un anneau \emph{commutatif} $A$, il est possible de contrôler ce phénomène de finitude sur le prolongement intermédiaire en imposant une condition de régularité sur le morphisme d’action $\rho : G_n(A)\to M$.

Ceci nous conduit à introduire la notion de \textit{représentation polynomiale à la Eilenberg-MacLane}, ou plus brièvement \emph{représentation EML-polynomiale}, dans la définition~\ref{def-rep-pol}. C'est une version généralisée de la notion classique de représentation polynomiale \cite{Green}, dans laquelle on demande que $\rho$ soit une fonction polynomiale à la Eilenberg-MacLane (par opposition à un polynôme). Les deux propriétés suivantes donnent un grand nombre d'exemples.
\begin{enumerate}
\item Toute représentation polynomiale de $\GL_n(K)$ au sens classique \cite{Green} est EML-polynomiale.
\item La notion de représentation EML-polynomiale est stable par produit tensoriel et par restriction le long des morphismes d'anneaux.
\end{enumerate}

Si $M$ est une représentation de $G_n(K)$ et $\phi : A\to K$ un morphisme d'anneaux, on note $M^{[\phi]}$ la représentation de $G_n(A)$ obtenue par restriction de $M$ le long du morphisme de groupes $G_n(\phi) : G_n(A)\to G_n(K)$. Cela concorde avec la notation du théorème~\ref{thm-intro-3} si l'on assimile $\phi$ au $(K,A)$-bimodule $K$ sur lequel $A$ agit via $\phi$.

Nous montrons le théorème de décomposition suivant. 

\begin{thi}[Théorème~\ref{th-SL}]\label{intro-thm4} Soient $A$ un anneau commutatif et $M$ une représentation $K$-linéaire de dimension finie de $\SL_n(A)$. On suppose que $M$ est EML-polynomiale.
Alors $M$ est simple si et seulement si elle peut s'écrire comme un produit tensoriel
\[M\simeq M_1^{[\phi_1]}\otimes\dots \otimes M_m^{[\phi_m]}\]
où les $M_i$ sont des représentations polynomiales (au sens classique) simples de $\SL_n(K)$, qui sont $p$-restreintes si $K$ est de caractéristique $p>0$, et les $\phi_i:A\to K$ sont des morphismes d'anneaux deux à deux distincts. 

De plus, les morphismes $\phi_i$ et les représentations $M_i$ apparaissant dans une telle décomposition sont uniquement déterminés.
\end{thi}

Ce théorème est de même nature que les résultats de Borel et Tits \cite[§\,10]{Borel-Tits}, mais les hypothèses (et la méthode de démonstration) sont différentes : ces auteurs n'ont besoin d'aucune hypothèse apparentée à la polynomialité à la Eilenberg-MacLane mais ils ne traitent de groupes linéaires que sur des \emph{corps} (commutatifs), tandis que, dans notre théorème~\ref{intro-thm4} l'anneau commutatif $A$ est arbitraire. Notre théorème peut tomber en défaut si l'on omet l'hypothèse de polynomialité sur les représentations. Ce phénomène est lié à la $K$-théorie de $A$, voir la remarque \ref{rem-Kth}.

Nous établissons également des analogues du théorème 10 pour les représentations de $\M_n(A)$ et $\GL_n(A)$ -- avec la nuance importante que dans le cas des groupes linéaires l'unicité peut tomber en défaut, voir l'exemple \ref{ex-pas-unique}. 

Même pour $n=1$, la variante du théorème~\ref{intro-thm4} que nous établissons (proposition~\ref{pia1} ci-après) pour les monoïdes $\M_n(A)$ n'est pas totalement évidente : nous n'en connaissons pas de démonstration directe. Ce résultat, comme plus généralement la notion de représentation EML-polynomiale et le théorème~\ref{intro-thm4}, est de nature \emph{arithmétique} : les hypothèses combinent une propriété liée purement à la structure additive des anneaux (la polynomialité) à une propriété, a priori indépendante, liée purement à leur structure multiplicative (la préservation du produit). La conclusion peut se lire comme un résultat de \emph{rigidité} : les objets s'obtiennent en fait de manière très contrainte à partir de morphismes d'anneaux, c'est-à-dire respectant simultanément les structures additive et multiplicative.

\begin{pri}[Proposition~\ref{cor-arith1}]\label{pia1}
Soient $A$ un anneau commutatif et $\zeta : A\to K$ une fonction EML-polynomiale de degré $d>0$, et multiplicative au sens où $\zeta(xy)=\zeta(x)\zeta(y)$ pour tous $x$, $y\in A$. Alors $\zeta$ peut s'écrire comme le produit (point par point) de $d$ morphismes d'anneaux de $A$ dans $K$, qui sont uniques à l'ordre des facteurs près.
\end{pri}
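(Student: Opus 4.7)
Plan de démonstration. L'idée centrale est d'interpréter $\varphi$ comme une représentation $K$-linéaire simple de dimension $1$ du monoïde multiplicatif $\M_1(A) = A$, puis d'appliquer le théorème~\ref{intro-thm2} de décomposition tensorielle des foncteurs polynomiaux simples. Notons d'abord qu'en évaluant la relation $\varphi(xy) = \varphi(x)\varphi(y)$ en $x = y = 1$, on obtient $\varphi(1)\in\{0,1\}$, et l'hypothèse $d > 0$ force $\varphi(1) = 1$ (sinon $\varphi \equiv 0$).

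Première étape (passage au cadre fonctoriel). La fonction multiplicative $\varphi$ définit une représentation de dimension $1$ du monoïde $(A,\cdot) = \M_1(A)$, qui est trivialement simple. Par hypothèse elle est EML-polynomiale de degré $d$. D'après le lemme~\ref{lm-vdf} et la théorie du prolongement intermédiaire développée dans l'article, on en déduit l'existence d'un foncteur simple polynomial $F \in \F^{df}(\mathbf{P}(A);K)$ de degré $d$ tel que $F(A) \simeq K$, l'action de $A = \mathrm{End}_{\mathbf{P}(A)}(A)$ sur $F(A)$ se lisant précisément via $\varphi$.

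Deuxième étape (décomposition tensorielle et identification des facteurs). Le théorème~\ref{intro-thm2} fournit une décomposition $F \simeq \bigotimes_\pi \pi^* E_\pi$ indexée par les classes d'isomorphisme de foncteurs additifs simples $\pi \in \F^{df}(\mathbf{P}(A);K)$, où chaque $E_\pi$ est un endofoncteur élémentaire de $K\Md$. L'hypothèse de commutativité de $A$ est ici essentielle: les foncteurs additifs simples de $\mathbf{P}(A)$ vers $K\Md$ à valeurs de dimensions finies s'identifient aux $(K,A)$-bimodules simples de dimension finie sur $K$. Pour $A$ commutatif et $K$ algébriquement clos, un tel bimodule est nécessairement de dimension $1$ (par le lemme de Schur appliqué à une sous-algèbre commutative de $\mathrm{End}_K(V)$ agissant de façon irréductible), et correspond à un morphisme d'anneaux $\psi_\pi : A \to K$.

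Conclusion et unicité. Comme $\dim_K F(A) = \prod_\pi \dim_K E_\pi(K_{\psi_\pi}) = 1$, chaque $E_\pi$ est non nul sur un espace de dimension $1$; étant homogène de degré $e_\pi$, il agit alors sur un tel espace $V$ par $\lambda\cdot\mathrm{id}_V \mapsto \lambda^{e_\pi}\cdot\mathrm{id}_{E_\pi(V)}$. On obtient ainsi $\varphi(a) = \prod_\pi \psi_\pi(a)^{e_\pi}$ avec $\sum_\pi e_\pi = d$: c'est exactement un produit de $d$ morphismes d'anneaux (chaque $\psi_\pi$ apparaissant avec multiplicité $e_\pi$). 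L'unicité à l'ordre des facteurs près découle directement de l'unicité des $\pi$ et des $E_\pi$ dans le théorème~\ref{intro-thm2}. Le point le plus délicat est la première étape: construire le prolongement intermédiaire d'une représentation \emph{a priori} seulement EML-polynomiale et vérifier qu'il produit un foncteur polynomial à valeurs de dimensions finies — question qui motive précisément l'introduction de la notion de représentation EML-polynomiale et le lemme~\ref{lm-vdf}.
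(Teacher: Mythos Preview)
Your approach is essentially the same as the paper's: interpret $\varphi$ as a one-dimensional EML-polynomial representation of $\M_1(A)$, pass to a simple polynomial functor in $\F^{df}(A,K)$ via the prolongement intermédiaire (lemme~\ref{lm-vdf}), and apply the Steinberg decomposition of Theorem~\ref{intro-thm2}. The paper routes this through Theorem~\ref{st-gl-ann-com}, which is just the representation-theoretic packaging of the same functorial decomposition.

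There is, however, a genuine gap in your uniqueness argument when $\mathrm{car}(K)=p>0$. You claim uniqueness follows \og directly\fg{} from the uniqueness of the $E_\pi$ in Theorem~\ref{intro-thm2}, but that theorem only guarantees uniqueness of \emph{functorial} Steinberg decompositions. Given another decomposition $\varphi=\prod_{j=1}^d g_j$ into $d$ ring morphisms, grouped as $\varphi=\prod_\sigma g_\sigma^{b_\sigma}$ with $\sum b_\sigma=d$, you need $b_\sigma<p$ for every $\sigma$ so that each $S^{b_\sigma}$ is elementary and $\bigotimes_\sigma g_\sigma^*S^{b_\sigma}$ is a Steinberg decomposition of $F$ to which Theorem~\ref{intro-thm2} applies. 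The paper supplies the missing argument (case~(iii) of Proposition~\ref{cor-arith1}): if some $b_\sigma\ge p$, then since $g_\sigma^p$ is again a ring morphism, $\varphi$ would be a product of at most $d-p+1<d$ ring morphisms, hence of polynomial degree strictly less than $d$, contradicting the hypothesis. This short step is not dispensable, and without it your uniqueness claim is incomplete in positive characteristic.
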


\subsubsection*{Application aux propriétés de finitude des catégories de foncteurs}

Nous appliquons également nos théorèmes principaux à l'étude de propriétés de finitude des catégories $\F(\A;K)$ (dans cette partie, le corps $K$ peut être quelconque). Ces propriétés sont banales lorsque cette catégorie est localement noethérienne, mais elles valent sous des hypothèses bien plus faibles et faciles à vérifier. On rappelle qu'une catégorie abélienne est \emph{localement noethérienne} si elle possède un ensemble de générateurs noethériens, et qu'un objet est dit \emph{fini} s'il est à la fois noethérien et artinien. Le résultat suivant utilise la notion classique \cite{Mi72,Street} d'idéal d'une catégorie additive, qui généralise la notion d'idéal bilatère d'un anneau. Nous disons  qu'un idéal $\I$ de $\A$ est \emph{$K$-cotrivial} si la catégorie additive quotient $\A/\I$ est $K$-triviale (voir la définition \ref{df-ideaux}). 

\begin{thi}[Théorème \ref{th-ptens-fini}]
 Si, pour tout idéal $K$-cotrivial $\I$ de $\A$, les catégories $\F(\A/\I;K)$ et $\F((\A/\I)^{op};K)$ sont localement noethériennes, alors  la classe des foncteurs finis de $\F^{\df}(\A;K)$ est stable par produit tensoriel.
\end{thi}

 L'hypothèse de ce théorème est en général difficile à vérifier, mais la finitude des groupes abéliens de morphismes entre deux objets de $\A/\I$ donne dans certains cas accès à des méthodes combinatoires fructueuses. En particulier, cette hypothèse est satisfaite pour $\A=\mathbf{P}(A)$, où $A$ est un anneau \emph{quelconque} (y compris non noethérien). Cela provient de profonds résultats de noethérianité établis par Putman-Sam-Snowden \cite{PSam,SamSn}.
 
 Nous nous intéressons aussi à la propriété de {\em présentation finie} pour les foncteurs finis. Cette question apparaît par exemple de façon centrale (pour les foncteurs additifs) dans les travaux d'Auslander \cite[§\,3]{Aus82}. Une conséquence de notre critère général de présentation finie pour les simples (aux hypothèses plus techniques -- voir le théorème~\ref{thapf}) est le résultat suivant.
 
 \begin{thi}[Corollaire~\ref{cor-pfdf}]\label{thipfi} Supposons que les anneaux $A$ et $A\otimes_\mathbb{Z}K$ sont noethériens à droite. Alors tout foncteur fini de $\F^{\df}(\mathbf{P}(A);K)$ est de présentation finie dans $\F(\mathbf{P}(A);K)$.
 \end{thi}
 
 Ce résultat ne découle pas de propriétés formelles générales car la catégorie $\F(\mathbf{P}(A);K)$ n'est localement noethérienne que si $A$ est \emph{fini} (proposition~\ref{rq-anninf}).
 
\subsubsection*{Foncteurs simples ne prenant pas des valeurs de dimensions finies}

Les théorèmes~\ref{intro-thm1} et~\ref{intro-thm2} concernent les foncteurs simples prenant des valeurs de dimensions finies. Nous discutons dans l'appendice~\ref{apa} quelques questions liées aux foncteurs simples de $\F(\A;K)$ n'appartenant pas à $\F^{\df}(\A;K)$. Notons que l'hypothèse de valeurs de dimensions finies pourrait être remplacée, quitte à agrandir le corps commutatif $K$, par celle d'\textit{endofinitude centrale}. Cette notion, plus intrinsèque, et variante de la notion d'endofinitude, classique pour les modules (cf. par exemple \cite[§\,6.3 et 6.4]{Kra}), a été étudiée dans le cadre des représentations des groupes par différents auteurs, dont Wehrfritz \cite{W83}. Pour un foncteur simple $S$ de $\F(\C;K)$, l'endofinitude centrale signifie que les valeurs de $S$ sont de dimensions finies sur le centre de son corps d'endomorphismes.

Dans la section~\ref{apa-1}, nous donnons des exemples explicites de foncteurs simples de $\F(\mathbf{P}(L);K)$ (pour $L$ un corps commutatif infini) prenant une seule valeur non nulle de dimension finie. 

Nous montrons également, dans la section~\ref{apa-2}, qu'il existe souvent des foncteurs simples dont toutes les valeurs non nulles sont de dimensions infinies, par exemple dans  $\F(\mathbf{P}(A);K)$, où $A$ est un anneau commutatif de caractéristique nulle. Une description complète de ces foncteurs paraît toutefois hors de portée.

\paragraph*{Remerciements.} 
Le premier auteur remercie Ofer Gabber pour l'entretien qu'il lui a accordé sur des questions arithmétiques reliées à ce travail.

Les auteurs remercient vivement le rapporteur anonyme dont la relecture attentive a permis de substantielles améliorations de la présentation de la première version de cet article. Ils ont bénéficié du soutien partiel de l'Agence Nationale de la Recherche, via le projet ANR ChroK [ANR-16-CE40-0003] et le labex CEMPI [ANR-11-LABX-0007-01]. Ils ne soutiennent pas pour autant le principe de l’ANR, dont ils revendiquent la restitution des moyens aux laboratoires sous forme de crédits récurrents.

\tableofcontents

\paragraph*{Conventions.}
Dans tout l'article, on désigne par :
\begin{itemize}
\item[$\bullet$]$\A$ une catégorie additive essentiellement petite ;
\item[$\bullet$] $\E$ une catégorie de Grothendieck ;
\item[$\bullet$] $K$ un corps commutatif ;
\item[$\bullet$] $K[\M]$ la $K$-algèbre d'un monoïde (ou d'un groupe) $\M$, de sorte que les $K[\M]$-modules s'identifient aux représentations $K$-linéaires de $\M$ ;
\item[$\bullet$] $\mathbf{Ab}$ la catégorie des groupes abéliens et, pour un anneau $A$, $A\Md$ la catégorie des $A$-modules à gauche ;
\item[$\bullet$] $\Si_d$, pour un entier $d\ge 0$, le groupe symétrique des permutations de l'ensemble à $d$ éléments $\{1,\dots,d\}$.
\end{itemize}
Sauf mention explicite du contraire, les produits tensoriels de base non spécifiée sont pris sur $K$. Tous les anneaux sont associatifs et unitaires.

\part{Préliminaires}\label{part-prelim}

\section{Rappels sur les catégories de foncteurs}\label{paragraphe-fonct}

\subsection{Catégories de Grothendieck}\label{subsec-Groth}
La plupart des catégories abéliennes apparaissant dans cet article seront des \emph{catégories de Grothendieck} à l'instar des catégories considérées dans \cite{Gabriel}, c'est-à-dire des catégories abéliennes avec sommes arbitraires, colimites filtrantes exactes et un générateur. Le générateur assure en particulier que les sous-objets d'un objet donné forment un ensemble. La catégorie $A\Md$ des modules à gauche sur un anneau $A$ -- en particulier la catégorie $\mathbf{Ab}$ des groupes abéliens -- est un exemple de catégorie de Grothendieck.

Un objet $s$ d'une catégorie de Grothendieck $\E$ est \emph{simple} s'il est non nul et n'admet aucun sous-objet non trivial. L'anneau des endomorphismes $\mathrm{End}_\E(s)$ est alors un corps (généralement non commutatif). Les classes d'isomorphisme d'objets simples de $\E$ forment un ensemble, qu'on notera $\Irr(\E)$, comme abréviation d'\emph{irréductible}, synonyme courant de \emph{simple} pour les représentations linéaires des groupes. Un objet de $\E$ est dit \emph{semi-simple} s'il est somme directe d'une famille d'objets simples de $\E$. Il est dit \emph{semi-simple isotypique de type $s$} s'il est somme directe de copies d'un objet simple $s$. Les propriétés des objets semi-simples de $\E$ sont analogues à celles des objets semi-simples dans les catégories de modules. Pour plus de détails nous renvoyons à \cite[§\,4]{Bki},
qui traite de catégories de modules, mais dont les démonstrations valent en fait dans toute catégorie de Grothendieck.

Un objet $x$ de $\E$ est \emph{de type fini} si toute famille de sous-objets de $x$, filtrante croissante pour l'inclusion et de réunion $x$, contient $x$. De manière équivalente, le foncteur $\E(x,-)$ commute aux colimites filtrantes de monomorphismes. Un objet de type fini non nul possède toujours un quotient simple. Dualement, un objet $x$ est \emph{de type cofini} si toute famille de sous-objets, filtrante décroissante pour l'inclusion et d'intersection nulle, contient l'objet nul. Un objet de type cofini non nul possède toujours un sous-objet simple.

\subsection{Catégories de foncteurs}

Soient $\C$ et $\D$ deux catégories, avec $\C$ petite. Ici, nous nommons \emph{petite} toute catégorie dont les classes d'isomorphisme d'objets forment un ensemble (on supposera \emph{toujours} que les morphismes entre deux objets donnés d'une catégorie forment un ensemble), c'est-à-dire une catégorie nommée usuellement \emph{essentiellement petite}. On note $\fct(\C,\D)$ la catégorie des foncteurs\, de $\C$ vers $\D$, les morphismes étant les transformations naturelles. On note $\Phi^*:\fct(\C',\D)\to\fct(\C,\D)$ le foncteur de précomposition par un foncteur $\Phi:\C\to\C'$ entre petites catégories. Si $\Phi$ est une équivalence, alors $\Phi^*$ est aussi une équivalence. En prenant pour $\C'$ un squelette de la catégorie $\C$, cela montre que l'abus de terminologie entre \emph{petite} et \emph{essentiellement petite} pour la source est anodin.

Si $\D$ possède des limites (resp. des colimites), il en est de même pour $\fct(\C,\D)$, où elles se calculent au {\em but}. 
En particulier, si $\D$ est abélienne (resp. avec colimites filtrantes exactes), il en est de même pour $\fct(\C,\D)$. 
Pour une catégorie $\D$ avec sommes arbitraires, $M$ un objet de $\D$ et $E$ un ensemble, on note $M[E]$ la somme de copies de $M$ indexées par $E$. Une variation classique autour du lemme de Yoneda donne une bijection:
\[\fct(\C,\D)(M[\C(t,-)],F)\simeq\D(M,F(t))\]
naturelle en les objets $M$, $t$ et $F$ de $\D$, $\C$ et $\fct(\C,\D)$ respectivement. 
Il en résulte que si $M$ est un objet générateur (resp. projectif) de $\D$, alors les foncteurs $M[\C(t,-)]$ engendrent (resp. sont projectifs dans) $\fct(\C,\D)$ lorsque $t$ parcourt un squelette de $\C$. Dualement, si $\D$ possède des produits, on note $M^{E}$ le produit de copies de $M$ indexées par $E$, et on a une bijection naturelle en $M$, $t$ et $F$:
\begin{equation}\label{eq-injy}\fct(\C,\D)(F,M^{\C(-,t)})\simeq\D(F(t),M)\;.
\end{equation}
En particulier, si $M$ est un cogénérateur injectif de $\D$, alors les foncteurs $M^{\C(-,t)}$ forment un ensemble de cogénérateurs injectifs de $\fct(\C,\D)$.
Ces considérations impliquent la proposition classique suivante.
\begin{pr}
Si $\D$ est une catégorie de Grothendieck (resp. une catégorie abélienne avec assez de projectifs, resp. avec assez d'injectifs) alors $\fct(\C,\D)$ est une catégorie de Grothendieck (resp. une catégorie abélienne avec assez de projectifs, resp. avec assez d'injectifs).
\end{pr}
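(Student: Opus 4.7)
The plan is to assemble the proposition from the ingredients already prepared in the paragraph preceding it: the fact that (co)limits in $\fct(\C,\D)$ are computed objectwise, and the two Yoneda-type adjunction isomorphisms
\[\fct(\C,\D)(M[\C(t,-)],F)\simeq\D(M,F(t))\quad\text{and}\quad\fct(\C,\D)(F,M^{\C(-,t)})\simeq\D(F(t),M).\]
Since $\C$ is essentially petite, I fix once and for all a small skeleton, so that $t$ ranges over a set in the arguments below.

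First I would treat the Grothendieck case. Objectwise computation of kernels and cokernels gives that $\fct(\C,\D)$ is abelian as soon as $\D$ is, and objectwise computation of filtered colimits combined with the exactness of filtered colimits in $\D$ shows filtered colimits are exact in $\fct(\C,\D)$. Arbitrary sums exist, again computed objectwise. Finally, let $M$ be a generator of $\D$; the left adjunction isomorphism shows that a natural transformation $\varphi:F\to G$ is zero if and only if all the composites $\D(M,F(t))\to\D(M,G(t))$ are zero for $t$ in the skeleton, if and only if the induced map $\fct(\C,\D)(M[\C(t,-)],F)\to\fct(\C,\D)(M[\C(t,-)],G)$ vanishes for every such $t$. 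Consequently the single object $U=\bigoplus_t M[\C(t,-)]$ is a generator of $\fct(\C,\D)$, which is therefore a Grothendieck category.

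For the projective case I would use the same adjunction. If $M$ is projective in $\D$, then $\D(M,-)$ is exact, and the evaluation functor at $t$ is exact (limits and colimits being objectwise), so $\fct(\C,\D)(M[\C(t,-)],-)\simeq\D(M,(-)(t))$ is exact, showing $M[\C(t,-)]$ is projective. Now take $F\in\fct(\C,\D)$ and for each $t$ in the skeleton choose, using the assumption of enough projectives in $\D$, an epimorphism $M_t\twoheadrightarrow F(t)$ with $M_t$ projective. The adjunction produces maps $M_t[\C(t,-)]\to F$ whose value at $t$ factors the chosen epimorphism $M_t\to F(t)$; assembling them gives a morphism
\[\bigoplus_t M_t[\C(t,-)]\longrightarrow F\]
from a projective object, and since epimorphisms in $\fct(\C,\D)$ are detected objectwise and the composite at each $t$ is surjective, this morphism is an epimorphism. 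Thus $\fct(\C,\D)$ has enough projectives.

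The injective case is strictly dual and uses the second isomorphism: if $M$ is an injective cogenerator of $\D$ then $M^{\C(-,t)}$ is injective in $\fct(\C,\D)$, and for any $F$ one chooses monomorphisms $F(t)\hookrightarrow M_t$ into injectives of $\D$ and assembles the adjoint maps into a monomorphism $F\hookrightarrow\prod_t M_t^{\C(-,t)}$. The only mildly delicate point in the whole argument, and the one I would be most careful about, is verifying that the assembled morphism is indeed an epi (resp. mono), but this is immediate because epimorphisms and monomorphisms in $\fct(\C,\D)$ are tested objectwise and at each $t$ one sees the chosen epimorphism $M_t\twoheadrightarrow F(t)$ (resp. monomorphism $F(t)\hookrightarrow M_t$) as a retract/factor of the evaluation.
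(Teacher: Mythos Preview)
Your proof is correct and is precisely the argument the paper has in mind: the proposition is stated without proof, introduced only by ``Ces considérations impliquent la proposition classique suivante'', where the ``considérations'' are exactly the objectwise computation of (co)limits and the two Yoneda-type adjunctions you invoke. You have simply written out in full what the paper leaves implicit.
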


La notion suivante, dont on pourra trouver une présentation plus conceptuelle en termes d'extensions de Kan et quelques propriétés classiques dans \cite[§\,2.3]{Dja-FM}, par exemple, interviendra de façon importante dans plusieurs parties de cet article.

\begin{defi}\label{def-supp} Soient $\C$ et $\D$ des catégories ; on suppose $\C$ petite. Soient $S$ un ensemble d'objets de $\C$ et $F$ un foncteur de $\fct(\C,\D)$.
\begin{enumerate}
\item Si $\D$ est une catégorie cocomplète, on dit que $S$ est un \emph{support} de $F$ si ce foncteur est isomorphe à un quotient d'une somme de foncteurs du type $M[\C(s,-)]$, où $M$ est un objet de $\D$ et $s$ un élément de $S$.
\item  Dualement, si $\D$ est une catégorie complète, on dit que $S$ est un \emph{co-support} de $F$ si $F$ est isomorphe à un sous-foncteur d'un produit de foncteurs du type $M^{\C(-,s)}$, où $M$ est un objet de $\D$ et $s$ un élément de $S$.
\end{enumerate}
\end{defi}

Si $k$ est un anneau, on note $\F(\C;k)$ la catégorie $\fct(\C,k\Md)$. C'est une catégorie de Grothendieck dont un générateur projectif est donné par la somme des foncteurs $P^t_\C(-)=k[\C(t,-)]$ ($t$ parcourant un squelette de $\C$), qui représentent l'évaluation en $t$, et qu'on appelle \emph{projectifs standard}. Un foncteur est de type fini si et seulement s'il est quotient d'une somme finie de projectifs standard ; un tel foncteur est donc en particulier à support fini. 

Pour une catégorie source $\A$ additive et petite, et un anneau commutatif $k$, on dispose dans $\F(\A;k)$ d'isomorphismes naturels
\begin{equation}\label{eq-ptproj}
P^x_\A\otimes_k P^y_\A\simeq P^{x\oplus y}_\A
\end{equation}
dont on déduit :
\begin{pr}\label{pr-pttf} Si $\A$ est une petite catégorie additive et $k$ un anneau commutatif, le produit tensoriel sur $k$ de deux foncteurs de type fini de $\F(\A;k)$ est de type fini.
\end{pr}

Si $A$ est un anneau, on note $\mathbf{P}(A)$ la petite sous-catégorie pleine de $A\Md$ des $A$-modules projectifs de type fini. La catégorie $\F(\mathbf{P}(A);k)$ sera notée $\F(A,k)$.

Si $k=K$ est un corps, on note $\F^{\df}(\C;K)$ (resp. $\F^{\df}(A,K)$) la sous-catégorie pleine de $\F(\C;K)$ (resp. $\F(A,K)$) des foncteurs prenant des valeurs de dimensions finies sur $K$. En général, la structure de la catégorie $\F(\C;K)$ peut être très différente de la structure de sa sous-catégorie $\F^{\df}(\C;K)$. La situation est toutefois considérablement simplifiée lorsque la catégorie source satisfait de fortes propriétés de finitude, comme le montre la proposition suivante.

\begin{pr}\label{pr-dim-finie}
Supposons que pour tous les objets $x$ et $y$ de $\C$, l'ensemble $\C(x,y)$ est de cardinal fini. Alors les foncteurs de type fini (en particulier les foncteurs simples) de $\F(\C;K)$ sont à valeurs de dimensions finies.
\end{pr}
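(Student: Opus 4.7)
The plan is to reduce both claims to a single elementary observation: evaluated at any object $y$ of $\C$, each standard projective $P^t_\C = K[\C(t,-)]$ takes the value $K[\C(t,y)]$, which is a $K$-vector space of dimension $\#\C(t,y)$, hence finite-dimensional by hypothesis.

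First I would handle finite type functors directly. By definition, a finite type functor $F$ of $\F(\C;K)$ is a quotient of some finite direct sum $P^{t_1}_\C\oplus\cdots\oplus P^{t_n}_\C$ of standard projectives. Evaluating at $y$ gives a surjection
\[K[\C(t_1,y)]\oplus\cdots\oplus K[\C(t_n,y)]\twoheadrightarrow F(y)\,,\]
whose source is a finite-dimensional $K$-vector space by the observation above. Hence $F(y)$ is finite-dimensional for every $y$.

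Next, I would show that every simple functor $S$ of $\F(\C;K)$ is of finite type, which then implies the statement about simple functors. Since $S$ is non-zero, pick an object $t$ with $S(t)\neq 0$ and a non-zero element $v\in S(t)$. By the Yoneda-type bijection recalled before Definition~\ref{def-supp} (with $\D=K\Md$ and $M=K$), the element $v$ corresponds to a non-zero natural transformation $P^t_\C\to S$. Its image is a non-zero subfunctor of $S$, so by simplicity it equals $S$. Thus $S$ is a quotient of a single standard projective, hence of finite type, and the previous paragraph applies.

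There is no real obstacle here; the proposition is essentially a direct combination of the definition of finite type, the explicit form of the standard projectives, and the finiteness of Hom-sets in $\C$. The only minor point worth stating carefully is the Yoneda argument producing a standard projective cover of any simple functor, which justifies that simple implies finite type in this abelian category.
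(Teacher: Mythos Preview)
Your proof is correct and follows the same approach as the paper: finite type functors are quotients of finite sums of standard projectives $P^t_\C$, and the hypothesis makes these projectives finite-dimensional at every object. The paper's proof is a two-line version of exactly this; the only difference is that the paper treats ``simple $\Rightarrow$ finite type'' as already known (cf.\ the parenthetical in the statement and the generalities in \S\ref{subsec-Groth}), whereas you spell it out via the Yoneda argument.
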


\begin{proof}
Si $F$ est de type fini, il est quotient d'une somme directe finie de projectifs standard $P^x_\C$. La propriété de finitude de $\C$ implique que ces projectifs standard sont à valeurs de dimensions finies, donc $F$ l'est également.
\end{proof}

\subsection{Prolongements intermédiaires}\label{par-prol-interm}

Le point de départ des rappels de ce paragraphe est la propriété classique suivante, démontrée dans \cite[prop.~3.4.1 et~3.4.2]{Borc}.

\begin{pr}\label{prreflex}
 Soient $\C$ et $\C'$ deux catégories et $F : \C\to\C'$ un foncteur. On suppose que $F$ possède un adjoint à gauche $G$ et un adjoint à droite $H$.
 
 Les assertions suivantes sont alors équivalentes :
 \begin{enumerate}
  \item l'unité ${\rm Id}_{\C'}\to FG$ est un isomorphisme ;
  \item la coünité $FH\to {\rm Id}_{\C'}$ est un isomorphisme.
 \end{enumerate}
 \end{pr}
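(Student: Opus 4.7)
The plan is to reduce both conditions to a single ``fully faithful'' property, and then prove the equivalence by a Yoneda chase through the two adjunctions.

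First I would recall the standard fact that in any adjunction $L\dashv R$, the unit is an isomorphism if and only if the left adjoint $L$ is fully faithful, while the counit is an isomorphism if and only if the right adjoint $R$ is fully faithful. Applied to $G\dashv F$, condition (1) says exactly that $G$ is fully faithful; applied to $F\dashv H$, condition (2) says exactly that $H$ is fully faithful. So it is enough to prove: $G$ fully faithful $\iff$ $H$ fully faithful.

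For the forward direction, I would fix $a\in\C'$ and aim to show by Yoneda that the natural transformation $\varepsilon_a\colon FHa\to a$ is an isomorphism, that is, that postcomposition by $\varepsilon_a$ induces a bijection $\C'(a',FHa)\to\C'(a',a)$ for every $a'\in\C'$. Using the two adjunctions, I would factor this map through the natural bijections
\[
\C'(a',FHa)\;\cong\;\C(Ga',Ha)\;\cong\;\C'(FGa',a)\;\xrightarrow{\;-\circ\eta_{a'}\;}\;\C'(a',a).
\]
The first bijection is the mate correspondence for $G\dashv F$, which sends $f\colon a'\to FHa$ to the unique $g\colon Ga'\to Ha$ with $Fg\circ\eta_{a'}=f$; the second is the mate correspondence for $F\dashv H$, which sends $g$ to $\varepsilon_a\circ Fg$. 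A direct computation then shows that the full composite sends $f$ to $\varepsilon_a\circ Fg\circ\eta_{a'}=\varepsilon_a\circ f$, i.e.\ to postcomposition by $\varepsilon_a$. Since the first two arrows are bijections unconditionally and the last is a bijection precisely because $\eta_{a'}$ is an isomorphism by hypothesis, the composite is a bijection, as required.

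The converse direction is dual: applying the same argument to the opposite categories (which swaps $G$ with $H$ and interchanges units and counits) yields that condition (2) implies condition (1). The only real bookkeeping step—and the place where one has to be slightly careful—is the identification of the composite mate correspondence with post\-composition by $\varepsilon_a$; I do not anticipate any genuine obstacle beyond that.
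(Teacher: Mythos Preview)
Your argument is correct. The paper does not actually give a proof of this statement but simply refers to Borceux \cite[prop.~3.4.1 et~3.4.2]{Borc}; your Yoneda chase through the two adjunctions is exactly the standard direct argument one finds there, so there is nothing to add.
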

 
La terminologie ci-dessous s'inspire de \cite[déf.~3.5.2]{Borc}.
 
 \begin{defi}\label{lm-2adj} On dit qu'un foncteur est \emph{réflexif} s'il possède des adjoints à droite et à gauche et vérifie les conditions équivalentes de la proposition~\ref{prreflex}.
 \end{defi}

 La définition suivante provient de la théorie des recollements, qui est  sous-jacente à ce paragraphe et à d'autres considérations de l'article, mais nous n'aurons explicitement besoin que de la notion de {\em prolongement intermédiaire}. Les recollements et prolongements intermédiaires ont d'abord été introduits dans le cadre des catégories triangulées dans \cite{BBD}. Pour le contexte abélien, notamment de catégories de foncteurs, nous nous référons à \cite{Ku2}.

\begin{defi}\label{df-prolint}
 Soient $\D$, $\D'$ des catégories abéliennes et $F : \D\to\D'$ un foncteur réflexif. On note $G$ et $H$ les adjoints à gauche et à droite respectivement de $F$. On appelle {\em prolongement intermédiaire} associé à $F$  le foncteur, noté $T_F : \D'\to\D$, image de la transformation naturelle $G\to H$ dont l'adjointe ${\rm Id}_{\D'}\to FH$ est l'inverse de la coünité.
\end{defi}

Pour les propriétés élémentaires suivantes, on renvoie par exemple à \cite[§\,4]{Ku2}.

\begin{pr}\label{pr-prolongement-interm}
 Soit $F : \D\to\D'$ un foncteur réflexif entre catégories abéliennes.
 \begin{enumerate}
 \item L'image par le foncteur $F$ d'un objet simple $s$ de $\D$ est soit nulle, soit simple dans $\D'$. Dans ce dernier cas, $s$ est isomorphe à $T_F F(s)$.
  \item L'endofoncteur $FT_F$ de $\D'$ est isomorphe à l'identité.
  \item Le foncteur $T_F$ est pleinement fidèle. Il est additif et préserve les monomorphismes et les épimorphismes.
  \item Le foncteur $T_F$ envoie un objet simple de $\D'$ sur un objet simple de $\D$. De plus, deux objets simples non isomorphes ont des images par $T$ non isomorphes.
 \end{enumerate}
\end{pr}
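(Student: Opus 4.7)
Ma stratégie est de démontrer les assertions dans l'ordre (2), (3), (4), (1), car (2) fournit l'outil central $FT\simeq\mathrm{Id}_{\D'}$ dont les autres découlent, et (1) est le point le plus délicat. L'observation préliminaire essentielle est que $F$ est exact, ayant des adjoints des deux côtés. Pour (2), je décrirai la transformation $\theta:G\to H$ dont $T$ est l'image : son adjointe sous $G\dashv F$ est par définition l'isomorphisme $\mathrm{Id}_{\D'}\to FH$ inverse de la coünité de $F\dashv H$. Un calcul via les identités triangulaires des deux adjonctions montre qu'à travers les isomorphismes canoniques $FG\simeq\mathrm{Id}_{\D'}$ et $FH\simeq\mathrm{Id}_{\D'}$, $F\theta$ s'identifie à l'identité ; c'est en particulier un isomorphisme. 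Par exactitude de $F$ (qui préserve donc les images), on obtient $FT\simeq\mathrm{im}(F\theta)\simeq\mathrm{Id}_{\D'}$.

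Pour (3), l'additivité de $T$ est formelle (image de transformation naturelle entre foncteurs additifs). La pleine fidélité résulte de la chaîne $\mathrm{Hom}_\D(TX,TY)\hookrightarrow\mathrm{Hom}_\D(TX,HY)\simeq\mathrm{Hom}_{\D'}(FTX,Y)\simeq\mathrm{Hom}_{\D'}(X,Y)$ (adjonction $F\dashv H$ combinée à (2)), la surjectivité étant assurée par $f\mapsto Tf$. Pour la préservation des mono et épi, j'utiliserai la propriété-clé que $TX$ n'admet aucun sous-objet, ni quotient, non nul dans $\mathcal{S}:=\ker(F)$ : cela découle de $TX\hookrightarrow HX$ et de l'annulation $\mathrm{Hom}(N,HX)\simeq\mathrm{Hom}(FN,X)=0$ pour $N\in\mathcal{S}$, et dualement pour les quotients via $GX\twoheadrightarrow TX$ et l'adjonction $G\dashv F$. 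Si $f:X\to Y$ est alors mono, $\ker(Tf)$ est un sous-objet de $TX$ dans $\mathcal{S}$ (car $F(Tf)\simeq f$ est mono), donc nul ; l'argument pour les épi est dual.

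Pour (4), soit $X$ simple dans $\D'$ et $N\hookrightarrow TX$ non nul : $FN\neq 0$ (sinon $N$ serait un sous-objet non nul de $TX$ dans $\mathcal{S}$, impossible), donc $FN=X$ par simplicité de $X$ ; alors $F(TX/N)=0$, donc $TX/N\in\mathcal{S}$, donc nul ; d'où $N=TX$ et $TX$ est simple. La préservation de la non-isomorphie est immédiate via $FT\simeq\mathrm{Id}_{\D'}$, qui permet de reconstituer $X$ à partir de $TX$.

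Enfin, pour (1), un calcul d'adjoints sous $G\dashv F$ fournit la factorisation cruciale $\theta_{F(s)}=\eta^H_s\circ\epsilon^G_s$, où $\epsilon^G$ est la coünité de $G\dashv F$ et $\eta^H$ l'unité de $F\dashv H$ ; cette identité s'obtient en observant que les deux morphismes ont la même adjointe $F(s)\to FHF(s)$. Les identités triangulaires montrent que $F(\epsilon^G_s)$ et $F(\eta^H_s)$ sont des isomorphismes, donc ces morphismes sont non nuls, donc respectivement épi et mono par simplicité de $s$. Il en résulte $TF(s)=\mathrm{im}(\eta^H_s\circ\epsilon^G_s)=\mathrm{im}(\eta^H_s)\simeq s$. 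La simplicité de $F(s)$ quand $F(s)\neq 0$ s'en déduit : un sous-objet non nul $Y\hookrightarrow F(s)$ fournit, par (3) et (2), un sous-objet non nul $TY\hookrightarrow TF(s)\simeq s$, nécessairement égal à $s$ par simplicité, d'où $Y\simeq FTY\simeq F(s)$. L'obstacle principal réside donc dans le dévissage soigneux de la définition abstraite de $\theta$ pour aboutir à la factorisation concrète $\eta^H_s\circ\epsilon^G_s$, le reste relevant du formalisme standard des recollements.
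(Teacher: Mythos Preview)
Your proof is correct and self-contained. The paper does not actually give a proof of this proposition: it simply states ``Pour les propri\'et\'es \'el\'ementaires suivantes, on renvoie par exemple \`a \cite[\S 4]{Ku2}'' and moves on. Your argument therefore fills in what the paper delegates to the literature, following the standard line of reasoning for recollements (the key facts being $FT\simeq\mathrm{Id}_{\D'}$, the absence of nonzero subobjects or quotients of $TX$ killed by $F$, and the factorisation $\theta_{F(s)}=\eta^H_s\circ\epsilon^G_s$).

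One minor point of phrasing in (1): when you conclude ``$Y\simeq FTY\simeq F(s)$'', what is actually needed for simplicity of $F(s)$ is that the \emph{inclusion} $Y\hookrightarrow F(s)$ is an isomorphism, not merely that $Y$ and $F(s)$ are abstractly isomorphic. This does follow immediately from the \emph{naturality} of the isomorphism $FT\simeq\mathrm{Id}_{\D'}$ applied to the morphism $Y\hookrightarrow F(s)$ (since $T$ of this inclusion is an isomorphism, so is $FT$ of it, hence so is the inclusion itself). You clearly have this in mind, but making the naturality explicit would tighten the write-up.
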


L'une des classes de prolongements intermédiaires les plus importantes dans la théorie des catégories de foncteurs est celle provenant de la proposition suivante, qui constitue un cas particulier de \cite[proposition~3.7.3]{Borc}.

\begin{pr}\label{restr-fct-refl}
 Soient $\C$ une petite catégorie et $\D$ une sous-catégorie pleine de $\C$. Le foncteur de restriction $\fct(\C,\E)\to\fct(\D,\E)$ est réflexif.
\end{pr}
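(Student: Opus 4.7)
Le plan est de construire explicitement des adjoints à gauche et à droite du foncteur de restriction $i^* : \fct(\C,\E)\to\fct(\D,\E)$, où $i : \D\hookrightarrow \C$ désigne l'inclusion, puis d'exploiter la pleine fidélité de $i$ pour vérifier que l'une des deux conditions équivalentes de la proposition~\ref{prreflex} est satisfaite.

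Je commencerais par rappeler que $\E$, étant une catégorie de Grothendieck, possède des limites et des colimites arbitraires. Les formules ponctuelles classiques d'extensions de Kan fournissent alors, pour tout foncteur $F : \D\to\E$ et tout objet $c$ de $\C$,
\[(i_!F)(c) = \mathop{\mathrm{colim}}_{(d,u)\in i/c} F(d) \qquad\text{et}\qquad (i_*F)(c) = \lim_{(d,u)\in c/i} F(d),\]
définissant deux foncteurs $i_!, i_* : \fct(\D,\E)\to\fct(\C,\E)$. Les adjonctions $i_!\dashv i^*\dashv i_*$ découlent alors des propriétés universelles standard des extensions de Kan, les catégories comma $i/c$ et $c/i$ étant petites puisque $\C$ l'est. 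La fonctorialité en $c$ de ces formules, ainsi que la naturalité des adjonctions, sont, elles aussi, classiques.

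Il reste à vérifier que l'unité $\mathrm{Id}\to i^*i_!$ est un isomorphisme. Pour tout objet $d'$ de $\D$, on a
\[(i^*i_!F)(d') = \mathop{\mathrm{colim}}_{(d,u : i(d)\to d')\in i/d'} F(d),\]
et comme $i$ est pleinement fidèle, la catégorie comma $i/d'$ s'identifie canoniquement à $\D/d'$ et admet $(d', \mathrm{id}_{d'})$ pour objet final. La colimite se réduit donc à $F(d')$, ce qui fournit l'isomorphisme souhaité. Un argument dual, basé sur l'objet initial de $d'/i$, montrerait directement que la coünité $i^*i_*\to\mathrm{Id}$ est un isomorphisme.

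Il n'y a pas d'obstacle réellement délicat : toutes les étapes reposent sur la théorie standard des extensions de Kan ponctuelles, et l'argument-clé (réduction de la colimite à $F(d')$ grâce à l'objet final fourni par la pleine fidélité) est une conséquence immédiate de l'hypothèse $\D\subset\C$ pleine.
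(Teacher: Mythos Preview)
Votre démonstration est correcte et constitue essentiellement une explicitation de l'argument standard par extensions de Kan ponctuelles, qui est précisément le contenu de la référence \cite[proposition~3.7.3]{Borc} invoquée par l'article (celui-ci ne donne pas de démonstration et se contente de ce renvoi). L'argument-clé -- la pleine fidélité de $i$ entraîne que $i/d'$ admet un objet terminal, donc que l'unité $\mathrm{Id}\to i^*i_!$ est inversible -- est exactement celui qu'on attend.
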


En considérant le cas où $\D$ est une catégorie à un objet, la proposition~\ref{pr-prolongement-interm} permet de ramener l'étude des objets simples de $\F(\C;k)$ à celle des représentations des monoïdes d'endomorphismes de $\C$ :

\begin{cor}\label{simples-eval}
 Soient $\C$ une petite catégorie, $t$ un objet de $\C$ et $k$ un anneau. Alors le foncteur $\F(\C;k)\to k[{\rm End}_\C(t)]\Md$ d'évaluation en $t$ envoie un objet simple $F$ sur un objet nul ou simple, qui a alors le même corps d'endomorphismes que $F$.
 
 De plus, le prolongement intermédiaire associé prend ses valeurs dans les foncteurs de $\F(\C;k)$ dont $\{t\}$ est un support et un co-support.
\end{cor}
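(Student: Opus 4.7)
The plan is to apply Proposition~\ref{restr-fct-refl} with $\D$ the full subcategory of $\C$ on the single object $t$. Under the obvious identification $\fct(\D,k\Md)\simeq k[\mathrm{End}_\C(t)]\Md$, evaluation at $t$ coincides with the restriction functor along $\D\hookrightarrow\C$ and is therefore reflexive in the sense of Definition~\ref{lm-2adj}. The two claims will then follow from Proposition~\ref{pr-prolongement-interm}, combined with explicit formulas for the two adjoints coming from the Yoneda-type bijections recalled above.

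For the first assertion, part~(1) of Proposition~\ref{pr-prolongement-interm} gives that when $F$ is simple, its evaluation $F(t)$ is either zero or simple, and in the non-zero case there is an isomorphism $F\simeq T(F(t))$, where $T$ denotes the intermediate extension associated with evaluation at $t$. Full faithfulness of $T$, asserted in part~(3) of the same proposition, then yields a ring isomorphism $\mathrm{End}_{\F(\C;k)}(F)\simeq\mathrm{End}_{k[\mathrm{End}_\C(t)]\Md}(F(t))$, which entails that the two endomorphism fields coincide.

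For the second assertion, the main point is to identify the left and right adjoints to evaluation at $t$ explicitly. Starting from the Yoneda bijections recalled in the subsection on functor categories, the left adjoint $G$ sends a $k[\mathrm{End}_\C(t)]$-module $M$ to the functor $s\mapsto k[\C(t,s)]\otimes_{k[\mathrm{End}_\C(t)]}M$, which is visibly a quotient of the standard object $M[\C(t,-)]$; dually, the right adjoint $H$ sends $M$ to $s\mapsto\mathrm{Hom}_{k[\mathrm{End}_\C(t)]}(k[\C(s,t)],M)$, which is a sub-functor of $M^{\C(-,t)}$. Since $T(M)$ is by Definition~\ref{df-prolint} the image of the canonical natural transformation $G(M)\to H(M)$, it is simultaneously a quotient of a functor with support $\{t\}$ and a sub-functor of a functor with co-support $\{t\}$, which is exactly the statement that $\{t\}$ is at the same time a support and a co-support of $T(M)$.

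The only non-formal step is the explicit identification of $G$ and $H$ as Kan extensions along $\D\hookrightarrow\C$, but this is a routine consequence of the Yoneda variations already recorded in this section, so I do not expect any substantial obstacle in carrying out the argument.
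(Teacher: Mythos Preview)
Your argument is correct and matches the paper's own proof, which simply invokes Propositions~\ref{restr-fct-refl} and~\ref{pr-prolongement-interm} for the first assertion and the explicit form of the adjoints for the second. You have merely spelled out the details that the paper leaves implicit.
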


\begin{proof} La première assertion découle des propositions~\ref{pr-prolongement-interm} et~\ref{restr-fct-refl}.

Le prolongement intermédiaire est un quotient de l'adjoint à gauche à l'évaluation, ce qui implique qu'il prend ses valeurs dans les foncteurs dont $\{t\}$ est un support. Comme c'est un sous-objet de l'adjoint à droite à l'évaluation, il prend ses valeurs dans les foncteurs dont $\{t\}$ est un co-support.
\end{proof}

Lorsque les ensembles de morphismes sont finis dans $\C$, on peut améliorer l'énoncé précédent et ramener l'étude des objets simples de $\F(\C;k)$ à celle de représentations simples de \emph{groupes} finis d'automorphismes de $\C$. Pour ce faire, nous commençons par quelques rappels élémentaires sur les représentations des monoïdes -- lesquels seront toujours supposés \emph{unitaires} dans cet article.

\begin{nota}\label{nota-monoides} Soient $\M$ un monoïde et $k$ un anneau.
\begin{enumerate}
\item On note $\M^\times$ le groupe des éléments inversibles de $\M$, et  $\iota: k[\M^\times]\to k[\M]$ le morphisme d'anneaux induit par l'inclusion de $\M^\times$ dans $\M$.
\item On définit deux applications $k$-linéaires:
$$r: k[\M]\to k[\M^\times]\;,\qquad \delta:k[\M]\to k$$
nulles sur les éléments non inversibles de $\M$, et respectivement égales à l'identité et à $1$ sur $\M^\times$. Ainsi $r$ est une rétraction de $\iota$.
\end{enumerate}
\end{nota}

\begin{lm}\label{lm-mon-eltr}
Les trois conditions suivantes sont équivalentes :
\begin{enumerate}
\item[(i)] le produit de deux éléments de $\M$ est inversible si et seulement si les deux éléments sont inversibles,
\item[(ii)] l'application $r$ est un morphisme d'anneaux,
\item[(iii)] l'application $\delta$ est un morphisme d'anneaux.
\end{enumerate}
Lorsque ces conditions sont satisfaites, $\delta$ fait de $k$ un $k[\M]$-module encore noté $\delta$.
\end{lm}

\begin{ex}\label{ex-morannf}
Si $\M$ est un monoïde fini, alors (i), donc (ii) et (iii), sont vérifiées.
\end{ex}

\begin{lm}\label{lm-rep-mono-grp}
Supposons les conditions équivalentes du lemme~\ref{lm-mon-eltr} vérifiées. 
\begin{enumerate}
\item La restriction le long de $r$ induit un foncteur pleinement fidèle
$$r^*:k[\M^\times]\Md\to k[\M]\Md$$
De plus $r^*$ préserve les simples et induit une injection $\Irr(k[\M^\times])\hookrightarrow\Irr(k[\M])$ entre les classes d'isomorphisme de modules simples.
\item La restriction le long de $\iota$ induit un foncteur essentiellement surjectif  
$$\iota^*:k[\M]\Md\to k[\M^\times]\Md\;.$$
De plus $\iota^*$ préserve les simples si et seulement si $-\otimes\delta$ préserve les $k[\M]$-modules simples. Si c'est le cas, alors $\iota^*$ induit une bijection entre l'ensemble des classes d'isomorphisme des modules simples du type $M\otimes \delta$ et $\Irr(k[\M^\times])$.
\end{enumerate}
\end{lm}
\begin{proof}
L'énoncé est une conséquence directe des identités $\iota^*r^*M=M$ et $r^*\iota^*M=M\otimes\delta$.
\end{proof}

\begin{pr}\label{simples-sfin} Soient $k$ un anneau et $\C$ une petite catégorie dont les idempotents se scindent, et telle que $\C(x,x)$ soit un ensemble fini pour tout objet $x$ de $\C$. On note $\mathrm{Iso}(\C)$ l'ensemble des classes d'isomorphismes d'objets de $\C$. On a une bijection
\[\Phi:\bigsqcup_{[a]\in\mathrm{Iso}(\C)}\Irr(k[\mathrm{Aut}_\C(a)]\Md)\xrightarrow[]{\simeq}\Irr(\F(\C;k))\;.\]
De plus, $\Phi$ préserve les corps d'endomorphismes des simples.
\end{pr}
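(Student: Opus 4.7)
The plan is to define $\Phi$ via intermediate extensions and then verify bijectivity in two main steps. Throughout, write $G_a = \mathrm{Aut}_\C(a)$, $M_a = \mathrm{End}_\C(a)$, and $I_a = M_a\setminus G_a$. Since $M_a$ is finite by hypothesis, Example~\ref{ex-morannf} and Lemma~\ref{lm-mon-eltr} yield a ring morphism $r : k[M_a]\to k[G_a]$ retracting the inclusion, and by Corollary~\ref{simples-eval} the evaluation functor $\F(\C;k)\to k[M_a]\Md$ is reflexive with associated intermediate extension $T_a$. Given $S\in\Irr(k[G_a]\Md)$, I set $\Phi([a],S) = T_a(r^*S)$. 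Lemma~\ref{lm-rep-mono-grp}(i) and Proposition~\ref{pr-prolongement-interm}(iii) and (iv) yield that $r^*S$ is simple, that $T_a(r^*S)$ is simple in $\F(\C;k)$, and that both $r^*$ and $T_a$ are fully faithful, so $\Phi$ preserves endomorphism fields. The construction is natural in the choice of representative of $[a]$, and its restriction to a fixed class $[a]$ is injective as a composition of injections on isomorphism classes.

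The main technical step is surjectivity: given a simple $F\in\F(\C;k)$, I seek an object $a$ with $F(a)\ne 0$ and $k[I_a]\cdot F(a) = 0$. Once found, Lemma~\ref{lm-rep-mono-grp}(i) identifies $F(a)$ with $r^*S$ for a unique $S\in\Irr(k[G_a]\Md)$, and Proposition~\ref{pr-prolongement-interm}(ii) yields $F\cong T_a(F(a)) = \Phi([a],S)$. To produce such an $a$, I would minimize $|M_a|$ among $\{c : F(c)\ne 0\}$. If the conclusion failed, simplicity of $F(a)$ as a $k[M_a]$-module (Corollary~\ref{simples-eval}) would force $k[I_a]\cdot F(a) = F(a)$, and the key is to exhibit an idempotent $e\in I_a$ with $F(e)\ne 0$: splitting $e = j i$ in $\C$ (with $i : a\to b$, $j : b\to a$, $ij = \mathrm{id}_b$) would then yield an object $b$ with $F(b)\ne 0$ and $|M_b| = |eM_ae| < |M_a|$ (since $\mathrm{id}_a\notin eM_ae$), contradicting minimality. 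To exhibit such an $e$, I argue by contradiction: if $F$ vanished on every idempotent of $I_a$, then for each $f\in I_a$ some power $f^N$ would be an idempotent in $I_a$, so $F(f)^N = F(f^N) = 0$. Since $F(a)$ is finite-dimensional over $k$ (being cyclic over the finite-dimensional algebra $k[M_a]$), the set $F(I_a)\subset\mathrm{End}_k(F(a))$ would be a multiplicative semigroup of nilpotent endomorphisms, and Levitzki's theorem on simultaneous triangularization would give $F(I_a)^d = 0$ for $d = \dim_k F(a)$, whence $(k[I_a])^d\cdot F(a) = 0$, contradicting the iterated equality $(k[I_a])^n\cdot F(a) = F(a)$ valid for all $n$. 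This Levitzki-type step is the principal technical obstacle.

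For the uniqueness of $[a]$, suppose $F = \Phi([a],S) = \Phi([a'],S')$; I must show $a\cong a'$. The natural surjections $L_a(r^*S)\twoheadrightarrow F$ and $L_{a'}(r^*S')\twoheadrightarrow F$ (from Definition~\ref{df-prolint}), evaluated at $a'$ and $a$ respectively, compose to a surjective map
\[\theta : k\C(a',a)\otimes_{k[M_{a'}]} k\C(a,a')\otimes_{k[M_a]} r^*S\twoheadrightarrow r^*S\,,\]
sending $\psi\otimes\phi\otimes v$ to $F(\psi\circ\phi)(v)$; this factors through the composition map $k\C(a',a)\otimes_{k[M_{a'}]} k\C(a,a')\to k[M_a]$. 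If the image of this composition map lay in $k[I_a]$, then $\theta$ would factor through $k[I_a]\otimes_{k[M_a]} r^*S = 0$, contradicting $\theta\ne 0$. Hence some composition $\psi\circ\phi$ lies in $G_a$, making $\phi : a\to a'$ a split monomorphism with retraction $\rho = (\psi\phi)^{-1}\psi$. Consider the idempotent $e = \phi\rho\in M_{a'}$: if $e = \mathrm{id}_{a'}$, then $\phi$ is an isomorphism and $a\cong a'$. Otherwise $e\in I_{a'}$, giving $F(e) = 0$; but $F(\rho)F(\phi) = \mathrm{id}_{F(a)}\ne 0$ forces $F(\rho)$ to be surjective onto $F(a)$, and combined with $F(\phi)F(\rho) = F(e) = 0$ this forces $F(\phi) = 0$, contradicting $F(\rho)F(\phi) = \mathrm{id}_{F(a)}$. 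Hence $a\cong a'$, and $\Phi$ is the desired bijection.
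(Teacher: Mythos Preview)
Your argument is essentially correct and more self-contained than the paper's. For surjectivity the paper simply invokes the Clifford--Munn--Ponizovski theory of simple modules over finite monoid algebras \cite[thm~5.5]{BSt} to produce an idempotent $e\in\mathrm{End}_\C(x)$ such that $eS(x)\ne 0$ and the non-invertible elements of $e\,\mathrm{End}_\C(x)\,e$ act by~$0$; for disjointness it observes from the explicit formula for $T_a$ that $T_a(r_a^*M)(x)\ne 0$ forces $a$ to be a direct summand of $x$, so that $a\mid a'$ and $a'\mid a$, whence $a\cong a'$ by finiteness of the endomorphism monoids. Your minimality-and-splitting argument and your computation with $\theta$ recover these conclusions by hand, which is a genuine gain in transparency.

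There is, however, one gap. Your Levitzki step presumes $k$ is a field: the phrases ``finite-dimensional over $k$'' and ``$d=\dim_k F(a)$'' have no meaning for a general ring $k$, and the simultaneous-triangularization form of Levitzki's theorem is a statement about vector spaces. Fortunately the conclusion you need follows from something much simpler that does not involve $F(a)$ at all. The image $F(I_a)$ is a \emph{finite} semigroup (as $I_a$ is finite) in which every element is nilpotent, hence whose only idempotent is $0$; such a semigroup is automatically nilpotent of index at most $|F(I_a)|$. Indeed, if $t_1\cdots t_n\ne 0$ with $n=|F(I_a)|$, the prefix products $p_i=t_1\cdots t_i$ are $n$ nonzero elements of the $(n{-}1)$-element set $F(I_a)\setminus\{0\}$, so $p_i=p_j$ for some $i<j$; setting $u=t_{i+1}\cdots t_j$ gives $p_i=p_iu=p_iu^2=\cdots=0$ since $u$ is nilpotent, a contradiction. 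With this replacement for the Levitzki appeal, your proof goes through for an arbitrary ring $k$.
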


\begin{proof}
Pour tout objet $a$ de $\C$, soit $r_a:k[\mathrm{End}_\C(a)]\to k[\mathrm{Aut}_\C(a)]$ le morphisme d'anneaux qui est l'identité sur les éléments inversibles de $\mathrm{End}_\C(a)$ et nul sur les éléments non inversibles (cf. supra). Notons $T_a :k[\mathrm{End}_\C(a)]\Md\to\F(\C;k)$ le prolongement intermédiaire associé à l'évaluation d'un foncteur sur $a$. 
D'après le lemme \ref{lm-rep-mono-grp} et le corollaire \ref{simples-eval}, le foncteur
$\Phi_a=T_a\circ r_a^*$ est pleinement fidèle et induit une injection (toujours notée $\Phi_a$):
$$\Phi_a: \Irr(k[\mathrm{Aut}_\C(a)]\Md)\to\Irr(\F(\C;k))\qquad [M]\to [T_a(r_a^*M)] \;.$$
Pour démontrer la proposition, il reste à établir que tout élément de $\Irr(\F(\C;k))$ est dans l'image de $\Phi_a$ pour un élément $a$ unique à isomorphisme près. 

Pour voir que tout élément $[S]$ de $\Irr(\F(\C;k))$ est dans l'image d'un $\Phi_a$, prenons un objet $x$ de $\C$ tel que $S(x)\ne 0$. Par le corollaire~\ref{simples-eval},  $S(x)$ est un $k[\mathrm{End}_\C(x)]$-module simple. Comme le monoïde $\mathrm{End}_\C(x)$ est fini, \cite[thm~5.5]{BSt} donne un idempotent $e\in \mathrm{End}_\C(x)$ tel que $eS(x)e\ne 0$ et tel que les éléments non inversibles du monoïde $e \mathrm{End}_\C(x)e$ opèrent par $0$ sur $S(x)$. Soit $a$ l'image de $e$ dans $\C$ (qui existe par hypothèse sur $\C$). On en déduit que $S(a)$ est un $k[{\rm Aut}_\C(a)]$-module simple et que $S$ est isomorphe à l'image par $\Phi_a$ de $S(a)(\simeq eS(x))$.

Il reste à voir que si $a$ et $b$ sont deux objets non isomorphes de $\C$, les images de $\Phi_a$ et $\Phi_b$ sont disjointes. Cela peut se déduire facilement des résultats de \cite[§\,5.2]{BSt}, ou s'établir directement : la formule du prolongement intermédiaire $T_a$ montre que, si $M$ est un $k[\mathrm{Aut}_\C(a)]$-module (non nécessairement simple), alors $T_a(r_a^*M)(x)$ ne peut être non nul que si $a$ est facteur direct de $x$.
\end{proof}

\section{Foncteurs polynomiaux}\label{paragraphe-pol}

Dans cette section on se fixe une catégorie abélienne $\V$ arbitraire (dans la partie \ref{part-resfond} de l'article et les parties suivantes, $\V$ sera la catégorie des espaces vectoriels sur un corps $K$, mais on peut prendre une catégorie plus générale ici). Les foncteurs polynomiaux d'une petite catégorie additive  $\A$ vers $\V$, initialement introduits par Eilenberg et MacLane \cite[chapitre~II]{EML}, constituent une généralisation naturelle des foncteurs additifs.
Cette section rassemble les définitions et les principales propriétés des foncteurs polynomiaux dont nous aurons besoin dans l'article. Tous les résultats sont classiques, hormis ceux de la section \ref{ssctfp} où nous introduisons la notion nouvelle de \emph{foncteur \ph}, et certains énoncés de la section \ref{par-msec} qui ne sont pas explicités dans l'article de Pirashvili \cite{Pira88} dont ils sont inspirés.

\subsection{Foncteurs additifs}\label{ssctadd} Un foncteur $F$ de la catégorie additive $\A$ vers la catégorie abélienne $\V$ est \emph{additif} s'il vérifie les propriétés équivalentes suivantes.
\begin{enumerate}
\item\label{itadd1} pour tout couple $(x,y)$ d'objets de $\A$, la fonction $\A(x,y)\to\V(F(x),F(y))$ qu'induit $F$ est additive ;
\item $F(0)\simeq 0$ et pour tout couple $(x,y)$ d'objets de $\A$ le morphisme canonique $F(x\oplus y)\to F(x)\oplus F(y)$ de $\V$ est un monomorphisme ;
\item pour tout couple $(x,y)$ d'objets de $\A$, le morphisme canonique $F(x\oplus y)\to F(x)\oplus F(y)$ est un isomorphisme.
\end{enumerate}

La sous-catégorie pleine de $\fct(\A,\V)$ constituée des foncteurs additifs est épaisse, stable par limites et colimites. On la notera $\mathbf{Add}(\A,\V)$, et on notera $\mathbf{Add}(\A;k)$ pour $\mathbf{Add}(\A,k\Md)$, lorsque $k$ est un anneau. On rappelle un résultat classique de Eilenberg et Watts \cite{E-add,W-add}.

\begin{pr}\label{additifs-bimodules}
 Soient $A$ et $k$ deux anneaux. L'évaluation en $A$ induit une équivalence de catégories
 $$\mathbf{Add}(\mathbf{P}(A);k)\xrightarrow{\simeq} (A^{\op}\otimes_{\mathbb{Z}} k)\Md$$
 dont un quasi-inverse est donné par la tensorisation au-dessus de $A$ avec un $(k,A)$-bimodule.
\end{pr}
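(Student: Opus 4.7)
The plan is to exhibit an explicit quasi-inverse to the evaluation functor $\mathrm{ev}_A : \mathbf{Add}(\mathbf{P}(A);k)\to (A^{op}\otimes_\mathbb{Z}k)\Md$ and check it works on both sides. First, I need to justify that $\mathrm{ev}_A$ really lands in $(A^{op}\otimes_\mathbb{Z}k)\Md$: as a left $A$-module, $A$ satisfies $\mathrm{End}_{A\Md}(A)\simeq A^{op}$ (every left-$A$-linear endomorphism is right multiplication by a unique element of $A$), so for any additive functor $F : \mathbf{P}(A)\to k\Md$ the $k$-module $F(A)$ inherits a ring map $A^{op}\to\mathrm{End}_{k\Md}(F(A))$ commuting with the $k$-action, i.e. an $A^{op}\otimes_\mathbb{Z} k$-module structure. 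Naturality in $F$ is clear, so $\mathrm{ev}_A$ is well defined.

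Second, I define $T : (A^{op}\otimes_\mathbb{Z} k)\Md\to\mathbf{Add}(\mathbf{P}(A);k)$ by $T(M)(P)=M\otimes_A P$, which is additive since the tensor product preserves finite direct sums in either variable, and indeed lands in $k\Md$ via the $k$-structure of $M$. The canonical isomorphism $M\otimes_A A\simeq M$ is $(k,A)$-bilinear and natural, so it yields a natural isomorphism $\mathrm{ev}_A\circ T\simeq\mathrm{Id}$.

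Third and most substantial, I need a natural isomorphism $T\circ\mathrm{ev}_A\simeq\mathrm{Id}$. For $F\in\mathbf{Add}(\mathbf{P}(A);k)$ and $P\in\mathbf{P}(A)$, I would define a natural map
\[
\alpha_{F,P} : F(A)\otimes_A P\longrightarrow F(P)
\]
by the rule $m\otimes p\mapsto F(\widetilde{p})(m)$, where $\widetilde{p} : A\to P$ is the unique $A$-linear map with $\widetilde{p}(1)=p$. A short check using $\widetilde{ap}=\widetilde{p}\circ\mu_a$ (where $\mu_a : A\to A$ is right multiplication by $a$) verifies the $A$-balancedness, so that $\alpha_{F,P}$ descends from the tensor. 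Naturality in $P$ is immediate from the functoriality of $F$; naturality in $F$ follows from the definition. For $P=A$ the map $\alpha_{F,A}$ identifies with the canonical iso $F(A)\otimes_A A\simeq F(A)$, hence is an isomorphism. Since $\alpha_{F,-}$ is a natural transformation between two \emph{additive} functors of $P$ (additivity on the right follows from the additivity of $F$; on the left from the bilinearity of $\otimes_A$), it is an isomorphism whenever $P=A^n$ for some finite $n$.

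Finally, every $P\in\mathbf{P}(A)$ is a direct summand of some $A^n$, and both $T\circ\mathrm{ev}_A$ and $\mathrm{Id}$ preserve retracts (a splitting $P\xrightarrow{i} A^n\xrightarrow{r} P$ with $ri=\mathrm{id}_P$ is sent by an additive functor between abelian categories to a splitting), so $\alpha_{F,P}$ is also an isomorphism for general $P\in\mathbf{P}(A)$. The main subtlety — and the only place any real care is needed — is checking the $A$-balancedness and naturality of $\alpha$ precisely enough that the additive reduction to the case $P=A$ is legitimate; everything else is formal.
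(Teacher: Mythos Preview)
Your proof is correct and follows the standard argument for this classical result. The paper does not actually prove this proposition: it simply recalls it as ``un résultat classique de Eilenberg et Watts'' with a citation, so there is no in-paper proof to compare against. Your argument is precisely the expected one.
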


\subsection{Foncteurs polynomiaux}\label{subsec-fctpol} 
La notion de foncteur polynomial introduite par Eilenberg et MacLane \cite[chapitre~II]{EML} généralise celle de foncteur additif. Comme rappelé ci-dessus, les foncteurs additifs peuvent être définis du point de vue de l'effet du foncteur sur les morphismes, ou du point de vue de l'effet du foncteur sur les objets. 
Il en va de même de la notion de foncteur polynomial, pour lequel le point de vue de l'effet sur les morphismes repose sur la notion de fonction polynomiale, et le point de vue de l'effet sur les objets repose sur la notion d'effet croisé.

\paragraph*{Fonctions polynomiales}
Soient $U$ et $V$ des groupes abéliens, $f : U\to V$ une fonction et $d\in\mathbb{N}$.
La $d$-ième {\em déviation} de $f$ \cite[§\,8]{EML}\,\footnote{Nous ne suivons pas l'indexation d'Eilenberg et MacLane pour les déviations (et plus
tard, les effets croisés) : ce que nous nommons $d$-ième déviation (resp. effet croisé) correspond à leur $(d-1)$-ième déviation (resp. effet croisé).
Cela nous permet de nous conformer, plus bas, aux conventions les plus courantes pour les effets croisés.} est la fonction $\delta_d(f) : U^d\to V$ définie par
$$\delta_d(f)(u_1,\dots,u_d):=\underset{I\subset\{1,\dots,d\}}{\sum} (-1)^{d-|I|}f\Big(\underset{i\in I}{\sum} u_i\Big)\;;$$
où $|I|$ désigne le cardinal de l'ensemble $I$. On dit que $f$ est une {\em fonction polynomiale} de degré au plus $d$ si la fonction
$\delta_{d+1}(f)$ est identiquement nulle (ce qui implique que les déviations $\delta_i(f)$ sont également nulles pour $i\geq d+1$). Les fonctions polynomiales (resp. les fonctions polynomiales de degré au plus $d$) forment un sous-groupe noté $\mathrm{Pol}(U,V)$ (resp. $\mathrm{Pol}_d(U,V)$) du groupe abélien des fonctions $U\to V$.
Pour une étude systématique de ces fonctions polynomiales, on pourra consulter Passi \cite[Chap.~V]{Passi}.
\begin{ex}\label{ex-fctpol}
\begin{enumerate}
\item[(i)]
Une fonction est  polynomiale de degré au plus $0$ si et seulement si elle est constante. Une fonction $f$ est polynomiale de degré au plus $1$ si et seulement si la fonction $f-f(0)$ est additive.
\item[(ii)] Si $U=\mathbb{Z}=V$ la notion de fonction polynomiale coïncide avec la notion de polynôme numérique \cite[V.1.3(vi)]{Passi}. 
\item[(iii)] si $U$ est un $p$-groupe abélien fini et $V$ est un $\FF_p$-espace vectoriel, toute fonction $f:U\to V$ est polynomiale, d'après \cite[Chap.~VI, Th.~1.2]{Passi}.
\end{enumerate}
\end{ex}

\paragraph*{Effets croisés}
Si $F$ est un objet de $\fct(\A,\V)$ on note $cr_0(F)=F(0)$. Si $d$ est un entier strictement positif, le {\em $d$-ième effet croisé} est le foncteur $cr_d : \fct(\A,\V)\to\fct(\A^d,\V)$ défini par
$$cr_d(F)(x_1,\dots,x_d):={\rm Ker}\,\Big(F\Big(\bigoplus_{i=1}^d x_i\Big)\to\bigoplus_{i=1}^d F\Big(\underset{j\neq i}{\bigoplus}x_j\Big)\Big)$$
où les morphismes sont induits par les projections canoniques. On a une décomposition naturelle \cite[théorème~9.1]{EML}:
\begin{equation}\label{eq-dec-cr}F\Big(\bigoplus_{i=1}^d x_i\Big)\simeq\bigoplus_{s=0}^d\underset{1\leq j_1<\dots<j_s\leq d}{\bigoplus}cr_s(F)(x_{j_1},\dots,x_{j_s}).
\end{equation}

\begin{ex}\label{ex-cr}
\begin{enumerate}
\item[(i)] $cr_1(F)(x)$ est le noyau de $F(x)\xrightarrow[]{F(0)}F(0)$. Le foncteur $cr_1 F$ est souvent noté $\overline{F}$ et appelé {\em partie réduite} de $F$. On a $F\simeq\overline{F}\oplus F(0)$.
\item[(ii)] $cr_2(F)(x,y)$ est le noyau du morphisme canonique $F(x\oplus y)\to F(x)\oplus F(y)$.
\end{enumerate}
\end{ex}

\paragraph*{Foncteurs polynomiaux} Un foncteur $F$ de $\fct(\A,\V)$ est {\em polynomial de degré au plus $d$} s'il vérifie les conditions suivantes, qui sont équivalentes d'après \cite[Th.~9.11]{EML} :
\begin{enumerate}
\item pour tout couple $(x,y)$ d'objets de $\A$, la fonction $F_{x,y}:\A(x,y)\to\V(F(x),F(y))$ induite par $F$ est polynomiale de degré au plus $d$ ;
\item $cr_{d+1}(F)=0$.
\end{enumerate}
Un foncteur $F$ est \emph{de degré $d$} s'il est de degré au plus $d$ et si $cr_d(F)\ne 0$. Enfin, un foncteur est simplement dit \emph{polynomial} s'il existe un entier $d$ tel qu'il soit polynomial de degré au plus $d$.
\begin{ex}
\begin{enumerate}
\item[(i)] Les foncteurs constants sont les foncteurs polynomiaux de degré au plus $0$. Les foncteurs additifs sont les foncteurs polynomiaux de degré au plus $1$ et nuls sur l'objet nul.
\item[(ii)] Si $F$ (resp. $G$) est un foncteur de $\F(\A;k)$ polynomial de degré au plus $d$ (resp. $e$), alors le produit tensoriel $F\otimes_k G$ (défini par $(F\otimes_k G)(x)=F(x)\otimes_k G(x)$) est polynomial de degré au plus $d+e$. 
\end{enumerate}
\end{ex}

On note $\pol_d(\A,\V)$ la sous-catégorie pleine de $\fct(\A,\V)$ dont les objets sont les foncteurs polynomiaux de degré au plus $d$. C'est une sous-catégorie épaisse, stable par limites et colimites, car le foncteur $cr_{d+1}$ commute aux limites et colimites (il est en particulier exact). Cette dernière propriété résulte de la propriété analogue de tout foncteur de précomposition et de l'isomorphisme naturel \eqref{eq-dec-cr}.

Si $\Phi : \A\to\B$ est un foncteur additif entre petites catégories additives, le foncteur de précomposition $\Phi^*$ envoie $\pol_d(\B,\V)$ dans $\pol_d(\A,\V)$.

\subsection{Foncteurs \phs}\label{ssctfp} 
Pour les besoins de notre article, nous introduisons une généralisation de la notion de foncteur polynomial.

\begin{defi}\label{df-ph} Un foncteur $F : \A\to\V$ est \emph{\ph} si, pour tous objets $x$ et $y$ de $\A$, la fonction $F_{x,y}:\A(x,y)\to\V(F(x),F(y))$ induite par $F$ est polynomiale.
\end{defi}

Les foncteurs polynomiaux sont \phs\ -- par définition, les foncteurs polynomiaux sont exactement les foncteurs $F$ \,\phs\ pour lesquels  il existe un entier $d$ majorant le degré de toutes les fonctions polynomiales $F_{x,y}$. L'exemple~\ref{ex-ttph} ci-dessous montre que la classe des foncteurs \phs\ peut être bien plus vaste que celle des foncteurs polynomiaux. Contrairement aux foncteurs polynomiaux, pour lesquels on dispose du critère par annulation d'effets croisés, les foncteurs \phs\ ne semblent pas pouvoir se caractériser facilement autrement que par leur définition, qui est une condition polynomiale sur les Hom, ce qui motive la terminologie employée.

Le résultat suivant est immédiat.

\begin{pr}\label{pr-hpsq}La classe des foncteurs \phs\ est stable par sous-quotient et par sommes directes finies.
\end{pr}

\begin{ex}\label{ex-ttph} Soit $k$ un anneau de caractéristique première $p$. Supposons que les groupes abéliens $\A(x,y)$ sont toujours des $p$-groupes finis. D'après l'exemple \ref{ex-fctpol} (iii), tous les foncteurs de $\F(\A;k)$ sont \phs. Par contraste, si $x$ est un objet non nul de $\A$, le foncteur projectif standard $P^x_\A=k[\A(x,-)]$ n'est pas polynomial car  $cr_d P_\A^x(x_1,\dots,x_d) = \overline{P^x_\A}(x_1)\otimes_k\dots\otimes_k \overline{P^x_\A}(x_d)$ avec $\overline{P^x_\A}=cr_1 P^x_\A\ne 0$.
\end{ex}

Sur un corps de caractéristique nulle, ou plus généralement sur une $\mathbb{Q}$-algèbre, le lien entre foncteurs \phs\ et polynomiaux est simple grâce au résultat de décomposition suivant. Un foncteur $F$ de $\F(\A;k)$ est \emph{$d$-homogène} si $F(\lambda f)=\lambda^d F(f)$ pour tout morphisme $f$ de $\A$ et tout $\lambda\in \mathbb{Z}$.
\begin{pr}\label{pr-polph-car0} Soit $k$ une $\mathbb{Q}$-algèbre. Tout foncteur \ph\ $F$ de $\F(\A;k)$ est isomorphe à une somme directe $\bigoplus_{d\in\mathbb{N}}F_d$ où chaque foncteur $F_d$ est polynomial de degré au plus $d$ et $d$-homogène. De plus, les $F_d$ sont uniques à isomorphisme près.
\end{pr}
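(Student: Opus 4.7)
\emph{Strategy.} My plan is to use the $\mathbb{Z}$-action on morphisms by scalar multiplication to extract the homogeneous components of $F$, by a Vandermonde-type argument that crucially relies on the $\mathbb{Q}$-algebra hypothesis: it ensures that the functions $\lambda\mapsto\lambda^d$ ($d\geq 0$) are linearly independent in the $k$-module of polynomial functions $\mathbb{Z}\to k$, so that coefficients of polynomial expansions are uniquely determined.

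\emph{Construction of idempotents.} For each object $x$ of $\A$, polyHomiality gives a polynomial function $\mathbb{Z}\to\mathrm{End}_k(F(x))$, $\lambda\mapsto F(\lambda\,\mathrm{id}_x)$, which admits a unique finite expansion $F(\lambda\,\mathrm{id}_x)=\sum_{d\geq 0}\lambda^d e_d(x)$ with $e_d(x)\in\mathrm{End}_k(F(x))$. I would then show that $(e_d(x))_{d\geq 0}$ is a complete orthogonal family of idempotents by extracting coefficients from the identities $F(\mathrm{id}_x)=\mathrm{id}_{F(x)}$ (completeness: $\sum_d e_d(x)=\mathrm{id}$) and $F(\lambda\mu\,\mathrm{id}_x)=F(\lambda\,\mathrm{id}_x)\circ F(\mu\,\mathrm{id}_x)$ (orthogonality: $e_i(x)e_j(x)=\delta_{i,j}e_i(x)$, after comparing monomials $\lambda^i\mu^j$). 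Naturality of these idempotents, i.e.\ $F(g)\circ e_d(x)=e_d(y)\circ F(g)$ for $g:x\to y$, follows similarly from the two expressions $F(g)\circ F(\lambda\,\mathrm{id}_x)=F(\lambda g)=F(\lambda\,\mathrm{id}_y)\circ F(g)$.

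\emph{Decomposition and homogeneity.} Define $F_d\subset F$ by $F_d(x):=e_d(x)(F(x))$; by naturality this is a subfunctor of $F$, and completeness/orthogonality yield an isomorphism $F\simeq\bigoplus_{d\geq 0}F_d$ in $\F(\A;k)$, with the coproduct computed pointwise as a finite direct sum. To check $d$-homogeneity, I would restrict $F(\lambda g)=\sum_e\lambda^e\,e_e(y)\circ F(g)$ to $F_d(x)$ and use orthogonality to collapse the sum to the single term $\lambda^d F_d(g)$. Since $F_d$, being a subfunctor of $F$, is still polyHomial, the functions $(F_d)_{x,y}$ are both polynomial and $d$-homogeneous, hence polynomial of degree at most $d$ (and exactly $d$ whenever $F_d\neq 0$). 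The uniqueness follows from the same principle: any decomposition $F\simeq\bigoplus_d G_d$ with $G_d$ polynomial and $d$-homogeneous gives $F(\lambda\,\mathrm{id}_x)=\sum_d\lambda^d\mathrm{pr}_{G_d(x)}$, and the uniqueness of $\lambda^d$-coefficients forces $\mathrm{pr}_{G_d(x)}=e_d(x)$, whence $G_d\simeq F_d$ canonically.

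\emph{Main obstacle.} The only nontrivial technical point is the legitimacy of the coefficient-extraction procedure used throughout—both for defining the $e_d(x)$ and for establishing their algebraic properties from polynomial identities in one or two variables over $\mathbb{Z}$. This rests on the linear independence over $k$ of the functions $\mathbb{Z}\to k$, $\lambda\mapsto\lambda^d$, which is specific to the $\mathbb{Q}$-algebra setting (it fails in positive characteristic by Fermat's little theorem). Once this is granted, all the remaining verifications are formal.
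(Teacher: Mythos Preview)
Your proposal is correct and follows essentially the same route as the paper: both extract a complete family of orthogonal idempotents from the polynomial function $\lambda\mapsto F(\lambda\,\mathrm{id}_x)$ via coefficient comparison (the paper isolates this as a separate lemma on homogeneous decomposition of polynomial functions into a uniquely divisible target), and both deduce functoriality of the resulting summands from the commutation of $F(g)$ with $F(\lambda\,\mathrm{id})$. The only point you leave slightly implicit is that ``polynomial and $d$-homogeneous implies degree $\le d$'' for functions on a general abelian group, but this follows from the same Vandermonde principle you already invoke, applied to several integer variables.
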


La démonstration de la proposition \ref{pr-polph-car0} repose sur le lemme suivant.

\begin{lm}\label{lm-fonction-pol-car0} Soient $U$ et $V$ des groupes abéliens, avec $V$ uniquement divisible, et $f : U\to V$ une fonction polynomiale. Alors $f$ se décompose de façon unique comme une somme presque nulle $f=\sum_{d\in\mathbb{N}}f_d$ où les $f_d : U\to V$ sont des fonctions polynomiales vérifiant la relation $f_d(\lambda.u)=\lambda^d.f_d(u)$ pour tout $(\lambda,u)\in\mathbb{Z}\times U$.
\end{lm}
\begin{proof}[Démonstration du lemme \ref{lm-fonction-pol-car0}] Le cas où $U=\mathbb{Z}$ se déduit directement de \cite[Ch.~V, Th.~2.1]{Passi}. Le cas général s'y ramène en considérant la fonction polynomiale
$\Psi_f:\mathbb{Z}\to\mathrm{Pol}(U,V)$ telle que $\Psi_f(\lambda):u\mapsto f(\lambda u)$.
\end{proof}

\begin{proof}[Démonstration de la proposition \ref{pr-polph-car0}]
L'unicité provient du fait que tout morphisme (en particulier tout isomorphisme) $F\to G$ doit nécessairement préserver les termes de décompositions en sommes directes de foncteurs homogènes.

Pour l'existence, pour chaque objet $x$ de $\A$ on considère l'application polynomiale $\Psi:\mathbb{Z}\to {\rm End}_k(F(x))$ telle que $\Psi(\lambda)= F(\lambda.\mathrm{Id}_x)$ et on note $\Psi_d$ ses composantes homogènes fournies par le lemme \ref{lm-fonction-pol-car0}. Comme $\Psi(\lambda\mu)=\Psi(\lambda)\circ\Psi(\mu)$ et $\Psi(1)=\mathrm{Id}_{F(x)}$, les $\Psi_d(1)$ forment une famille de projecteurs orthogonaux de $F(x)$, et on a une décomposition $F(x)=\bigoplus_{d\in\mathbb{N}} F(x)_d$  où $F(x)_d$ est l'image de $\Psi_d(1)$. Par fonctorialité de $F$ et par unicité des composantes homogènes d'une application polynomiale, la composante $d$-homogène de $F_{x,y}: \A(x,y)\to {\rm Hom}_k(F(x),F(y))$ est à valeurs dans ${\rm Hom}_k(F(x)_d,F(y)_d)$. On peut donc définir un foncteur $F_d$ adéquat comme le foncteur qui à $x$ associe $F(x)_d$ et dont l'effet sur les morphismes est donné par les composantes $d$-homogènes des applications polynomiales $F_{x,y}$.
\end{proof}

\begin{rem}\label{rq-algext} Les foncteurs $F_d$ de la proposition \ref{pr-polph-car0} peuvent être tous non nuls, même si $F$ est à valeurs de dimensions finies, comme l'illustre l'algèbre extérieure $\bigoplus_{d\in\mathbb{N}}\Lambda^d$ vue comme endofoncteur des $K$-espaces vectoriels de dimensions finies.
\end{rem}

La proposition~\ref{pr-polph-car0} implique qu'un foncteur \ph\ de type fini de $\F(\A;k)$ est nécessairement polynomial lorsque l'anneau $k$ est une $\mathbb{Q}$-algèbre. D'après l'exemple~\ref{ex-ttph}, ce n'est pas le cas pour une catégorie $\V$ arbitraire au but. Nous pouvons toutefois assurer qu'un foncteur \ph\  est polynomial sous une condition de finitude plus forte. Nous renvoyons à la définition \ref{def-supp} pour la notion de (co-)support. Par exemple les foncteurs de longueur finie, ou plus généralement les foncteurs à la fois de type fini et de type cofini, possèdent un support fini et un co-support fini.
\begin{pr}\label{pr-phpol-lf}
Tout foncteur \ph\ de $\fct(\A,\V)$ qui possède un support fini et un co-support fini  est polynomial.
\end{pr}

Cette proposition découle du lemme suivant.

\begin{lm}\label{lm-sucosu} Soient $F$ un foncteur de $\fct(\A,\V)$, $S$ un support et $T$ un co-support de $F$, et $d\in\mathbb{N}$. Supposons que, pour tous objets $s$ dans $S$ et $t$ dans $T$, l'application $F_{s,t}:\A(s,t)\to\V(F(s),F(t))$ est polynomiale de degré au plus $d$. Alors $F$ est polynomial de degré au plus~$d$.
\end{lm}

\begin{proof}
Soient $x$ et $y$ deux objets de $\A$. Considérons le carré commutatif
\[\xymatrix{\A(x,y)\ar[r]\ar[d] & \V(F(x),F(y))\ar[d]\\
\prod\A(s,t)\ar[r] & \prod \V(F(s),F(t))
}\]
où les produits sont pris sur tous les $s\in S$, $t\in T$ et toutes les flèches $f : s\to x$ et $g : y\to t$ de $\A$, les flèches horizontales sont données par l'effet de $F$ sur les morphismes, et les flèches verticales sont induites par l'image inverse le long de $f$ et l'image directe le long de $g$.

La flèche horizontale inférieure est une fonction polynomiale de degré au plus $d$. 

Les flèches verticales sont des morphismes de groupes abéliens, et celle de droite est {\em injective} puisque $S$ (resp. $T$) est un support (resp. co-support) de $F$. Il en résulte que la flèche horizontale supérieure est une fonction polynomiale de degré au plus $d$, ce qui achève la démonstration.
\end{proof}

Enfin, sans aucune hypothèse de finitude, l'existence de foncteurs \phs\ non constants est équivalente à l'existence de foncteurs polynomiaux non constants :

\begin{pr}\label{add-pol-cst}
Les assertions suivantes sont équivalentes.
\begin{enumerate}
 \item[{\rm (a)}] La catégorie $\mathbf{Add}(\A,\V)$ est réduite à $0$.
 \item[{\rm (b)}] Tout foncteur polynomial $\A\to\V$ est constant.
 \item[{\rm (c)}] Tout foncteur \ph\ $\A\to\V$ est constant.
\end{enumerate}
Si $\V=k\Md$ pour un anneau $k$, ces assertions équivalent aussi aux suivantes :
\begin{enumerate}
 \item[{\rm (d)}] Pour tous les objets $x$ et $y$ de $\A$, $\A(x,y)\otimes_{\mathbb{Z}} k=0$.
 \item[{\rm (e)}] Pour tout objet $x$ de $\A$, $\A(x,x)\otimes_{\mathbb{Z}} k=0$.
\end{enumerate}
\end{pr}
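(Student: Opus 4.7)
The plan is to prove the equivalences via the cycle $(1)\Rightarrow(2)\Rightarrow(3)$ wait, via $(3)\Rightarrow(2)\Rightarrow(1)\Rightarrow(3)$, and then handle the additional equivalences with $(4)$ and $(5)$ separately in the case $\V=k\Md$. The implication $(3)\Rightarrow(2)$ is immediate since polynomial functors are polyHomial by definition. For $(2)\Rightarrow(1)$, I note that an additive functor is polynomial of degree $\leq 1$ and reduced ($F(0)\simeq 0$); under $(2)$ such a functor is constant, and together with $F(0)=0$ this forces $F=0$. For $(1)\Rightarrow(2)$, suppose $F$ is polynomial of exact degree $d\geq 1$, so that $cr_dF\neq 0$ while $cr_{d+1}F=0$. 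It is classical from \eqref{eq-dec-cr} that $cr_dF$ is then multi-additive in its $d$ variables; fixing $d-1$ arguments appropriately gives a non-zero additive functor $\A\to\V$, contradicting $(1)$.

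The main step is $(1)\Rightarrow(3)$. Given a polyHomial $F:\A\to\V$, the decomposition $F\simeq \overline{F}\oplus F(0)$ reduces the problem to showing that every reduced polyHomial functor is zero; note $\overline F$ is polyHomial as a subfunctor of $F$. Assume $F$ is reduced. I aim to show that for each $(x,y)$ the function $F_{x,y}:\A(x,y)\to\V(F(x),F(y))$ is constant; since $F_{x,y}(0)=F(0_{x,y})=0$ by reduction, this yields $F_{x,y}=0$, and applied to $\mathrm{id}_x$ forces $F(x)=0$. Suppose for contradiction that $d(x,y):=\deg F_{x,y}\geq 1$ for some pair. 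Then the top deviation $\delta_{d(x,y)}(F_{x,y}):\A(x,y)^{d(x,y)}\to\V(F(x),F(y))$ is non-zero and multi-additive. Naturality of deviations in $y$ (via $\delta_d(F_{x,y'})\circ (g_*)^{\otimes d}=F(g)_*\circ\delta_d(F_{x,y})$ for $g:y\to y'$) and a similar relation in $x$ allow one to build from this data a non-trivial additive functor $\A\to\V$ (for instance, in the easiest case $d(x,y)=1$, the reduced polynomial $F_{x,y}$ is itself additive and the naturality gives a non-zero additive natural transformation), contradicting $(1)$.

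For the equivalences with $(4)$ and $(5)$ in the case $\V=k\Md$: $(5)\Rightarrow(4)$ holds because $\A(x,y)$ is a direct summand of $\A(x\oplus y,x\oplus y)$ via the matrix block decomposition, and tensoring with $k$ preserves this. $(4)\Rightarrow(1)$: any additive $F:\A\to k\Md$ induces $\mathbb{Z}$-linear maps $\A(x,y)\to\mathrm{Hom}_k(F(x),F(y))$ valued in a $k$-module, which therefore factor through $\A(x,y)\otimes_\mathbb{Z} k=0$; applied to $\mathrm{id}_x$ this gives $\mathrm{id}_{F(x)}=0$, hence $F(x)=0$. Finally, $(1)\Rightarrow(5)$ follows because $\A(x,-)\otimes_\mathbb{Z} k:\A\to k\Md$ is additive (composition of additive functors), hence zero under $(1)$, so its value $\A(x,x)\otimes_\mathbb{Z} k$ at $x$ vanishes.

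The principal obstacle is the $(1)\Rightarrow(3)$ implication, specifically producing a non-zero additive functor $\A\to\V$ from the multi-linear top-deviation of $F_{x,y}$ when its degree $d(x,y)$ varies with $y$. The degree-one case is direct, since a reduced polynomial function of degree $\leq 1$ is itself additive, and the naturality of $\delta_1(F_{x,y})=F_{x,y}$ in $y$ assembles into a natural transformation whose source is additive. For $d(x,y)\geq 2$, one needs carefully to perform partial contractions of the multi-linear map against well-chosen test morphisms, exploiting Lemma \ref{lm-sucosu} (polyHomial with finite support and co-support is polynomial) on suitably truncated subquotients of $F$; this is where the general abelian target $\V$ adds subtlety compared to the case $\V=k\Md$, where the tensor-product computation $\mathrm{Hom}_\mathbb{Z}(\A(x,y)^{\otimes d},M)=0$ for $M\in k\Md$ gives an immediate shortcut.
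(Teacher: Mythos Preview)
Your handling of $(3)\Rightarrow(2)\Rightarrow(1)$ and $(1)\Rightarrow(2)$ is fine and matches the paper. The equivalences with $(4)$ and $(5)$ are also correct (and more explicit than the paper's one-line remark).

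The genuine gap is in your $(1)\Rightarrow(3)$. Your plan is to build, from the top deviation $\delta_{d(x,y)}(F_{x,y})$, a non-zero additive functor $\A\to\V$. But the naturality relation you write only compares $\delta_d(F_{x,y})$ and $\delta_d(F_{x,y'})$ for the \emph{same} $d$, while $d(x,y)$ varies with $(x,y)$; there is no reason the various top deviations assemble into anything functorial in~$y$. Even in the case $d(x,y)=1$ you cite, a single additive map $\A(x,y_0)\to\V(F(x),F(y_0))$ is not an additive functor. You yourself flag this as ``the principal obstacle'' and do not close it. (Your tensor-product shortcut does close the gap when $\V=k\Md$, since it proves $(4)\Rightarrow(3)$ directly, and you already have $(1)\Rightarrow(5)\Rightarrow(4)$; but for general abelian $\V$ nothing is proved.)

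The paper avoids this entirely by proving $(2)\Rightarrow(3)$ instead of $(1)\Rightarrow(3)$. The idea is: given a non-constant polyHomial $F$, replace $F$ by $\overline{F}=cr_1 F$ to assume $F(0)=0$; pick a non-zero finite-type subfunctor $G\subset F$ (possible since $F$ is the filtered colimit of such), then a non-zero morphism $G\to M^{\A(-,x)}$ for some $M\in\V$, $x\in\A$; its image is a non-zero subquotient of $F$ that is simultaneously of finite type and of co-finite type. This subquotient is polyHomial (stability under subquotients), hence polynomial by Proposition~\ref{pr-phpol-lf}, hence constant by~$(2)$, hence zero since it is reduced --- contradiction. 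You actually gesture at Lemma~\ref{lm-sucosu} in your last paragraph, but you never build the finite-type/co-finite-type subquotient; that construction is the whole point.
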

\begin{proof}
On a clairement (c)$\Rightarrow$(b)$\Rightarrow$(a). Soit $F$ un foncteur polynomial de degré au plus $d$. En utilisant \cite[Thm 9.6]{EML} et l'annulation de $cr_{d+1}F$, on constate que $cr_d F$ est additif par rapport à chacune de ses $d$ variables. Si $d$ est strictement positif et si l'on suppose (a), alors $cr_d F$ doit être nul, donc $F$ est de degré au plus $d-1$, et en itérant ce raisonnement on obtient que $F$ est degré au plus $0$, donc constant. Ainsi, (a)$\Rightarrow$(b). D'après les propositions~\ref{pr-phpol-lf} et~\ref{pr-hpsq}, pour montrer (b)$\Rightarrow$(c), il suffit de montrer que tout foncteur $F$ non constant de $\fct(\A,\V)$ admet un sous-quotient non constant de type fini et de type co-fini. 

Pour cela, quitte à remplacer $F$ par $cr_1 F$, on peut supposer $F(0)=0$. Comme $F$ est colimite de ses sous-foncteurs de type fini, il admet un sous-foncteur non nul de type fini $G$. Il existe un objet $M$ de $\V$, un objet $x$ de $\A$ et un morphisme non nul $G\to M^{\A(-,x)}$, grâce à l'isomorphisme \eqref{eq-injy} (page~\pageref{eq-injy}). L'image de ce morphisme fournit un sous-quotient  de $F$ non constant, de type fini et de type co-fini.

L'équivalence de (a) avec (d) et (e), pour $\V=k\Md$, résulte de ce que la catégorie $\mathbf{Add}(\A,\V)$ est alors engendrée par les foncteurs $\A(x,-)\otimes_{\mathbb{Z}} k$, dont les anneaux d'endomorphismes sont $\A(x,x)\otimes_{\mathbb{Z}} k$.
\end{proof}

\subsection{Fonctions de dimensions} 
Lorsque $\A=\mathbf{P}(A)$ et $\V$ est la catégorie des espaces vectoriels sur un corps $K$, les foncteurs polynomiaux à valeurs de dimensions finies peuvent être caractérisés par leurs fonctions de dimensions. 

\begin{defi}\label{defi-fct-dim} Soient $A$ un anneau et $F$ un foncteur de $\F^{\df}(A,K)$. La \emph{fonction de dimensions} de $F$ est la fonction   
\[\mathrm{d}_F : \mathbb{N}\to\mathbb{N}\qquad n\mapsto\dim_K F(A^n)\;.\]
\end{defi}

Une fonction $f:\mathbb{N}\to \mathbb{N}$ est dite \emph{polynomiale} de degré $d$ si elle est la restriction d'une fonction $\mathbf{f}:\mathbb{Z}\to\mathbb{Z}$ polynomiale de degré $d$.

La proposition suivante, conséquence de la décomposition de $F(A^n)$ donnée par \eqref{eq-dec-cr} (page~\pageref{eq-dec-cr}, avec tous les $x_i$ égaux à $A$), est établie par Kuhn \cite[§\,4]{Ku1} lorsque $A=K$ est un corps fini, mais la démonstration s'applique de façon inchangée au cas général.

\begin{pr}\label{pr-fct-dim-pol}
Soient $A$ un anneau et $F$ un foncteur de $\F^{\df}(A,K)$. Si $F$ est polynomial, alors $\mathrm{d}_F$ est une fonction polynomiale de même degré que $F$, sinon $\mathrm{d}_F$ croît plus vite que toute fonction polynomiale.
\end{pr}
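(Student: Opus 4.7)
The plan is to leverage the cross-effect decomposition \eqref{eq-dec-cr} by specializing all arguments to $A$. With $x_1=\dots=x_n=A$, one obtains the isomorphism
\[F(A^n)\simeq\bigoplus_{s=0}^{n}\binom{n}{s}\,cr_s(F)(A,\dots,A),\]
which yields the explicit formula $\mathrm{d}_F(n)=\sum_{s=0}^{n}\binom{n}{s}\,a_s$ with $a_s:=\dim_K cr_s(F)(A,\dots,A)$ (and $s$ copies of $A$). Since $\binom{n}{s}$ is a polynomial in $n$ of degree exactly $s$ and these polynomials are linearly independent, the behavior of $\mathrm{d}_F$ is entirely controlled by the sequence $(a_s)$: $\mathrm{d}_F$ is polynomial of degree $\leq d$ if and only if $a_s=0$ for all $s>d$, and $\mathrm{d}_F$ outgrows every polynomial as soon as $(a_s)$ has infinite support.

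For the first assertion, suppose $F$ is polynomial of degree $d$. Then $cr_{d+1}(F)=0$ forces $a_s=0$ for $s>d$, so $\mathrm{d}_F(n)=\sum_{s=0}^{d}\binom{n}{s}a_s$ is polynomial of degree $\leq d$. To get degree exactly $d$, I would check that $a_d>0$: the functor $cr_d(F)$ is non-zero, and the vanishing of $cr_{d+1}(F)$ means $cr_d(F)$ is additive in each of its $d$ variables (as in the proof of Proposition~\ref{add-pol-cst}). Picking $(P_1,\dots,P_d)$ where $cr_d(F)$ is non-zero and writing each $P_i$ as a direct summand of some $A^{n_i}$, multi-additivity gives $cr_d(F)(A^{n_1},\dots,A^{n_d})\simeq n_1\cdots n_d\cdot cr_d(F)(A,\dots,A)$ and a splitting that makes $cr_d(F)(P_1,\dots,P_d)$ a direct summand, so $a_d>0$.

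For the second assertion, I would argue by contraposition: if $a_s=0$ for all $s>d$, then $F$ is polynomial of degree $\leq d$. The key is an iterated cross-effect expansion: applying the decomposition \eqref{eq-dec-cr} in each variable of $cr_s(F)(A^{n_1},\dots,A^{n_s})$ (and using that $cr_t$ of $cr_s(F)(-,x_2,\dots,x_s)$ in the first variable equals $cr_{s+t-1}(F)(-,\dots,-,x_2,\dots,x_s)$, a consequence of the standard expansion $cr_s(F)(x\oplus x',x_2,\dots)=cr_s(F)(x,x_2,\dots)\oplus cr_s(F)(x',x_2,\dots)\oplus cr_{s+1}(F)(x,x',x_2,\dots)$), one obtains
\[\dim_K cr_s(F)(A^{n_1},\dots,A^{n_s})=\sum_{\substack{t_1,\dots,t_s\ge 1\\ t_i\le n_i}}\Big(\prod_{i=1}^s\binom{n_i}{t_i}\Big)\,a_{t_1+\dots+t_s}.\]
When $s>d$ every multi-index satisfies $\sum t_i\geq s>d$, so all terms involve $a_u$ with $u>d$ and vanish; thus $cr_s(F)(A^{n_1},\dots,A^{n_s})=0$ for every tuple of integers. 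Since each object of $\mathbf{P}(A)$ is a direct summand of some $A^{n}$ and $cr_s(F)(P_1,\dots,P_s)$ is a direct summand of $cr_s(F)(A^{n_1},\dots,A^{n_s})$, we conclude $cr_s(F)=0$ for $s>d$, i.e.\ $F$ is polynomial of degree $\leq d$, as desired. Consequently, if $F$ is not polynomial, there exist arbitrarily large $s$ with $a_s>0$, and $\mathrm{d}_F(n)\geq a_s\binom{n}{s}$ outgrows every polynomial.

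The main obstacle is the last step: translating the hypothesis that each $cr_s(F)$ is non-zero into the pointwise non-vanishing of $a_s=\dim_K cr_s(F)(A,\dots,A)$ for infinitely many $s$. This requires the iterated cross-effect expansion formula above, together with the reduction from arbitrary finitely generated projective arguments to $A^n$ via the direct summand observation. The additive case $d=1$ and the multi-additive shape of $cr_d(F)$ for a polynomial $F$ of degree $d$ make the forward direction routine; the core technical content lies in the combinatorial identity controlling $cr_s(F)$ on free modules.
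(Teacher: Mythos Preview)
Your proof is correct and follows precisely the approach the paper indicates: the paper does not give a detailed argument but simply points to the cross-effect decomposition \eqref{eq-dec-cr} specialized to $x_1=\dots=x_n=A$ and refers to Kuhn's treatment of the case $A=K$ finite, noting that the same proof works in general. You have fleshed out exactly those details---the binomial formula $\mathrm{d}_F(n)=\sum_s\binom{n}{s}a_s$, the use of multi-additivity of $cr_d(F)$ to show $a_d>0$, and the iterated cross-effect expansion to handle the converse direction---so there is nothing to add.
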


\subsection{Lemme de Pirashvili}
Un foncteur d'une catégorie additive vers la catégorie abélienne 
$\V$ est dit \emph{réduit} s'il envoie l'objet nul sur $0$. Un foncteur depuis un produit de $d$ catégories additives vers $\V$ est dit $d$-\emph{multiréduit} s'il est réduit par rapport à chacune des $d$ variables. La sous-catégorie pleine de $\fct(\A^d,\V)$ des foncteurs $d$-multiréduits sera notée $\fct^{d\text{-}\rm{red}}(\A^d,\V)$. Si $k$ est un anneau et $\V=k\Md$ on notera cette catégorie $\F^{d\text{-}\rm{red}}(\A^d;k)$. 

Pour tout entier $d>0$, les effets croisés $cr_d(F)$ d'un foncteur $F$ de $\fct(\A,\V)$ sont des foncteurs multiréduits. Les effets croisés définissent donc des foncteurs : 
\begin{align}cr_d : \fct(\A,\V)\to\fct^{d\text{-}\rm{red}}(\A^d,\V)\;.
\label{eq-crn}
\end{align}
Si $\Delta_d:\A\to \A^{\times d}$ désigne la diagonale $d$-itérée, c'est-à-dire le foncteur additif défini par $\Delta_d(x)=(x,\dots,x)$, on notera encore 
\begin{align}
\Delta_d^* : \fct^{d\text{-}\rm{red}}(\A^d,\V)\to\fct(\A,\V)
\label{eq-Deltan}
\end{align} 
la restriction du foncteur de précomposition $\Delta_d^* : \fct(\A^d,\V)\to\fct(\A,\V)$ à la sous-catégorie $\fct^{d\text{-}\rm{red}}(\A^d,\V)$.
L'adjonction classique suivante est fondamentale.

\begin{pr}\label{adj-diagcr} 
Pour tout $d>0$, le foncteur \eqref{eq-crn} est adjoint des deux côtés au foncteur \eqref{eq-Deltan}.
\end{pr}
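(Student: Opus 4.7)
Mon plan consiste à construire explicitement les deux bijections d'adjonction, en exploitant deux propriétés essentielles : d'une part, la décomposition naturelle \eqref{eq-dec-cr} qui réalise $cr_d F(x_1,\dots,x_d)$ à la fois comme un sous-objet \emph{et} comme un facteur direct de $F(x_1\oplus\dots\oplus x_d)$ ; d'autre part, la condition de multiréduction, qui fait que $G(y_1,\dots,y_d)$ s'annule dès qu'une des entrées $y_i$ est nulle. La preuve est formelle mais il faut être vigilant sur les directions des morphismes.

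Pour l'adjonction $\Delta_d^*\dashv cr_d$, à toute transformation naturelle $\varphi:\Delta_d^*G\to F$, j'associerais l'application qui, pour $(x_1,\dots,x_d)$ fixé, est la composée
\[G(x_1,\dots,x_d)\xrightarrow{G(\iota_1,\dots,\iota_d)}G\bigl(\textstyle\bigoplus_i x_i,\dots,\bigoplus_i x_i\bigr)\xrightarrow{\varphi_{\oplus x_i}}F\bigl(\textstyle\bigoplus_i x_i\bigr),\]
où $\iota_j:x_j\hookrightarrow\bigoplus_i x_i$ est l'inclusion canonique. La vérification que cette composée se factorise à travers $cr_d F\subset F(\bigoplus x_i)$ utilise la multiréduction de $G$ : la projection $F(\bigoplus x_i)\twoheadrightarrow F(\bigoplus_{j\neq i} x_j)$ correspond par naturalité à appliquer $G$ à un $d$-uplet dont la $i$-ème composante est nulle, donc le résultat est nul. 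Dans l'autre sens, à $\psi:G\to cr_d F$ j'associerais $\Delta_d^*G(x)\xrightarrow{\psi_{x,\dots,x}}cr_d F(x,\dots,x)\hookrightarrow F(x^{\oplus d})\xrightarrow{F(\nabla)}F(x)$ en utilisant la codiagonale $\nabla:x^{\oplus d}\to x$.

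Pour l'adjonction $cr_d\dashv\Delta_d^*$, je procéderais de façon symétrique : à $\varphi:cr_d F\to G$, j'associerais la composée
\[F(x)\xrightarrow{F(\delta)}F(x^{\oplus d})\twoheadrightarrow cr_d F(x,\dots,x)\xrightarrow{\varphi_{x,\dots,x}}G(x,\dots,x),\]
où $\delta:x\to x^{\oplus d}$ est la diagonale et la flèche surjective provient de la décomposition \eqref{eq-dec-cr} ; et à $\psi:F\to\Delta_d^*G$, j'associerais la composée $cr_d F(x_1,\dots,x_d)\hookrightarrow F(\bigoplus x_i)\xrightarrow{\psi_{\oplus x_i}}G(\bigoplus x_i,\dots,\bigoplus x_i)\xrightarrow{G(p_1,\dots,p_d)}G(x_1,\dots,x_d)$ où $p_j:\bigoplus x_i\twoheadrightarrow x_j$ est la projection canonique.

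L'étape à vraiment vérifier — et qui constitue l'obstacle principal — est que chacune des constructions inverses l'autre. Cela se ramène à un calcul explicite utilisant de manière répétée la décomposition \eqref{eq-dec-cr} : les morphismes entre différents facteurs de la décomposition $F(x^{\oplus d})\simeq\bigoplus_{s,j_1<\dots<j_s}cr_s F(x_{j_1},\dots,x_{j_s})$ (induits par les endomorphismes de $x^{\oplus d}$ provenant de $\delta\circ\nabla$ et des permutations) s'organisent via la structure de comonoïde de $x^{\oplus d}$, et la condition de multiréduction (resp. la propriété définissant $cr_d F$ comme noyau) élimine toutes les contributions hors diagonale, ne laissant que l'identité sur le facteur $cr_d$. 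L'hypothèse cruciale $d>0$ intervient pour permettre d'évaluer la situation de façon non triviale sur la décomposition, le cas $d=0$ étant dégénéré puisque $cr_0 F=F(0)$ n'est pas relié à $F$ par une adjonction analogue.
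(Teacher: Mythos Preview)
Votre approche est correcte mais elle diffère de celle de l'article, qui est plus conceptuelle et nettement plus courte. L'article exploite le fait que, dans une catégorie additive, le foncteur somme $\Sigma_d:\A^d\to\A$, $(x_1,\dots,x_d)\mapsto\bigoplus_i x_i$, est adjoint \emph{des deux côtés} au foncteur diagonale $\Delta_d$ (la somme directe étant à la fois produit et coproduit). On en déduit immédiatement que les foncteurs de précomposition $\Sigma_d^*:\fct(\A,\V)\to\fct(\A^d,\V)$ et $\Delta_d^*$ sont eux aussi biadjoints. La décomposition \eqref{eq-dec-cr} scinde ensuite $\Sigma_d^*F\simeq cr_d F\oplus G'$, où $G'$ est somme de foncteurs constants par rapport à l'une des variables ; or un tel foncteur n'admet aucun morphisme non nul, dans un sens comme dans l'autre, avec un foncteur multiréduit. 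L'adjonction entre $\Sigma_d^*$ et $\Delta_d^*$ se restreint donc automatiquement à une adjonction entre $cr_d$ et $\Delta_d^*$ sur les multiréduits, sans aucune vérification explicite d'inversibilité. Votre démonstration a l'avantage d'être constructive --- elle fournit des formules explicites pour l'unité et la coünité, ce qui peut servir dans des applications --- mais la vérification que vos deux constructions sont réciproques l'une de l'autre, que vous identifiez à juste titre comme l'obstacle principal, est entièrement court-circuitée par la méthode de l'article.
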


\begin{proof} Comme $\A$ est additive, le foncteur $\Sigma_d : \A^d\to\A$ défini par $\Sigma_d(x_1,\dots,x_d)=\bigoplus_{1\le i\le d}x_i$ est adjoint des deux côtés à $\Delta_d$. Il s'ensuit que les foncteurs de précomposition 
$$\Sigma_d^* : \fct(\A,\V)\leftrightarrows\fct(\A^d,\V):\Delta_d^*$$
sont adjoints des deux côtés. Pour tout foncteur $G:\A\to \V$ la décomposition \eqref{eq-dec-cr} de la section \ref{subsec-fctpol} donne un isomorphisme $\Sigma_d^*G\simeq cr_dG\oplus G'$ où $G'$ est une somme directe de foncteurs $G'_i$ (pour $1\le i\le d$) constants par rapport à la $i$-ème variable. Mais il n'y a pas de morphisme non nul entre un tel foncteur $G_i$ et un foncteur multiréduit $F$ (en effet, si $f$ est un morphisme entre $F$ et $G'_i$, alors $f$ factorise à travers le foncteur $(x_1,\dots,x_d)\mapsto F(x_1,\dots,x_{i-1},0,x_{i+1},\dots,x_d)$, et comme ce dernier est nul, $f$ est nul). Ainsi tout isomorphisme d'adjonction entre $\Sigma_d^*$ et $\Delta_d^*$ se restreint en un isomorphisme d'adjonction entre les foncteurs \eqref{eq-crn} et \eqref{eq-Deltan}.
\end{proof}

La proposition~\ref{adj-diagcr} est souvent utilisée par l'intermédiaire du résultat suivant.

\begin{cor}[Pirashvili]\label{cor-annul-pira} Soient $k$ un anneau commutatif et $F$ un foncteur de $\F(\A;k)$. Pour tout $d> 0$, les assertions suivantes sont équivalentes.
\begin{enumerate}
\item\label{icp1} Le foncteur $F$ est polynomial de degré au plus $d-1$.
\item\label{icp2} Pour tout objet $X$ de $\F^{d\text{-}\rm{red}}(\A^{d};k)$, on a $\F(\A;k)(\Delta_{d}^*X,F)=0$.
\item\label{icp3} Pour tout objet $G$ de $\F^{\rm{red}}(\A;k)$, on a $\F(\A;k)(G^{\otimes_k d},F)=0$.
\end{enumerate}
\end{cor}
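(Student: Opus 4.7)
The plan is to run the implications cyclically, (1)$\Rightarrow$(2)$\Rightarrow$(3)$\Rightarrow$(1), using systematically the adjunction of Proposition~\ref{adj-diagcr}, which provides a natural isomorphism
\[\F(\A;k)(\Delta_d^* X, F)\simeq\F^{d-\rm{red}}(\A^d;k)(X, cr_d F)\]
for every $X$ in $\F^{d-\rm{red}}(\A^d;k)$. By definition, condition (1) is the vanishing of $cr_d F$, so this identification makes the implication (1)$\Rightarrow$(2) tautological.

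For (2)$\Rightarrow$(3), I would observe that, for a reduced $T$, the external tensor product $T^{\boxtimes d}\colon(x_1,\dots,x_d)\mapsto T(x_1)\otimes_k\cdots\otimes_k T(x_d)$ is multireduced, and that $T^{\otimes_k d}=\Delta_d^*(T^{\boxtimes d})$ by construction; hence condition (3) is simply the specialization of condition (2) to ``diagonal pure tensors''.

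The substantive step is (3)$\Rightarrow$(1). Via the adjunction, condition (3) rephrases as the vanishing of $\F^{d-\rm{red}}(\A^d;k)(T^{\boxtimes d}, cr_d F)$ for every reduced $T$, and one wants to deduce $cr_d F=0$. The key point is that, writing $\overline{P}^t_\A=cr_1 P^t_\A$ for the reduced part of the standard projective on $t$, a straightforward variant of Yoneda identifies the evaluation $cr_d F(t_1,\dots,t_d)$ with the Hom group $\F^{d-\rm{red}}(\A^d;k)(\overline{P}^{t_1}_\A\boxtimes\cdots\boxtimes\overline{P}^{t_d}_\A, cr_d F)$. To access this Hom group from condition (3), I would apply the hypothesis to the single reduced functor $T:=\overline{P}^{t_1}_\A\oplus\cdots\oplus\overline{P}^{t_d}_\A$: expanding $T^{\boxtimes d}$ by distributivity of $\otimes_k$ over direct sums exhibits $\overline{P}^{t_1}_\A\boxtimes\cdots\boxtimes\overline{P}^{t_d}_\A$ as one of the $d^d$ direct summands, so the vanishing of $\F^{d-\rm{red}}(\A^d;k)(T^{\boxtimes d}, cr_d F)$ forces the vanishing on this summand, whence $cr_d F(t_1,\dots,t_d)=0$ for every tuple of objects.

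The only non-formal point, and hence the main (minor) obstacle, is this last distributivity-plus-Yoneda argument: \emph{a priori} condition (3) only tests the ``diagonal'' $d$-th tensor powers of a single reduced functor, whereas the Yoneda witnesses needed to conclude $cr_d F=0$ are external tensor products of possibly distinct reduced standard projectives, and the direct-sum trick is precisely what bridges the gap.
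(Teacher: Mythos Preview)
Your proof is correct and follows essentially the same route as the paper: both use the adjunction of Proposition~\ref{adj-diagcr} for (1)$\Rightarrow$(2), specialize to $X=T^{\boxtimes d}$ for (2)$\Rightarrow$(3), and handle (3)$\Rightarrow$(1) by reducing to reduced projectives. The only cosmetic difference is in the bridging step for (3)$\Rightarrow$(1): the paper tests with $T=\overline{P^x_\A}$ for a single object $x$ and invokes the decomposition~\eqref{eq-dec-cr} (via the equivalence $cr_d F=0\Leftrightarrow\Delta_d^*cr_d F=0$) to recover all values $cr_d F(t_1,\dots,t_d)$ from the diagonal ones, whereas you take $T=\overline{P}^{t_1}_\A\oplus\cdots\oplus\overline{P}^{t_d}_\A$ and expand $T^{\boxtimes d}$ by distributivity --- these two manoeuvres are really the same trick seen from opposite sides of the adjunction.
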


\begin{proof}
Si $d=1$, l'équivalence entre les trois assertions découle de l'équivalence de catégories $\F(\A;k)\simeq \F^{\rm red}(\A;k)\times k\Md$ (voir l'exemple \ref{ex-cr}). Supposons $d>1$. L'implication \ref{icp1}$\Rightarrow$\ref{icp2} découle de la proposition~\ref{adj-diagcr}. L'implication \ref{icp2}$\Rightarrow$\ref{icp3} s'obtient en prenant pour $X$ le foncteur $X(x_1,\dots, x_d)= G(x_1)\otimes_k\dots\otimes_k G(x_d)$.
Montrons maintenant \ref{icp3}$\Rightarrow$\ref{icp1}. Le $k$-module $(\Delta_d^*cr_d F)(x_1\oplus\dots\oplus x_d)$ contient $cr_d F(x_1,\dots,x_d)$ comme facteur direct, donc $cr_d F=0$ si et seulement si $\Delta_d^*cr_d F=0$. Mais pour tout $x$ de $\A$:
$$(\Delta_d^*cr_d F)(x)\simeq \F(\A;k)(P^x_\A,\Delta_d^*cr_d F)\simeq \F(\A;k)(\Delta_d^*cr_d P^x_\A,F)\;,$$
le premier isomorphisme étant donné par le lemme de Yoneda, et le deuxième par la proposition~\ref{adj-diagcr}. Mais $\Delta_d^*cr_dP^x_\A = \overline{P^x_\A}^{\otimes_k d}$ où $\overline{P^x_\A}=cr_1P^x_\A$ est un foncteur réduit (cf. exemple~\ref{ex-ttph}). L'assertion \ref{icp3} implique ainsi la nullité de $cr_d F$, donc l'assertion \ref{icp1}.
\end{proof}

\subsection{Multifoncteurs symétriques et effets croisés}\label{par-msec} Les déviations d'une fonction $f : U\to V$ entre groupes  abéliens sont symétriques : pour tout $d\in\mathbb{N}$ et toute permutation $\sigma\in\Si_d$, on a $\delta_d(f)(u_1,\dots,u_d)=\delta_d(f)(u_{\sigma(1)},\dots,u_{\sigma(d)})$. Cette invariance par l'action du groupe symétrique a son analogue pour les effets croisés des foncteurs, mais de façon un peu plus subtile car les égalités doivent être remplacées par des isomorphismes cohérents au sens approprié, que nous détaillons maintenant. Le contenu de ce §\,\ref{par-msec} est connu des experts ; une partie figure dans Pirashvili \cite{Pira88}.

Pour tout $d\in\mathbb{N}$ on note $\mathbf{Add}_d(\A,\V)$ la sous-catégorie pleine de $\fct(\A^d,\V)$ des foncteurs qui sont additifs par rapport à chacune des $d$ variables. Le groupe symétrique $\Si_d$ agit \emph{à gauche} sur $\fct(\A^d,\V)$ et sur $\mathbf{Add}_d(\A,\V)$, via
\[(\sigma.F)(a_1,\dots,a_d):= F(a_{\sigma(1)},\dots,a_{\sigma(d)}).\]

La catégorie des foncteurs additifs multi-symétriques de $\A$ vers $\V$ est la catégorie $\Sigma\mathbf{Add}_d(\A,\V)$ définie de la manière suivante. Ses objets sont les objets $F$ de $\mathbf{Add}_d(\A,\V)$ munis d'une famille d'isomorphismes $\xi_\sigma : \sigma.F\simeq F$ pour $\sigma\in\Si_d$ telle que que pour tous $\sigma, \tau\in\Si_d$ la composée suivante soit égale à $\xi_{\sigma\tau}$:
$$(\sigma\tau).F=\sigma.(\tau.F)\xrightarrow[]{\sigma.\xi_\tau}\sigma.F\xrightarrow[]{\xi_\sigma} F\;.$$
Les morphismes de $\Sigma\mathbf{Add}_d(\A,\V)$ sont les morphismes des objets sous-jacents de $\mathbf{Add}_d(\A,\V)$ qui sont compatibles aux isomorphismes structuraux $\xi_\sigma$. 

On dispose d'une paire de foncteurs:
$$\mathcal{O}:\Sigma\mathbf{Add}_d(\A,\V)\leftrightarrows\mathbf{Add}_d(\A,\V):\mathcal{L}$$
où $\mathcal{O}$ désigne le foncteur d'oubli, et $\mathcal{L}$ envoie un foncteur $F$ sur le foncteur symétrisé $\mathcal{L}(F)=\bigoplus_{\tau\in\Si_d}\tau.F$ muni des isomorphismes
\[\xi_\sigma : \sigma.\mathcal{L}(F)=\bigoplus_{\tau\in\Si_d}(\sigma\tau).F\xrightarrow{\simeq}\bigoplus_{\tau\in\Si_d}\tau.F=\mathcal{L}(F)\]
fournis par l'isomorphisme d'échange des facteurs induit par la bijection $\tau\mapsto\sigma\tau$ de l'ensemble d'indexation $\Si_d$ de la somme directe.

\begin{lm}\label{lm-adj-cro}
Le foncteur $\mathcal{L}$ est adjoint des deux côtés au foncteur $\mathcal{O}$.
\end{lm}

\begin{proof} Soient $F$ et $X$ des objets de $\mathbf{Add}_d(\A,\V)$ et $\Sigma\mathbf{Add}_d(\A,\V)$ respectivement. Un morphisme $f : \mathcal{L}(F)\to X$ dans $\Sigma\mathbf{Add}_d(\A,\V)$ consiste en la donnée de morphismes $f_\tau : \tau.F\to\mathcal{O}(X)$ de $\mathbf{Add}_d(\A,\V)$ pour $\tau\in\Si_d$ de sorte que, pour tout $\sigma\in\Si_d$, le diagramme
\[\xymatrix{\sigma.(\tau.F)\ar[r]^-{\sigma.f_\tau}\ar@{=}[d] & \sigma.\mathcal{O}(X)\ar[d]^\simeq\\
(\sigma\tau).F\ar[r]_{f_{\sigma\tau}} & \mathcal{O}(X)
}\]
commute, où la flèche verticale de droite est l'isomorphisme structural de multi-symétrie. Il s'ensuit que l'application naturelle envoyant $f\in\mathrm{Hom}(\mathcal{L}(F),X)$ sur sa composante $f_1\in\mathrm{Hom}(F,\mathcal{O}(X))$ est une bijection, dont la réciproque envoie un morphisme $g$ sur le morphisme de composantes $f_\sigma : \sigma.F\xrightarrow{\sigma.g}\sigma.\mathcal{O}(X)\simeq\mathcal{O}(X)$.

Ainsi, $\mathcal{L}$ est adjoint à gauche à $\mathcal{O}$ ; la démonstration de l'adjonction dans l'autre sens est identique.
\end{proof}

Le foncteur de $d$-ième effet croisé se restreint en un foncteur (toujours noté $cr_d$) $\pol_d(\A,\V)\to\mathbf{Add}_d(\A,\V)$, qui se relève à travers $\mathcal{O}$ en un foncteur que nous noterons $\mathrm{Cr}_d$:
$$\xymatrix{
\pol_d(\A,\V)\ar[rrd]_-{cr_d}\ar[rr]^-{\mathrm{Cr}_d}&& \Sigma\mathbf{Add}_d(\A,\V)\ar[d]^-{\mathcal{O}}\\
&&\mathbf{Add}_d(\A,\V)
}\;.$$

\begin{pr}[Pirashvili, cf. \cite{Pira88}]\label{recollement-cr}
 Soient $\V$ une catégorie abélienne et $d\in\mathbb{N}$. Le foncteur $\mathrm{Cr}_d : \pol_d(\A,\V)\to\Sigma\mathbf{Add}_d(\A,\V)$ est réflexif.
\end{pr}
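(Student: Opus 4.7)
The plan is to construct candidate left and right adjoints by averaging the adjunctions of Proposition~\ref{adj-diagcr} over the symmetric group action, and to verify the reflexivity condition of Proposition~\ref{prreflex}.

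First, for $(X,\xi)\in\Sigma\mathbf{Add}_d(\A,\V)$, the $\Si_d$-invariance of the diagonal $\Delta_d:\A\to\A^d$ furnishes canonical isomorphisms $\sigma^*\Delta_d^* X\simeq\Delta_d^*(\sigma.X)$; composing with $\Delta_d^*(\xi_\sigma)$ defines a left $\Si_d$-action on the polynomial functor $\Delta_d^* X$. Since $\pol_d(\A,\V)$ is stable under finite limits and colimits, I would set
\[G(X,\xi):=(\Delta_d^* X)_{\Si_d}\qquad\text{and}\qquad H(X,\xi):=(\Delta_d^* X)^{\Si_d},\]
as candidates for the left and right adjoints. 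The two-sided adjunction of Proposition~\ref{adj-diagcr} provides natural bijections $\mathrm{Hom}_{\pol_d(\A,\V)}(\Delta_d^* X,F)\simeq\mathrm{Hom}_{\mathbf{Add}_d(\A,\V)}(X,cr_d F)$, which I would check to be $\Si_d$-equivariant once the left side carries the action induced by $\xi$ and the right side carries the natural symmetric structure on $cr_d F$ underlying $\mathrm{Cr}_d F$. Passing to $\Si_d$-invariants converts the left side into $\mathrm{Hom}_{\pol_d(\A,\V)}(G(X,\xi),F)$ via the universal property of coinvariants, and the right side into the set of equivariant natural transformations, i.e. $\mathrm{Hom}_{\Sigma\mathbf{Add}_d(\A,\V)}((X,\xi),\mathrm{Cr}_d F)$. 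The analogous manipulation of the other adjunction of Proposition~\ref{adj-diagcr} gives $\mathrm{Cr}_d\dashv H$.

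To conclude, I would verify the first condition of Proposition~\ref{prreflex} by checking that the unit ${\rm Id}\to\mathrm{Cr}_d\circ G$ is an isomorphism. Since $cr_d$ is exact and, being both a left and a right adjoint, commutes with $\Si_d$-coinvariants, one has $\mathcal{O}(\mathrm{Cr}_d G(X,\xi))\simeq(cr_d\Delta_d^* X)_{\Si_d}$. Multi-additivity of $X$ decomposes $\Delta_d^* X(y_1\oplus\cdots\oplus y_d)$ as a direct sum of terms $X(y_{i_1},\dots,y_{i_d})$, and the cross-effect extracts exactly those indexed by permutations, yielding $cr_d\Delta_d^* X\simeq\bigoplus_{\sigma\in\Si_d}\sigma.X$. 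After tracking the $\Si_d$-actions, this identification realizes $\mathrm{Cr}_d\Delta_d^* X$ as $\mathcal{L}(X)$, whose coinvariants canonically collapse onto a single copy of $X$, providing the desired natural isomorphism $X\simeq\mathrm{Cr}_d G(X,\xi)$ in $\Sigma\mathbf{Add}_d(\A,\V)$.

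The main difficulty lies in the compatibility verifications: three a priori distinct $\Si_d$-actions must be reconciled --- the one built on $\Delta_d^* X$ from $\xi$ and the symmetry of the diagonal, the intrinsic action on $cr_d F$ coming from permuting the summands of $F(\bigoplus x_i)$, and the action permuting the summands of $\mathcal{L}(X)$ --- and the identification $cr_d\Delta_d^* X\simeq\bigoplus_\sigma\sigma.X$ must be shown to intertwine them. Once this bookkeeping is settled, both adjunctions and the isomorphism $\mathrm{Cr}_d G\simeq{\rm Id}$ follow immediately from the universal properties of (co)invariants.
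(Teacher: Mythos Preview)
Your approach is correct and is the standard one: build the adjoints as $\Si_d$-(co)invariants of $\Delta_d^*$ using the two-sided adjunction of Proposition~\ref{adj-diagcr}, then verify reflexivity via the computation $cr_d\Delta_d^* X\simeq\bigoplus_{\sigma\in\Si_d}\sigma.X$ for $X$ multi-additive. The compatibility checks you flag are genuine bookkeeping but present no obstacle.

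There is nothing to compare against in the paper itself: Proposition~\ref{recollement-cr} is stated with attribution to Pirashvili \cite{Pira88} and no proof is reproduced. The paper treats it as background, using only its consequences (Corollary~\ref{cor-recollement-cr} and Proposition~\ref{pr-fin-cr}). Your write-up is therefore more than what the paper provides; it is essentially the argument one would extract from Pirashvili's work, rephrased via the reflexivity criterion of Proposition~\ref{prreflex}.
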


\begin{cor}\label{cor-recollement-cr}
Si $F$ est un foncteur simple de $\fct(\A,\V)$, polynomial de degré $d$, alors $\mathrm{Cr}_d(F)$ est un objet simple de $\Sigma\mathbf{Add}_d(\A,\V)$ et le foncteur $\mathrm{Cr}_d$ induit un isomorphisme de corps ${\rm End}(F)\simeq {\rm End}(\mathrm{Cr}_d(F))$.
\end{cor}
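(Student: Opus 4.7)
La démonstration consistera à appliquer au foncteur $\mathrm{Cr}_d$ la proposition~\ref{pr-prolongement-interm} sur les prolongements intermédiaires, la réflexivité étant fournie par la proposition~\ref{recollement-cr}. On commencera par observer que $F$, étant simple dans $\fct(\A,\V)$ et polynomial de degré au plus $d$, est également simple dans la sous-catégorie pleine $\pol_d(\A,\V)$, car celle-ci est épaisse dans $\fct(\A,\V)$ (voir la section~\ref{subsec-fctpol}), en particulier stable par sous-objets.

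L'assertion~(1) de la proposition~\ref{pr-prolongement-interm}, appliquée au foncteur réflexif $\mathrm{Cr}_d:\pol_d(\A,\V)\to\Sigma\mathbf{Add}_d(\A,\V)$, donnera alors que $\mathrm{Cr}_d(F)$ est soit nul, soit simple dans $\Sigma\mathbf{Add}_d(\A,\V)$. Comme $F$ est supposé de degré exactement $d$, on a $cr_d(F)=\mathcal{O}(\mathrm{Cr}_d(F))\neq 0$ ; en conséquence $\mathrm{Cr}_d(F)$ est effectivement non nul, donc simple, et l'on dispose en outre d'un isomorphisme $F\simeq T\mathrm{Cr}_d(F)$, où $T$ désigne le prolongement intermédiaire associé à $\mathrm{Cr}_d$.

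Il restera à démontrer que le morphisme d'anneaux ${\rm End}(F)\to {\rm End}(\mathrm{Cr}_d(F))$ induit par $\mathrm{Cr}_d$ est bijectif. L'injectivité est immédiate, puisque la source est un corps (par simplicité de $F$) et que ce morphisme unitaire n'est pas nul. Pour la surjectivité, à tout $\psi\in {\rm End}(\mathrm{Cr}_d(F))$ on associera $T\psi\in {\rm End}(T\mathrm{Cr}_d(F))$, puis, via l'isomorphisme $T\mathrm{Cr}_d(F)\simeq F$ issu de l'étape précédente, un élément $\widetilde\psi\in {\rm End}(F)$ ; en utilisant l'isomorphisme canonique $\mathrm{Cr}_dT\simeq\mathrm{Id}$ de l'assertion~(2) de la proposition~\ref{pr-prolongement-interm}, on vérifiera que $\mathrm{Cr}_d(\widetilde\psi)$ s'identifie à $\psi$, ce qui conclura.

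La difficulté principale est déjà absorbée par les résultats préparatoires, à savoir la proposition~\ref{recollement-cr} de Pirashvili (qui fournit le cadre de recollement sous-jacent) et les propriétés générales des prolongements intermédiaires rassemblées dans la proposition~\ref{pr-prolongement-interm} ; la vérification esquissée ci-dessus est purement formelle une fois ces outils en place, et ne requiert qu'une manipulation soigneuse des (co)unités d'adjonction sous-jacentes à la définition~\ref{df-prolint}.
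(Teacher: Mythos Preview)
Ta démonstration est correcte et suit essentiellement la même approche que l'article : observer que $\mathrm{Cr}_d(F)\neq 0$ puisque $cr_d(F)\neq 0$, puis conclure via les propositions~\ref{pr-prolongement-interm} et~\ref{recollement-cr}. Ton argument pour l'isomorphisme des corps d'endomorphismes peut être légèrement raccourci en invoquant directement la pleine fidélité du prolongement intermédiaire $T$ (point~(3) de la proposition~\ref{pr-prolongement-interm}) : comme $F\simeq T\mathrm{Cr}_d(F)$ et que $T$ est pleinement fidèle, on obtient $\mathrm{End}(F)\simeq\mathrm{End}(T\mathrm{Cr}_d(F))\simeq\mathrm{End}(\mathrm{Cr}_d(F))$, la compatibilité avec l'application induite par $\mathrm{Cr}_d$ découlant de la naturalité de $\mathrm{Cr}_dT\simeq\mathrm{Id}$.
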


\begin{proof}
L'objet $\mathrm{Cr}_d(F)$ est non nul car  $\mathcal{O}(\mathrm{Cr}_d(F))=cr_d(F)\ne 0$. Le corollaire découle donc directement des propositions~\ref{pr-prolongement-interm} et~\ref{recollement-cr}. 
\end{proof}

Les résultats de structure sur les foncteurs $\mathrm{Cr}_d$ et $\mathcal{O}$ nous permettent d'obtenir dans la proposition suivante des renseignements importants sur la structure des foncteurs $cr_d(F)$ lorsque $F$ est un foncteur simple polynomial de degré~$d$.

\begin{pr}\label{pr-fin-cr} Soit $\E$ une catégorie de Grothendieck et soit $F$ un foncteur simple de $\fct(\A,\E)$, polynomial de degré $d$.
\begin{enumerate}
\item Le foncteur $cr_d(F)$ de $\fct(\A^d,\E)$ est semi-simple : il existe un foncteur simple $S$ de $\fct(\A^d,\E)$, un sous-ensemble $E$ de $\Si_d$, et un isomorphisme
$$ cr_d(F)\simeq \bigoplus_{\sigma\in E}\sigma.S\;.$$
\item Il existe des entiers naturels $n$ et $m$ vérifiant $0<nm\leq d!$ de sorte que l'anneau ${\rm End}(cr_d(F))$ soit isomorphe au produit d'anneaux de matrices $\M_n({\rm End}(S))^m$.
\item Le morphisme d'anneaux ${\rm End}(F)\to {\rm End}(cr_d(F))$ induit par $cr_d$ fait de ${\rm End}(cr_d(F))$ un ${\rm End}(F)$-espace vectoriel de dimension au plus $d!$.
\end{enumerate}
\end{pr}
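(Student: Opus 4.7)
The plan is to study $X := \mathrm{Cr}_d(F) \in \Sigma\mathbf{Add}_d(\A,\E)$, for which Corollary~\ref{cor-recollement-cr} supplies the simplicity of $X$ and the identification $K := {\rm End}(F) \simeq {\rm End}_{\Sigma\mathbf{Add}_d}(X)$, a division ring, together with the relation $\mathcal{O}(X) = cr_d(F)$. The key tools are the two-sided adjunction $\mathcal{L}\dashv\mathcal{O}\dashv\mathcal{L}$ of Lemma~\ref{lm-adj-cro} and the structural isomorphisms $\xi_\sigma:\sigma.\mathcal{O}(X)\simeq\mathcal{O}(X)$ constituting the $\Sigma$-structure on $X$.

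For part~(1), I would pick a simple subobject $S\subseteq\mathcal{O}(X)$ in $\mathbf{Add}_d(\A,\E)$ (its existence is guaranteed by observing that the socle of $\mathcal{O}(X)$ is preserved by every $\xi_\sigma$, hence lifts to a non-zero subobject of $X$ and therefore equals $\mathcal{O}(X)$ itself). Then $W := \sum_\sigma\xi_\sigma(\sigma.S)$ is stable under all $\xi_\sigma$, so it lifts to a non-zero subobject of $X$ in $\Sigma\mathbf{Add}_d$, which must equal $X$ by simplicity; hence $\mathcal{O}(X) = W$ is semi-simple. The transitive action of $\Si_d$ on the (finite) orbit $\{[S_1],\dots,[S_m]\}$ of isoclasses of simple summands forces each to appear with a common multiplicity $n$; taking $n$ coset representatives in each of the $m=[\Si_d:\mathrm{Stab}([S])]$ cosets of the stabiliser yields a subset $E\subseteq\Si_d$ with $|E|=nm$ and $\mathcal{O}(X)\simeq\bigoplus_{\sigma\in E}\sigma.S$, where $1\leq nm\leq d!$ since $cr_d(F)\neq 0$. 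Part~(2) then follows at once from Schur's lemma: ${\rm End}_{\mathbf{Add}_d}(\mathcal{O}(X))\simeq\prod_{i=1}^m\M_n({\rm End}(S_i))$, and transport by any $\sigma$ taking $[S]$ to $[S_i]$ identifies ${\rm End}(S_i)$ with $D:={\rm End}(S)$, giving the announced $\M_n(D)^m$.

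For part~(3), the adjunction $\mathcal{L}\dashv\mathcal{O}$ identifies $R := {\rm End}(\mathcal{O}(X))$ with $\mathrm{Hom}_{\Sigma\mathbf{Add}_d}(\mathcal{L}\mathcal{O}(X), X)$ as a left $K$-module, the $K$-action being post-composition on either side. A standard induction on length in $\Sigma\mathbf{Add}_d$ gives, for any finite-length $Y$, the bound $\dim_K\mathrm{Hom}_{\Sigma\mathbf{Add}_d}(Y,X)\leq[Y:X]$, the composition-factor multiplicity of $X$ in $Y$. The object $\mathcal{O}(\mathcal{L}\mathcal{O}(X))=\bigoplus_{\tau\in\Si_d}\tau.\mathcal{O}(X)$ has $\mathbf{Add}_d$-length $d!\cdot nm$, so $\mathcal{L}\mathcal{O}(X)$ has finite length in $\Sigma\mathbf{Add}_d$ (exactness and faithfulness of $\mathcal{O}$), and in any composition series each composition factor isomorphic to $X$ contributes $\ell_{\mathbf{Add}_d}(\mathcal{O}(X))=nm$ to the $\mathbf{Add}_d$-length; therefore $nm\cdot[\mathcal{L}\mathcal{O}(X):X]\leq d!\cdot nm$, so $\dim_K R\leq[\mathcal{L}\mathcal{O}(X):X]\leq d!$.

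The main obstacle in this plan lies in the semi-simplicity step~(1): guaranteeing that $\mathcal{O}(X)$ possesses a simple subobject amounts to excluding the possibility of a vanishing socle, which is not automatic in an arbitrary Grothendieck category. I expect this to be handled either by imposing mild finiteness on $\E$ or by exhibiting simple subquotients explicitly via the counit $\mathcal{L}\mathcal{O}(X)\twoheadrightarrow X$, whose finite-length source $\mathcal{L}\mathcal{O}(X)$ (once semi-simplicity is known) forces the required Jordan--Hölder decomposition.
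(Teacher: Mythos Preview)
You have correctly identified the genuine gap: the existence of a simple subobject of $\mathcal{O}(X)$ is not guaranteed in an arbitrary Grothendieck category, and your socle argument is circular (a zero socle lifts to the zero subobject of $X$, which is no contradiction). Your proposed workaround via the counit $\mathcal{L}\mathcal{O}(X)\twoheadrightarrow X$ is also circular, since the finite length of $\mathcal{L}\mathcal{O}(X)$ is deduced \emph{a posteriori} from the semi-simplicity of $\mathcal{O}(X)$.

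The paper resolves this by working dually: instead of seeking a simple \emph{subobject}, it produces a simple \emph{quotient}. The two-sided adjunction (Lemma~\ref{lm-adj-cro}) makes $\mathcal{L}$ a left adjoint, hence colimit-preserving; since $\mathcal{O}$ is left adjoint to $\mathcal{L}$, it therefore preserves objects of finite type. As $X=\mathrm{Cr}_d(F)$ is simple, $\mathcal{O}(X)=cr_d(F)$ is non-zero of finite type, so it admits a simple quotient $S$. Now use the \emph{other} adjunction $\mathcal{O}\dashv\mathcal{L}$: the quotient map $\mathcal{O}(X)\twoheadrightarrow S$ corresponds to a non-zero map $X\to\mathcal{L}(S)$, necessarily a monomorphism by simplicity of $X$. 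Applying the exact functor $\mathcal{O}$ yields
\[
cr_d(F)=\mathcal{O}(X)\hookrightarrow\mathcal{O}\mathcal{L}(S)=\bigoplus_{\tau\in\Si_d}\tau.S,
\]
from which the semi-simplicity and the bound $nm\le d!$ are immediate. Once~(1) is secured, your arguments for~(2) and~(3) are sound; the paper's treatment of~(3) is slightly different (bounding directly the number of copies of $X$ embedding in $\mathcal{L}\mathcal{O}(X)$ rather than counting composition factors), but both routes are valid.
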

\begin{proof}
Le foncteur $\mathcal{O}$ est adjoint à gauche au foncteur $\mathcal{L}$, qui commute aux colimites. Par conséquent, $\mathcal{O}$ préserve les objets de type fini. D'après le corollaire \ref{cor-recollement-cr}, $\mathrm{Cr}_dF$ est un simple de $\Sigma\mathbf{Add}_d(\A,\E)$, son image par $\mathcal{O}$ est donc non nulle et de type fini, et possède donc un quotient simple $S$. Par adjonction il existe un morphisme non nul, donc injectif, $\mathrm{Cr}_dF\to\mathcal{L}(S)$. D'où un monomorphisme :
\begin{equation*}
cr_d(F)=\mathcal{O}(\mathrm{Cr}_dF)\hookrightarrow\mathcal{O}\mathcal{L}(S)=\bigoplus_{\tau\in\Si_d}\tau.S.
\end{equation*}
Les foncteurs $\tau.S$ sont des objets simples de $\mathbf{Add}_d(\A,\E)$, tout comme $S$ (l'action de $\tau$ définissant un automorphisme de la catégorie $\mathbf{Add}_d(\A,\E)$). La première assertion en découle. En regroupant les termes isomorphes dans la décomposition de $cr_d(F)$ on obtient un isomorphisme $cr_d(F)\simeq\bigoplus_{i=1}^m \sigma_i.S^{\oplus n}$ pour des permutations $\sigma_i$ convenables, de sorte que $\sigma_i.S$ et $\sigma_j.S$ ne soient jamais isomorphes pour $i\neq j$. Cela fournit la deuxième assertion. 

Pour la troisième assertion, on considère le diagramme commutatif
\[\xymatrix{{\rm End}(F)\ar[d]_\simeq\ar[r] & {\rm End}(cr_d(F))\ar[d]^\simeq \\
{\rm End}(\mathrm{Cr}_d(F))\ar[r] & {\rm End}\,(\mathcal{O}\mathrm{Cr}_d(F))
}\]
dont la flèche horizontale supérieure (resp. inférieure) est induite par le foncteur $cr_d$ (resp. $\mathcal{O}$) et l'isomorphisme vertical de gauche par le corollaire~\ref{cor-recollement-cr}. Combiné à l'isomorphisme d'adjonction ${\rm End}\,(\mathcal{O}\mathrm{Cr}_d(F))\simeq {\rm Hom}(\mathrm{Cr}_d(F),\mathcal{L}\mathcal{O}\mathrm{Cr}_d(F))$ (lemme~\ref{lm-adj-cro}), il montre que la dimension de ${\rm End}(cr_d(F))$ sur le corps ${\rm End}(F)$ est la borne supérieure des $i\in\mathbb{N}$ tels qu'il existe un monomorphisme $\mathrm{Cr}_d(F)^{\oplus i}\hookrightarrow\mathcal{L}\mathcal{O}\mathrm{Cr}_d(F)$. L'existence d'un tel monomorphisme implique, en appliquant le foncteur exact $\mathcal{O}$, celle d'un monomorphisme $cr_d(F)^{\oplus i}\hookrightarrow\mathcal{O}\mathcal{L}cr_d(F)\simeq cr_d(F)^{\oplus d!}$. Comme $cr_d(F)$ est semi-simple, fini et non nul, par ce qui précède, cela entraîne $i\le d!$ et termine la démonstration.
\end{proof}

On remarquera que la proposition~\ref{pr-fin-cr} ne nécessite aucune autre hypothèse que la simplicité et la polynomialité de $F$ : elle s'applique aussi à des foncteurs à valeurs de dimensions infinies de $\F(\A;K)$, contrairement à la plupart des résultats ultérieurs que nous donnerons sur les foncteurs simples.

\section{Produits tensoriels et extension des scalaires}\label{sect-prelim-ptens}

Dans cette section, nous étendons aux catégories de foncteurs générales des résultats classiques sur les représentations simples des produits tensoriels d'algèbres, pour lesquels on pourra se référer à \cite[§\,12]{Bki}. Ces considérations constituent l'une des bases de nos décompositions tensorielles. Elles expliquent la nécessité fréquente d'imposer une hypothèse de dimension finie pour les valeurs des foncteurs simples, ainsi que celle de supposer le corps commutatif de base $K$ algébriquement clos, ou au moins assez gros, en un sens que nous allons préciser (définition~\ref{def-corps-dec}).

Après une partie de généralités sur la notion fondamentale de foncteur \emph{absolument} simple (§\,\ref{par-absimple}), on étudie (§\,\ref{par-simprod}) les foncteurs simples à valeurs de dimensions finies sur une catégorie produit. On termine (§\,\ref{sect-decomp-prim}) par une légère généralisation, correspondant à une source se décomposant en somme directe infinie de catégories additives en un sens approprié, qui s'applique en particulier à une décomposition tensorielle primaire issue de la décomposition primaire des groupes abéliens finis.

Dans toute la section, $\C$ et $\C'$ désignent deux petites catégories. \'Etant donnés un foncteur $F$ de $\F(\C;K)$ et un foncteur $G$ de $\F(\C';K)$, on note $F\boxtimes G$ leur produit tensoriel extérieur, c'est-à-dire le foncteur de $\F(\C\times\C';K)$ tel que $(F\boxtimes G)(c,c'):=F(c)\otimes G(c')$, les produits tensoriels étant pris comme d'habitude sur $K$.

\begin{rem}\label{chgt-base-gal} Une grande partie des résultats de changement de base de cette section s'étend (avec les mêmes démonstrations) à la situation plus intrinsèque suivante. Supposons que la catégorie de Grothendieck $\E$ est $k$-linéaire (où $k$ est un anneau commutatif) et que $A$ est une $k$-algèbre. Les objets $x$ de $\E$ munis d'une action de $A$ (c'est-à-dire d'un morphisme de $k$-algèbres $A\to\mathrm{End}_\E(x)$) forment une catégorie de Grothendieck $\E_A$. On dispose d'un foncteur d'extension des scalaires $\E\to\E_A$, qui est adjoint à gauche au foncteur d'oubli (et qui est exact si $k$ est un corps).

Si $\C$ est une petite catégorie, la catégorie $\F(\C;k)$ a une structure $k$-linéaire canonique pour tout anneau commutatif $k$, et pour toute $k$-algèbre $A$, la catégorie $\F(\C;k)_A$ s'identifie à $\F(\C;A)$. Via cette identification, le foncteur d'extension des scalaires est la post-composition par $-\otimes_k A$.

Il est également possible de donner une construction catégorique abstraite, appelée produit tensoriel, associant à deux catégories de Grothendieck une autre catégorie de Grothendieck d'une manière naturelle, de sorte que le produit tensoriel de $\F(\C;k)$ et $\F(\C';k)$ au-dessus de $k$ s'identifie à $\F(\C\times\C';k)$. Cette notion intrinsèque de produit tensoriel de catégories de Grothendieck, beaucoup plus délicate que l'extension des scalaires, a été introduite et étudiée dans l'article récent de Lowen, Ramos Gonz\'alez et Shoikhet \cite{LRGS}.
\end{rem}

\subsection{Foncteurs absolument simples}\label{par-absimple}

\begin{pr}\label{abs-semi-simple}
Soit $S$ un objet simple de $\F(\C;K)$. Les conditions suivantes sont équivalentes.
\begin{enumerate}
\item\label{it1} Le morphisme canonique $K\to\mathrm{End}_{\F(\C;K)}(S)$ est un isomorphisme.
\item\label{it1bis} Pour toute petite catégorie $\C'$ et tous objets $F$ et $G$ de $\F(\C';K)$, le produit tensoriel par $\mathrm{Id}_S$ induit un isomorphisme:
$$\F(\C';K)(F,G) \xrightarrow{\mathrm{Id}_S\boxtimes-}\F(\C\times\C';K)(S\boxtimes F,S\boxtimes G). $$
\item\label{it2} Pour toute petite catégorie $\C'$ et tout foncteur simple $T$ de $\F(\C';K)$, le produit tensoriel $S\boxtimes T$ est un foncteur simple de $\F(\C\times \C';K)$.
\item\label{it3} Pour toute extension $L$ de  $K$, $S\otimes_K L$ est un objet simple de $\F(\C;L)$. 
\end{enumerate}
\end{pr}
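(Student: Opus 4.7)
Le plan est d'établir les équivalences dans l'ordre (1)$\Leftrightarrow$(1bis)$\Rightarrow$(2), (1)$\Rightarrow$(3), (3)$\Rightarrow$(1), puis de refermer la chaîne par (2)$\Rightarrow$(1) ; on s'inspire de l'équivalence classique pour les modules simples sur une $K$-algèbre (\cite[§\,12]{Bki}), adaptée au cadre des catégories de foncteurs.

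Le c\oe ur technique est un lemme à la Schur : sous l'hypothèse (1), pour tout $K$-espace vectoriel $V$, les sous-foncteurs de $S\otimes_K V$ dans $\F(\C;K)$ sont exactement les $S\otimes_K W$ pour $W\subseteq V$, et pour tout $K$-espace vectoriel $W$ le morphisme $\mathrm{Hom}_K(V,W)\to\mathrm{Hom}_{\F(\C;K)}(S\otimes_K V,S\otimes_K W)$ est un isomorphisme. Pour l'établir, on utilise que $S$ est de type fini -- propriété générale des objets simples dans une catégorie de Grothendieck rappelée au \S\,\ref{subsec-Groth} -- donc que $\mathrm{Hom}_{\F(\C;K)}(S,-)$ commute aux colimites filtrantes de monomorphismes ; en écrivant $V$ comme réunion filtrante de ses sous-espaces de dimension finie, on se ramène au cas $\dim_K V<\infty$, où un calcul en base joint à l'hypothèse $\mathrm{End}_{\F(\C;K)}(S)=K$ fournit immédiatement les deux énoncés ($S\otimes_K V$ étant alors semi-simple isotypique de type $S$ de multiplicité $V$).

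L'implication (1bis)$\Rightarrow$(1) s'obtient en prenant pour $\C'$ la catégorie ponctuelle et $F=G=K$. Pour (1)$\Rightarrow$(1bis), le lemme appliqué à chaque objet $c'\in\C'$ identifie les composantes d'une transformation naturelle $\phi:S\boxtimes F\to S\boxtimes G$ à des applications linéaires $\psi_{c'}:F(c')\to G(c')$, et la naturalité de $\phi$ en $c'$ force celle de $\psi$. L'implication (1)$\Rightarrow$(2) se démontre de façon parallèle via la description des sous-foncteurs : un sous-foncteur de $S\boxtimes T$ se restreint en chaque $c'$ à un $S\otimes_K T'(c')$ avec $T'(c')\subseteq T(c')$, et $T'$ est naturellement un sous-foncteur de $T$, dont la simplicité entraîne celle de $S\boxtimes T$. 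L'argument analogue appliqué à $V=L$ (en utilisant la remarque~\ref{chgt-base-gal} pour identifier les sous-foncteurs de $S\otimes_K L$ dans $\F(\C;L)$ aux sous-foncteurs de $\F(\C;K)$ compatibles à l'action de $L$, c'est-à-dire aux $S\otimes_K W$ pour $W$ sous-$L$-espace de $L$) donne (1)$\Rightarrow$(3).

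Pour fermer la boucle, notons $D:=\mathrm{End}_{\F(\C;K)}(S)$. L'action naturelle de $D$ sur $S$ induit, pour toute extension $L/K$, une inclusion $D\otimes_K L\hookrightarrow\mathrm{End}_{\F(\C;L)}(S\otimes_K L)$ ; sous (3), Schur montre que le membre de droite est un corps, donc $D\otimes_K L$ est intègre pour toute extension $L$. Or si $D\ne K$, un argument classique (\cite[§\,12]{Bki}) exhibe une extension $L$ telle que $D\otimes_K L$ possède des diviseurs de zéro -- par exemple, pour $d\in D\setminus K$ algébrique sur $K$, le corps de décomposition du polynôme minimal de $d$ rend $K[d]\otimes_K L$ non intègre --, d'où contradiction. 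La même stratégie donne (2)$\Rightarrow$(1) : en prenant $\C'=\C$ et $T=S$, le foncteur simple $S\boxtimes S$ a pour corps d'endomorphismes un surcorps de $D\otimes_K D$, ce qui impose $D=K$. L'obstacle principal réside dans la mise en place du lemme à la Schur -- particulièrement l'interaction entre simplicité, finitude de type et tenseur $S\otimes_K V$ dans une catégorie de Grothendieck générale --, après quoi le reste est essentiellement formel.
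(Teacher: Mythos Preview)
Ton lemme à la Schur et les implications (1)$\Rightarrow$(1bis), (1)$\Rightarrow$(2), (1)$\Rightarrow$(3) sont corrects et bien menés. En revanche, les implications de retour (3)$\Rightarrow$(1) et (2)$\Rightarrow$(1) contiennent une vraie lacune, qui provient d'un même affaiblissement : tu ne tires de Schur que le fait que $D\otimes_K L$ (resp.\ $D\otimes_K D$) est \emph{intègre}, alors que l'argument requiert qu'il soit un \emph{corps}. L'assertion \og si $D\ne K$, il existe une extension $L$ telle que $D\otimes_K L$ ait des diviseurs de zéro \fg{} est fausse : si $D=K(t)$ (ce qui arrive, par exemple pour $\C$ réduite à un objet d'endomorphismes $K(t)$ et $S=K(t)$), alors $D\otimes_K L$ est une localisation de $L[t]$, donc intègre pour toute extension $L$ ; de même $D\otimes_K D\simeq K(t)\otimes_K K(s)$ est intègre. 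Ton exemple algébrique est correct mais ne couvre pas le cas transcendant, et la référence à \cite[§\,12]{Bki} ne le sauve pas telle quelle.

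La démonstration de l'article comble précisément ce point. Pour (3)$\Rightarrow$(1), elle observe que $S(c)$ est un $D$-module \emph{libre} (puisque $D$ est un corps gauche), donc $S(c)\otimes_K L$ est libre sur $A_L:=D\otimes_K L$ ; un élément non nul de $A_L$ agit alors comme automorphisme de $S\otimes_K L$ (par Schur), et une homothétie sur un module libre non nul n'est inversible que si son rapport l'est : $A_L$ est donc un corps. Dès lors, pour $\xi\in D\setminus K$ transcendant, l'élément $\xi\otimes 1 - 1\otimes x$ de $A_{K(x)}$ est non nul mais non inversible (un calcul de degrés dans $D[x]$), contradiction. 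Pour fermer la chaîne, l'article ne tente pas (2)$\Rightarrow$(1) directement mais passe par (2)$\Rightarrow$(3) en choisissant pour $\C'$ la catégorie à un objet associée au monoïde multiplicatif $L_\mu$ et pour $T$ le $K[L_\mu]$-module simple $L$ : la catégorie $\F(\C;L)$ s'identifie alors à une sous-catégorie pleine de $\F(\C\times\C';K)$ stable par sous-quotients, ce qui transfère la simplicité de $S\boxtimes L$ à $S\otimes_K L$.
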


\begin{proof}
\eqref{it1}$\Rightarrow$\eqref{it2}. Via l'isomorphisme canonique de catégories $\F(\C\times\C';K)\simeq\fct(\C',\F(\C;K))$, le foncteur $S\boxtimes T$ prend ses valeurs dans la sous-catégorie pleine $\mathcal{S}$ de $\F(\C;K)$ constituée des foncteurs semi-simples isotypiques de type $S$. Comme celle-ci est stable par sous-quotients, il suffit de montrer que le foncteur qu'on obtient en voyant $S\boxtimes T$ comme objet de $\fct(\C',\mathcal{S})$ est simple. Mais $\mathcal{S}$ est équivalente  (via ${\rm Hom}(S,-)$) à la catégorie des espaces vectoriels sur $K$, et via cette équivalence, $S\boxtimes T$ s'identifie au foncteur $T$ de $\F(\C';K)$, qui est par hypothèse simple, d'où la simplicité de $S\boxtimes T$.
 
  \eqref{it2}$\Rightarrow$\eqref{it3} : soit $\C'$ la catégorie à un objet associée au monoïde multiplicatif $L_\mu$ sous-jacent à $L$. La catégorie $\F(\C;L)$ s'identifie à la sous-catégorie pleine, qui est stable par sous-quotient (donc préserve la simplicité), de $\F(\C\times\C';K)$ constituée des foncteurs sur lesquels les éléments de $K$ à la source (vus comme flèches de $\C'$) agissent par l'homothétie correspondante au but et sur lesquels l'action de $L$ à la source est additive, d'où le résultat en prenant pour $T$ le $K[L_\mu]$-module simple $L$.
  
  \eqref{it3}$\Rightarrow$\eqref{it1} : pour toute extension $L$ de $K$, le foncteur $S\otimes_K L$ prend naturellement ses valeurs dans les modules à gauche {\em libres} sur l'anneau $A_L:={\rm End}(S)\otimes_K L$. Comme par hypothèse $S\otimes_K L$ est simple dans $\F(\C;L)$, on en déduit que tout élément non nul de $A_L$ définit un automorphisme de ce foncteur. Comme une homothétie sur un module libre non nul ne peut être inversible que si son rapport l'est, on en déduit que $A_L$ est un {\em corps}. Supposons que ${\rm End}(S)$ contienne strictement $K$ et considérons un élément $\xi\in{\rm End}(S)\setminus K$ : si $\xi$ est algébrique sur $K$, $A_L$ n'est pas un corps si l'on prend pour $L$ un corps de rupture d'un polynôme irréductible sur $K$ annulant $\xi$, ce qui est absurde. Si $\xi$ est transcendant, $A_{K(x)}$ n'est pas un corps car son élément non nul $\xi\otimes 1-1\otimes x$ n'est pas inversible. On en déduit donc ${\rm End}(S)=K$.
  
  L'implication \eqref{it1bis}$\Rightarrow$\eqref{it1} est évidente (prendre pour $\C'$ la catégorie à un seul morphisme), et l'implication réciproque s'établit comme \eqref{it1}$\Rightarrow$\eqref{it2}.
\end{proof}

La définition suivante reprend, dans le cadre des catégories de foncteurs, la notion de module absolument simple classique en théorie des représentations \cite[§\,3.C, définition~3.42]{CuR}.

\begin{defi}
Un foncteur simple est {\em absolument simple} lorsqu'il vérifie les conditions équivalentes de la proposition \ref{abs-semi-simple}.
\end{defi}

\begin{defi}\label{def-corps-dec} Nous dirons que $K$ est un \emph{corps de décomposition} (resp. \emph{corps de décomposition non additif}) de la catégorie additive $\A$ (resp. de la catégorie $\C$)  si tout foncteur simple à valeurs de dimensions finies de $\mathbf{Add}(\A;K)$ (resp. $\F^{\df}(\C;K)$) est absolument simple.
\end{defi}

\begin{rem}\label{rq-df-cordec} Cette terminologie est motivée par l'observation suivante.

Si $A$ est un anneau, $K$ est un corps de décomposition de la catégorie $\mathbf{P}(A)$ si et seulement si $K$ est un corps de décomposition (\emph{splitting field} en anglais) de la $K$-algèbre $A^{\op}\otimes_{\mathbb{Z}}K$ au sens usuel de la théorie des représentations \cite[§\,7.B]{CuR}, en vertu de la proposition~\ref{additifs-bimodules}.
\end{rem}

\begin{ex}\label{excd}
 Si $k$ est un corps fini, tout surcorps commutatif de $k$ est un corps de décomposition de $\mathbf{P}(k)$. Tout corps commutatif est un corps de décomposition de $\mathbf{P}(\mathbb{Z})$ et de $\mathbf{P}(\mathbb{Q})$.
\end{ex}

\begin{pr}\label{crit-absimpl} 
 \begin{enumerate}
  \item Si $K$ est algébriquement clos, alors $K$ est un corps de décomposition non additif de la catégorie $\C$. Plus généralement,  tout foncteur simple de $\F(\C;K)$ prenant au moins une valeur de dimension finie non nulle est absolument simple.
  \item Si pour tout objet $a$ de $\C$, l'ensemble $\C(a,a)$ est fini, et si $K$ contient toutes les racines de l'unité, alors $K$ est un corps de décomposition non additif de $\C$.
 \end{enumerate}
\end{pr}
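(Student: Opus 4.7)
The plan is to reduce both statements to questions about endomorphism division algebras of simple modules, using the evaluation functor provided by Corollary~\ref{simples-eval}, and then to invoke, respectively, the Schur-style argument over an algebraically closed field and the classical Brauer splitting-field theorem for finite groups.

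For part~(1), let $S$ be simple in $\F(\C;K)$ and suppose $S(a)$ is non-zero and finite-dimensional over $K$ for some object $a$. By Corollary~\ref{simples-eval}, $S(a)$ is a simple $K[\mathrm{End}_\C(a)]$-module whose endomorphism division ring coincides with that of $S$. In particular $\mathrm{End}(S)$ embeds into $\mathrm{End}_K(S(a))$, hence is a finite-dimensional division algebra over $K$. When $K$ is algebraically closed, any element of $\mathrm{End}(S)$ is algebraic over $K$ and therefore lies in $K$, so $\mathrm{End}(S)=K$, which by the equivalence \eqref{it1}$\Leftrightarrow$(absolute simplicity) of Proposition~\ref{abs-semi-simple} gives that $S$ is absolutely simple.

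For part~(2), the hypothesis that each $\C(a,a)$ is finite passes to the Karoubian envelope $\mathrm{Kar}(\C)$ (endomorphism monoids there are subsets of those of $\C$), and $\F(\C;K)\simeq\F(\mathrm{Kar}(\C);K)$. Replacing $\C$ by $\mathrm{Kar}(\C)$, I may assume idempotents split in $\C$ and apply Proposition~\ref{simples-sfin}: every simple $S$ of $\F^{df}(\C;K)$ corresponds to a simple $K[\mathrm{Aut}_\C(a)]$-module $M$ for some object $a$, with the same endomorphism field. Since $S$ has finite-dimensional values and the evaluation $S(a)$ contains $M$ as a direct summand (via the idempotent extracting $M$ from $S(a)$ in the proof of Proposition~\ref{simples-sfin}), $M$ is finite-dimensional over $K$; and $\mathrm{Aut}_\C(a)$ is a finite group.

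The core step is then the following classical result, which I would invoke as a black box: if $G$ is a finite group and $K$ contains a primitive $m$-th root of unity with $m$ the (prime-to-$\mathrm{char}(K)$ part of the) exponent of $G$, then $K$ is a splitting field for $K[G]$, that is, every simple $K[G]$-module has $\mathrm{End}_{K[G]}(M)=K$ (Brauer's theorem, see e.g.\ \cite[§\,17.A]{CuR}). Since by hypothesis $K$ contains all roots of unity, this gives $\mathrm{End}(S)=\mathrm{End}_{K[\mathrm{Aut}_\C(a)]}(M)=K$, and Proposition~\ref{abs-semi-simple} again concludes that $S$ is absolutely simple. The only real obstacle is checking that the classical splitting-field criterion for group algebras transfers cleanly through the monoid-to-group reduction of Proposition~\ref{simples-sfin}, which is taken care of by that proposition's assertion that the bijection preserves endomorphism fields.
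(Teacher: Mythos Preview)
Your proposal is correct and follows essentially the same approach as the paper's proof: for part~(1) you establish that $\mathrm{End}(S)$ is finite-dimensional over $K$ (the paper does this via the Yoneda embedding $\mathrm{End}(F)\hookrightarrow\mathrm{Hom}(P_\C^t,F)\simeq F(t)$ rather than through Corollary~\ref{simples-eval}, but the conclusion is the same), and for part~(2) both reduce to Brauer's splitting theorem for finite group algebras via Proposition~\ref{simples-sfin}. Your explicit passage to the Karoubi envelope to ensure idempotents split is a detail the paper leaves implicit.
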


\begin{proof}
Soit $F$ un foncteur simple de $\F(\C;K)$ tel qu'il existe un objet $t$ de $\C$ tel que $F(t)$ soit non nul et de dimension finie. Alors ${\rm End}(F)$ est un sous-espace vectoriel de $F(t)$, car $F$ est un quotient de $P_\C^t$ (on a une flèche non nulle $P_\C^t\to F$ et $F$ est simple), donc est de dimension finie sur $K$. De plus, $K$ est inclus dans le centre de ${\rm End}(F)$. Par conséquent, si $K$ est algébriquement clos, ${\rm End}(F)$ est nécessairement réduit à $K$ (cf. par exemple \cite[§\,14, n°1, cor.~1]{Bki}).

La deuxième assertion se déduit de la proposition~\ref{simples-sfin} et de ce qu'un corps contenant assez de racines de l'unité est un corps de décomposition de l'algèbre de n'importe quel groupe fini. Ce dernier résultat constitue un corollaire classique du théorème d'induction de Brauer (voir par exemple \cite[{\em Theorem}~17.1]{CuR}).
\end{proof}

\begin{rem}
 La première propriété de la proposition~\ref{crit-absimpl} tombe en défaut si l'on omet l'hypothèse des valeurs de dimensions finies. Ainsi, si $L$ est un surcorps strict de $K$, le foncteur d'inclusion des $L$-espaces vectoriels de dimensions finies dans les $K$-espaces vectoriels définit un foncteur simple de $\F(L,K)$, mais il n'est pas absolument simple.
\end{rem}

\begin{rem} Nous verrons au corollaire~\ref{cor-corps-dec} que l'on peut, dans la deuxième assertion de la proposition~\ref{crit-absimpl}, s'affranchir de l'hypothèse de finitude sur $\C$ lorsque cette catégorie est additive et que $K$ en est un corps de décomposition.
\end{rem}

Il nous sera parfois utile de ramener l'étude de foncteurs simples à celle de foncteurs absolument simples, vu les bonnes propriétés dont jouissent ces derniers. On peut le faire, au prix d'une extension finie des scalaires et de dévissages, par l'intermédiaire de la propriété suivante.

\begin{pr}\label{pr-corpassegro}
Soit $S$ un foncteur simple de $\F^{\df}(\C;K)$. Il existe une extension finie de corps commutatifs $K\subset L$ telle que le foncteur $S\otimes_KL$ de $\F(\C;L)$ possède une filtration finie dont les sous-quotients sont \emph{absolument} simples.
\end{pr}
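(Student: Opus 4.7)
Plan de démonstration.

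La première étape est d'établir que $D:=\mathrm{End}_{\F(\C;K)}(S)$ est une $K$-algèbre à division de dimension finie. Soit $t$ un objet de $\C$ tel que $S(t)\neq 0$~; la simplicité de $S$ fournit un épimorphisme $P_\C^t\twoheadrightarrow S$, et la précomposition par cet épimorphisme induit une injection $D\hookrightarrow \F(\C;K)(P_\C^t,S)\simeq S(t)$ par le lemme de Yoneda. Comme $S(t)$ est de dimension finie sur $K$ par hypothèse, $D$ est bien une $K$-algèbre à division de dimension finie.

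L'idée est ensuite de choisir une extension finie $L/K$ qui déploie $D$, puis d'utiliser les idempotents de $D\otimes_K L$, qui agissent sur $S\otimes_K L$ via la structure naturelle de $D$-module sur $S$, pour extraire les sous-quotients absolument simples. Plus précisément, notons $Z$ le centre de $D$, qui est une extension finie de $K$, et choisissons une extension finie $L$ de $K$ contenant $Z$ (quitte à rendre l'extension séparable en caractéristique positive) et déployant $D$ sur $Z$ au sens où $D\otimes_Z L\simeq \M_n(L)$ avec $n^2=\dim_Z D$. On obtient alors un isomorphisme de $L$-algèbres $D\otimes_K L\simeq \prod_{i=1}^r \M_n(L)$, où $r$ compte les $K$-plongements de $Z$ dans $L$. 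Les idempotents centraux primitifs $\varepsilon_1,\dots,\varepsilon_r$ de cette algèbre, vus comme endomorphismes de $S\otimes_K L$ dans $\F(\C;L)$, donnent une décomposition $S\otimes_K L=\bigoplus_{i=1}^r S_i$ avec $S_i:=\varepsilon_i\cdot(S\otimes_K L)$~; les idempotents primitifs orthogonaux $e_{i,1},\dots,e_{i,n}$ d'un facteur $\varepsilon_i(D\otimes_K L)\varepsilon_i\simeq \M_n(L)$ fournissent à leur tour une décomposition $S_i\simeq T_i^{\oplus n}$ avec $T_i:=e_{i,1}\cdot S_i$, de sorte que $\mathrm{End}_{\F(\C;L)}(T_i)\simeq e_{i,1}(\M_n(L))e_{i,1}\simeq L$.

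Le point principal, et l'obstacle principal à surmonter, est de démontrer que chaque $T_i$ est effectivement simple dans $\F(\C;L)$~: s'il en est ainsi, la condition \eqref{it1} de la proposition~\ref{abs-semi-simple} assurera l'absolue simplicité, et la décomposition $S\otimes_K L\simeq \bigoplus_i T_i^{\oplus n}$ se raffinera trivialement en une filtration finie à sous-quotients absolument simples. Pour établir la simplicité des $T_i$, on se ramène à un énoncé classique sur les modules à l'aide du corollaire~\ref{simples-eval}~: le $K[\mathrm{End}_\C(t)]$-module $S(t)$ est simple, de corps d'endomorphismes $D$, et le prolongement intermédiaire associé à l'évaluation en $t$ l'envoie sur $S$. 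La théorie classique des modules simples sur une algèbre (voir par exemple \cite[§\,7.B]{CuR}) affirme alors que $S(t)\otimes_K L$ admet une filtration finie dans $L[\mathrm{End}_\C(t)]\Md$ dont les sous-quotients sont absolument simples. Comme l'évaluation en $t$ commute à l'extension des scalaires et comme le prolongement intermédiaire associé préserve et détecte la simplicité (proposition~\ref{pr-prolongement-interm}), on en déduit que les $T_i$ sont simples dans $\F(\C;L)$ et que leurs valeurs en $t$ réalisent les sous-quotients absolument simples de $S(t)\otimes_K L$. Une démonstration alternative, évitant le prolongement intermédiaire, consisterait à contrôler directement les sous-foncteurs de $S\otimes_K L$ via l'exactitude et la fidélité de la restriction des scalaires $\F(\C;L)\to \F(\C;K)$, combinée à un argument de type théorème de densité de Jacobson pour la structure de $(D\otimes_K L)$-module de $S\otimes_K L$~; c'est cet argument de densité, ou son substitut via la réduction à l'évaluation en $t$, qui constitue le cœur technique de la démonstration.
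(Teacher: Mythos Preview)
Ton plan contient une véritable lacune dans le cas de caractéristique positive. L'isomorphisme $D\otimes_K L\simeq \prod_{i=1}^r \M_n(L)$ suppose que $Z\otimes_K L$ soit un produit de copies de $L$, c'est-à-dire que $Z/K$ soit étale et que $L$ contienne une clôture normale de $Z$. Or si $Z/K$ possède une partie inséparable --- ce qui n'a aucune raison d'être exclu pour le centre du corps d'endomorphismes d'un foncteur simple --- alors $Z\otimes_K L$ a un radical nilpotent non nul pour \emph{toute} extension $L/K$, et $D\otimes_K L$ n'est donc jamais semi-simple. Ta parenthèse \og quitte à rendre l'extension séparable\fg{} ne peut pas résoudre cela~: on ne fabrique pas de séparabilité là où il n'y en a pas. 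Dans ce cas, il n'existe pas de décomposition en somme directe $S\otimes_K L=\bigoplus T_i^{\oplus n}$ avec $T_i$ simple, seulement une \emph{filtration}.

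La démonstration de l'article contourne précisément cet écueil. On prend $L$ extension normale finie de $K$ déployant $D$ sur $Z$, de sorte que $k:=D\otimes_K L$ ait pour quotient par son radical $\M_n(L)^m$~; ainsi tous les $k$-modules simples sont absolument simples sur $L$, et le $k$-module régulier $k$ possède une filtration finie à quotients absolument simples. Le point clé, qui remplace avantageusement ton détour par le prolongement intermédiaire, est la correspondance bijective entre sous-foncteurs de $S\otimes_K L$ dans $\F(\C;L)$ et sous-$k$-modules de $k$~: elle s'obtient directement via l'équivalence entre la sous-catégorie $S$-isotypique de $\F(\C;K)$ et $D^{op}\Md$ (c'est l'argument de la preuve de la proposition~\ref{abs-semi-simple}), en observant que les sous-objets $L$-stables correspondent aux sous-$(D^{op}\otimes_K L)$-modules. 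Cette correspondance transporte la filtration de $k$ en une filtration de $S\otimes_K L$ à sous-quotients absolument simples. Ton argument par idempotents est le cas particulier où le radical de $k$ est nul~; ton allusion finale à une approche \og type densité de Jacobson\fg{} pointe dans la bonne direction mais reste à préciser, et c'est exactement ce que fait la correspondance sous-foncteurs/sous-modules.
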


\begin{proof} Soit $Z$ le centre du corps ${\rm End}(S)$. Comme $S$ est à valeurs de dimensions finies, ${\rm End}(S)$ est de dimension finie sur $K$ (cf. le début de la démonstration de la proposition~\ref{crit-absimpl}) et donc sur $Z$. 

D'après \cite[§\,14.1, Théorème~1]{Bki}, il existe une extension finie $L$ de $Z$ et un entier $n>0$ tels que ${\rm End}(S)\otimes_Z L\simeq\mathcal{M}_n(L)$ comme $L$-algèbres. Quitte à remplacer $L$ par  sa clôture normale comme extension \emph{de $K$}, on peut supposer que l'extension finie $K\subset Z\subset L$ est normale.

Soient $A$ la $L$-algèbre commutative de dimension finie $Z\otimes_K L$ et $J$ son radical : comme l'extension $K\subset L$ est normale, la $L$-algèbre semi-simple $A/J$ est isomorphe à $L^m$ pour un entier $m>0$. Il s'ensuit que le quotient de la $L$-algèbre artinienne
\[R:={\rm End}(S)\otimes_K L\simeq {\rm End}(S)\otimes_Z(Z\otimes_K L)\]
par son radical est isomorphe à ${\rm End}(S)\otimes_Z L^m\simeq\big({\rm End}(S)\otimes_Z L\big)^m\simeq\M_n(L)^m$. Autrement dit, tous les $R$-modules simples sont absolument simples (sur $L$), et le $R$-module $R$ possède une filtration finie dont les quotients sont absolument simples.

En utilisant la correspondance entre sous-foncteurs de $S\otimes_K L$ dans $\F(\C;L)$ et sous-$R$-modules de $R$ (cf. le démonstration de la proposition~\ref{abs-semi-simple}), on en déduit le résultat souhaité.
\end{proof}

\subsection{Foncteurs simples sur une catégorie produit}\label{par-simprod}

La proposition ci-dessous constitue une variante fonctorielle de \cite[§\,12.1, proposition~2]{Bki}. 

\begin{pr}\label{simples-cat-prod}
 Soit $F$ un foncteur simple de $\F(\C\times\C';K)$. On suppose qu'il existe un objet $(x,y)$ de $\C\times \C'$ tel que  $F(x,y)$ soit de dimension finie non nulle sur $K$. Alors il existe des foncteurs simples $S$ de $\F(\C;K)$ et $T$ de $\F(\C';K)$, uniques à isomorphisme près, tels que $F$ soit isomorphe à un quotient de $S\boxtimes T$.
\end{pr}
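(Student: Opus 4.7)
The plan is to reduce the statement to the classical theorem on simple modules over a tensor product of algebras (Bourbaki \cite[§12]{Bki}) via the intermediate extension machinery of §\ref{par-prol-interm}. Set $V := F(x,y)$, $E_x := \operatorname{End}_\C(x)$, $E_y := \operatorname{End}_{\C'}(y)$. By Corollary \ref{simples-eval} applied to evaluation at $(x,y)$, $V$ is a simple module over $K[E_x \times E_y] = K[E_x] \otimes_K K[E_y]$, and it is finite-dimensional over $K$ by hypothesis. The classical theorem on simple modules over tensor products of algebras then furnishes simple modules $M$ over $K[E_x]$ and $N$ over $K[E_y]$, unique up to isomorphism, together with a surjection $\gamma : M \otimes_K N \twoheadrightarrow V$ of $K[E_x \times E_y]$-modules. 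I set $S := T_x(M) \in \F(\C;K)$ and $T := T_y(N) \in \F(\C';K)$ via the intermediate extensions associated with evaluation at $x$ and at $y$ respectively; both are simple by Corollary \ref{simples-eval} and Proposition \ref{pr-prolongement-interm}, with $S(x) \cong M$ and $T(y) \cong N$ by Proposition \ref{pr-prolongement-interm}(2).

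The main step is to produce a nonzero morphism $S \boxtimes T \to F$, which will automatically be a surjection by simplicity of $F$. Under the left adjoint $L_{(x,y)}$ of $\operatorname{ev}_{(x,y)}$, the surjection $\gamma$ transports, via the counit onto $F = T_{(x,y)}(V)$, to a morphism $L_{(x,y)}(M \otimes_K N) \to F$. A direct computation using the explicit formula $L_x(M) = M \otimes_{K[E_x]} K[\C(x,-)]$ (and similarly for $L_y$ and $L_{(x,y)}$) identifies $L_{(x,y)}(M \otimes_K N)$ with $L_x(M) \boxtimes L_y(N)$. Combined with the canonical surjections $L_x(M) \twoheadrightarrow S$ and $L_y(N) \twoheadrightarrow T$ (coming from the factorization that defines the intermediate extension), this yields a surjection $L_x(M) \boxtimes L_y(N) \twoheadrightarrow S \boxtimes T$ whose kernel is $K_x \boxtimes L_y(N) + L_x(M) \boxtimes K_y$, with $K_x$ and $K_y$ the respective kernels of the two component surjections. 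Now $K_x(x) = 0$, since the unit $M = L_x(M)(x) \to S(x) = M$ is the identity, and similarly $K_y(y) = 0$; hence the kernel above evaluates to $0$ at $(x,y)$. Its image in $F$ is a subfunctor vanishing at $(x,y)$, hence zero by simplicity of $F$ (since $F(x,y) = V \neq 0$). This produces the desired factorization $S \boxtimes T \to F$; nonvanishing is verified by evaluating at $(x,y)$, where the composition with $L_x(M) \boxtimes L_y(N) \twoheadrightarrow S \boxtimes T$ recovers $\gamma$.

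For uniqueness, suppose $\pi : S' \boxtimes T' \twoheadrightarrow F$ is another such surjection with $S', T'$ simple. Exactness of evaluation at $(x,y)$ yields a surjection $S'(x) \otimes_K T'(y) \twoheadrightarrow V$; since $V \neq 0$, both $S'(x)$ and $T'(y)$ are nonzero, hence simple by Corollary \ref{simples-eval}. The uniqueness clause in the classical tensor-product theorem forces $S'(x) \cong M$ and $T'(y) \cong N$, and Proposition \ref{pr-prolongement-interm}(1) then gives $S' \cong T_x(S'(x)) \cong S$ and similarly $T' \cong T$. I expect the main technical obstacle to be the factorization step: the identification $L_{(x,y)}(M \otimes_K N) \cong L_x(M) \boxtimes L_y(N)$ and the vanishing $K_x(x) = K_y(y) = 0$ are both elementary once spelled out, but they require some care with the several adjunctions at play.
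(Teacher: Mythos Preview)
Your proof is correct, but it takes a genuinely different route from the paper's. You reduce the functorial statement to the classical module-theoretic result of Bourbaki (simple modules over a tensor product of algebras) as a black box, then transport everything back via intermediate extensions and the identification $L_{(x,y)}(M\otimes N)\cong L_x(M)\boxtimes L_y(N)$; the factorization through $S\boxtimes T$ via the kernel argument is clean and correct. The paper instead argues more internally: it views $F$ as an object of $\fct(\C',\F(\C;K))$, finds a simple $S\subset F(-,y)$ directly (by intermediate extension of a simple $K[\mathrm{End}_\C(x)]$-submodule of $F(x,y)$), and then uses simplicity of $F$ and the support/co-support property to show that \emph{every} value $F(-,u)$ is semi-simple isotypic of type $S$; this isotypicity both characterises $S$ (hence uniqueness) and allows one to identify $F$ with $S\boxtimes_{\mathrm{End}(S)^{op}} T'$ for a suitable simple $T'$, from which the surjection $S\boxtimes T\twoheadrightarrow F$ is read off. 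Your approach is more modular and quicker once one trusts Bourbaki, whereas the paper's approach is self-contained and yields the extra structural information that all partial evaluations $F(t,-)$ and $F(-,u)$ are isotypic.
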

\begin{proof} 
Si $S$ et $T$ existent, alors pour tous les objets $t$ de $\C$ et $u$ de $\C'$, le foncteur $F(t,-)$, resp. $F(-,u)$ est semi-simple isotypique de type $S$, resp. $T$. Ceci prouve l'unicité de $S$ et $T$. Montrons leur existence. 

Notons que $\{(x,y)\}$ est un support et un co-support (cf. définition~\ref{def-supp}) de $F$. Il s'ensuit que $\{x\}$ (resp. $\{y\}$) est un support et un co-support de $F(-,u)$
(resp. $F(t,-)$) pour tout objet $u$ de $\C'$ (resp. $t$ de $\C$). Soit $M$ un $K[{\rm End}_\C(x)]$-module simple contenu dans $F(x,y)$ (il en existe car
$F(x,y)$ est de dimension finie non nulle sur $K$) : alors $F(-,y)$ contient le foncteur simple $S$ de $\F(\C;K)$ obtenu en appliquant à $M$ le prolongement intermédiaire associé à la restriction de $\C$ à sa sous-catégorie pleine constituée du seul objet $t$ (cf. propositions~\ref{restr-fct-refl} et~\ref{pr-prolongement-interm}), car le foncteur de prolongement intermédiaire préserve les
monomorphismes, et l'hypothèse qu'un ensemble d'objets d'un foncteur en constitue un support et un co-support est équivalente à dire que ce foncteur
s'identifie au prolongement intermédiaire de sa restriction à la sous-catégorie pleine déterminée par cet ensemble.

Ainsi, vu comme objet de $\fct(\C',\F(\C;K))(\simeq\F(\C\times\C';K))$, $F$ prend une valeur contenant un objet simple $S$ de $\F(\C;K)$. La simplicité de $F$ montre alors que {\em toutes} les valeurs de ce foncteur sont des objets semi-simples isotypiques de type $S$ dans $\F(\C;K)$, puisque $\{y\}$ constitue un support de $F$ (vu dans $\fct(\C',\F(\C;K))$). On obtient de même un simple $T$ de $\F(\C';K)$ tel que $F(c,-)$ soit un foncteur semi-simple isotypique de type $T$ pour tout objet $c$ de $\C$. 

Pour voir que $F$ est un quotient de $S\boxtimes T$, on raisonne comme au début de la démonstration de la proposition~\ref{abs-semi-simple}: la sous-catégorie $\mathcal{S}$ de $\F(\C;K)$ des foncteurs semi-simples isotypiques de type $S$ est équivalente à ${\rm End}(S)^{\op}\Md$. Il existe donc un foncteur simple $T'$ de $\F(\C';{\rm End}(S)^{\op})$ tel que $F\simeq S\underset{{\rm End}(S)^{\op}}{\boxtimes} T'$. L'image de $T'$ dans $\F(\C';K)$ (par le foncteur de restriction des scalaires au but) contient manifestement $T$, de sorte qu'on dispose dans $\F(\C';{\rm End}(S)^{\op})$ d'un épimorphisme $T\otimes_K {\rm End}(S)\twoheadrightarrow T'$, d'où un épimorphisme $S\boxtimes T\twoheadrightarrow F$.
\end{proof}

\begin{rem}
 Le résultat tombe en général en défaut si l'on omet l'hypothèse d'une valeur de dimension finie. Pour le voir, il suffit de trouver des anneaux commutatifs $A$ et $B$ tels qu'existe, en posant $A_K:=A\otimes_\mathbb{Z}K$ et $B_K:=B\otimes_\mathbb{Z}K$, un $A_K\otimes B_K$-module simple qui ne s'obtienne pas comme produit tensoriel sur $K$ d'un $A$-module simple et d'un $B$-module simple, et d'utiliser la proposition~\ref{additifs-bimodules} (et l'équivalence $\mathbf{P}(A\times B)\simeq\mathbf{P}(A)\times\mathbf{P}(B)$).
 
 Un exemple d'une telle situation s'obtient en posant $A:=\mathbb{Z}[x]$ et en prenant pour $B$ l'algèbre de polynômes $\mathbb{Z}[t_P]$, où les indéterminées $t_P$ sont indexées par les polynômes irréductibles unitaires $P$ de $K[X]$. On fait du corps $K(x)$ un $A_K\otimes B_K$-module via les morphismes de $K$-algèbres $K[x]\hookrightarrow K(x)$ (inclusion canonique) et $K[t_P]\to K(x)$ envoyant $t_P$ sur $1/P$. Ce module est simple car le morphisme $A_K\otimes B_K\to K(x)$ ainsi défini est surjectif. Mais $K(x)$ ne contient aucun $K[x]$-module simple.
\end{rem}

\begin{cor}\label{cor-cas-sympa-tens}
Supposons que $K$ est un corps de décomposition non additif de $\C$. Alors les simples de $\F^{\df}(\C\times \C';K)$ sont les bifoncteurs de la forme $S\boxtimes T$ où $S$ et $T$ sont des simples de $\F^{\df}(\C;K)$ et $\F^{\df}(\C';K)$ respectivement. De plus, deux simples $S\boxtimes T$ et $S'\boxtimes T'$ sont isomorphes si et seulement si $S\simeq S'$ et $T\simeq T'$.
\end{cor}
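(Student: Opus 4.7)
The plan is to combine Proposition \ref{simples-cat-prod} (which gives, for any simple $F$ in $\F^{df}(\C\times\C';K)$, a quotient presentation $S\boxtimes T\twoheadrightarrow F$ with $S,T$ unique simple) with the characterization \eqref{it2} of absolute simplicity in Proposition \ref{abs-semi-simple}, which converts such a quotient map into an isomorphism.

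First I would establish the easy direction: given simples $S\in\F^{df}(\C;K)$ and $T\in\F^{df}(\C';K)$, the bifunctor $S\boxtimes T$ is simple and takes finite-dimensional values. Finite dimensionality of the values is immediate from $(S\boxtimes T)(c,c')=S(c)\otimes_K T(c')$. Simplicity is where the decomposition-field hypothesis enters: since $K$ is a non-additive corps de décomposition of $\C$, the simple $S$ is absolutely simple, so condition \eqref{it2} of Proposition \ref{abs-semi-simple} applies and yields the simplicity of $S\boxtimes T$ in $\F(\C\times\C';K)$.

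For the converse, let $F$ be simple in $\F^{df}(\C\times\C';K)$. Pick any $(x,y)$ with $F(x,y)\ne 0$; since $F\in\F^{df}$, the value $F(x,y)$ is finite dimensional and nonzero, so the hypotheses of Proposition \ref{simples-cat-prod} are met. This yields simples $S\in\F(\C;K)$, $T\in\F(\C';K)$ (unique up to isomorphism) and a surjection $S\boxtimes T\twoheadrightarrow F$. I would next check that $S$ and $T$ actually belong to $\F^{df}$: choose $x_0,y_0$ with $S(x_0)\ne 0$ and $T(y_0)\ne 0$ (possible by non-nullity of $S$ and $T$); then $F(x_0,y)$ is a quotient of $S(x_0)\otimes T(y)$, and the proof of Proposition \ref{simples-cat-prod} in fact exhibits $T$ as a subfunctor of $F(x_0,-)$ (up to a finite multiplicity), so $\dim T(y)\le \dim F(x_0,y)<\infty$ for all $y$; symmetrically $S\in\F^{df}(\C;K)$. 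Applying the easy direction to this pair $(S,T)$ shows that $S\boxtimes T$ is already simple, hence the surjection $S\boxtimes T\twoheadrightarrow F$ is an isomorphism.

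Finally, the uniqueness clause: if $S\boxtimes T\simeq S'\boxtimes T'$ with all four functors simple in the appropriate $\F^{df}$ categories, then both presentations realize the left-hand side as a quotient of a box product of simples, so the uniqueness in Proposition \ref{simples-cat-prod} forces $S\simeq S'$ and $T\simeq T'$. No step is a genuine obstacle; the only subtle point, and thus the one I would write most carefully, is verifying that the $S$ and $T$ produced by Proposition \ref{simples-cat-prod} take finite-dimensional values, since the statement of that proposition only asserts a single finite-dimensional value on the product.
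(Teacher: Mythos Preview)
Your proposal is correct and follows exactly the implicit argument intended by the paper (which states the corollary without proof, as an immediate combination of Proposition~\ref{abs-semi-simple} and Proposition~\ref{simples-cat-prod}). One streamlining: for the finite-dimensionality of $S$ and $T$, you need not pick new objects $x_0,y_0$; the proof of Proposition~\ref{simples-cat-prod} already exhibits $S$ as a subfunctor of $F(-,y)$ and $T$ as a subfunctor of $F(x,-)$ for the \emph{original} pair $(x,y)$ with $F(x,y)\ne 0$, and since $F\in\F^{df}(\C\times\C';K)$ these restricted functors lie in $\F^{df}$, so $S$ and $T$ do too.
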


\begin{proof} Cela découle des propositions~\ref{abs-semi-simple} et~\ref{simples-cat-prod}.
\end{proof}

\subsection{Application : décomposition tensorielle primaire}\label{sect-decomp-prim}

Cette section contient une généralisation (directe) aux catégories additives des décompositions classiques des algèbres données par des familles d'idempotents orthogonaux \cite[§\,6A]{CuR}. Nous obtenons en particulier la proposition~\ref{dec-prim}, qui s'appliquera notamment aux foncteurs simples antipolynomiaux,   permettant de raffiner la décomposition tensorielle du corollaire~\ref{cor-tens-St1} ci-après.

Appelons {\em idempotent} de $\A$ un foncteur additif $e:\A\to \A$ qui est l'identité sur les objets, et tel que $e^2=e$.
On note alors $e\A$ la catégorie ayant les mêmes objets que $\A$ avec ${\rm Hom}_{e\A}(x,y)=e{\rm Hom}_{\A}(x,y)$ et avec la loi de composition
$(ef)\circ (eg)= e(f\circ g)$. Cette loi est bien définie, $e\A$ est une catégorie additive, et on a un foncteur quotient canonique 
$$\pi_e:\A\to e\A\;.$$
Une \emph{famille localement finie d'idempotents orthogonaux} de $\A$ est une famille d'idempotents $(e_i)_{i\in I}$ de $\A$ telle que:
\begin{enumerate}
\item $e_ie_j=0=e_je_i$ si $i\ne j$ ;
\item\label{item2} pour tous objets $x,y$ de $\A$, seul un nombre fini d'idempotents induisent une application non nulle $e_i:\A(x,y)\to \A(x,y)$ ;
\item $\sum_{i\in I} e_i =1$ (la somme signifie la somme sur les morphismes, et l'identité sur les objets. Cette somme éventuellement infinie a un sens
car elle est en fait finie à cause de la condition précédente). 
\end{enumerate}
Soit $(\A_i)_{i\in I}$ une famille de catégories additives. La somme $\bigoplus_{i\in I}\A_i$ est la sous-catégorie pleine de $\prod_{i\in I}\A_i$, dont les objets sont les familles $(x_i)_{i\in I}$ telles que tous les $x_i$ sont nuls sauf un nombre fini d'entre eux. 

\begin{lm}
Pour toute famille $(e_i)_{i\in I}$ localement finie d'idempotents orthogonaux de $\A$, le foncteur
$$\prod_{i\in I}\pi_{e_i}:\A\to\prod_{i\in I} e_i\A$$
est à valeurs dans la sous-catégorie $\bigoplus_{i\in I} e_i\A$, et pour toute catégorie abélienne $\V$, il induit une équivalence de catégories:
$$\fct\left(\bigoplus_{i\in I} e_i\A,\V\right)\simeq\fct(\A,\V)\;.$$
\end{lm}
\begin{proof}
La deuxième condition dans la définition des familles localement finies d'idempotents orthogonaux assure que pour tout objet $x$ de $\A$, et pour tous les $i$ sauf un nombre fini, $\pi_{e_i}(x)$ est nul dans $e_i\A$. Ainsi $\prod_{i\in I}\pi_{e_i}$ est à valeurs dans $\bigoplus_{i\in I} e_i\A$. La troisième condition assure que $\prod_{i\in I}\pi_{e_i}$ est pleinement fidèle. Il induit donc une équivalence de catégories entre $\A$ et son image $\Delta$. L'équivalence de catégories du lemme découle alors du fait que tout objet de $\bigoplus_{i\in I} e_i\A$ est facteur direct d'un objet de $\Delta$.
\end{proof}

\begin{pr}\label{prop-dec-ortho}
Soit  $(e_i)_{i\in I}$ une famille localement finie d'idempotents orthogonaux de $\A$. On suppose donnés, pour tout $i\in I$, des foncteurs $S_i$ et $T_i$ de $\F(e_i\A;K)$, de sorte que seul un nombre fini des $S_i$ et des $T_i$ soient non isomorphes au foncteur constant en $K$. 
\begin{enumerate}
\item Le foncteur $\bigotimes_{i\in I}\pi_{e_i}^*S_i$ est absolument simple si et seulement si les foncteurs $S_i$ sont absolument simples. 
\item Si on a un isomorphisme entre deux foncteurs absolument simples :
$$\bigotimes_{i\in I}\pi_{e_i}^*S_i\simeq \bigotimes_{i\in I}\pi_{e_i}^*T_i$$
alors pour tout $i\in I$, les foncteurs $S_i$ et $T_i$ sont isomorphes.
\end{enumerate}
Enfin, $K$ est un corps de décomposition non additif de $\A$ si et seulement si c'est un corps de décomposition non additif de chaque catégorie $e_i\A$. Dans ce cas, tous les foncteurs simples de $\F^{\df}(\A;K)$ sont isomorphes à un produit tensoriel du type ci-dessus.
\end{pr}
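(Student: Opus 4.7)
Notre stratégie est d'utiliser l'équivalence du lemme précédent pour identifier $\A$ avec $\bigoplus_{i\in I} e_i\A$, puis d'exploiter la filtration de $\A$ par ses sous-catégories additives pleines $\A_J := \bigoplus_{j\in J} e_j\A = \prod_{j\in J} e_j\A$ indexées par les parties finies $J\subseteq I$. Chaque inclusion $\iota_J : \A_J\hookrightarrow\A$ admet une rétraction canonique $\pi_J : \A\to\A_J$ projetant sur les composantes de $J$, de sorte que $\iota_J^*\pi_J^* = \mathrm{id}$. L'observation clé est que pour $J$ contenant tous les indices $i$ pour lesquels $S_i$ diffère du foncteur constant $K$, la restriction $\iota_J^*\bigotimes_i\pi_{e_i}^* S_i$ s'identifie canoniquement au produit tensoriel extérieur $\boxtimes_{i\in J} S_i$ dans $\F(\A_J;K)$. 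Cela ramènera les trois assertions au cadre fini traité par le corollaire~\ref{cor-cas-sympa-tens}.

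Pour l'assertion (1), nous recourons à la condition \eqref{it3} de la proposition~\ref{abs-semi-simple} : pour toute extension $L/K$, l'identité $F\otimes_K L \simeq \bigotimes_i\pi_{e_i}^*(S_i\otimes_K L)$ vaut car chaque produit tensoriel ponctuel est fini, ce qui ramène à prouver la simplicité dans $\F(\A;L)$ d'un produit tensoriel de foncteurs simples à corps d'endomorphismes scalaire. Étant donné un sous-foncteur non nul $G$, on choisit $x_0$ tel que $G(x_0)\neq 0$ ; pour tout $y\in\A$, en élargissant $J$ pour contenir les supports de $x_0$ et $y$ ainsi que les indices où $S_i\not\simeq K$, la restriction $\iota_J^* G$ devient un sous-foncteur non nul de $\boxtimes_{i\in J}(S_i\otimes_K L)$, lequel est absolument simple par itération de la proposition~\ref{abs-semi-simple}\eqref{it2} ; donc $\iota_J^* G$ remplit tout, d'où $G(y) = (F\otimes_K L)(y)$. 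Réciproquement, si un $S_j$ n'est pas absolument simple, en choisissant une extension $L/K$ et un sous-foncteur propre non nul $T\subsetneq S_j\otimes_K L$, on construit le sous-foncteur propre non nul $\pi_{e_j}^* T\otimes_L\bigotimes_{i\neq j}\pi_{e_i}^*(S_i\otimes_K L)$ de $F\otimes_K L$, sa non-nullité étant attestée en évaluant sur $y\oplus\bigoplus_{i\neq j}x_i^*$ pour des objets $x_i^*$ tels que $S_i(x_i^*)\neq 0$.

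Pour l'assertion (2), fixons $j$ et choisissons, pour chaque $i\neq j$, un objet $x_i^*\in e_i\A$ avec $S_i(x_i^*)\neq 0$ et $T_i(x_i^*)\neq 0$ : c'est possible en prenant $x_i^* = x_i^S\oplus x_i^T$ de sorte que les deux valeurs apparaissent comme facteurs directs par l'argument classique retract/section, la quasi-totalité des $x_i^*$ étant nuls. Posons $X = \bigoplus_{i\neq j}x_i^*$ (objet légitime) et tirons en arrière les deux produits tensoriels le long du foncteur additif $e_j\A\to\A$, $y\mapsto y\oplus X$. Les tirés en arrière sont les foncteurs semi-simples isotypiques $S_j\otimes_K V$ et $T_j\otimes_K V'$ avec $V := \bigotimes_{i\neq j}S_i(x_i^*)$ et $V' := \bigotimes_{i\neq j}T_i(x_i^*)$ tous deux non nuls et de dimensions finies. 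L'isomorphisme hypothétique se tire en arrière en un isomorphisme $S_j^{\oplus\dim V}\simeq T_j^{\oplus\dim V'}$, et l'unicité des composantes isotypiques dans la catégorie de Grothendieck $\F(e_j\A;K)$ force $S_j\simeq T_j$.

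Pour l'assertion (3), le sens direct résulte de l'assertion (1) combinée à l'observation que $\pi_{e_j}^* S$ est simple sur $\A$ lorsque $S$ est simple sur $e_j\A$ : un sous-foncteur se restreint via $\iota_{\{j\}}^*$ en un sous-foncteur de $S$, et comme la projection et l'inclusion $\A\rightleftarrows e_j\A$ agissent comme l'identité sur $\pi_{e_j}^* S$, le sous-foncteur coïncide avec le tiré en arrière de sa restriction. Pour la réciproque et la dernière affirmation, soit $F\in\F^{df}(\A;K)$ simple ; choisissons $x_0$ avec $F(x_0)\neq 0$ et posons $J_0 := \{i : e_i x_0\neq 0\}$, fini. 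Comme $e_i x_0 = 0$ pour $i\notin J_0$, on a $\A(x_0,x) = \A_{J_0}(x_0,\pi_{J_0}(x))$, donc $P_\A^{x_0} = \pi_{J_0}^* P_{\A_{J_0}}^{x_0}$. Tout sous-quotient d'un foncteur dans l'image de $\pi_{J_0}^*$ y reste (par le même argument de rétraction que ci-dessus), si bien que l'épimorphisme $P_\A^{x_0}\twoheadrightarrow F$ fourni par le lemme de Yoneda exhibe $F$ comme $\pi_{J_0}^* F'$ pour un simple $F'\in\F^{df}(\A_{J_0};K)$. L'application itérée du corollaire~\ref{cor-cas-sympa-tens} au produit fini $\A_{J_0}$ décompose $F'\simeq\boxtimes_{j\in J_0}S_j$ en facteurs absolument simples, donnant $F\simeq\bigotimes_{i\in I}\pi_{e_i}^*\tilde{S}_i$ avec $\tilde{S}_j = S_j$ pour $j\in J_0$ et $\tilde{S}_i = K$ sinon ; la simplicité absolue de $F$ en découle par l'assertion (1). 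Le point délicat principal sera la réduction systématique des ensembles d'indices infinis aux finis, qui repose sur le jeu entre la rétraction $\pi_J\circ\iota_J = \mathrm{id}$ et la finitude locale des données idempotentes et tensorielles.
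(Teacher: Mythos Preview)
Your proof is correct and follows essentially the same strategy as the paper: reduce to the case of a finite index set via the equivalence of the preceding lemma, then invoke the results on absolutely simple functors and simples over product categories (propositions~\ref{abs-semi-simple} and~\ref{simples-cat-prod}, or corollary~\ref{cor-cas-sympa-tens}). The paper compresses all of this into three sentences, noting that any finite-type functor on $\bigoplus_i e_i\A$ factors through a finite sub-sum; you unpack the same reduction in detail, and for part~(2) you replace the citation of proposition~\ref{simples-cat-prod} by a direct isotypic-component argument via pullback along $y\mapsto y\oplus X$, which is a pleasant variant.

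One minor inaccuracy: in your argument for~(2) you assert that $V=\bigotimes_{i\neq j}S_i(x_i^*)$ and $V'$ are finite-dimensional, but the statement does not assume the $S_i$ lie in $\F^{df}$. This does not matter for your proof: the isotypic-component argument only needs $V,V'\neq 0$, which you have arranged, and the conclusion $S_j\simeq T_j$ follows from the fact that a nonzero semisimple isotypic object of type $S_j$ cannot be isomorphic to one of type $T_j$ unless $S_j\simeq T_j$, regardless of multiplicities.
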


\begin{proof} Le fait que $K$ est un corps de décomposition non additif des $e_j\A$ si c'est un corps de décomposition non additif de $\A$ découle de ce que $\F^{\df}(e_j\A;K)$ est une sous-catégorie pleine stable par sous-quotient de $\F^{\df}(\A;K)$. Les autres assertions découlent du lemme précédent et des propositions~\ref{abs-semi-simple} et~\ref{simples-cat-prod}, lorsque la famille d'idempotents est {\em finie}.
 Le cas général s'en déduit car tout foncteur de type fini sur une catégorie somme directe de catégories additives se factorise par la projection sur la somme directe d'une sous-famille finie.
\end{proof}

Si $V$ est un groupe abélien et $p$ un nombre premier, on note $_{(p)}V$ le sous-groupe de torsion $p$-primaire de $V$. On note $_{(p)}\A$ la sous-catégorie de $\A$ avec les mêmes objets que $\A$ et dont les groupes abéliens de morphismes sont donnés par $(_{(p)}\A)(x,y)={_{(p)}(\A(x,y))}$. On a un foncteur canonique (qui est l'identité sur les objets et qui envoie un morphisme sur sa composante $p$-primaire):
$$\pi_p:\A\to {_{(p)}\A}\;.$$

\begin{pr}\label{dec-prim}
Soit $S$ un foncteur de $\F(\A;K)$. Si les groupes abéliens de morphismes $\A(x,y)$ sont finis pour tous objets $x$ et $y$ de $\A$ et que $K$ est un corps de décomposition non additif de $\A$, alors $S$ est simple si et seulement s'il existe des simples $S_p$ de $\F({_{(p)}}\A;K)$, pour tout nombre premier $p$, dont seul un nombre fini sont non isomorphes à $K$, tels que
\[S\simeq \bigotimes_p \pi_p^*S_p\;.\]
De plus, les $S_p$ sont uniques à isomorphisme près.
\end{pr}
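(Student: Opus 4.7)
Je propose de déduire cette proposition de la proposition~\ref{prop-dec-ortho} en exhibant une famille localement finie d'idempotents orthogonaux de $\A$ indexée par les nombres premiers, dont les catégories quotients associées sont précisément les ${_{(p)}\A}$ et dont les foncteurs quotients associés sont précisément les $\pi_p$.

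Pour ce faire, je définirais pour chaque nombre premier $p$ un foncteur $e_p : \A\to\A$ qui est l'identité sur les objets et qui envoie un morphisme $f\in\A(x,y)$ sur sa composante $p$-primaire $f_{(p)}$ dans le groupe abélien fini $\A(x,y)$. La vérification que $e_p$ définit bien un foncteur additif repose sur la bi-additivité de la composition de $\A$~: si $f$ est $p$-primaire d'ordre divisant $p^k$ et $g$ est $q$-primaire (pour $q\ne p$ premier) d'ordre divisant $q^\ell$, alors $f\circ g$ est simultanément annulé par $p^k$ et par $q^\ell$, donc est nul puisque ces entiers sont premiers entre eux. On en déduit que pour des morphismes composables arbitraires $f$ et $g$ on a $f\circ g = \sum_p f_{(p)}\circ g_{(p)}$, et comme chaque $f_{(p)}\circ g_{(p)}$ est $p$-primaire, cette expression coïncide avec la décomposition primaire de $f\circ g$, d'où $(f\circ g)_{(p)}=f_{(p)}\circ g_{(p)}$.

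Il faudrait alors vérifier les trois propriétés définissant une famille localement finie d'idempotents orthogonaux du §\,\ref{sect-decomp-prim} : l'orthogonalité $e_p\,e_q = 0$ pour $p\ne q$ découle de ce que la composante $p$-primaire d'un élément $q$-primaire est nulle ; la finitude locale provient de ce que chaque $\A(x,y)$, étant un groupe abélien fini, n'a qu'un nombre fini de composantes primaires non nulles ; enfin la relation $\sum_p e_p = \mathrm{id}$ est précisément la décomposition primaire des groupes abéliens finis. Par construction, la catégorie $e_p\A$ s'identifie alors à ${_{(p)}\A}$ et le foncteur quotient $\pi_{e_p}$ à $\pi_p$.

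La conclusion s'obtiendrait en appliquant directement la proposition~\ref{prop-dec-ortho} : l'hypothèse que $K$ est un corps de décomposition non additif de $\A$ entraîne qu'il l'est également pour chaque ${_{(p)}\A}$ (c'est l'énoncé final de la proposition~\ref{prop-dec-ortho} appliqué en sens inverse), et la proposition~\ref{pr-dim-finie}, grâce à la finitude des $\A(x,y)$, assure que tout foncteur simple de $\F(\A;K)$ est automatiquement à valeurs de dimensions finies et relève donc de la classification fournie par la proposition~\ref{prop-dec-ortho}. L'étape la plus substantielle du plan est la vérification que $e_p$ est effectivement un foncteur, c'est-à-dire la compatibilité entre composition et décomposition primaire ; une fois celle-ci acquise, le reste est formel et se déduit sans difficulté de la proposition générale sur les décompositions par familles d'idempotents orthogonaux.
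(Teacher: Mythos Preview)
Your proposal is correct and follows exactly the same route as the paper's proof, which simply states that the result follows from proposition~\ref{prop-dec-ortho} together with the primary decomposition of finite abelian groups and proposition~\ref{pr-dim-finie}. You have spelled out in detail the construction of the idempotents $e_p$ and the verification that they form a locally finite orthogonal family, which the paper leaves implicit.
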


\begin{proof}
D'après la proposition \ref{pr-dim-finie}, tous les foncteurs simples de $\F(\A;K)$ sont à valeurs de dimensions finies. 
Par conséquent, le résultat découle de la proposition~\ref{prop-dec-ortho} et de la décomposition primaire des groupes abéliens finis.
\end{proof}

\part{Décompositions tensorielles à la Steinberg}\label{part-resfond}

Cette partie présente les résultats théoriques principaux de l'article. Pour les foncteurs simples d'une petite catégorie additive vers les espaces vectoriels de dimensions finies sur un corps assez gros, ils se traduisent par des décompositions tensorielles qui en facilitent grandement l'étude.

\section{Décomposition tensorielle globale}\label{section-generique}

Le but de cette section, inspirée de Steinberg \cite{RSt}, est de ramener l'étude des foncteurs simples à celle de foncteurs simples de natures très différentes : les foncteurs simples polynomiaux, et des foncteurs simples que nous nommons {\it antipolynomiaux}, dont nous commençons par donner la définition et des propriétés élémentaires.

\subsection{Foncteurs antipolynomiaux}\label{ssect-antipol}

\begin{defi}\label{def-CategKtriv}
Une petite categorie additive $\B$ est dite \emph{$K$-triviale} si elle vérifie l'une des deux conditions équivalentes suivantes. 
\begin{enumerate}
\item Pour tous $x$ et $y$ de $\B$, le groupe abélien $\B(x,y)$ est fini et $K\otimes_{\mathbb{Z}}\B(x,y)=0$.
\item Pour tout $x$ de $\B$, le groupe abélien $\B(x,x)$ est fini et
$K\otimes_{\mathbb{Z}}\B(x,x)=0$. 
\end{enumerate}
\end{defi}

La deuxième condition de la définition \ref{def-CategKtriv} est clairement une conséquence de la première, et réciproquement la première condition s'obtient à partir de la deuxième qu'on applique à $x\oplus y$.

\begin{defi}\label{def-antipol} Un foncteur $F$ de $\F(\A;K)$ est dit \emph{antipolynomial} s'il existe une catégorie $K$-triviale $\B$ et un foncteur additif $\pi:\A\to \B$ tel que $F$ appartienne à l'image essentielle du foncteur de précomposition $\pi^* : \F(\B;K)\to\F(\A;K)$.
\end{defi}

La manipulation des foncteurs antipolynomiaux sera plus aisée en utilisant les idéaux de la catégorie additive $\A$.  
Rappelons (cf. Mitchell \cite[§\,3, p.~18]{Mi72}, Street \cite{Street}) qu'un {\em idéal} $\I$ de $\A$ est un sous-foncteur de ${\rm Hom}_\A : \A^{\op}\times\A\to\mathbf{Ab}$.
Autrement dit, $\I$ consiste en la donnée de sous-groupes $\I(x,y)$ des $\A(x,y)$ stables par composition à droite et à gauche par des morphismes quelconques.
Un tel idéal détermine une catégorie additive $\A/\I$ ayant les mêmes objets que $\A$ et dont les morphismes sont donnés par $(\A/\I)(x,y):=\A(x,y)/\I(x,y)$,
la composition étant induite par celle de $\A$. On dispose ainsi d'un foncteur additif canonique plein et essentiellement surjectif $\pi_\I : \A\to\A/\I$, qui est l'identité sur les objets.
Réciproquement, tout foncteur additif depuis $\A$ détermine un idéal en considérant les noyaux des morphismes induits entre groupes abéliens de flèches.
Nous noterons parfois simplement $\I\triangleleft\A$ pour signifier que $\I$ est un idéal de $\A$.

\begin{defi}\label{df-ideaux}
 Soit $\I$ un idéal de $\A$. Nous dirons que $\I$ est {\em $K$-cotrivial} si la catégorie $\A/\I$ est $K$-triviale.
\end{defi}

\begin{ex}\label{rem-ideal}
Les idéaux bilatères d'un anneau $A$ sont en bijection avec les idéaux de la catégorie $\mathbf{P}(A)$. La bijection envoie un idéal $I$ sur le foncteur $I.{\rm Hom}_{\mathbf{P}(A)}$. Elle induit une bijection entre les idéaux bilatères $I$ de $A$ tels que $A/I$ est fini et $K\otimes_{\mathbb{Z}} A/I$ est nul (que nous appellerons également \emph{idéaux $K$-cotriviaux de $A$}) et les idéaux $K$-cotriviaux de $\mathbf{P}(A)$. 
\end{ex}

Les idéaux de $\A$ forment un ensemble ordonné par l'inclusion, on y dispose d'une notion d'intersection et de somme. La propriété suivante est immédiate.

\begin{pr}\label{prop-ev-cotriv}
\begin{enumerate}
\item Tout idéal de $\A$ contenant un idéal $K$-cotrivial est $K$-cotrivial.
\item Toute intersection finie d'idéaux $K$-cotriviaux de $\A$ est un idéal $K$-cotrivial.
\end{enumerate}
\end{pr}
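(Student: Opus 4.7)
Le plan sera de ramener les deux assertions à l'observation élémentaire suivante : pour un groupe abélien fini $V$, la condition $K\otimes_\mathbb{Z} V=0$ équivaut à ce que l'ordre de $V$ (ou, ce qui revient au même, son exposant) soit inversible dans $K$ ; cette propriété est manifestement héréditaire par passage aux sous-groupes et aux quotients. Je procéderai objet par objet, en examinant pour chaque couple $(x,y)$ d'objets de $\A$ le groupe abélien $\A(x,y)/\I(x,y)$ pertinent, la définition d'un idéal $K$-cotrivial étant précisément donnée terme à terme.

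Pour l'assertion (1), étant donné un idéal $\J$ contenant un idéal $K$-cotrivial $\I$, je remarquerai que l'inclusion $\I\subset\J$ induit une surjection canonique
$$\A(x,y)/\I(x,y)\twoheadrightarrow\A(x,y)/\J(x,y)\;.$$
Comme un quotient d'un groupe fini est fini, le premier point de la définition \ref{df-ideaux} passe au quotient. Pour le second, on invoquera l'exactitude à droite du foncteur $K\otimes_\mathbb{Z}-$, qui fournit une surjection de $K\otimes_\mathbb{Z}\A(x,y)/\I(x,y)=0$ sur $K\otimes_\mathbb{Z}\A(x,y)/\J(x,y)$, d'où la nullité de ce dernier.

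Pour l'assertion (2), une récurrence immédiate permettra de se ramener au cas de l'intersection de deux idéaux $K$-cotriviaux $\I$ et $\J$. On disposera alors du monomorphisme canonique
$$\A(x,y)/(\I\cap\J)(x,y)\hookrightarrow\A(x,y)/\I(x,y)\oplus\A(x,y)/\J(x,y)$$
(à savoir le produit des deux projections, dont le noyau est précisément $(\I\cap\J)(x,y)/(\I\cap\J)(x,y)=0$). Le terme de droite est un groupe abélien fini dont l'ordre, produit des ordres des deux facteurs, est inversible dans $K$ ; par hérédité aux sous-groupes, le terme de gauche est lui aussi fini et d'ordre inversible dans $K$, ce qui entraîne la nullité de son tensorisé par $K$.

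La démonstration ne présente aucun obstacle véritable : elle se ramène à une manipulation élémentaire autour de la finitude des quotients et du comportement de $K\otimes_\mathbb{Z}-$ vis-à-vis des surjections et des sous-groupes de groupes abéliens finis. Seul un soin minime est à apporter pour la deuxième assertion, où il faut éviter d'utiliser la tensorisation par $K$ sur l'injection (qui n'est pas exacte à gauche) et invoquer à la place la caractérisation par l'inversibilité des ordres.
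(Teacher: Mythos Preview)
Your proof is correct. The paper does not give a proof of this proposition, declaring it immediate (\og La propriété suivante est immédiate\fg{}); your explicit verification via the surjection for~(1) and the embedding in the direct sum together with the characterisation by invertibility of orders for~(2) is precisely the routine check the authors omit.
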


\begin{rem}\label{ex-pas-de-cotriv} Dans un certain nombre de situations courantes, $\A$ ne possède pas d'autre idéal $K$-cotrivial que $\A$. C'est notamment le cas si la catégorie additive $\A$ est linéaire sur un corps $L$, lorsque $L$ est infini ou lorsque $\mathrm{car}(L)=\mathrm{car}(K)$.
\end{rem}

\begin{pr}\label{pr-antipol} Un foncteur $F$ de $\F(\A;K)$ est antipolynomial si et seulement s'il existe un idéal $K$-cotrivial $\I$ de $\A$ tel que $F$ appartienne à l'image essentielle du foncteur de précomposition $\pi_\I^* : \F(\A/\I;K)\to\F(\A;K)$.
\end{pr}
\begin{proof}
L'équivalence provient du fait que pour tout foncteur additif $\pi:\A\to \B$ de but $K$-trivial, l'idéal $\I=\ker \pi$ est $K$-cotrivial et $\pi$ factorise par le quotient $\pi_\I:\A\to \A/\I$.
\end{proof}

\begin{pr}\label{prel-antipol}
\begin{enumerate}
\item Les foncteurs antipolynomiaux simples sont à valeurs de dimensions finies.
\item La classe des foncteurs antipolynomiaux de $\F(\A;K)$ est stable par sous-quotients, sommes directes finies, et par produit tensoriel.
\item Tout foncteur \ph\ et antipolynomial de $\F(\A;K)$ est constant.
\end{enumerate}
\end{pr}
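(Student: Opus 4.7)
The three assertions can be handled in sequence, each by reducing to results already established in the excerpt.

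For assertion (1), write the antipolynomial simple functor $F$ as $\pi_\I^* G$ for some $K$-cotrivial ideal $\I$. Since $\pi_\I : \A \to \A/\I$ is the identity on objects and surjective on morphism groups, the precomposition functor $\pi_\I^*$ is fully faithful, its essential image is a Serre subcategory of $\F(\A;K)$, and it preserves and reflects subfunctors. Hence $G$ is a simple object of $\F(\A/\I;K)$. Because $\I$ is $K$-cotrivial the groups $(\A/\I)(x,y)$ are finite, so Proposition~\ref{pr-dim-finie} applied to $\A/\I$ gives that $G$ (hence $F$) takes values of finite dimension.

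For assertion (2), the stability under subquotients follows from the observation just made: $\pi_\I^*$ identifies $\F(\A/\I;K)$ with the full subcategory of functors on which $\I$ acts trivially, and this subcategory is manifestly closed under subquotients. For finite direct sums and tensor products, if $F_1 = \pi_{\I_1}^* G_1$ and $F_2 = \pi_{\I_2}^* G_2$ with $\I_1$ and $\I_2$ both $K$-cotrivial, set $\I := \I_1 \cap \I_2$. Proposition~\ref{prop-ev-cotriv} gives that $\I$ is still $K$-cotrivial, both $F_i$ factor through the canonical projection $\A \to \A/\I$, and so do $F_1 \oplus F_2$ and $F_1 \otimes F_2$.

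For assertion (3), given $F$ both polyHomial and antipolynomial, write $F = \pi_\I^* G$. I would first show that $G$ is itself polyHomial. Each map $F_{x,y} : \A(x,y) \to \Hom_K(F(x),F(y))$ factors as $G_{x,y} \circ \pi_{\I,x,y}$, where $\pi_{\I,x,y}: \A(x,y) \twoheadrightarrow (\A/\I)(x,y)$ is a surjective group homomorphism; by the standard behavior of Eilenberg--Mac Lane deviations under surjective group homomorphisms (the deviation $\delta_d(G_{x,y})$ on $(\A/\I)(x,y)^d$ lifts to $\delta_d(F_{x,y})$ on $\A(x,y)^d$ along the surjection $\pi_{\I,x,y}^d$, so vanishing of the former is equivalent to vanishing of the latter), $G_{x,y}$ is polynomial of the same degree as $F_{x,y}$. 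Now apply Proposition~\ref{add-pol-cst} to the category $\A/\I$: the $K$-cotriviality assumption is precisely condition~(4) for $\A/\I$, namely $(\A/\I)(x,y)\otimes_\mathbb{Z} K = 0$. Hence every polyHomial functor on $\A/\I$ is constant, so $G$ is constant, and therefore so is $F$.

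The only non-routine step is the descent of polyHomiality from $F$ to $G$, i.e. checking that an Eilenberg--Mac Lane polynomial function on an abelian group passes to a quotient; this is the place that actually uses that $\pi_{\I,x,y}$ is a morphism of abelian groups (not only a set-theoretic surjection), and it is the argument one should state carefully before invoking Proposition~\ref{add-pol-cst}.
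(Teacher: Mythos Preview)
Your proof is correct and follows the same approach as the paper: reduce to $\A/\I$ via $\pi_\I^*$, invoke Proposition~\ref{pr-dim-finie} for (1), Proposition~\ref{prop-ev-cotriv} for (2), and Proposition~\ref{add-pol-cst} for (3). The paper's own proof of (3) is the single line ``résulte de la proposition~\ref{add-pol-cst}'', so your explicit justification that polyHomiality descends along the surjective group homomorphisms $\pi_{\I,x,y}$ is a useful expansion of what the paper leaves implicit.
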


\begin{proof} La première assertion résulte de la proposition \ref{pr-dim-finie}. 
La proposition~\ref{prop-ev-cotriv} montre que tout ensemble \emph{fini} de foncteurs antipolynomiaux de $\F(\A;K)$ appartient à l'image essentielle de $\pi_\I^*$ pour un idéal $K$-cotrivial convenable $\I$ de $\A$. La deuxième assertion résulte donc de ce que l'image essentielle du foncteur $\pi_\I^*$ est stable par sommes directes, produits tensoriels, et sous-quotients (ce dernier fait venant de ce que $\pi_I$ est plein et essentiellement surjectif, en utilisant par exemple \cite[Prop.~A.2]{V2008}).
Enfin, la troisième assertion résulte de la proposition~\ref{add-pol-cst}.
\end{proof}

\subsection{Résultats de décomposition}\label{sous-section-generique} La proposition suivante, qu'on établira au §\,\ref{dm-thglob2}, montre comment construire des foncteurs simples à partir d'un idéal $K$-cotrivial. Elle peut aussi s'appliquer à des situations plus générales et, contrairement aux autres énoncés de cette section, elle ne nécessite aucune hypothèse de valeurs de dimensions finies.

On note dans ce qui suit $\psi_\I : \A\to\A/\I\times\A$ le foncteur dont les composantes sont la projection $\pi_\I : \A\to\A/\I$ et l'identité de $\A$.

\begin{pr}\label{pr-simglobi}  Soit $\I\triangleleft\A$ tel que $(\A/\I)(x,x)\otimes_\mathbb{Z} K=0$ pour tout objet $x$ de $\A$. Alors la restriction du foncteur de précomposition $\psi_\I^* : \F(\A/\I\times\A;K)\to\F(\A;K)$ à la sous-catégorie pleine des bifoncteurs de $\F(\A/\I\times\A;K)$ \phs\ par rapport à la deuxième variable est pleinement fidèle, et son image essentielle est stable par sous-quotient. En particulier, $\psi_\I^*$ envoie un foncteur simple de $\F(\A/\I\times\A;K)$ \ph\ par rapport à la première variable sur un foncteur simple de $\F(\A;K)$.
\end{pr}

Tous les résultats de cette section se déduiront du précédent et du théorème suivant, qu'on démontrera au §\,\ref{dm-thglob}.

\begin{thm}\label{th-glob-tf} Soit $F$ un foncteur de type fini (ou de type co-fini) de $\F^{\df}(\A;K)$. Il existe un bifoncteur $B$ de $\F^\df(\A\times\A;K)$ tel que :
\begin{enumerate}
\item $B$ est antipolynomial par rapport à la première variable, c'est-à-dire que pour tout $x$, $B(-,x)$ est un foncteur antipolynomial de $\F(\A;K)$ ;
\item $B$ est \ph\ par rapport à la deuxième variable, c'est-à-dire que pour tout $x$, $B(x,-)$ est un foncteur \ph\ de $\F(\A;K)$ ;
\item $F$ est isomorphe à la précomposition de $B$ par la diagonale $\A\to\A\times\A$.
\end{enumerate}

De plus, $B$ est unique à isomorphisme près.
\end{thm}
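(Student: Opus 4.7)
Mon plan est d'exhiber le bifoncteur $B$ en identifiant un idéal $K$-cotrivial $\I \triangleleft \A$ canoniquement associé à $F$, puis d'invoquer la proposition~\ref{pr-simglobi} pour l'unicité.

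Premièrement, j'exploiterais l'hypothèse de finitude sur $F$ : son type fini (ou co-fini) fournit un support fini (ou co-support) $S$, et les valeurs $F(s)$ pour $s \in S$ sont des $K$-espaces vectoriels de dimension finie. L'action $\rho_s : \mathrm{End}_\A(s) \to \mathrm{End}_K(F(s))$ atterrit dans une $K$-algèbre de dimension finie ; après extension des scalaires, on dispose d'une décomposition de Wedderburn--Artin qui distingue les facteurs simples sur lesquels $\mathrm{End}_\A(s)$ agit à travers un quotient fini d'ordre inversible dans $K$ (les « blocs antipolynomiaux ») des autres facteurs (les « blocs polyHomiaux »).

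Deuxièmement, je construirais $\I$ de la manière suivante : $\I(x,y)$ est l'ensemble des morphismes $f : x \to y$ dont l'action induite sur les blocs antipolynomiaux s'annule après passage aux décompositions de Wedderburn pertinentes. L'hypothèse de finitude sur $F$ (support ou co-support fini) est cruciale pour garantir que $\I$ est effectivement un idéal de $\A$, et que le quotient $\A/\I$ est $K$-cotrivial -- c'est-à-dire que les groupes abéliens $\A(x,y)/\I(x,y)$ sont finis et d'ordre inversible dans $K$. Je définirais ensuite $B$ sur $\A/\I \times \A$ en empaquetant la décomposition de Wedderburn pointwise : $B(y, x)$ combine la partie antipolynomiale dépendant de $y \in \A/\I$ et la partie polyHomiale dépendant de $x \in \A$. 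Le fait que $F \simeq \psi_\I^* B$ résulte du réassemblage de la décomposition le long de la diagonale ; l'antipolynomialité de $B$ par rapport à la première variable et sa polyHomialité en la seconde se lisent directement sur la construction.

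Pour l'unicité, soient $B_1, B_2$ deux bifoncteurs vérifiant les conclusions, avec idéaux $K$-cotriviaux associés $\I_1, \I_2$. On pose $\I := \I_1 \cap \I_2$, qui est encore $K$-cotrivial par la proposition~\ref{prop-ev-cotriv}, et à travers lequel les deux $B_i$ se factorisent sur $\A/\I \times \A$. La pleine fidélité du foncteur de précomposition $\psi_\I^*$ sur les bifoncteurs \phs\ par rapport à la deuxième variable (proposition~\ref{pr-simglobi}) force alors $B_1 \simeq B_2$.

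L'obstacle principal sera la deuxième étape : montrer que $\I$ est effectivement un idéal bilatère de $\A$ requiert de vérifier que les décompositions de Wedderburn aux différents objets $s$ sont compatibles sous la fonctorialité de $F$, ce qui est une condition de cohérence non triviale. La $K$-cotrivialité de $\I$ nécessite quant à elle de s'appuyer sur la finitude quantitative des blocs antipolynomiaux et sur l'hypothèse de type fini (ou co-fini) pour en déduire un contrôle uniforme de la structure de $\A(x,y)/\I(x,y)$ en dehors du support $S$.
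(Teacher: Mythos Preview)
Your uniqueness argument (intersecting the two $K$-cotrivial ideals and applying proposition~\ref{pr-simglobi}) is correct and matches the paper's logic. The existence argument, however, has a genuine gap.

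The map $\rho_s : \mathrm{End}_\A(s) \to \mathrm{End}_K(F(s))$ is \emph{not} a ring homomorphism: $F$ is not additive, so $\rho_s$ is only multiplicative, not $K$-linear. There is therefore no Wedderburn--Artin decomposition available in the sense you invoke. Even if one passes to the $K$-subalgebra of $\mathrm{End}_K(F(s))$ generated by $\mathrm{Im}\,\rho_s$, nothing in that algebra intrinsically labels a simple factor as ``antipolynomial'' versus ``polyHomial'': the dichotomy is about how $F$ interacts with the \emph{additive} structure of the Hom groups of $\A$, and that information is invisible in a single endomorphism algebra. Your definition of $\I(x,y)$ (``morphisms whose action on the antipolynomial blocks vanishes'') is therefore not well-posed.

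The paper's construction is different and supplies exactly the missing idea. For $f \in \A(x,y)$ one forms the elementary automorphism $u[f] = \begin{pmatrix} 1 & 0 \\ f & 1 \end{pmatrix}$ of $x \oplus y$ and declares $f \in \I(x,y)$ when $F(u[f] \oplus t)$ is \emph{unipotent} for every $t$. Because the $u[f]$ commute additively in $f$ (lemma~\ref{propunit}), the $F(u[f])$ are commuting automorphisms of a finite-dimensional space, so one can simultaneously triangularize them; the diagonal eigenvalues give a group homomorphism $\A(x,y) \to (\bar K^\times)^N$ with kernel $\I(x,y)$, and a factorization trick through $\A(x \oplus y, x \oplus y)$ shows its image is finite (hence $K$-cotrivial). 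The bifunctor $B$ then arises from the \emph{multiplicative Jordan decomposition} $F(u[f]) = U(f)\cdot D(f)$: the unipotent part $U$ is automatically polynomial in $f$ (triangular with nilpotent off-diagonal), while the semisimple part $D$ factors through $\A/\I$. This Jordan-decomposition mechanism is the heart of the proof and has no analogue in your outline.
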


Cet énoncé et sa démonstration s'inspirent de résultats de Steinberg \cite[théorèmes~6 et~8]{RSt} mentionnés dans l'introduction.

En combinant le théorème~\ref{th-glob-tf} à la proposition~\ref{pr-phpol-lf}, on obtient le résultat suivant, qui s'applique notamment aux foncteurs de longueur finie de $\F^{\df}(\A;K)$.

\begin{cor}\label{cor-tftcf} Tout foncteur de type fini et de type cofini de $\F^{\df}(\A;K)$ s'exprime, de façon unique à isomorphisme près, comme la composée de la diagonale $\A\to\A\times\A$ et d'un bifoncteur de $\F^\df(\A\times\A;K)$ antipolynomial par rapport à la première variable et polynomial par rapport à la deuxième.
\end{cor}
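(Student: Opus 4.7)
Le point de départ est d'appliquer le théorème~\ref{th-glob-tf} au foncteur $F$, qui est de type fini par hypothèse. On obtient un unique bifoncteur $B$ de $\F(\A\times\A;K)$, antipolynomial par rapport à la première variable, polyHomial par rapport à la seconde, et tel que $F\simeq B\circ\Delta$. L'unicité exigée dans l'énoncé du corollaire se déduit directement de celle du théorème; il ne reste qu'à renforcer la polyHomialité de $B$ en la seconde variable en polynomialité, c'est-à-dire à montrer que pour tout $x\in\A$, le foncteur $B(x,-)$ est polynomial (avec un degré borné uniformément en $x$, via le lemme~\ref{lm-sucosu}).

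Pour cela, j'invoquerais la proposition~\ref{pr-phpol-lf}, qui assure qu'un foncteur polyHomial possédant un support fini et un co-support fini est polynomial. Soient donc $S$ un support fini et $T$ un co-support fini de $F$ (fournis par les hypothèses de type fini et de type cofini). L'étape décisive consiste à vérifier que pour tout objet $x$ de $\A$, $S$ est encore un support de $B(x,-)$ et $T$ en est un co-support. Une fois ce point acquis, la proposition~\ref{pr-phpol-lf} entraîne la polynomialité de chaque $B(x,-)$, et le lemme~\ref{lm-sucosu} appliqué aux mêmes $S$ et $T$ fournit une borne $d$ commune sur les degrés — égale au maximum, fini puisque $S$ et $T$ sont finis et que $B$ est polyHomial en la seconde variable, des degrés des applications polynomiales $\A(s,t)\to\mathrm{Hom}_K(B(x,s),B(x,t))$ pour $s\in S$ et $t\in T$.

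L'obstacle principal réside donc dans cette hérédité des propriétés de support et de co-support de $F$ vers chaque $B(x,-)$. Pour la traiter, j'exploiterais la construction canonique de $B$ esquissée au §\,\ref{dm-thglob}: $B$ s'écrit sous la forme $(\pi_\I\times\mathrm{id})^*B'$ pour un idéal $K$-cotrivial $\I$ adéquat et un bifoncteur $B'$ de $\F(\A/\I\times\A;K)$ polyHomial en la seconde variable. La pleine fidélité du foncteur $\psi_\I^*$ sur la sous-catégorie pleine des bifoncteurs polyHomiaux en la seconde variable (proposition~\ref{pr-simglobi}), ainsi que la stabilité de son image essentielle par sous-quotients, permettent alors de remonter tout épimorphisme du type $\bigoplus_{s\in S}M_s[\A(s,-)]\twoheadrightarrow F$ en un épimorphisme analogue vers $B'$ dans $\F(\A/\I\times\A;K)$; on en déduit que $S$ fournit bien un support de $B'$ dans sa seconde variable, donc de chaque $B(x,-)$. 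Un argument dual, utilisant un monomorphisme $F\hookrightarrow\prod_{t\in T}N_t^{\A(-,t)}$, fournit $T$ comme co-support.
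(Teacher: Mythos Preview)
Your strategy is the paper's: combine Theorem~\ref{th-glob-tf} with Proposition~\ref{pr-phpol-lf}, the only non-formal point being the passage of finite support and co-support from $F$ to each $B(x,-)$.

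Your justification of that passage via Proposition~\ref{pr-simglobi}, however, does not go through as written. Full faithfulness of $\psi_\I^*$ and stability of its essential image by subquotients only let you lift morphisms \emph{between}, or sub/quotients \emph{of}, objects already in that image. The source $\bigoplus_{s\in S}M_s[\A(s,-)]$ of your epimorphism is in general \emph{not} of the form $\psi_\I^*X$ with $X$ polyHomial in the second variable: for instance, $K[\A(s,-)]$ is not polyHomial when the groups $\A(s,a)$ are infinite (the map $a\mapsto[av]$ from $\mathbb{Z}$ into $K[\mathbb{Z}]$ is not polynomial). So neither property of $\psi_\I^*$ produces the ``épimorphisme analogue'' onto $B'$ that you invoke.

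The hérédité is more directly obtained as follows. For any object $x$ of $\A$, the object $(\bar x,y)$ of $\A/\I\times\A$ is a retract of $\psi_\I(x\oplus y)$, naturally in $y$; hence $B(x,-)$ is a natural retract of the translate $\tau_xF:=F(x\oplus -)$. Since $\tau_x$ is exact and $\tau_x\big(M[\A(s,-)]\big)\simeq \big(M[\A(s,x)]\big)[\A(s,-)]$ (and dually $\tau_x\big(M^{\A(-,t)}\big)\simeq\big(M^{\A(x,t)}\big)^{\A(-,t)}$), any finite support $S$ and co-support $T$ of $F$ remain so for $\tau_xF$, hence for its retract $B(x,-)$. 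Proposition~\ref{pr-phpol-lf} then concludes. (Your final appeal to Lemma~\ref{lm-sucosu} for a degree bound independent of $x$ is not needed for the statement as phrased in the paper, and is in any case not justified by polyHomiality alone, which bounds each map $\A(s,t)\to\mathrm{Hom}_K(B(x,s),B(x,t))$ only for fixed $x$.)
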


\begin{thm}\label{th-simglob} Un foncteur $F$ de $\F^{\df}(\A;K)$ est simple si et seulement s'il est isomorphe à la composée de la diagonale $\A\to\A\times\A$ et d'un bifoncteur simple de $\F^{\df}(\A\times\A;K)$ antipolynomial par rapport à la première variable et polynomial par rapport à la deuxième. De plus, ce bifoncteur est uniquement déterminé par $F$, à isomorphisme près.
\end{thm}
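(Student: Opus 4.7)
La preuve procédera par combinaison du corollaire~\ref{cor-tftcf} (existence et unicité d'un bifoncteur $B$ décomposant $F$) et de la proposition~\ref{pr-simglobi} (qui transférera la simplicité entre $F$ et $B$). On traitera chaque sens séparément.

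Pour le sens réciproque, nous partirons d'un bifoncteur $B$ simple de $\F^{df}(\A\times\A;K)$, antipolynomial par rapport à la première variable et polynomial par rapport à la seconde. Nous produirons un idéal $K$-cotrivial $\I$ de $\A$ et un bifoncteur $B'$ simple de $\F(\A/\I\times\A;K)$ tels que $B\simeq(\pi_\I\times\mathrm{id}_\A)^*B'$ --- la simplicité de $B'$ découlant de la pleine fidélité et de la stabilité par sous-objet de l'image essentielle de $(\pi_\I\times\mathrm{id}_\A)^*$. Ce $B'$ sera polynomial (donc \ph) par rapport à la seconde variable, et l'identification $\Delta^*B\simeq\psi_\I^*B'$, combinée à la proposition~\ref{pr-simglobi}, fournira la simplicité de $\Delta^*B$ dans $\F^{df}(\A;K)$.

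Pour le sens direct, nous partirons de $F$ simple dans $\F^{df}(\A;K)$. Étant de longueur finie, $F$ sera à la fois de type fini et de type cofini ; le corollaire~\ref{cor-tftcf} produira alors un bifoncteur $B$ de $\F(\A\times\A;K)$, unique à isomorphisme près, antipolynomial par rapport à la première variable et polynomial par rapport à la seconde, vérifiant $F\simeq\Delta^*B$. Il restera à établir que $B$ est simple et à valeurs de dimensions finies. Pour la simplicité, on observera qu'un sous-bifoncteur non nul $B''\subset B$ reste antipolynomial en la première variable et polynomial en la seconde par stabilité par sous-objet (proposition~\ref{prel-antipol} et section~\ref{subsec-fctpol}) ; la proposition~\ref{pr-simglobi} garantira alors que $\Delta^*B''$ est un sous-foncteur non nul de $F$, forcément égal à $F$ par simplicité, ce qui entra\^inera $B''\simeq B$ par pleine fidélité (ou par l'unicité du corollaire~\ref{cor-tftcf}). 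Pour la finitude des valeurs, on factorisera $B\simeq(\pi_\I\times\mathrm{id}_\A)^*B'$ et l'on notera que $B'(-,y)$, de type fini sur $\A/\I$ (catégorie dont les ensembles de morphismes sont finis), est à valeurs de dimensions finies en vertu de la proposition~\ref{pr-dim-finie}. L'unicité globale de $B$ (avec sa simplicité) résultera enfin de celle du corollaire~\ref{cor-tftcf}.

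Le point délicat sera la traduction entre l'antipolynomialité d'un bifoncteur par rapport à la première variable (définie, objet par objet, par une famille d'idéaux $K$-cotriviaux a priori distincts selon la seconde variable) et la factorisation globale à travers un unique quotient $\A/\I\times\A$. Ce point se règlera en exploitant la finitude du support de $B$, elle-même conséquence de la finitude de la longueur de $F$ : un nombre fini d'idéaux $K$-cotriviaux suffiront à contrôler $B$, et la proposition~\ref{prop-ev-cotriv} permettra de les intersecter en un idéal $K$-cotrivial unique.
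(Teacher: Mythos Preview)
Your approach matches the paper's: the two ingredients are exactly le corollaire~\ref{cor-tftcf} et la proposition~\ref{pr-simglobi}, and the paper's proof is essentially the two-line observation that these suffice. Your elaboration is largely correct, but two points deserve attention.

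For the simplicity of $B$, your argument via pleine fidélité is sound (once you have factored through $\A/\I\times\A$, the functor $\psi_\I^*$ is fully faithful on the relevant bifunctors and exact, hence reflects isomorphisms; so the inclusion $B''\hookrightarrow B$ becomes an isomorphism). The alternative via l'unicité du corollaire~\ref{cor-tftcf} alone is incomplete: uniqueness only yields an \emph{abstract} isomorphism $B''\simeq B$, which does not by itself force the inclusion $B''\hookrightarrow B$ to be an equality (penser à $2\mathbb{Z}\subset\mathbb{Z}$). You should rely on the full faithfulness route.

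For finite-dimensionality, your claim that $B'(-,y)$ is de type fini in $\F(\A/\I;K)$ is not justified as stated: $B'$ being simple gives $B'$ of finite type in $\F(\A/\I\times\A;K)$, but the partial evaluation $B'(-,y)$ is only a quotient of $\bigoplus_i P^{a_i}_{\A/\I}\otimes K[\A(b_i,y)]$, where the second tensor factor may be infinite-dimensional. You can salvage this by noting that evaluation at $y$ is a reflexive functor (proposition~\ref{restr-fct-refl}), so $B'(-,y)$ is simple or zero by proposition~\ref{pr-prolongement-interm}, hence of finite type. Much simpler, however, is the direct observation (implicit in the Morita equivalence invoked at the start of the proof of proposition~\ref{pr-simglobi}): the object $(x,y)$ is a retract of $(x\oplus y,x\oplus y)=\Delta(x\oplus y)$ in $\A\times\A$, so $B(x,y)$ is a retract of $B(x\oplus y,x\oplus y)=F(x\oplus y)$, which is finite-dimensional by hypothesis. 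This avoids the detour through $\A/\I$ entirely.
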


\begin{proof} Le fait qu'une telle composée soit simple découle de la proposition~\ref{pr-simglobi}. Le reste de l'énoncé découle du corollaire~\ref{cor-tftcf}.
\end{proof}

En appliquant le corollaire~\ref{cor-cas-sympa-tens} on en tire l'importante conséquence suivante, qui ramène l'étude des simples de $\F^{\df}(\A;K)$ à celle des simples polynomiaux et celle des simples antipolynomiaux, au moins lorsque $K$ est assez gros -- par exemple si $K$ contient toutes les racines de l'unité, par la proposition~\ref{crit-absimpl}.

\begin{cor}\label{cor-tens-St1} Supposons que $K$ est un corps de décomposition non additif de $\A/\I$ pour tout idéal $K$-cotrivial $\I$ de $\A$.

Alors un foncteur de $\F^{\df}(\A;K)$ est simple si et seulement s'il est isomorphe au produit tensoriel d'un foncteur simple polynomial de $\F^{\df}(\A;K)$ et d'un foncteur simple antipolynomial de $\F(\A;K)$. De plus, une telle décomposition tensorielle est unique à isomorphisme près.
\end{cor}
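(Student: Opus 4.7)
The plan is to combine Theorem~\ref{th-simglob} with Corollary~\ref{cor-cas-sympa-tens} applied to a product $\A/\I\times\A$, for a suitably chosen $K$-cotrivial ideal $\I$. Concretely, given a simple $F\in\F^{df}(\A;K)$, Theorem~\ref{th-simglob} produces a simple bifunctor $B\in\F^{df}(\A\times\A;K)$, antipolynomial in the first variable and polynomial in the second, with $F\simeq\Delta^*B$. I would first upgrade this pointwise antipolynomiality to a uniform factorization $B\simeq(\pi_\I\times\mathrm{Id}_\A)^*B'$ for some $K$-cotrivial ideal $\I$ of $\A$ and some simple bifunctor $B'\in\F^{df}((\A/\I)\times\A;K)$, still polynomial in the second variable. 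The candidate for $\I$ is the largest ideal acting trivially on $B$ in the first slot; its $K$-cotriviality comes from combining the finite support of $B$ (available since $B$ is simple, hence of type fini) with Proposition~\ref{prop-ev-cotriv}, which allows the intersection of finitely many $K$-cotrivial ideals attached to the support objects to remain $K$-cotrivial.

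Once this uniform factorization is at hand, Corollary~\ref{cor-cas-sympa-tens} applied to the pair $(\A/\I,\A)$ — whose hypothesis is exactly that $K$ be a non-additive splitting field of $\A/\I$ — yields $B'\simeq \bar S\boxtimes T$ with $\bar S$ simple in $\F^{df}(\A/\I;K)$ and $T$ simple in $\F^{df}(\A;K)$; the polynomial character of $B$ in the second variable transfers to $T$ (antipolynomiality and polynomiality being local conditions preserved by the outer tensor decomposition). Pulling back along the diagonal gives the desired decomposition $F\simeq \pi_\I^*\bar S\otimes T$, with $S:=\pi_\I^*\bar S$ antipolynomial simple and $T$ polynomial simple. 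For the converse direction, given $F\simeq S\otimes T$ with $S$ antipolynomial (so $S\simeq\pi_\I^*\bar S$ for some $K$-cotrivial $\I$) and $T$ polynomial simple, the bifunctor $\bar S\boxtimes T$ on $(\A/\I)\times\A$ is simple by Corollary~\ref{cor-cas-sympa-tens}, and Proposition~\ref{pr-simglobi} guarantees that its pullback $\psi_\I^*(\bar S\boxtimes T)\simeq S\otimes T$ remains simple in $\F^{df}(\A;K)$.

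Uniqueness of the decomposition follows by assembling a second decomposition $F\simeq S'\otimes T'$ into a bifunctor on $\A\times\A$: passing to a common refinement $\I\cap\I'$ (which is still $K$-cotrivial by Proposition~\ref{prop-ev-cotriv}) so that both bifunctors live on $(\A/(\I\cap\I'))\times\A$, the uniqueness clause of Corollary~\ref{cor-tftcf} identifies the two bifunctors, and then Corollary~\ref{cor-cas-sympa-tens} forces $S\simeq S'$ and $T\simeq T'$. The parenthetical remark reduces immediately to Proposition~\ref{crit-absimpl}(2), since $K$-cotriviality of $\I$ makes all endomorphism monoids of $\A/\I$ finite, so $K$ containing all roots of unity guarantees it is a non-additive splitting field of $\A/\I$. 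The main obstacle I anticipate is precisely the uniformization step: Theorem~\ref{th-simglob} only delivers pointwise antipolynomiality of $B$, potentially with different $K$-cotrivial ideals at different second-variable arguments, and the argument must use the simplicity of $B$ in an essential way (via finite support plus Proposition~\ref{prop-ev-cotriv}) to collapse these into a single ideal through which $B$ factors as a bifunctor — only then can Corollary~\ref{cor-cas-sympa-tens} be brought to bear.
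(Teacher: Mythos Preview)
Your proposal is correct and follows the same route as the paper, whose entire proof is the single sentence ``En appliquant le corollaire~\ref{cor-cas-sympa-tens} (et la proposition~\ref{crit-absimpl}\ldots)''. You have correctly identified and filled in the uniformization step the paper leaves implicit. In fact a single object $x_0$ with $B(-,x_0)\neq 0$ already suffices: simplicity of $B$ makes the canonical map $B(-,x_0)[\A(x_0,-)]\to B$ an epimorphism in $\fct(\A,\F(\A;K))$, so every $B(-,x)$ is a quotient of copies of $B(-,x_0)$ and hence factors through the same $\A/\I_{x_0}$ --- no finite intersection is needed, though your version via Proposition~\ref{prop-ev-cotriv} is also fine.
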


\begin{rem}\label{rq-disc-Stein}
Lorsque $\A=\mathbf{P}(A)$ pour un anneau $A$, un foncteur donne par évaluation sur chaque $A^n$ une représentation de $\GL_n(A)$, et on peut donc penser à $\F(A,K)$ comme un avatar des représentations $K$-linéaires de $\GL_\infty(A)$. Avec ce point de vue, le corollaire~\ref{cor-tens-St1} pour $\A=\mathbf{P}(A)$ est formellement analogue à la décomposition tensorielle de Steinberg \cite{RSt} (la formule \eqref{eqn-2} de l'introduction de notre article, page~\pageref{eqn-2}) ou aux résultats similaires de Bass-Milnor-Serre \cite{BMS}. Cependant, contrairement à ces résultats, le corollaire \ref{cor-tens-St1} ne requiert aucune hypothèse sur l'anneau $A$. Ceci est à rapprocher de phénomènes de \og stabilisation\fg{} classiques : il est plus facile de donner une présentation du groupe $\GL_\infty(A)$ que des groupes $\GL_n(A)$, $n<\infty$. 
Il ne semble donc pas qu'on puisse déduire directement les résultats de \cite{RSt,BMS} de ceux pour $\F(A,K)$ ou inversement. \`A cet égard, la situation est différente de l'article \cite{Harman} où Harman utilise les résultats de \cite{BMS} pour obtenir des résultats de structure sur des catégories de foncteurs dont la source est les groupes abéliens, mais avec des morphismes modifiés (monomorphismes munis d'un scindement).
\end{rem}

Dans certains cas, les résultats précédents prennent une forme plus forte, du fait de l'absence de foncteurs \phs\ ou antipolynomiaux non constants, par exemple. Nous donnons quelques énoncés en ce sens ci-dessous.

\begin{cor}\label{cor-pasdecotr} Supposons que $\A$ n'a pas d'autre idéal $K$-cotrivial que $\A$. Alors tout foncteur de type fini (ou de type cofini) de $\F^{\df}(\A;K)$ est \ph. Tout foncteur de type fini et de type cofini de $\F^{\df}(\A;K)$ est polynomial.
\end{cor}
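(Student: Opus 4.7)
Le plan est d'appliquer le théorème \ref{th-glob-tf}, combiné à une analyse des foncteurs antipolynomiaux rendue particulièrement simple par l'hypothèse. Sous celle-ci, le seul idéal $K$-cotrivial de $\A$ est $\A$ elle-même, donc tout foncteur antipolynomial $G\in\F(\A;K)$ se factorise à travers $\pi_\A:\A\to\A/\A$. Dans la catégorie quotient $\A/\A$, tous les groupes de morphismes sont triviaux et l'identité $[\mathrm{id}_x]$ est la flèche nulle. Un calcul direct sur les axiomes fonctoriels (utilisant $[\mathrm{id}_x]=0$ et la composition $0\circ 0=0$) montrera que $G$ envoie l'unique morphisme $0_{x,y}$ de $(\A/\A)(x,y)$ sur un isomorphisme $G(x)\xrightarrow{\sim}G(y)$ ; fixant un objet de base $x_0$, ces isomorphismes trivialiseront un cocycle et définiront un isomorphisme entre $G$ et le foncteur constant en $G(x_0)$. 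Ainsi tout foncteur antipolynomial sera, à isomorphisme près, constant.

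Pour la partie \ph, nous appliquerons le théorème \ref{th-glob-tf} à un foncteur $F$ de type fini (le cas de type cofini étant analogue) : nous obtiendrons un bifoncteur $B:\A\times\A\to K\Md$ antipolynomial en la première variable, \ph\ en la seconde, tel que $F\simeq\Delta^*B$, où $\Delta:\A\to\A\times\A$ est la diagonale. Par l'étape précédente, chaque tranche $B(-,y)$ sera isomorphe à un foncteur constant. En articulant cette trivialisation avec la structure de bifoncteur, nous construirons un isomorphisme $B\simeq p_2^*C$, où $p_2:\A\times\A\to\A$ est la seconde projection et $C:=B(x_0,-)$ pour un objet $x_0$ de $\A$ fixé. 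Il viendra alors $F\simeq\Delta^*p_2^*C=C$, et ce dernier est \ph\ puisque $B$ l'est en sa seconde variable.

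Pour la partie polynomiale, nous supposerons $F$ à la fois de type fini et de type cofini, donc à support et cosupport finis. Étant \ph\ par ce qui précède, la proposition \ref{pr-phpol-lf} en conclura la polynomialité. Les foncteurs de longueur finie étant de ce type, la dernière assertion suivra.

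L'obstacle principal sera la construction du foncteur $C$ et l'identification $B\simeq p_2^*C$ comme bifoncteurs. La naturalité en la seconde variable découlera directement de la bifonctorialité de $B$ et du choix $\beta_{x,y}:=B(0_{x_0,x},y)$ (à travers la factorisation de $B$ par $\pi_\A\times\mathrm{id}_\A$), tandis que la naturalité en la première exigera que le cocycle donné par les actions de $B$ sur les morphismes de $\A$ se trivialise de manière cohérente — ce qui est exactement l'argument du cocycle mentionné dans le premier paragraphe.
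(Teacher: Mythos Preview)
Your proof is correct and follows essentially the same route as the paper: apply Theorem~\ref{th-glob-tf} to obtain the bifunctor $B$, observe that under the hypothesis every antipolynomial functor is constant (since $\A/\A$ is the category with trivial morphism groups, hence equivalent to a point), deduce $B\simeq p_2^*C$ with $C=B(x_0,-)$ polyHomial, whence $F\simeq C$; then invoke Proposition~\ref{pr-phpol-lf} for the polynomial conclusion. The paper leaves this corollary without explicit proof, and your write-up faithfully supplies the details, including the careful verification that the factorisation through $(\A/\A)\times\A$ is bifunctorial and that the isomorphisms $\beta_{x,y}$ are natural in both variables.
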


\begin{cor}\label{cor-tfap} Supposons que, pour tout objet $x$ de $\A$, le groupe abélien $\A(x,x)\otimes_\mathbb{Z}K$ est nul. Alors tout foncteur de type fini (ou de type cofini) de $\F^{\df}(\A;K)$ est  antipolynomial.
\end{cor}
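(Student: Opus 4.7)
Le plan est d'appliquer le théorème \ref{th-glob-tf} au foncteur $F$ afin d'obtenir un bifoncteur $B$ de $\F(\A\times\A;K)$, antipolynomial par rapport à la première variable et \ph\ par rapport à la seconde, vérifiant $F\simeq\Delta^*B$, où $\Delta:\A\to\A\times\A$ désigne la diagonale. L'idée est ensuite de montrer que, sous l'hypothèse de l'énoncé, $B$ ne dépend pas (à isomorphisme près) de sa seconde variable, ce dont on déduira immédiatement que $F$ est antipolynomial.

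L'étape clé reposera sur la proposition~\ref{add-pol-cst} : l'hypothèse $\A(x,x)\otimes_\mathbb{Z}K=0$ pour tout objet $x$ est précisément l'assertion (5) de cette proposition, équivalente à son assertion (3) selon laquelle tout foncteur \ph\ de $\F(\A;K)$ est constant (c'est-à-dire isomorphe à un foncteur constant). Comme $B(x,-)$ est \ph\ pour chaque $x$, ce foncteur sera constant ; par conséquent, le morphisme $B(x,y)\to B(x,0)$ induit par le morphisme nul $y\to 0$ sera un isomorphisme pour tout couple $(x,y)$, et sera naturel en $(x,y)\in\A\times\A$ par bifonctorialité de $B$. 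On en tirera un isomorphisme de bifoncteurs $B\simeq p_1^*G$, où $p_1:\A\times\A\to\A$ désigne la projection sur le premier facteur et $G:=B(-,0):\A\to K\Md$.

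Il restera enfin à observer que $p_1\circ\Delta=\mathrm{id}_\A$, d'où $F\simeq\Delta^*B\simeq\Delta^*p_1^*G\simeq G$, foncteur qui est antipolynomial puisque $B$ l'est par rapport à sa première variable. Aucune étape ne paraît présenter d'obstacle majeur, l'essentiel du travail étant déjà contenu dans le théorème \ref{th-glob-tf} et la caractérisation fournie par la proposition \ref{add-pol-cst}.
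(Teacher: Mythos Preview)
Your argument is correct and is precisely the approach of the paper, whose proof reads in its entirety: \og Combiner le théorème~\ref{th-glob-tf} et la proposition~\ref{add-pol-cst}.\fg{} You have simply spelled out the details of this combination, namely that the constancy of each $B(x,-)$ yields a natural isomorphism $B\simeq p_1^*B(-,0)$, whence $F\simeq B(-,0)$ is antipolynomial.
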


\begin{proof} Combiner le théorème~\ref{th-glob-tf} et la proposition~\ref{add-pol-cst}.
\end{proof}

\begin{cor}\label{cor-dfcst} Supposons que $\A$ est $L$-linéaire, où $L$ est un corps commutatif infini, avec $\mathrm{car}(L)\ne\mathrm{car}(K)$. Alors la catégorie $\F^{\df}(\A;K)$ est réduite aux foncteurs constants.
\end{cor}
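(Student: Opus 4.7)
The plan is to combine the two hypotheses (infiniteness of $L$; different characteristics) with the machinery already set up to reduce every finite-dimensional-valued functor to a filtered colimit of constants, then conclude that such a colimit is constant.

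First I would invoke Remark~\ref{ex-pas-de-cotriv}: since $\A$ is $L$-linear with $L$ infinite, the only $K$-cotrivial ideal of $\A$ is $\A$ itself. Corollary~\ref{cor-pasdecotr} then applies and yields that every type-fini functor of $\F^{df}(\A;K)$ is \ph. Next I would verify that $\A(x,x)\otimes_{\mathbb Z}K=0$ for every object $x$. Since $\A$ is $L$-linear, $\A(x,x)$ is an $L$-vector space, so it suffices to check that $L\otimes_{\mathbb Z}K=0$. Writing $p:=\mathrm{car}(L)$ and $q:=\mathrm{car}(K)$ with $p\ne q$, at least one of the two is a positive prime invertible in the other field: if $p>0$ then $p$ annihilates $L\otimes_{\mathbb Z}K$ while being invertible in $K$, whence $\ell\otimes k=(p\ell)\otimes(p^{-1}k)=0$; the symmetric computation covers the case $q>0$. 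Proposition~\ref{add-pol-cst} then gives, via the implication $(5)\Rightarrow(3)$, that every \ph\ functor $\A\to K\Md$ is constant. Combining the two steps, every type-fini functor of $\F^{df}(\A;K)$ is constant.

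Finally, I would conclude by a filtered-colimit argument. Any $F\in\F^{df}(\A;K)$ is the filtered colimit of its finitely generated subfunctors $G_\alpha\subseteq F$. Each $G_\alpha$ is automatically in $\F^{df}(\A;K)$ since $G_\alpha(x)\subseteq F(x)$ is finite-dimensional, so by the previous paragraph every $G_\alpha$ is constant. A filtered colimit of constants is again constant: if $G_\alpha(x)=V_\alpha$ and $G_\alpha(f)=\mathrm{id}_{V_\alpha}$ for all morphisms $f$, then $(\col G_\alpha)(x)=\col V_\alpha$ with $(\col G_\alpha)(f)=\mathrm{id}_{\col V_\alpha}$. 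Hence $F$ is constant, as desired.

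There is no serious obstacle, since the heavy lifting is already done in Corollary~\ref{cor-pasdecotr} and Proposition~\ref{add-pol-cst}; the only points requiring care are the tensor-vanishing $L\otimes_{\mathbb Z}K=0$ (a one-line character computation using that $p\ne q$) and the observation that filtered colimits preserve constancy in the strict sense, which holds precisely because a constant functor imposes \emph{no} nontrivial transition maps to track through the colimit.
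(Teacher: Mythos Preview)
Your proof is correct. The paper's own argument is a one-line reference to Corollaire~\ref{cor-tfap} and Remarque~\ref{ex-pas-de-cotriv}: it uses the tensor-vanishing $\A(x,x)\otimes_{\mathbb Z}K=0$ to conclude via Corollaire~\ref{cor-tfap} that every type-fini functor is \emph{antipolynomial}, and then the absence of nontrivial $K$-cotrivial ideals (Remarque~\ref{ex-pas-de-cotriv}) forces antipolynomial functors to be constant. You instead use the two hypotheses in the reverse order: the absence of nontrivial $K$-cotrivial ideals first (via Corollaire~\ref{cor-pasdecotr}) to get that type-fini functors are \ph, and then the tensor-vanishing (via Proposition~\ref{add-pol-cst}) to conclude constancy. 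Both routes are immediate corollaries of the bifunctor decomposition of Théorème~\ref{th-glob-tf}, so the difference is purely cosmetic. Your explicit filtered-colimit step and the check that $L\otimes_{\mathbb Z}K=0$ are details the paper leaves to the reader; it is good that you spelled them out.
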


\begin{proof} Cela découle du corollaire~\ref{cor-tfap} et de la remarque~\ref{ex-pas-de-cotriv}.
\end{proof}

\begin{ex}\label{ex-St-Q} Si $k$ est un corps \emph{infini}, tout foncteur simple de $\F^{\df}(k,K)$ est polynomial. Si de plus $\mathrm{car}(k)\ne\mathrm{car}(K)$, le foncteur constant $K$ est le seul foncteur simple de $\F^{\df}(k,K)$, et $\F^{\df}(k,K)$ est réduite aux foncteurs constants.

Steinberg \cite[\emph{theorem} p.~343]{RSt} a montré que toute représentation complexe de dimension finie d'un groupe $\SL_n(\mathbb{Q})$ (pour $n\in\mathbb{N}$) est polynomiale : la polynomialité des foncteurs simples de $\F^{\df}(\mathbb{Q},\mathbb{C})$ est à mettre en parallèle de ce résultat.
\end{ex}

\begin{cor}\label{cor-glob-car0} Supposons $K$ de caractéristique $0$. Alors tout foncteur de $\F^{\df}(\A;K)$ se décompose en une somme directe
\[\bigoplus_{d\in\mathbb{N}}\delta^* B_d\]
où $\delta : \A\to\A\times\A$ désigne la diagonale, et $B_d$ est un bifoncteur de $\F(\A\times\A;K)$ antipolynomial par rapport à la première variable et polynomial homogène de degré $d$ par rapport à la deuxième. De plus, les bifoncteurs $B_d$ sont uniques à isomorphisme près.
\end{cor}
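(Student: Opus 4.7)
Le plan est d'appliquer successivement le théorème~\ref{th-glob-tf} et la proposition~\ref{pr-polph-car0}, en transférant la décomposition homogène de celle-ci des foncteurs aux bifoncteurs. Soit $F$ un objet de $\F^{df}(\A;K)$. Pour me ramener au cadre d'application du théorème~\ref{th-glob-tf}, j'écrirais d'abord $F$ comme colimite filtrante de ses sous-foncteurs de type fini ; les deux propriétés requises sur les $B_d$ (antipolynomialité en la première variable, homogénéité polynomiale de degré $d$ en la seconde) étant stables par colimites filtrantes, l'unicité du théorème~\ref{th-glob-tf} permettra de recoller les décompositions obtenues sur chaque sous-foncteur de type fini.

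Pour $F$ de type fini, le théorème~\ref{th-glob-tf} fournit un unique bifoncteur $B$ antipolynomial par rapport à la première variable et \ph\ par rapport à la seconde, avec $F\simeq \delta^*B$. Pour toute paire d'objets $(x,y)$, l'application $\Psi_{x,y}:\mathbb{Z}\to {\rm End}_K(B(x,y))$, $\lambda\mapsto B(\mathrm{id}_x,\lambda\mathrm{id}_y)$, est polynomiale ; le lemme~\ref{lm-fonction-pol-car0} (\textit{via} la caractéristique nulle) en livre la décomposition en composantes $d$-homogènes $\Psi_{x,y,d}$, et comme dans la preuve de la proposition~\ref{pr-polph-car0}, les projecteurs orthogonaux $\Psi_{x,y,d}(1)$ définissent une décomposition $B(x,y)=\bigoplus_d B_d(x,y)$ compatible à l'effet de $B$ sur les morphismes de la seconde variable. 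Pour la naturalité en la première variable, j'exploiterais l'identité
\[B(g,\lambda\mathrm{id}_y)=B(g,\mathrm{id}_y)\circ B(\mathrm{id}_x,\lambda\mathrm{id}_y)=B(\mathrm{id}_{x'},\lambda\mathrm{id}_y)\circ B(g,\mathrm{id}_y)\]
valable pour tout morphisme $g:x\to x'$ : l'unicité dans le lemme~\ref{lm-fonction-pol-car0} force alors $B(g,\mathrm{id}_y)$ à entrelacer $\Psi_{x,y,d}(1)$ et $\Psi_{x',y,d}(1)$. On obtient ainsi des sous-bifoncteurs $B_d$ de $B$, polynomiaux homogènes de degré $d$ en la seconde variable, dont la somme directe est $B$ (la finitude de la dimension des valeurs assure que cette somme est localement finie). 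Comme la classe des foncteurs antipolynomiaux est stable par sous-quotient (proposition~\ref{prel-antipol}), chaque $B_d$ reste antipolynomial en la première variable.

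En appliquant $\delta^*$, on obtient la décomposition voulue $F\simeq\bigoplus_d\delta^*B_d$. L'unicité se déduit comme suit : pour toute autre décomposition $F\simeq\bigoplus_d\delta^*B_d'$, le bifoncteur $B':=\bigoplus_d B_d'$ est antipolynomial en la première variable et \ph\ en la seconde (comme somme directe de polynomiaux de degrés croissants), donc l'unicité du théorème~\ref{th-glob-tf} donne $B\simeq B'$, puis l'unicité de la proposition~\ref{pr-polph-car0} appliquée à $B(x,-)$ pour chaque $x$ identifie les $B_d'$ aux $B_d$. L'obstacle principal que j'anticipe est la vérification rigoureuse de la naturalité en la première variable des projecteurs homogènes, et la gestion du passage à la colimite pour les foncteurs de $\F^{df}(\A;K)$ qui ne sont pas de type fini ; aucun de ces deux points ne devrait toutefois présenter de difficulté substantielle compte tenu du caractère canonique de la décomposition en composantes homogènes en caractéristique nulle.
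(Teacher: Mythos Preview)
La démarche est correcte et coïncide avec celle de l'article : application du théorème~\ref{th-glob-tf} puis de la proposition~\ref{pr-polph-car0} pour $F$ de type fini, et passage à la colimite dans le cas général. Tu donnes simplement plus de détails que l'article (notamment sur la naturalité des projecteurs homogènes en la première variable), ce qui n'est pas inutile ; une remarque mineure : la finitude locale de la somme $\bigoplus_d B_d(x,y)$ découle déjà de la polyHomialité de $B(x,-)$ (pour chaque paire d'objets fixée, l'application induite sur les morphismes est polynomiale d'un certain degré), sans recours explicite à la dimension finie des valeurs.
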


\begin{proof} Lorsque $F$ est de type fini, le résultat se déduit du théorème~\ref{th-glob-tf} et de la proposition~\ref{pr-polph-car0}. Le cas général s'y ramène en écrivant $F$ comme la colimite de ses sous-foncteurs de type fini.
\end{proof}

\subsection{Démonstration de la proposition~\ref{pr-simglobi}}\label{dm-thglob2}

Celle-ci repose sur le lemme suivant sur les fonctions polynomiales (à la Eilenberg-MacLane).

\begin{lm}\label{lm-polbi}
Soient $E$ un $K$-espace vectoriel, $V$ un groupe abélien et $U$ un sous-groupe de $V$ tel quel $K\otimes_{\mathbb{Z}} V/U=0$. Alors la restriction de la fonction $E^{V/U\times V}\to E^V$ induite par l'application linéaire canonique $V\to V/U\times V$ (dont les composantes sont la projection et l'identité) à l'espace vectoriel des fonctions $V/U\times V\to E$ qui sont polynomiales par rapport à la deuxième variable est injective.
\end{lm}

\begin{proof} Montrons d'abord que la restriction de $V$ à $U$ définit une application linéaire {\em injective} $\mathrm{Pol}(V,E)\to\mathrm{Pol}(U,E)$. Supposons en effet qu'il existe une fonction polynomiale non nulle $f : V\to E$ dont la restriction à $U$ soit nulle ; soit $n\geq 0$ le degré de $f$. Alors la $n$-déviation $\delta_n(f) : V^n\to E$ est une fonction additive par rapport à chacune des $n$ variables et non identiquement nulle. On voit par récurrence descendante sur l'entier $r\in\{0,\dots,n\}$ que $\delta_n(f)(a_1,\dots,a_n)=0$ si au moins $r$ des éléments $a_1,\dots,a_n$ de $V$ appartiennent à $U$ : pour $r=n$ cela résulte de la nullité de $f$ sur $U$, et le pas de la récurrence s'obtient en utilisant que toute fonction {\em additive} $V/U\to E$ est nulle, en raison de la nullité de $K\otimes_{\mathbb{Z}} V/U$. Pour $r=0$, on voit que $\delta_n(f)$ est identiquement nulle, contradiction qui établit l'injectivité souhaitée.

Considérons maintenant une fonction $f : V/U\times V\to E$ polynomiale par rapport à la deuxième variable et telle que $f(\bar{x},x)=0$ pour tout $x\in V$, où $\bar{x}$ désigne la classe de $x$ modulo $U$. Pour tout $x\in V$, la fonction $V\to E\quad t\mapsto f(\bar{x},x+t)$ est polynomiale et nulle sur $U$, elle est donc identiquement nulle d'après ce qui précède, d'où le lemme.
\end{proof}

\begin{lm}\label{lm-vraimev} Soient $\I\triangleleft\A$ et $\B$ la sous-catégorie pleine de $\A/\I\times\A$ image essentielle du foncteur $\psi_\I : \A\to\A/\I\times\A$. Alors le foncteur de restriction $\F(\A/\I\times\A;K)\to\F(\B;K)$ est une équivalence de catégories.
\end{lm}

\begin{proof} En effet, tout objet $(x,y)$ de $\A/\I\times\A$ est facteur direct de l'objet $(x\oplus y,x\oplus y)$, qui appartient à $\B$.
\end{proof}

\begin{proof}[Démonstration de la proposition~\ref{pr-simglobi}]
Soient $X$ un foncteur de $\F(\A/\I\times\A;K)$ \ph\ par rapport à la deuxième variable et $F$ un sous-foncteur de $\psi_\I^*X$ (dans $\F(\A;K)$). On va montrer que $F$ est isomorphe  à $\psi_\I^*Y$ pour un sous-foncteur $Y$ de $X$. À cette fin, considérons, pour des objets $a$ et $b$ de $\A$, le diagramme commutatif ensembliste (en traits pleins)
\[\xymatrix{\A(a,b)\ar[r]^-f\ar[d]_-j & {\rm Hom}_K(F(a),F(b))\ar[r]^-\alpha & {\rm Hom}_K(F(a),\psi_\I^*X(b)) \\
(\A(a,b)/\I(a,b))\times\A(a,b)\ar[r]_-g\ar@{-->}[ru] & {\rm Hom}_K(\psi_\I^*X(a),\psi_\I^*X(b))\ar[ru]_-\beta & 
}\]
où les flèches $f$ et $g$ sont données par la fonctorialité de $F$ et $X$ respectivement, $\alpha$ et $\beta$ sont induites par les inclusions $F(b)\subset\psi_\I^*X(b)$ et $F(a)\subset\psi_\I^*X(a)$ respectivement, et $j$ est induite par $\psi_\I$. L'existence d'une fonction ensembliste en pointillé faisant commuter le diagramme, et naturelle en $a$ et $b$, est équivalente au fait que $F$ est isomorphe à la précomposition par $\psi_\I$ d'un sous-foncteur de $X$. Comme $\alpha$ est un monomorphisme de $K$-espaces vectoriels, l'existence de la flèche en pointillé équivaut à la nullité de la composée de $\beta g$ et de la projection de ${\rm Hom}_K(F(a),\psi_\I^*X(b))$ sur ${\rm Coker}\,\alpha$. Or la précomposition de cette composée avec $\A(a,b)\xrightarrow{j}(\A(a,b)/\I(a,b))\times\A(a,b)$ est nulle (grâce à la commutativité du diagramme), de sorte que l'hypothèse faite sur $X$ et le lemme~\ref{lm-polbi} garantissent l'existence de la flèche en pointillé. L'unicité et la naturalité en $a$ et $b$ de la factorisation obtenue découle de l'injectivité de $\alpha$. Ainsi, $F$ est bien isomorphe à $\psi_\I^*Y$ pour un sous-foncteur $Y$ de $X$.

Le lemme~\ref{lm-vraimev} montre que le foncteur $\psi_\I^* : \F(\A/\I\times\A;K)\to\F(\A;K)$ est fidèle.

Montrons que la restriction aux foncteurs \phs\ par rapport à la deuxième variable de $\psi_\I^*$ est pleine. Considérons pour cela une transformation naturelle $\psi_\I^*X\to\psi_\I^*Y$, où $X$ et $Y$ sont des bifoncteurs de $\F(\A/\I\times\A;K)$ \phs\ par rapport à la deuxième variable : c'est une collection d'applications linéaires $\psi_\I^*X(a)\to\psi_\I^*Y(a)$, pour $a$ objet de $\A$, telle que le diagramme ensembliste évident
\begin{equation}\label{dctf1}
\xymatrix{\A(a,b)\ar[r]\ar[d] & {\rm Hom}_K(\psi_\I^*X(a),\psi_\I^*X(b))\ar[d] \\
{\rm Hom}_K(\psi_\I^*X(a),\psi_\I^*X(b))\ar[r] & {\rm Hom}_K(\psi_\I^*X(a),\psi_\I^*Y(b))
}
\end{equation}
commute pour tous objets $a$ et $b$ de $\A$. Le fait que cette transformation naturelle appartient à l'image du morphisme $\F(\A/\I\times\A;K)(X,Y)\to\F(\A;K)(\psi_\I^*X,\psi_\I^*Y)$ équivaut à la commutativité du diagramme évident
\begin{equation}\label{dctf2}
\xymatrix{(\A/\I)(a,b)\times\A(a,b)\ar[r]\ar[d] & {\rm Hom}_K(\psi_\I^*X(a),\psi_\I^*X(b))\ar[d] \\
{\rm Hom}_K(\psi_\I^*X(a),\psi_\I^*X(b))\ar[r] & {\rm Hom}_K(\psi_\I^*X(a),\psi_\I^*Y(b))
},
\end{equation}
dans lequel les deux composées $(\A/\I)(a,b)\times\A(a,b)\to {\rm Hom}_K(\psi_\I^*X(a),\psi_\I^*Y(b))$ sont polynomiales par rapport à la deuxième variable. Comme le diagramme commutatif (\ref{dctf1}) s'obtient en composant (\ref{dctf2}) avec $\A(a,b)\xrightarrow{j}(\A/\I)(a,b)\times\A(a,b)$, le lemme~\ref{lm-polbi} implique la commutativité de (\ref{dctf2}), ce qui achève la démonstration.
\end{proof}

Les lemmes~\ref{lm-polbi} et~\ref{lm-vraimev} impliquent également le résultat suivant, dont nous nous servirons à la fin de la démonstration du théorème~\ref{th-glob-tf}.

\begin{lm}\label{lm-factoris} Sous les hypothèses de la proposition~\ref{pr-simglobi}, supposons que $F$ est un foncteur de $\F(\A;K)$ tel que, pour tous objets $x$ et $y$ de $\A$, il existe une factorisation :
$$\xymatrix{\A(x,y)\ar[rr]^-{(\psi_\I)_{x,y}}\ar[rrd]_-{F_{x,y}} & & (\A/\I)(x,y)\times\A(x,y)\ar@{-->}[d]^-{g_{x,y}}\\
& & \mathrm{Hom}_K(F(x),F(y))
}$$
où $g_{x,y}$ est une fonction polynomiale par rapport à la deuxième variable. Alors $F$ est isomorphe à $\psi_\I^*G$ pour un foncteur $G$ de $\F(\A/\I\times\A;K)$ \ph\ par rapport à la deuxième variable.
\end{lm}

\begin{proof} Le lemme~\ref{lm-polbi} implique que l'application $g_{x,y}$ réalisant la factorisation est unique. Le même lemme appliqué avec $V=\A(x,y)\times\A(y,z)$ et $U=\I(x,y)\times\I(y,z)$ montre que les diagrammes suivants commutent
$$\xymatrix{\big((\A/\I)(x,y)\times\A(x,y)\big)\times\big((\A/\I)(y,z)\times\A(y,z)\big)\ar[r]\ar[d]_-{g_{x,y}\times g_{y,z}} & (\A/\I)(x,z)\times\A(x,z)\ar[d]^-{g_{x,z}} \\
\mathrm{Hom}_K(F(x),F(y))\times\mathrm{Hom}_K(F(y),F(z))\ar[r] & \mathrm{Hom}_K(F(x),F(z))
}$$
où les flèches horizontales sont données par la composition des morphismes. Autrement dit, les applications $g_{x,y}$ définissent une factorisation de $F$ à travers le foncteur essentiellent surjectif $\A\to\B$ induit par $\psi_\I$, où $\B$ est la catégorie du lemme~\ref{lm-vraimev} ; le foncteur de $\F(\B;K)$ ainsi obtenu est par construction \ph\ par rapport à la deuxième variable. La conclusion suit du lemme~\ref{lm-vraimev}.
\end{proof}

\subsection{Démonstration du théorème~\ref{th-glob-tf}}\label{dm-thglob}

La démonstration du théorème~\ref{th-glob-tf} repose sur la décomposition de Jordan multiplicative des automorphismes linéaires. Plus précisément, pour tout morphisme $f\in\A(x,y)$, notons $u[f]$ l'automorphisme de $x\oplus y$ donné matriciellement par
$$u[f]=\left(\begin{array}{cc} 1 & 0\\
                                                f & 1
                                               \end{array}\right)\;.$$
Nous établirons comme corollaire de notre démonstration une caractérisation des foncteurs \phs\ et antipolynomiaux en termes de leur effet sur les automorphismes $u[f]$ : un foncteur $F$ de type fini de $\F^\df(\A;K)$ est \ph\ si et seulement s'il transforme les automorphismes $u[f]$ en automorphismes unipotents, et il est antipolynomial si et seulement s'il transforme les morphismes $u[f]$ en automorphismes absolument semi-simples. (Cette caractérisation est énoncée à la proposition~\ref{pr-unidiag} en fin de section).

La stratégie de la démonstration se comprend bien à l'aune de cette caractérisation. Pour tout $f$, on dispose d'une décomposition de Jordan multiplicative $F(u[f])=D(f)U(f)$ où $D(f)$ est absolument semi-simple et $U(f)$ est unipotent. Le bifoncteur $B\in\F(\A\times\A;K)$ correspondant à $F$ dans le théorème~\ref{th-glob-tf} sera alors obtenu comme l'unique  bifoncteur (à isomorphisme près) tel que $B(u[f],1)=D(f)$ et $B(1,u[f])=U(f)$. Si l'on définit $B$ de cette façon, sa partie antipolynomiale envoie sur l'identité tous les morphismes $u[f]$ dont l'image par $F$ est unipotente. Ainsi, la partie antipolynomiale de $B$ se factorise à travers un quotient $\A\to\A/\I$ où $\I$ est un certain idéal $K$-cotrivial de $\A$ dont tous les éléments $f$ sont tels que $F(u[f])$ est unipotent.

Toute la démonstration du théorème~\ref{th-glob-tf} reposant sur la manipulation des automorphismes $u[f]$, nous commençons par rassembler, dans le lemme immédiat suivant, les propriétés élémentaires des morphismes $u[f]$ dont nous aurons besoin.

\begin{lm}\label{propunit} Soient $x$, $y$, $z$ des objets de $\A$, $f, f'\in\A(x,y)$ et $g\in\A(y,z)$.
\begin{enumerate}
\item\label{lpp} On a $u[f]u[f']=u[f']u[f]=u[f+f']$.
\item\label{lci1}\label{lci2} On a des diagrammes commutatifs dans $\A$:
$$\xymatrix{x\oplus y\ar[d]_-{x\oplus g}\ar[rr]^-{u[f]} & & x\oplus y\ar[d]^-{x\oplus g}\\
x\oplus z\ar[rr]_-{u[gf]} & & x\oplus z
}\;,\qquad\xymatrix{x\oplus z\ar[rr]^-{u[gf]}\ar[d]_-{f\oplus z} & & x\oplus z\ar[d]^-{f\oplus z} \\
y\oplus z\ar[rr]_-{u[g]} & & y\oplus z
}.$$
\item\label{lpi1} Notons $p(f,g) : x\oplus y\oplus z\to x\oplus z$ le morphisme dont  les composantes $x\to x$ et $z\to z$ sont les identités, la composante $y\to z$ est $g$ et les autres composantes sont nulles. Alors $p(f,g)$ est un épimorphisme scindé, et le diagramme suivant commute:
$$\xymatrix{x\oplus y\oplus z\ar[rr]^-{p(f,g)}\ar[d]_{u[f]\oplus z} & & x\oplus z\ar[d]^{u[gf]}\\
x\oplus y\oplus z\ar[rr]_-{p(f,g)} & & x\oplus z
}.$$
\item\label{lpi2} Notons $i(f,g) : x\oplus y\oplus z\to x\oplus z$ le morphisme dont  les composantes $x\to x$ et $z\to z$ sont les identités, la composante $x\to y$ est $f$ et les autres composantes sont nulles. Alors $i(f,g)$ est un monomorphisme scindé, et le diagramme suivant commute:
\[\xymatrix{x\oplus z\ar[rr]^-{i(f,g)}\ar[d]_{u[gf]} & & x\oplus y\oplus z\ar[d]^{x\oplus u[g]}\\
x\oplus z\ar[rr]_-{i(f,g)} & & x\oplus y\oplus z
}.\]
\end{enumerate}
\end{lm}

Dans la suite de cette section, on fixe un foncteur $F$ de $\F^\df(\A;K)$, de type fini (le cas de type co-fini est analogue, ou s'en déduit par dualité). Étant donné des objets $x$ et $y$ de $\A$, on note $\I(x,y)$ l'ensemble des morphismes $f$ de $\A(x,y)$ tels que l'automorphisme $F(u[f]\oplus t)$ de $F(x\oplus y\oplus t$) soit unipotent pour tout objet $t$ de $\A$. Nous ne nous servirons en fait que de l'unipotence de $F(u[f])$ pour montrer la factorisation du théorème~\ref{th-glob-tf}, mais celle de $F(u[f]\oplus t)$ pour tout $t$ est nécessaire pour le lemme suivant, qui constitue une étape cruciale de la démonstration du théorème.

\begin{lm}\label{lm-Icotriv} $\I$ est un idéal $K$-cotrivial de $\A$.

De plus, pour tout morphisme $f$ de $\A$, les valeurs propres de $F(u[f])$ dans une clôture algébrique $\bar{K}$ de $K$ sont des racines de l'unité.
\end{lm}

\begin{proof} Comme le produit de deux automorphismes unipotents permutables est unipotent, le lemme~\ref{propunit}.\ref{lpp} montre que $\I(x,y)$ est un sous-groupe de $\A(x,y)$.

Soient $f\in\I(x,y)$ et $g\in\A(y,z)$, montrons que $gf\in\I(x,z)$. Soit $t$ un objet de $\A$. En utilisant le lemme~\ref{propunit}.\ref{lpi1}, on obtient un diagramme commutatif
\begin{equation}\label{eqej}
\xymatrix{F(x\oplus y\oplus z\oplus t)\ar[rr]^-{F(p(f,g)\oplus t)}\ar[d]_{F(u[f]\oplus z\oplus t)} & & F(x\oplus z\oplus t)\ar[d]^{F(u[gf]\oplus t)}\\
F(x\oplus y\oplus z\oplus t)\ar[rr]_-{F(p(f,g)\oplus t)} & & F(x\oplus z\oplus t)
}
\end{equation}
dans lequel $F(p(f,g)\oplus t)$ est un épimorphisme. Comme $F(u[f]\oplus z\oplus t)$ est unipotent puisque $f$ appartient à $\I(x,y)$, on en déduit que $F(u[gf]\oplus t)$ est également unipotent. On a donc bien $gf\in\I(x,z)$.

On établit de façon analogue, à partir du lemme~\ref{propunit}.\ref{lpi2}, que, pour $f\in\A(x,y)$ et $g\in\I(y,z)$, on a $gf\in\I(x,z)$.

Par conséquent, $\I$ est un idéal de $\A$.

Comme $F$ est de type fini et $\A$ additive, il existe un objet $s$ de $\A$ qui constitue un support de $F$. Montrons qu'un morphisme $f\in\A(x,y)$ appartient à $\I(x,y)$ dès que $F(u[f]\oplus s)$ est unipotent. En effet, pour tout objet $t$ de $\A$, le morphisme $\xi : F(x\oplus y\oplus s)^{\oplus\A(s,t)}\to F(x\oplus y\oplus t)$ dont la composante indexée par un morphisme $\alpha\in\A(s,t)$ est $F(x\oplus y\oplus\alpha) : F(x\oplus y\oplus s)\to F(x\oplus y\oplus t)$ est un épimorphisme (cela se vérifie par inspection lorsque $F=P^s_\A$, le cas général s'en déduit par naturalité de $\xi$ en $F$ en utilisant la définition du support). Le diagramme commutatif
$$\xymatrix{F(x\oplus y\oplus s)^{\oplus\A(s,t)}\ar[r]^-\xi\ar[d]_{F(u[f]\oplus s)^{\oplus\A(s,t)}} & F(x\oplus y\oplus t)\ar[d]^{F(u[f]\oplus t)}\\
F(x\oplus y\oplus s)^{\oplus\A(s,t)}\ar[r]_-\xi & F(x\oplus y\oplus t)
}$$
montre donc que $F(u[f]\oplus t)$ est unipotent si c'est le cas de $F(u[f]\oplus s)$, de sorte que $f\in\I(x,y)$ si $F(u[f]\oplus s)$ est unipotent.

Pour tous objets $x$ et $y$ de $\A$, notons $d(x,y):=\dim_K F(x\oplus y\oplus s)$. On dispose d'un morphisme
de groupes $\A(x,y)\to (\bar{K}^\times)^{d(x,y)}$
obtenu en trigonalisant simultanément les automorphismes permutables $\bar{K}\otimes_K F(u[f]\oplus s)$ (pour $f\in\A(x,y)$) de $\bar{K}\otimes_K F(x\oplus y\oplus s)$, les composantes $\A(x,y)\to\bar{K}^\times$ du morphisme étant les différents coefficients diagonaux des matrices ainsi obtenues.

Le noyau de ce morphisme est exactement $\I(x,y)$ d'après ce qu'on vient de montrer, de sorte qu'il induit un monomorphisme de groupes
\[\phi_{x,y}=(\phi_{x,y}^1,\dots,\phi_{x,y}^{d(x,y)}) : (\A/\I)(x,y)\to (\bar{K}^\times)^{d(x,y)}.\]

Soient $f\in\A(x,y)$ et $g\in\A(y,z)$ des morphismes de $\A$. Le lemme~\ref{propunit}.\ref{lci1} procure un diagramme commutatif
$$\xymatrix{F(x\oplus y\oplus s)\ar[d]_-{F(x\oplus g\oplus s)}\ar[rr]^-{F(u[f]\oplus s)} & & F(x\oplus y\oplus s)\ar[d]^-{F(x\oplus g\oplus s)}\\
F(x\oplus z\oplus s)\ar[rr]_-{F(u[gf]\oplus s)} & & F(x\oplus z\oplus s)
\;.}$$
Si $g$ est un épimorphisme scindé, alors $F(x\oplus g\oplus s)$ est un épimorphisme, ce qui implique l'existence d'une fonction injective $\sigma : \{1,\dots,d(x,z)\}\to\{1,\dots,d(x,y)\}$ telle que $\phi^i_{x,z}(gf)=\phi^{\sigma(i)}_{x,y}(f)$ pour tout $i\in\{1,\dots,d(x,z)\}$. De même, en utilisant l'autre diagramme commutatif du lemme~\ref{propunit}.\ref{lci1}, on voit que, si $f$ est un monomorphisme scindé, il existe une fonction injective $\tau : \{1,\dots,d(x,z)\}\to\{1,\dots,d(y,z)\}$ telle que $\phi^i_{x,z}(gf)=\phi^{\tau(i)}_{y,z}(g)$ pour tout $i\in\{1,\dots,d(x,z)\}$.

Tout morphisme $h\in\A(x,y)$ se factorise en $x\xrightarrow{\alpha_h}x\oplus y\xrightarrow{\beta} y$,
où $\alpha_h$ a pour composantes $\mathrm{Id}_x$ et $h : x\to y$ et $\beta$ est la projection. Comme $\beta$ est un épimorphisme scindé, en appliquant le fait ci-avant à la composée $x\oplus y\xrightarrow{\mathrm{Id}_{x\oplus y}}x\oplus y\xrightarrow{\beta}y$, on obtient une fonction injective $\sigma : \{1,\dots,d(x\oplus y,y)\}\to\{1,\dots,d(x\oplus y,x\oplus y)\}$ telle que $\phi^i_{x\oplus y,y}(\beta)=\phi^{\sigma(i)}_{x\oplus y,x\oplus y}(\mathrm{Id}_{x\oplus y})$ pour tout $i$. De même, comme $\alpha_h$ est un monomorphisme scindé et que $h=\beta\alpha_h$, il existe une fonction injective $\tau : \{1,\dots,d(x,y)\}\to\{1,\dots,d(x\oplus y,y)\}$ telle que $\phi^j_{x,y}(h)=\phi^{\tau(j)}_{x\oplus y,y}(\beta)$ pour tout $j$.

Il s'ensuit que l'image de $\phi_{x,y}$ est {\em finie}, de cardinal majoré par le nombre de fonctions injectives de $\{1,\dots,d(x,y)\}$ vers $\{1,\dots,d(x\oplus y,x\oplus y)\}$. Le groupe abélien $(\A/\I)(x,y)$ est donc isomorphe à un sous-groupe fini d'un produit de copies de $\bar{K}^\times$. La nullité de $K\otimes_\mathbb{Z}(\A/\I)(x,y)$ découle de la finitude de $(\A/\I)(x,y)$ si $\mathrm{car}(K)=0$. Si $\mathrm{car}(K)=p>0$, elle vient de ce que $p$ est inversible dans $\bar{K}^\times$. Ainsi, l'idéal $\I$ de $\A$ est $K$-cotrivial. De plus, les valeurs propres dans $\bar{K}$ des $F(u[f])$ appartiennent à des sous-groupes finis de $\bar{K}^\times$, ce sont donc des racines de l'unité.
\end{proof}

Le lemme suivant montre comment déduire la polynomialité d'applications appropriées à partir de propriétés d'unipotence.

\begin{lm}\label{lm-polunipo} Soient $M$ un groupe abélien, $V$ un $K$-espace vectoriel de dimension finie $d$ et $\rho : M\to E:=\mathrm{End}_K(V)$ une fonction telle que $\rho(u+v)=\rho(u).\rho(v)$ pour tout $(u,v)\in M^2$. On suppose que $\rho$ prend ses valeurs dans les automorphismes unipotents de $V$. Alors $\rho$ définit une fonction polynomiale de degré au plus $d-1$ de $M$ vers le groupe additif sous-jacent à $E$.
\end{lm}

\begin{proof} Les automorphismes unipotents $\rho(u)$ de $V$ commutent deux à deux, donc il existe une base de $V$ dans laquelle leurs matrices $(\rho_{i,j}(u))_{1\le i,j\le d}$ sont triangulaires supérieures strictes : $\rho_{i,j}(u)$ est nul pour $i>j$ et égal à $1$ pour $i=j$. La relation $\rho(u+v)=\rho(u).\rho(v)$ s'écrit donc, pour $0<l\le d-i$,  $$\rho_{i,i+l}(u+v)=\rho_{i,i+l}(u)+\rho_{i,i+l}(v)+\sum_{0<n<l}\rho_{i,i+n}(u).\rho_{i+n,i+l}(v)\,,$$
ce dont on déduit par récurrence sur $l$ que les fonctions $\rho_{i,i+l} : M\to K$ sont polynomiales de degré au plus $l$, d'où le lemme.
\end{proof}

\begin{proof}[Démonstration du théorème~\ref{th-glob-tf}] 
Soit $f\in\A(x,y)$ un morphisme de $\A$. Par le lemme~\ref{lm-Icotriv}, les valeurs propres de $F(u[f])$ dans $\bar{K}$ sont des racines de l'unité, elles sont donc {\em séparables} sur $K$. Ainsi, $F(u[f])$ possède une décomposition de Jordan multiplicative en produit permutable d'un automorphisme unipotent $U(f)$ et d'un automorphisme absolument semi-simple $D(f)$ \cite[chap.~7, §\,5.9, th.~1 et prop.~17]{Bki2}.

 Comme les $u[f]$ sont deux à deux permutables, $U$ et $D$ définissent des morphismes de groupes $\A(x,y)\to {\rm Aut}_K(F(x\oplus y))$ dont les images commutent. De plus, par définition de $\I$, le noyau de $D$ contient $\I(x,y)$, de sorte que ces morphismes induisent un morphisme de groupes $(\A/\I)(x,y)\times\A(x,y)\to {\rm Aut}_K(F(x\oplus y))$. La composée de $U$ avec l'inclusion de ${\rm Aut}_K(F(x\oplus y))$ dans le groupe additif ${\rm End}_K(F(x\oplus y))$ est polynomiale (de degré au plus $\dim_K F(x\oplus y)-1$) grâce au lemme~\ref{lm-polunipo}.
 
 Le diagramme commutatif
 $$\xymatrix{\A(x,y)\ar[rr]^-{f\mapsto F(u[f])}\ar[rrd]_-{F_{x,y}} & & \mathrm{Hom}_K(F(x\oplus y),F(x\oplus y))\ar[d]^{{\rm Hom}_K(F(x\hookrightarrow x\oplus y),F(x\oplus y\twoheadrightarrow y))} \\
& & {\rm Hom}_K(F(x),F(y))
}$$
permet d'en déduire que $F_{x,y}$ se factorise comme la composée de l'application linéaire canonique
 $\A(x,y)\to (\A/\I)(x,y)\times\A(x,y)$
 et d'une fonction
 $(\A/\I)(x,y)\times\A(x,y)\to {\rm Hom}_K(F(x),F(y))$ polynomiale par rapport à la deuxième variable.
 Le lemme~\ref{lm-factoris} montre donc que $F$ se factorise comme composée de $\psi_\I$ et d'un foncteur de $\F(\A/\I\times\A;K)$ \ph\ par rapport à la deuxième variable. Comme $\I$ est $K$-cotrivial (lemme~\ref{lm-Icotriv}), cela montre que $F$ est isomorphe à la composée de la diagonale $\A\to\A\times\A$ et d'un bifoncteur $B$ de $\F(\A\times\A;K)$ antipolynomial par rapport à la première variable et \ph\ par rapport à la deuxième variable.
 
 L'unicité à isomorphisme près de cette factorisation découle de la proposition~\ref{pr-simglobi} et du fait que l'intersection de deux idéaux $K$-cotriviaux de $\A$ est un idéal $K$-cotrivial (proposition~\ref{prop-ev-cotriv}). Cela achève la démonstration.
\end{proof}

La définition du bifoncteur $B$ dans la démonstration du théorème~\ref{th-glob-tf} montre que pour tout morphisme $f$ de $\A$, $B(u[f],1)$ est absolument semi-simple et $B(1,u[f])$ est unipotent. Comme le bifoncteur $B$ est unique, nous en déduisons la caractérisation suivante des foncteurs antipolynomiaux et \phs.

\begin{pr}\label{pr-unidiag} Soit $F$ un foncteur de type fini (ou de type cofini) de $\F^\df(\A;K)$. 
\begin{enumerate}
\item Le foncteur $F$ est antipolynomial si et seulement si l'automorphisme $F(u[f])$ est absolument semi-simple pour tout morphisme $f$ de $\A$.
\item  Le foncteur $F$ est \ph\ si et seulement si l'automorphisme $F(u[f])$ est unipotent pour tout morphisme $f$ de $\A$.
\end{enumerate}
\end{pr}

\begin{rem}\label{rem-Stunipo} Cette caractérisation des foncteurs \phs\ est à rapprocher du fait bien connu \cite[thm~2.4.8]{Spri} que les morphismes $GL_n(K)\to GL_m(K)$ (dont l'effet sur les éléments du groupe est donné par des polynômes) préservent la décomposition de Jordan, en particulier les éléments unipotents.
La caractérisation des foncteurs antipolynomiaux peut quant à elle être rapprochée du fait que les éléments unipotents d'un groupe fini $GL_n(k)$, où $k$ est un corps fini de caractéristique $p$ différente de celle de $K$, agissent par des automorphismes absolument semi-simples sur toute représentation $K$-linéaire, car leur ordre est une puissance de $p$.
\end{rem}

\section{Décomposition tensorielle des simples polynomiaux}\label{sect-stein-equi}

Le résultat principal que nous allons établir constitue une forme fonctorielle très générale du théorème du produit tensoriel de Steinberg \cite{St-TPT}. Nous en discuterons dans la section~\ref{s-tpts} les liens avec les représentations des monoïdes et groupes linéaires, mais les démonstrations employées dans le cadre fonctoriel reposent plutôt sur les liens avec les représentations des groupes symétriques, par l'intermédiaire des effets croisés.

\subsection{Définitions et énoncés} \label{sous-sec-dec-tenso}

Pour tout $K[\Si_d]$-module $M$ on définit $E_M:K\Md\to K\Md$ par:
$$E_M(V)\;:=\; \mathrm{Im}\left( \left(V^{\otimes d}\otimes M\right)_{\Si_d} \xrightarrow[]{N} \left(V^{\otimes d}\otimes M\right)^{\Si_d} \right)\;.$$
Le morphisme $N$ apparaissant dans cette formule est la norme, qui est l'application $M_G\to M^G$ définie pour tout groupe fini $G$ agissant $K$-linéairement sur $M$, et qui à un co\"invariant $m$ associe l'invariant $N(m)=\sum_{g\in G} gm$. L'action de $\Si_d$ sur le $K$-module $V^{\otimes d}\otimes M$ est donnée par
$$\sigma(v_1\otimes\dots\otimes v_d\otimes m)= v_{\sigma^{-1}(1)}\otimes\dots\otimes v_{\sigma^{-1}(d)}\otimes\sigma m\;.$$

\begin{defi}\label{df-felem}
Un \emph{foncteur élémentaire}  $E$ est un endofoncteur de $K\Md$ tel qu'il existe un $K[\Si_d]$-module simple $M$ et un isomorphisme $E\simeq E_M$.
\end{defi}

\begin{rem}
Les foncteurs élémentaires sont intimement reliés aux foncteurs de Schur classiques. Nous n'utiliserons dans cette section que la définition élémentaire donnée ci-dessus, et renvoyons le lecteur à l'appendice \ref{app-elt} pour des détails sur les relations avec les foncteurs de Schur et les représentations de $\GL_n(K)$.
\end{rem}

Le but de la section est d'établir le théorème suivant de décomposition tensorielle à la Steinberg pour les foncteurs absolument simples polynomiaux. Contrairement au théorème \ref{th-simglob}, il ne nécessite pas d'hypothèse de dimension finie sur les foncteurs.

\begin{thm}\label{thm-st}
Soit $S$ un foncteur polynomial de degré $d\ge 0$ de $\F(\A;K)$. Les conditions suivantes sont équivalentes.
\begin{enumerate}
\item[(1)] Le foncteur $S$ est simple, et il existe des foncteurs additifs absolument simples $\pi_i$ de $\mathbf{Add}(\A;K)$ tels que $S$ soit un quotient de $\pi_1\otimes\dots\otimes \pi_d$. 
\item[(2)]\label{it-st-2} Le foncteur $S$ est simple, et il existe des foncteurs additifs absolument simples $\pi_i$ de $\mathbf{Add}(\A;K)$ tels que $S$ soit un sous-foncteur de $\pi_1\otimes\dots\otimes \pi_d$.  
\item[(3)] Il existe une famille de foncteurs élémentaires $(\mathrm{E}_\pi)$, indexée par les classes d'isomorphisme de foncteurs absolument simples $\pi$ de $\mathbf{Add}(\A;K)$, tous constants sauf un nombre fini, telle que
\[S\simeq\bigotimes_\pi \pi^*\mathrm{E}_\pi\;.\]
\end{enumerate}
De plus, si $S$ vérifie les conditions équivalentes précédentes, les foncteurs $\mathrm{E}_\pi$ sont uniques à isomorphisme près, et $S$ est absolument simple.
\end{thm}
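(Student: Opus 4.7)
Plan de démonstration. La structure sera la suivante: les implications (3)$\Rightarrow$(1) et (3)$\Rightarrow$(2) se déduisent relativement directement de la définition de $\mathrm{E}_M$; l'implication principale (1) ou (2) $\Rightarrow$(3) repose sur l'analyse de $\mathrm{Cr}_d(S)$ et la théorie des représentations de $\Si_d$.

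Pour (3)$\Rightarrow$(1) et (2), on notera que par construction $\mathrm{E}_M(V)$ est à la fois sous-foncteur et quotient de $V^{\otimes d}\otimes M$, donc d'une somme directe de copies de $V^{\otimes d}$. En précomposant par les foncteurs additifs absolument simples $\pi$ et en prenant le produit tensoriel, $\bigotimes_\pi\pi^*\mathrm{E}_\pi$ est simultanément sous-foncteur et quotient d'un produit $\pi_1\otimes\cdots\otimes\pi_d$ dans lequel chaque classe $\pi$ apparaît avec multiplicité $d_\pi$ telle que $\sum d_\pi=d$; la simplicité, voire l'absolue simplicité, du produit $\bigotimes_\pi\pi^*\mathrm{E}_\pi$ sera établie par l'analyse de l'implication réciproque.

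L'implication (1)$\Rightarrow$(3) repose d'abord sur un calcul d'effet croisé: pour $\pi_1,\dots,\pi_d$ additifs, l'additivité fournit un isomorphisme $\Si_d$-équivariant
\[\mathrm{Cr}_d(\pi_1\otimes\cdots\otimes\pi_d)\simeq\mathrm{Ind}_{\Si_{d_1}\times\cdots\times\Si_{d_k}}^{\Si_d}\bigl(\rho_1^{\boxtimes d_1}\boxtimes\cdots\boxtimes\rho_k^{\boxtimes d_k}\bigr),\]
où $\rho_1,\dots,\rho_k$ sont les classes d'isomorphisme distinctes des $\pi_i$, de multiplicités respectives $d_1,\dots,d_k$. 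En itérant le corollaire~\ref{cor-cas-sympa-tens}, le multifoncteur $\rho_1^{\boxtimes d_1}\boxtimes\cdots\boxtimes\rho_k^{\boxtimes d_k}$ est absolument simple dans $\mathbf{Add}_d(\A;K)$ dès que les $\rho_j$ le sont. Pour $S$ simple quotient de $\pi_1\otimes\cdots\otimes\pi_d$ de degré $d$, $\mathrm{Cr}_d(S)$ est non nul, simple et de corps d'endomorphismes $K$ d'après le corollaire~\ref{cor-recollement-cr}; la semi-simplicité donnée par la proposition~\ref{pr-fin-cr}, combinée à la théorie de Clifford pour l'inclusion $\Si_{d_1}\times\cdots\times\Si_{d_k}\subset\Si_d$, impose alors
\[\mathrm{Cr}_d(S)\simeq\mathrm{Ind}_{\Si_{d_1}\times\cdots\times\Si_{d_k}}^{\Si_d}\bigl((M_1\boxtimes\cdots\boxtimes M_k)\otimes(\rho_1^{\boxtimes d_1}\boxtimes\cdots\boxtimes\rho_k^{\boxtimes d_k})\bigr),\]
pour des $K[\Si_{d_j}]$-modules simples $M_j$ uniquement déterminés.

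On reconstruira alors $S$ \emph{via} le prolongement intermédiaire associé au foncteur réflexif $\mathrm{Cr}_d$ (proposition~\ref{recollement-cr}). Le point-clef sera de vérifier que ce prolongement intermédiaire envoie le multifoncteur symétrique explicite ci-dessus sur $\bigotimes_{j=1}^k\rho_j^*\mathrm{E}_{M_j}$, ce qui revient à identifier l'image de la flèche d'adjonction définissant le prolongement intermédiaire avec l'image de la norme définissant les foncteurs élémentaires $\mathrm{E}_{M_j}$. L'implication (2)$\Rightarrow$(3) s'obtiendra dualement (en travaillant avec co-support plutôt que support); l'unicité des $\mathrm{E}_\pi$ découlera de celle de la décomposition de Clifford et de la fidélité du prolongement intermédiaire sur les simples; l'absolue simplicité de $S$ résultera du corollaire~\ref{cor-recollement-cr}, puisque $\mathrm{Cr}_d(S)$ a $K$ pour corps d'endomorphismes. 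L'obstacle principal prévisible est précisément cette identification entre prolongement intermédiaire et foncteur élémentaire: elle demande de traduire explicitement la définition de $\mathrm{E}_M$ par image de norme en termes d'adjonction unité-coünité, ce qui exige un soin particulier en caractéristique positive, où la norme n'est pas inversible.
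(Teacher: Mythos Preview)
Votre plan est correct et suit essentiellement la même démarche que l'article: analyse de $\mathrm{Cr}_d(S)$ via la structure de recollement, identification des simples de $\Sigma\mathbf{Add}_d(\A,K\Md)$ pertinents avec des représentations de sous-groupes de Young de $\Si_d$, puis reconstruction de $S$ par prolongement intermédiaire en vérifiant que celui-ci coïncide avec l'image de la norme définissant les $\mathrm{E}_M$.

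Deux remarques néanmoins. D'abord, l'article organise votre <<~théorie de Clifford~>> de façon plus systématique en introduisant une catégorie auxiliaire $\Sigma_d(\A;K)$ (sous-catégorie de $\Sigma\mathbf{Add}_d$ formée des objets dont l'image par oubli est somme de produits tensoriels extérieurs de simples absolument simples) et en établissant une équivalence de Morita $\Sigma_d(\A;K)\simeq\prod_{[\pi]}K[\Si_\pi]\Md$; cela rend l'argument plus propre et évite d'invoquer Clifford de façon informelle. Ensuite, et c'est le point le plus important, votre plan affirme que $\mathrm{Cr}_d(S)$ a pour corps d'endomorphismes $K$, mais le corollaire~\ref{cor-recollement-cr} ne dit que $\mathrm{End}(S)\simeq\mathrm{End}(\mathrm{Cr}_d(S))$: pour conclure à l'absolue simplicité de $S$ (et à l'unicité des $M_j$), il faut savoir que les $K[\Si_{d_j}]$-modules simples sont \emph{absolument} simples. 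C'est un théorème non trivial de James \cite[théorème~11.5]{Ja-sym} que l'article invoque explicitement, et que vous devriez faire apparaître dans votre rédaction.
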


\begin{rem}\label{rq-dectensmax} Le deuxième auteur a montré \cite[Th.~B.12]{Touze} que les foncteurs élémentaires ne se décomposent pas en produit tensoriel non trivial d'autres foncteurs. Il s'ensuit que la décomposition tensorielle fournie par le théorème précédent est \emph{maximale}.
Par contraste, nous ignorons comment obtenir une décomposition tensorielle maximale pour les foncteurs simples antipolynomiaux.
\end{rem}

Si l'on suppose que le foncteur polynomial simple considéré est à valeurs de dimensions finies, et que le corps $K$ est assez gros (par exemple algébriquement clos, d'après la proposition \ref{crit-absimpl}), alors les conditions précédentes sont vérifiées et l'on obtient le résultat suivant.

\begin{thm}\label{steinberg-pol-gal} Supposons que $K$ est un corps de décomposition de $\A$.
Un foncteur polynomial $S$ de $\F^{\df}(\A;K)$ est simple si et seulement s'il est isomorphe à un produit tensoriel
\[S\simeq\bigotimes_\pi \pi^*\mathrm{E}_\pi\]
indexé par les classes d'isomorphisme de foncteurs additifs simples $\pi$ de $\F^{\df}(\A;K)$, où les $\mathrm{E}_\pi$ sont des foncteurs élémentaires, tous constants sauf un nombre fini. De plus, les foncteurs élémentaires $\mathrm{E}_\pi$ sont uniques à isomorphisme près.

Enfin les foncteurs polynomiaux simples de $\F^{\df}(\A;K)$ sont absolument simples.
\end{thm}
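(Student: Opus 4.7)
The strategy is to deduce Theorem \ref{steinberg-pol-gal} from the more general Theorem \ref{thm-st}. Given a simple polynomial functor $S$ of degree $d$ in $\F^{df}(\A;K)$, it suffices to verify that $S$ satisfies condition (1) of Theorem \ref{thm-st}, namely that $S$ is a quotient of a tensor product $\pi_1\otimes\cdots\otimes\pi_d$ where the $\pi_i$ are absolutely simple additive functors. Once this is established, the tensor decomposition formula, the uniqueness of the $\mathrm{E}_\pi$ up to isomorphism, and the absolute simplicity of $S$ all follow directly from Theorem \ref{thm-st}.

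The first step is to produce the additive simples via the cross effect. I would apply Proposition \ref{pr-fin-cr}: since $S$ is simple and polynomial of degree $d$, the cross effect $cr_d(S)$ is semi-simple in $\fct(\A^d,K\Md)$ and decomposes as $\bigoplus_{\sigma\in E}\sigma.T$ for some simple functor $T$ and some subset $E\subset\Si_d$. Since $cr_d(S)$ is multi-additive ($S$ has degree $\le d$) and its values are subspaces of values of $S$, the summand $T$ is both multi-additive and takes values of finite dimension over $K$.

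Next I would inductively decompose $T$ as a box product. Applying Proposition \ref{simples-cat-prod} with source $\A\times\A^{d-1}$ (using that $T$ has a finite-dimensional value), $T$ is a quotient of $S_1\boxtimes T_1$ for a simple $S_1\in\F(\A;K)$ and a simple $T_1\in\F(\A^{d-1};K)$. The multi-additivity of $T$ forces $S_1$ to be additive and $T_1$ to be multi-additive, and finite-dimensionality transfers to $S_1$ and $T_1$. Because $K$ is a decomposition field of $\A$, $S_1$ is absolutely simple. Iterating on $T_1$ and using Proposition \ref{abs-semi-simple} at each step to upgrade the quotient map to an isomorphism (the absolute simplicity of each factor makes the box product simple), one obtains $T\simeq\pi_1\boxtimes\cdots\boxtimes\pi_d$ with each $\pi_i$ an absolutely simple additive functor in $\F^{df}(\A;K)$.

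Finally, the bi-adjunction between $cr_d$ and $\Delta_d^*$ (Proposition \ref{adj-diagcr}) converts the monomorphism $T\hookrightarrow cr_d(S)$ into a nonzero morphism $\Delta_d^*T\to S$, which is an epimorphism by simplicity of $S$. Since $\Delta_d^*T = \pi_1\otimes\cdots\otimes\pi_d$, condition (1) of Theorem \ref{thm-st} holds and the conclusion follows. The delicate point is the inductive decomposition of $T$: one must simultaneously track multi-additivity of the successive factors and exploit the hypothesis on $K$ to transmute simplicity into absolute simplicity at each step. The matching of indexing sets in the two theorems is automatic: only finitely many factors $\pi^*\mathrm{E}_\pi$ are non-trivial, and each such $\pi$ necessarily has finite-dimensional values (otherwise $S$ would not lie in $\F^{df}(\A;K)$); for $K$ a decomposition field of $\A$, on simple additive functors with finite-dimensional values the notions of simplicity and absolute simplicity coincide, so the two indexings yield the same expression.
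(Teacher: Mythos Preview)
Your proof is correct and follows essentially the same route as the paper: extract a simple summand of $cr_d(S)$ via Proposition~\ref{pr-fin-cr}, identify it as an exterior tensor product $\pi_1\boxtimes\cdots\boxtimes\pi_d$ of absolutely simple additive functors using Propositions~\ref{simples-cat-prod} and~\ref{abs-semi-simple}, then invoke the diagonal/cross-effect adjunction (Proposition~\ref{adj-diagcr}) to exhibit $S$ as a quotient of $\pi_1\otimes\cdots\otimes\pi_d$ and conclude with Theorem~\ref{thm-st}. The paper compresses the inductive decomposition of the simple multi-additive summand into a single sentence, whereas you spell it out; your closing remark reconciling the two indexing sets is a point the paper leaves implicit.
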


\begin{proof} 
Soit $d$ le degré de $S$. L'adjonction diagonale-effets croisés de la proposition \ref{adj-diagcr} donne un morphisme non nul $\Delta^*cr_d(S)\to S$ qui est un épimorphisme par simplicité de $S$. D'après la proposition \ref{pr-fin-cr}, $cr_d(S)$ est semi-simple. Comme $K$ est un corps de décomposition de $\A$, il découle des propositions \ref{abs-semi-simple} et \ref{simples-cat-prod} que les facteurs directs de $cr_d(S)$ sont de la forme $\pi_1\boxtimes\dots\boxtimes\pi_d$ pour des foncteurs $\pi_i$ absolument simples. Ainsi $S$ est un quotient d'un produit tensoriel $\pi_1\otimes\dots\otimes\pi_d$ et le théorème \ref{steinberg-pol-gal} découle du théorème~\ref{thm-st}.
\end{proof}

\begin{rem}\label{rq-abss-st} Dans l'énoncé précédent, on ne peut généralement pas s'affranchir de l'hypothèse que $K$ est un corps de décomposition de $\A$, même si $S$ est \emph{absolument} simple.
  Considérons par exemple le foncteur quadratique $S$ de $\F(\mathbb{C},\mathbb{R})$ associant à un $\mathbb{C}$-espace vectoriel $V$ le $\mathbb{R}$-espace vectoriel des  formes hermitiennes sur $V^*$. On vérifie facilement que $S$ est absolument simple, mais il n'admet pas de décomposition à la Steinberg, car la catégorie $\mathbf{Add}(\mathbf{P}(\mathbb{C});\mathbb{R})$ ne contient \emph{aucun} foncteur absolument simple. En effet, un tel foncteur additif absolument simple $F$ fournirait un morphisme de corps $\mathbb{C}\to\mathrm{End}(F)\simeq\mathbb{R}\quad\lambda\mapsto F(.\lambda)$.
\end{rem}

Le théorème~\ref{steinberg-pol-gal}, le corollaire~\ref{cor-tens-St1} et la proposition~\ref{crit-absimpl} entraînent aussitôt le résultat suivant, dont nous ne connaissons pas de démonstration directe.

\begin{cor}\label{cor-corps-dec} Tout corps de décomposition de $\A$ qui contient toutes les racines de l'unité est un corps de décomposition non additif de $\A$.
\end{cor}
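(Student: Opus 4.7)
Je montrerai directement que tout foncteur simple $F$ de $\F^{df}(\A;K)$ est absolument simple, en combinant le théorème~\ref{th-simglob} de décomposition globale avec le théorème~\ref{steinberg-pol-gal} de décomposition polynomiale à la Steinberg.

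Tout d'abord, comme tout idéal $K$-cotrivial $\I$ de $\A$ produit un quotient $\A/\I$ dont les ensembles de morphismes sont finis (définition~\ref{df-ideaux}) et comme $K$ contient par hypothèse toutes les racines de l'unité, la proposition~\ref{crit-absimpl}(2) garantit que $K$ est un corps de décomposition non additif de $\A/\I$. Ensuite, partant d'un simple $F$ de $\F^{df}(\A;K)$, le théorème~\ref{th-simglob} fournit un idéal $K$-cotrivial $\I$ et un bifoncteur $B'$ de $\F(\A/\I\times\A;K)$, antipolynomial par rapport à la première variable et polynomial par rapport à la seconde, tel que $F\simeq\psi_\I^*B'$. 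La proposition~\ref{pr-simglobi} assure que $\psi_\I^*$ est pleinement fidèle sur les bifoncteurs \phs\ par rapport à la seconde variable et que son image essentielle est stable par sous-quotient ; combinée à la stabilité de la polynomialité par sous-objets, cela fournit une correspondance bijective entre les sous-objets de $F$ et ceux de $B'$, et donc la simplicité de $B'$, ainsi que l'isomorphisme $\mathrm{End}(B')\simeq\mathrm{End}(F)$.

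Je décomposerai ensuite $B'$ comme un produit tensoriel extérieur. Comme $F\ne 0$ prend des valeurs de dimensions finies sur $K$, il existe $a\in\A$ tel que $B'(\pi_\I a,a)=F(a)$ soit non nul et de dimension finie ; la proposition~\ref{simples-cat-prod} fournit alors des simples $S$ de $\F(\A/\I;K)$ et $T$ de $\F(\A;K)$ tels que $B'$ soit un quotient de $S\boxtimes T$. Par la proposition~\ref{pr-dim-finie}, $S$ est à valeurs de dimensions finies sur $K$ et donc absolument simple (étape précédente), ce qui par la proposition~\ref{abs-semi-simple}(2) entraîne la simplicité de $S\boxtimes T$, donc l'isomorphisme $B'\simeq S\boxtimes T$. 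La relation $F(a)\simeq S(\pi_\I a)\otimes_K T(a)$ montre alors que $T$ est à valeurs de dimensions finies ; comme $B'$ est polynomial par rapport à la seconde variable, $T$ l'est aussi ; et le théorème~\ref{steinberg-pol-gal}, qu'on peut invoquer car $K$ est par hypothèse un corps de décomposition de $\A$, fournit l'absolue simplicité de $T$.

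Enfin, la condition (1bis) de la proposition~\ref{abs-semi-simple} appliquée au foncteur absolument simple $S$ donne $\mathrm{End}(S\boxtimes T)\simeq\mathrm{End}(T)=K$, donc $B'$ est absolument simple, et par conséquent $\mathrm{End}(F)\simeq\mathrm{End}(B')=K$, établissant l'absolue simplicité de $F$. L'étape techniquement la plus délicate me semble être la mise en place rigoureuse de la correspondance entre sous-objets de $F$ et de $B'$ pour établir la simplicité de ce dernier : il faut combiner pleine fidélité, stabilité par sous-quotient de l'image essentielle de $\psi_\I^*$ et stabilité de la polynomialité par sous-objets ; une fois celle-ci établie, les outils d'absolue simplicité rassemblés au §\,\ref{par-absimple} s'enchaînent sans nouvelle difficulté.
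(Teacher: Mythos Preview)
Your argument is correct and follows essentially the paper's route: the paper simply cites corollaire~\ref{cor-tens-St1}, théorème~\ref{steinberg-pol-gal} and proposition~\ref{crit-absimpl}, while you unpack the first of these into its ingredients (théorème~\ref{th-simglob}, proposition~\ref{pr-simglobi}, proposition~\ref{simples-cat-prod}, proposition~\ref{abs-semi-simple}).

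One small imprecision is worth fixing. The relation $F(a)\simeq S(\pi_\I a)\otimes_K T(a)$ does not by itself show that $T$ has finite-dimensional values everywhere, since $S(\pi_\I a)$ could vanish for some objects $a$. The clean way out is to note that théorème~\ref{th-simglob} already produces the bifunctor in $\F^{df}(\A\times\A;K)$; since $\pi_\I$ is the identity on objects, $B'$ lies in $\F^{df}(\A/\I\times\A;K)$, and evaluating $B'\simeq S\boxtimes T$ at any pair $(b_0,a)$ with $S(b_0)\ne 0$ then gives $\dim_K T(a)<\infty$ for every $a$.
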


Si le corps $K$ n'est pas assez gros, on a le résultat plus faible suivant.

\begin{thm}\label{st-pol-petit-corps} Soit $F$ un foncteur polynomial simple de $\F^{\df}(\A;K)$. Il existe une extension finie de corps commutatifs $K\subset L$ telle que le foncteur $F\otimes_KL$ de $\F(\A;L)$ possède une filtration finie dont les sous-quotients sont du type
\[\pi_1^*E_1\otimes\dots\otimes \pi_d^*E_d\]
où les $\pi_i$ sont des foncteurs additifs absolument simples de $\F^{\df}(\A;L)$ et les $E_i$ des foncteurs élémentaires.
\end{thm}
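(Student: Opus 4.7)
L'idée est de combiner la proposition \ref{pr-corpassegro}, qui ramène à des simples absolus par extension finie, avec le théorème \ref{thm-st}, en procédant en deux temps : d'abord on scinde $F$ en sous-quotients absolument simples, puis on absolutise les facteurs additifs intervenant dans leurs effets croisés.

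La proposition \ref{pr-corpassegro} appliquée à $F$ fournit une extension finie $K\subset L_1$ telle que $F\otimes_KL_1$ admette une filtration finie dont les sous-quotients $T_1,\dots,T_r$ sont absolument simples dans $\F^{df}(\A;L_1)$. Chacun est polynomial ; on note $d_j$ le degré de $T_j$. Pour $j$ fixé, la proposition \ref{pr-fin-cr} entraîne la semi-simplicité de $cr_{d_j}(T_j)$ dans $\F^{df}(\A^{d_j};L_1)$ ; notons $S_0^{(j)}$ l'un de ses facteurs simples. En itérant $(d_j-1)$ fois la proposition \ref{simples-cat-prod} appliquée à $S_0^{(j)}$, on obtient un épimorphisme $\pi_1^{(j)}\boxtimes\dots\boxtimes\pi_{d_j}^{(j)}\twoheadrightarrow S_0^{(j)}$ pour des foncteurs additifs simples $\pi_i^{(j)}$ de $\F^{df}(\A;L_1)$ (leurs valeurs sont de dimension finie puisque celles de $S_0^{(j)}$ le sont). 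En composant la restriction à la diagonale de cet épimorphisme avec la counité $\Delta_{d_j}^*cr_{d_j}(T_j)\to T_j$ de l'adjonction de la proposition \ref{adj-diagcr} (non nulle puisque $T_j$ est exactement de degré $d_j$, donc surjective par simplicité), on construit un épimorphisme $\pi_1^{(j)}\otimes\dots\otimes\pi_{d_j}^{(j)}\twoheadrightarrow T_j$.

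La proposition \ref{pr-corpassegro} appliquée à chacun des $\pi_i^{(j)}$ (en nombre fini) fournit une extension finie $L_1\subset L$ telle que, pour tous $i,j$, $\pi_i^{(j)}\otimes_{L_1}L$ admette une filtration finie à sous-quotients absolument simples dans $\mathbf{Add}(\A;L)$. Sur $L$, chaque $T_j\otimes_{L_1}L$ demeure absolument simple et reste quotient de $\pi_1^{(j)}\otimes\dots\otimes\pi_{d_j}^{(j)}\otimes_{L_1}L$, lui-même filtré (par exactitude sur $L$ du produit tensoriel) par des sous-objets dont les quotients successifs sont des produits tensoriels $\varpi_1\otimes\dots\otimes\varpi_{d_j}$ avec $\varpi_i$ absolument simples additifs dans $\F^{df}(\A;L)$. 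Le foncteur simple $T_j\otimes L$ étant quotient d'un objet filtré ainsi, il est quotient de l'un de ces sous-quotients : la condition (1) du théorème \ref{thm-st} est satisfaite, fournissant la décomposition tensorielle à la Steinberg désirée pour $T_j\otimes L$. La filtration cherchée de $F\otimes_KL$ est l'extension à $L$ de celle construite au premier pas, dont les sous-quotients $T_j\otimes_{L_1}L$ se décomposent comme voulu.

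\emph{Obstacle principal.} La difficulté centrale est de garantir qu'une \emph{seule} extension finie $L/K$ suffit à absolutiser simultanément les sous-quotients de $F$ et les foncteurs additifs apparaissant dans leurs effets croisés. Ceci repose de manière essentielle sur la finitude à tous les niveaux (nombre de $T_j$, degrés $d_j$, et par suite nombre de $\pi_i^{(j)}$), garantie par la proposition \ref{pr-corpassegro} et par l'hypothèse de dimensions finies sur $F$.
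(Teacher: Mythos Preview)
Your proof is correct and lands on Théorème~\ref{thm-st} just as the paper does, but the route is organised differently. You apply Proposition~\ref{pr-corpassegro} \emph{twice}: first to $F$ (to obtain absolutely simple subquotients $T_j$ over $L_1$), then to the finitely many additive simples $\pi_i^{(j)}$ extracted from $cr_{d_j}(T_j)$ via Proposition~\ref{simples-cat-prod} (to obtain absolutely simple additives over $L$); the absolute simplicity of each $T_j$ is precisely what guarantees that $T_j\otimes_{L_1}L$ remains simple and is therefore a quotient of a single graded piece of the filtered tensor product. The paper instead applies Proposition~\ref{pr-corpassegro} \emph{once}, to a simple $S\subset cr_d(F)$ viewed in $\F(\A^d;K)$: since $cr_d(F)$ is a finite sum of $\sigma.S$ (Proposition~\ref{pr-fin-cr}), this single extension $L$ already makes every simple subquotient of $cr_d(F\otimes_KL)$ absolutely simple, so the argument of Théorème~\ref{steinberg-pol-gal} applies directly to each simple subquotient of $F\otimes_KL$. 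The paper's approach is more economical (one extension, with the bound $[L:K]\le d!$ of Remarque~\ref{rq-taille-corps} when $F$ is absolutely simple); yours, on the other hand, handles each $T_j$ with its own degree $d_j$ and thereby sidesteps two verifications left implicit in the paper's proof, namely that all subquotients $T_i$ of $F\otimes_KL$ have degree exactly $d$, and that an absolutely simple object of $\mathbf{Add}_d(\A;L)$ with finite-dimensional values decomposes as $\pi_1\boxtimes\dots\boxtimes\pi_d$ with each $\pi_i$ absolutely simple.
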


\begin{proof} Soit $S$ un sous-foncteur simple de $cr_d(F)$ (cf. proposition~\ref{pr-fin-cr}), où $d$ est le degré de $F$. Par la proposition~\ref{pr-corpassegro}, il existe une extension finie $K\subset L$ telle que $S\otimes_KL$ possède une filtration finie dont les sous-quotients sont absolument simples (dans $\F(\A^d;L)$).

Comme $cr_d(F)$ est somme directe d'un nombre fini de foncteurs du type $\sigma.S$ pour $\sigma\in\Si_d$ (proposition~\ref{pr-fin-cr}), et que le foncteur $-\otimes_KL : \F(\A;K)\to\F(\A;L)$ commute aux effets croisés, on voit que tous les sous-quotients simples de $cr_d(F\otimes_KL)$ (dans $\mathbf{Add}_d(\A;L)$) sont absolument simples. Le foncteur $F\otimes_KL$ possède une filtration finie (de longueur bornée par le degré de $L$ sur $K$) dont les sous-quotients $T_i$ sont simples, polynomiaux, à valeurs de dimensions finies. Par construction, les sous-quotients simples de $cr_d(T_i)$ sont absolument simples dans $\mathbf{Add}_d(\A;L)$. On peut donc conclure que $T_i$ a la forme souhaitée en procédant comme dans la démonstration du théorème \ref{steinberg-pol-gal}. 
\end{proof}

\begin{rem}\label{rq-taille-corps} Si $F$ est \emph{absolument} simple, la proposition~\ref{pr-fin-cr} montre qu'on peut choisir  $L$ de degré au plus $d!$ sur $K$, où $d$ est le degré de $F$.
\end{rem}

\subsection{La catégorie auxiliaire $\Sigma_d(\A;K)$}\label{ssct-cataux} Dans cette section, on se donne un entier $d\ge 0$. Les notations $\Sigma\mathbf{Add}_d$, $\mathbf{Add}_d$, $\mathcal{O}$ et $\mathcal{L}$ sont celles du §\,\ref{par-msec}.

\begin{nota}\begin{enumerate}
\item On désigne par $\s$ un ensemble de représentants des classes d'isomorphisme de foncteurs absolument simples de $\mathbf{Add}(\A;K)$. Si $\underline{\pi}=(\pi_1,\dots,\pi_d)$ est un élément de $\s^d$, on note $t(\underline{\pi})$ l'objet $\pi_1\boxtimes\dots\boxtimes\pi_d$ de $\mathbf{Add}_d(\A,K\Md)$.
\item On note $\Sigma_d(\A;K)$ la sous-catégorie pleine de $\Sigma\mathbf{Add}_d(\A,K\Md)$ constituée des objets dont l'image par le foncteur d'oubli $\mathcal{O} : \Sigma\mathbf{Add}_d(\A,K\Md)\to\mathbf{Add}_d(\A,K\Md)$ est isomorphe à une somme directe de foncteurs du type $t(\underline{\pi})$ avec $\underline{\pi}\in\s^d$.
\end{enumerate}
\end{nota}

Comme un multifoncteur du type $t(\underline{\pi})$ est (absolument) simple (proposition~\ref{abs-semi-simple}), $\Sigma_d(\A;K)$ est une sous-catégorie de $\Sigma\mathbf{Add}_d(\A,K\Md)$ stable par sous-quotients et par colimites. C'est en particulier une catégorie de Grothendieck.

\begin{lm}\label{lm-mor-pte} Soient $\underline{\pi}$ et $\underline{\chi}$ des éléments de $\s^d$. Le $K$-espace vectoriel $\mathbf{Add}_d(\A;K)(t(\underline{\pi}),t(\underline{\chi}))$ est de dimension $1$ si $\underline{\pi}=\underline{\chi}$ et $0$ sinon.
\end{lm}

\begin{proof} Le cas $\underline{\pi}=\underline{\chi}$ provient de ce que les $\pi_i$ sont absolument simples (cf. proposition~\ref{abs-semi-simple}). Si $\underline{\pi}\ne\underline{\chi}$, soit $i\in\{1,\dots,d\}$ tel que $\pi_i$ ne soit pas isomorphe à $\chi_i$ : alors ${\rm Hom}(\pi_i,\chi_i)=0$, puisque $\pi_i$ et $\chi_i$ sont simples. La conclusion en résulte, en fixant toutes les variables sauf la $i$-ème.
\end{proof}

Dans ce qui suit, on fait usage de l'action à droite du groupe symétrique $\Si_d$ sur $\s^d$ par permutation des facteurs.

\begin{nota} Soit $\underline{\pi}$ un élément de $\s^d$.
\begin{enumerate}
\item On note $\Si_{\underline{\pi}}$ le sous-groupe de $\Si_d$ stabilisateur de $\underline{\pi}$.
\item On note $\mathrm{L}(\underline{\pi})$ l'objet $\mathcal{L}(t(\underline{\pi}))$ de $\Sigma\mathbf{Add}_d(\A,K\Md)$.
\end{enumerate}
\end{nota}

On constate que $\mathrm{L}(\underline{\pi})$ appartient à $\Sigma_d(\A;K)$.

\begin{lm}\label{lm-endsym} Soient $\underline{\pi}$ et $\underline{\chi}$ des éléments de $\s^d$.
\begin{enumerate}
\item Si $[\underline{\pi}]=[\underline{\chi}]$ dans $\s^d/\Si_d$, alors $\mathrm{L}(\underline{\chi})\simeq\mathrm{L}(\underline{\pi})$. Dans le cas contraire, $\Sigma\mathbf{Add}_d(\A,K\Md)(\mathrm{L}(\underline{\pi}),\mathrm{L}(\underline{\chi}))$ est nul.
\item La $K$-algèbre $\mathrm{End}(\mathrm{L}(\underline{\pi}))$ est isomorphe à $K[\Si_{\underline{\pi}}]$.
\end{enumerate}
\end{lm}

\begin{proof} Il est immédiat que $\mathrm{L}(\underline{\chi})\simeq\mathrm{L}(\underline{\pi})$ s'il existe $\sigma\in\Si_d$ tel que  $\chi_i=\pi_{\sigma(i)}$ pour tout $i$.

Par ailleurs, le lemme~\ref{lm-adj-cro} fournit des isomorphismes naturels
\[\Sigma\mathbf{Add}_d(\A,K\Md)(\mathrm{L}(\underline{\pi}),\mathrm{L}(\underline{\chi}))\simeq\mathbf{Add}_d(\A,K\Md)(t(\underline{\pi}),\mathcal{O}\mathrm{L}(\underline{\chi}))\]
\[\simeq\bigoplus_{\sigma\in\Si_d}\mathbf{Add}_d(\A,K\Md)\big(t(\underline{\pi}),t(\sigma.\underline{\chi})\big).\]
Il s'ensuit que $\Sigma\mathbf{Add}_d(\A,K\Md)(\mathrm{L}(\underline{\pi}),\mathrm{L}(\underline{\chi}))$ est nul si $\sigma.\underline{\chi}\ne\underline{\pi}$ pour toute permutation $\sigma$, grâce au lemme~\ref{lm-mor-pte}, et que $\mathrm{End}(\mathrm{L}(\underline{\pi}))$ est isomorphe comme $K$-espace vectoriel à $K[\Si_{\underline{\pi}}]$. Pour conclure, il suffit de remarquer que cet isomorphisme linéaire coïncide avec le morphisme de $K$-algèbres $K[\Si_{\underline{\pi}}]\to\mathrm{End}(\mathrm{L}(\underline{\pi}))$ fourni par l'action canonique du groupe $\Si_{\underline{\pi}}$ sur $t(\underline{\pi})$, donc sur $\mathrm{L}(\underline{\pi})$.
\end{proof}

\begin{lm}\label{lm-projs} Soit $\underline{\pi}\in\s^d$. L'objet $\mathrm{L}(\underline{\pi})$ est projectif dans $\Sigma_d(\A;K)$.
\end{lm}

\begin{proof} Cette propriété résulte de l'adjonction entre les foncteurs exacts $\mathcal{O}$ et $\mathcal{L}$ (lemme~\ref{lm-adj-cro}) et de ce que la restriction de $\mathcal{O}$ à la sous-catégorie $\Sigma_d(\A;K)$ de $\Sigma\mathbf{Add}_d(\A,K\Md)$ prend par définition ses valeurs dans la sous-catégorie des objets semi-simples de $\mathbf{Add}_d(\A,K\Md)$, comme $t(\underline{\pi})$.
\end{proof}

\begin{lm}\label{lm-gens} Les objets $\mathrm{L}(\underline{\pi})$ engendrent la catégorie abélienne $\Sigma_d(\A;K)$ lorsque $\underline{\pi}$ parcourt $\s^d$.
\end{lm}

\begin{proof} C'est une conséquence directe de la définition de $\Sigma_d(\A;K)$ et de l'adjonction entre $\mathcal{L}$ et $\mathcal{O}$.
\end{proof}

\begin{pr}\label{identif-cataux} Les foncteurs
\[\Sigma_d(\A;K)\to\prod_{[\underline{\pi}]\in\s^d/\Si_d}K[\Si_{\underline{\pi}}]\Md\]
dont les composantes sont les $\mathrm{Hom}(\mathrm{L}(\underline{\pi}),-)$ et
\[\prod_{[\underline{\pi}]\in\s^d/\Si_d}K[\Si_{\underline{\pi}}]\Md\to\Sigma_d(\A;K)\qquad (M_{\underline{\pi}})_{[\underline{\pi}]\in\s^d/\Si_d}\mapsto\bigoplus_{[\underline{\pi}]\in\s^d/\Si_d}\mathrm{L}(\underline{\pi})\underset{\Si_{\underline{\pi}}}{\otimes}M_{\underline{\pi}}\]
sont des équivalences de catégories quasi-inverses l'une de l'autre.
\end{pr}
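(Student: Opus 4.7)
The plan is to deduce the equivalence from a Morita-Gabriel argument, with the preceding lemmas supplying all the input. First, Lemma~\ref{lm-endsym}(1) gives the orthogonality $\mathrm{Hom}(\mathrm{L}(\pi),\mathrm{L}(\chi))=0$ for $[\pi]\neq[\chi]$ in $\s^d/\Si_d$; combined with projectivity (Lemma~\ref{lm-projs}) and generation (Lemma~\ref{lm-gens}) of the $\mathrm{L}(\pi)$'s, this induces a canonical block decomposition $\Sigma_d(\A;K)\simeq\prod_{[\pi]}\Sigma_d(\A;K)_{[\pi]}$, where $\Sigma_d(\A;K)_{[\pi]}$ is the full subcategory of objects that arise as quotients of direct sums of copies of $\mathrm{L}(\pi)$. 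Using the anti-isomorphism $K[\Si_\pi]^{op}\simeq K[\Si_\pi]$ (group inversion) together with the identification $\mathrm{End}(\mathrm{L}(\pi))\simeq K[\Si_\pi]$ of Lemma~\ref{lm-endsym}(2), the $K$-vector space $\mathrm{Hom}(\mathrm{L}(\pi),X)$ inherits a canonical left $K[\Si_\pi]$-module structure from the precomposition action; this defines the first functor of the statement.

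For each orbit $[\pi]$, what remains is to prove that $\mathrm{Hom}(\mathrm{L}(\pi),-)\colon\Sigma_d(\A;K)_{[\pi]}\to K[\Si_\pi]\Md$ is an equivalence with quasi-inverse $M\mapsto\mathrm{L}(\pi)\otimes_{K[\Si_\pi]}M$, where the tensor product is defined via a free presentation of $M$ over $K[\Si_\pi]$. This is a classical Morita-Gabriel statement as soon as $\mathrm{L}(\pi)$ is known to be a \emph{compact} projective generator of $\Sigma_d(\A;K)_{[\pi]}$: projectivity is Lemma~\ref{lm-projs}, generation follows from Lemma~\ref{lm-gens} combined with the block decomposition, and the endomorphism identification is Lemma~\ref{lm-endsym}(2). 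Granted compactness, the unit and counit of the adjunction are isomorphisms on $\mathrm{L}(\pi)$ (directly from Lemma~\ref{lm-endsym}(2), since $\mathrm{L}(\pi)\otimes_{K[\Si_\pi]}K[\Si_\pi]\simeq\mathrm{L}(\pi)$ and $K[\Si_\pi]\simeq\mathrm{End}(\mathrm{L}(\pi))$) and extend to all objects by the standard projective-resolution argument, using the right-exactness of the tensor product and the exactness of $\mathrm{Hom}(\mathrm{L}(\pi),-)$.

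The only substantive point, and the expected main obstacle, is the compactness check: that $\mathrm{Hom}(\mathrm{L}(\pi),-)$ commutes with arbitrary direct sums in $\Sigma_d(\A;K)$. Via the adjunction $\mathcal{L}\dashv\mathcal{O}$ of Lemma~\ref{lm-adj-cro}, this functor factors as $\mathrm{Hom}_{\mathbf{Add}_d}(t(\pi),\mathcal{O}(-))$; since $\mathcal{O}$ admits both adjoints it preserves direct sums, so it suffices to verify that $\mathrm{Hom}_{\mathbf{Add}_d}(t(\pi),-)$ commutes with direct sums of objects of the form $\bigoplus_j t(\chi_j)$. By Lemma~\ref{lm-mor-pte}, the full subcategory of $\mathbf{Add}_d(\A,K\Md)$ spanned by such direct sums is equivalent, via $(V_\chi)_\chi\mapsto\bigoplus_\chi t(\chi)\otimes_K V_\chi$, to the product category $\prod_{\chi\in\s^d}K\Md$; in this product the image of $t(\pi)$ is the canonical generator of the $\pi$-th factor, which is manifestly compact, so the required commutation holds. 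This is the step where the semisimple structure of the subcategory spanned by the $t(\chi)$'s enters essentially and links the concrete content of the preceding lemmas to the abstract Morita framework.
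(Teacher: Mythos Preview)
Your proof is correct and takes essentially the same approach as the paper, which simply invokes Freyd's multi-object Morita theorem (cited from Mitchell \cite{Mi72}, théorème~3.1) on the strength of Lemmas~\ref{lm-projs}, \ref{lm-gens} and~\ref{lm-endsym}. Your write-up unpacks that citation --- in particular making explicit the compactness of the $\mathrm{L}(\pi)$'s, which the paper leaves implicit under the blanket reference to Mitchell --- but the underlying argument is identical.
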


\begin{proof} Cela découle des lemmes~\ref{lm-projs}, \ref{lm-gens} et~\ref{lm-endsym} par équivalence de Morita à plusieurs objets (voir le théorème~3.1 de \cite{Mi72}, dû à Freyd).
\end{proof}

\subsection{Démonstration du théorème~\ref{thm-st}}

Dans ce qui suit, si $[\underline{\pi}]$ est un élément de $\s^d/\Si_d$, quitte à réordonner les composantes de $\underline{\pi}$, on peut supposer que le sous-groupe $\Si_{\underline{\pi}}$ de $\Si_d$ est du type $\Si_{i_1}\times\dots\times\Si_{i_r}$ avec $i_1+\dots+i_r=d$ et écrire $\underline{\pi}=(\pi'_1,\dots,\pi'_1,\pi'_2,\dots,\pi'_2,\dots,\pi'_r,\dots,\pi'_r)$, où $\pi'_j$ est répété $i_j$ fois et les $\pi'_j$ sont deux à deux distincts.

\begin{lm}\label{lm-ident-printer} L'application
$$K[\Si_{i_1}]\Md\times\dots\times K[\Si_{i_r}]\Md\to\pol_d(\A;K)$$
donnée par la composée 
\begin{itemize}
\item du produit tensoriel extérieur $K[\Si_{i_1}]\Md\times\dots\times K[\Si_{i_r}]\Md\to K[\Si_{\underline{\pi}}]\Md$,
\item du plongement $K[\Si_{\underline{\pi}}]\Md\to\Sigma_d(\A;K)$ de la proposition~\ref{identif-cataux},
\item de l'inclusion $\Sigma_d(\A;K)\hookrightarrow\Sigma\mathbf{Add}_d(\A;K\Md)$,
\item du prolongement intermédiaire $\Sigma\mathbf{Add}_d(\A;K\Md)\to\pol_d(\A;K)$ associé au foncteur réflexif $\mathrm{Cr}_d$ (proposition~\ref{recollement-cr}),
\end{itemize}
envoie $(M_1,\dots,M_r)$ sur $(\pi'_1)^*E_{M_1}\otimes\dots\otimes (\pi'_r)^*E_{M_r}$.
\end{lm}

\begin{proof} Le prolongement intermédiaire associe à un multifoncteur $d$-multiadditif symétrique $X$ l'image de la norme $(\Delta_d^*X)_{\Si_d}\to(\Delta_d^*X)^{\Si_d}$, où $\Delta_d : \A\to\A^d$ est la diagonale itérée. Si $X$ est du type $\mathrm{L}(\underline{\pi})\underset{\Si_{\underline{\pi}}}{\otimes}(M_{i_1}\otimes\dots\otimes M_{i_r})$, alors le $\Si_d$-module $\Delta_d^*X$ s'identifie à $({\pi'_1}^{\otimes i_1}\otimes M_{i_1})\otimes\dots\otimes({\pi'_r}^{\otimes i_r}\otimes M_{i_r})\uparrow^{\Si_d}_{\Si_{i_1}\times\dots\times\Si_{i_r}}$, de sorte que l'application précédente s'identifie au produit tensoriel pour $t=1,\dots,r$ des normes $({\pi'_t}^{\otimes i_t}\otimes M_{i_t})_{\Si_{i_t}}\to ({\pi'_t}^{\otimes i_t}\otimes M_{i_t})^{\Si_{i_t}}$, d'où le lemme.
\end{proof}

\begin{lm}\label{lm-eq-ptcr} Soient $S$ un foncteur simple de $\F(\A;K)$, polynomial de degré $d$, et $\pi_1,\dots,\pi_d$ des foncteurs absolument simples de $\mathbf{Add}(\A;K)$. Les assertions suivantes sont équivalentes.
\begin{enumerate}
\item\label{itSq1} Le foncteur $S$ est un quotient de $\pi_1\otimes\dots\otimes\pi_d$.
\item\label{itSq2}  Le foncteur $S$ est un sous-objet de $\pi_1\otimes\dots\otimes\pi_d$.
\item\label{itcrm1} Le multifoncteur $\pi_1\boxtimes\dots\boxtimes\pi_d$ est facteur direct de $cr_d(S)$.
\item\label{itcrm2} Le multifoncteur multisymétrique $\mathrm{Cr}_d(S)$ est facteur direct de $\mathcal{L}(\pi_1\boxtimes\dots\boxtimes\pi_d)$.
\end{enumerate}
\end{lm}

\begin{proof} Comme $M:=\pi_1\boxtimes\dots\boxtimes\pi_d$ est simple, la proposition~\ref{pr-fin-cr} montre que les conditions \ref{itcrm1} et \ref{itcrm2} sont équivalentes entre elles, ainsi qu'à chacune des suivantes :
\begin{enumerate}
\item[\it{5.}] $M$ est un sous-objet de $cr_d(S)$ ;
\item[\it{6.}] $M$ est un quotient de $cr_d(S)$.
\end{enumerate} 

Maintenant, l'équivalence entre \ref{itSq1} et 5 d'une part, et entre \ref{itSq2} et 6 d'autre part, résulte de la simplicité de $M$ et de $S$ et de l'adjonction entre diagonale et effets croisés (proposition~\ref{adj-diagcr}).
\end{proof}

\begin{proof}[Démonstration du théorème~\ref{thm-st}] Les objets simples sont préservés par le foncteur d'inclusion $\Sigma_d(\A;K)\hookrightarrow\Sigma\mathbf{Add}_d(\A,K\Md)$, et tout simple de $\Sigma\mathbf{Add}_d(\A,K\Md)$ vérifiant les conditions équivalentes du lemme~\ref{lm-eq-ptcr} appartient à $\Sigma_d(\A;K)$ grâce à la proposition~\ref{pr-fin-cr}. En conséquence, la conclusion résulte des propositions~\ref{identif-cataux}, \ref{pr-prolongement-interm}, des lemmes~\ref{lm-ident-printer} et~\ref{lm-eq-ptcr} et de l'absolue simplicité des représentations simples des groupes symétriques \cite[théorème~11.5]{Ja-sym}.
\end{proof}

\part{Foncteurs simples de source $\mathbf{P}(A)$ et applications}\label{part-PA}

Dans cette partie, nous nous concentrons sur les foncteurs de source la catégorie $\mathbf{P}(A)$ des modules à gauche projectifs de type fini sur un anneau $A$. Nous résumons les résultats des sections précédentes dans la section~\ref{sec-PA}, et nous y apportons quelques compléments spécifiques à la source $\mathbf{P}(A)$. Puis nous en déduisons des décompositions tensorielles de représentations de monoïdes multiplicatifs $\M_n(A)$ de matrices carrées sur un anneau $A$ et de leurs sous-monoïdes $\GL_n(A)$ et $\SL_n(A)$.

Plus précisément, le foncteur réflexif (définition~\ref{lm-2adj} et proposition~\ref{restr-fct-refl}) $\mathrm{ev}_n$ d'évaluation sur $A^n$ et le prolongement intermédiaire associé (définition~\ref{df-prolint}), qu'on notera simplement $T_n$ dans cette partie, sont deux foncteurs dont la composée $\mathrm{ev}_n\circ T_n$
\[K[\M_n(A)]\Md\xrightarrow[]{T_n} \F(A,K) \xrightarrow[]{\mathrm{ev_n}} K[\M_n(A)]\Md\]
est isomorphe à l'identité (proposition~\ref{pr-prolongement-interm}).

Le foncteur $T_n$ envoie les modules simples sur des foncteurs simples (proposition~\ref{pr-prolongement-interm}), et le foncteur $\mathrm{ev}_n$ préserve les produits tensoriels.
Pour obtenir une décomposition tensorielle d'un $K[\M_n(A)]$-module simple $M$, il suffit donc  d'évaluer sur $A^n$ une décomposition tensorielle du foncteur simple $T_n(M)$. 
Cette approche pour obtenir des décompositions tensorielles est développée dans la section \ref{sec-dec-tens-mon}, où nous introduisons également les notations utiles dans les sections suivantes.

La restriction principale à cette approche est que nos théorèmes de décomposition tensorielle s'appliquent seulement si le foncteur simple $T_n(M)$ est à valeurs de dimensions finies sur le corps $K$.

Si l'anneau $A$ est fini, comme les foncteurs simples sont toujours à valeurs de dimensions finies, cette approche permet de montrer que {\it tous} les $K[\M_n(A)]$-modules simples admettent une décomposition tensorielle. Nous en déduisons une démonstration du théorème classique du produit tensoriel de Steinberg pour le groupe $\GL_n(A)$ sur un $p$-anneau fini $A$ dans la section~\ref{s-tpts}.

En revanche, si $A$ est un anneau infini, la restriction sur la dimension des valeurs de $T_n(M)$ est loin d'être anodine : supposer que $M$ est de dimension finie ne garantit pas que $T_n(M)$ soit à valeurs de dimensions finies. Pour cette raison, nous introduisons la notion de représentation \emph{polynomiale à la Eilenberg-MacLane} dans la section~\ref{seml}. Cette notion est un analogue naturel des représentations polynomiales au sens classique \cite{Green} où les polynômes sont remplacés par des applications polynomiales au sens de la section \ref{subsec-fctpol}. On montre que si $A$ est commutatif et si $M$ est une représentation polynomiale à la Eilenberg-MacLane de dimension finie, alors $T_n(M)$ est à valeurs de dimensions finies. Ceci nous assure de l'existence d'une décomposition tensorielle pour les $K[\M_n(A)]$-modules simples polynomiaux à la Eilenberg-MacLane, et nous en déduisons des décompositions tensorielles pour les groupes spéciaux linéaires.

De façon surprenante, le cas $n=1$ de la décomposition tensorielle des $K[\M_n(A)]$-modules, explicité dans la section~\ref{sta}, est un énoncé non trivial, qui prend une coloration arithmétique puisqu'il traite de fonctions multiplicatives (d'un anneau commutatif vers un corps commutatif) vérifiant {\it par ailleurs} une condition purement additive (la polynomialité au sens d'Eilenberg-MacLane). Ce résultat semble élémentaire mais nous ne savons pas l'établir sans recours aux catégories de foncteurs.

\section{Structure des foncteurs simples de source $\mathbf{P}(A)$}\label{sec-PA}

Si $p$ est un nombre premier, on appelle \emph{$p$-anneau fini} un anneau fini de caractéristique une puissance de $p$. Dans l'énoncé suivant, un foncteur $S_p$ de $\F(A,K)$ est dit \emph{$p$-antipolynomial} s'il existe un morphisme d'anneaux surjectif $A\to B$, où $B$ est un $p$-anneau fini, avec $p$ inversible dans $K$, tel que $S_p$ soit isomorphe à une composée :
\[\mathbf{P}(A)\xrightarrow{B\otimes_A -}\mathbf{P}(B)\xrightarrow{\overline{S_p}}K\Md\;.\]
Le théorème suivant résume le corollaire~\ref{cor-tens-St1}, le théorème~\ref{steinberg-pol-gal} et la proposition \ref{dec-prim} lorsque la source des foncteurs est la catégorie $\mathbf{P}(A)$.

\begin{thm}\label{thf-PA} Soient $A$ un anneau, $K$ un corps de décomposition non additif de $\mathbf{P}(A)$, et $F$ un foncteur de $\F^{\df}(A,K)$. Notons $(\pi_i)_{i\in I}$ une famille de représentants des classes d'isomorphisme des foncteurs additifs simples de $\F^{\df}(A,K)$, et $\mathrm{P}\cap K^\times$ l'ensemble des nombres premiers inversibles dans $K$. Alors $F$ est simple si et seulement s'il est isomorphe à un produit tensoriel
\[\left(\bigotimes_{p\in\mathrm{P}\cap K^\times}S_p\right)\otimes\left(\bigotimes_{i\in I}\pi^*_i\mathrm{E}_i\right)\]
où les $S_p$ sont des foncteurs simples $p$-antipolynomiaux et les $\mathrm{E}_i$ sont des endofoncteurs élémentaires des $K$-espaces vectoriels, tous constants sauf un nombre fini.

De plus, si $F$ est simple, alors $F$ est absolument simple, et les $S_p$ et les $\mathrm{E}_i$ apparaissant dans la décomposition sont uniques à isomorphisme près. 
\end{thm}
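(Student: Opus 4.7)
The theorem is essentially a packaging of three previously established results, so my plan is to apply them in sequence and then verify that the hypotheses propagate correctly.

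First I would apply Corollary~\ref{cor-tens-St1} to obtain a decomposition $F\simeq S\otimes T$, where $S$ is a simple polynomial functor of $\F^{df}(\A;K)$ and $T$ is a simple antipolynomial functor of $\F(\A;K)$, with $\A=\mathbf{P}(A)$. To legitimately invoke this corollary, I need to know that $K$ is a non-additive splitting field of $\A/\I$ for every $K$-cotrivial ideal $\I$ of $\A$. This follows from the hypothesis on $\mathbf{P}(A)$ because the precomposition functor $\pi_\I^*:\F(\A/\I;K)\to\F(\A;K)$ is fully faithful with image stable under subquotients (Proposition~\ref{pr-simglobi} applied in the antipolynomial case, or a direct verification using that $\pi_\I$ is full and essentially surjective), so absolute simplicity transfers from the source to $\A/\I$.

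Next, for the polynomial factor $S$, I would apply Theorem~\ref{steinberg-pol-gal}. Observe that a simple additive functor is automatically simple as a general functor (its subfunctors are automatically additive by a straightforward splitting argument using the biproduct structure of $\A$), so $K$ being a non-additive splitting field of $\A$ implies it is a splitting field in the sense of Definition~\ref{def-corps-dec}. Hence $S\simeq\bigotimes_{i\in I}\pi_i^*\mathrm{E}_i$ with the $\mathrm{E}_i$ elementary, uniquely determined.

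Third, for the antipolynomial factor $T$, fix a $K$-cotrivial ideal $\I\triangleleft\A$ through which $T$ factors, corresponding via Example~\ref{rem-ideal} to a two-sided ideal $I\triangleleft A$ with $B:=A/I$ finite and $K\otimes_{\mathbb{Z}}B=0$. Viewing $T$ as a simple functor on $\mathbf{P}(B)$, apply Proposition~\ref{dec-prim}: since the morphism groups of $\mathbf{P}(B)$ are finite (as $B$ is finite) and $K$ is a non-additive splitting field for $\mathbf{P}(B)\simeq\A/\I$, one gets $T\simeq\bigotimes_p\pi_p^*S_p'$ with $S_p'$ a simple functor on $_{(p)}\mathbf{P}(B)$, unique up to isomorphism. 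To identify this with the $p$-antipolynomial data of the theorem, I would use the primary decomposition of $B$ itself: as a finite ring, $B$ decomposes canonically as a product $B\simeq\prod_p B_{(p)}$ over primes $p$ dividing $|B|$, where each $B_{(p)}$ is a $p$-anneau fini and, because $K\otimes_{\mathbb{Z}}B=0$, every such $p$ is invertible in $K$. This ring decomposition induces an equivalence between $_{(p)}\mathbf{P}(B)$ and $\mathbf{P}(B_{(p)})$ (morphisms in $\mathbf{P}(B)$ are products of morphisms in each $\mathbf{P}(B_{(p)})$, and the $p$-primary part selects precisely the $B_{(p)}$-component), so each $S_p'$ is interpreted as a simple functor $\mathbf{P}(B_{(p)})\to K\Md$, and $S_p:=\pi_p^*S_p'$ is then $p$-antipolynomial in the sense of the definition preceding Theorem~\ref{thf-PA}, via the composite surjection $A\twoheadrightarrow B\twoheadrightarrow B_{(p)}$.

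Combining the three steps yields the asserted tensor decomposition. The converse (that any such product is simple) follows by running the three results in reverse: the polynomial and antipolynomial factors are simple by Theorem~\ref{steinberg-pol-gal} and Proposition~\ref{dec-prim} respectively, and their product is simple by Corollary~\ref{cor-tens-St1}. Uniqueness of the $S_p$ and $\mathrm{E}_i$ is the concatenation of the uniqueness clauses of these three statements; absolute simplicity of $F$ is immediate from the hypothesis that $K$ is a non-additive splitting field of $\mathbf{P}(A)$. The only step requiring genuine verification rather than direct citation is the identification of the primary components $S_p'$ with $p$-antipolynomial functors, and the main obstacle is really just checking cleanly that the Chinese Remainder decomposition of $B$ induces the right equivalence $_{(p)}\mathbf{P}(B)\simeq\mathbf{P}(B_{(p)})$ compatible with the maps $\pi_p$.
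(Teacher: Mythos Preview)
Your approach is correct and mirrors the paper's: the theorem is explicitly presented there as a summary of Corollary~\ref{cor-tens-St1}, Theorem~\ref{steinberg-pol-gal}, and Proposition~\ref{dec-prim} specialised to $\A=\mathbf{P}(A)$, and you have faithfully unpacked that, including the necessary checks that the splitting-field hypothesis descends to quotients $\A/\I$ and restricts to $\mathbf{Add}(\A;K)$. The only point the paper leaves entirely implicit and that you handle more carefully is the identification $_{(p)}\mathbf{P}(B)\simeq\mathbf{P}(B_{(p)})$ via the primary decomposition of the finite ring $B$; your sketch of this is fine.
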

\begin{rem}
Si $F$ est un foncteur polynomial, alors l'énoncé du théorème reste valable sous l'hypothèse plus faible que $K$ est un corps de décomposition de $\mathbf{P}(A)$, c'est-à-dire que $K$ est un corps de décomposition de la $K$-algèbre $A^{\op}\otimes_{\mathbb{Z}}K$ au sens classique \cite[\S 7.B]{CuR}. 
De plus, si $F$ est polynomial simple, alors tous les $S_p$ apparaissant dans sa décomposition tensorielle sont constants.
\end{rem}

Dans la suite de la section, nous commentons l'énoncé du théorème \ref{thf-PA} en apportant quelques compléments spécifiques au cas de la source $\mathbf{P}(A)$.

\subsection{Hypothèse des valeurs de dimensions finies}\label{phdfx}

Le théorème \ref{thf-PA} ne concerne que les foncteurs $F$ à valeurs de dimensions finies. Si $A$ est un anneau fini, ceci n'est pas une restriction car tous les foncteurs simples sont à valeurs de dimensions finies d'après la proposition~\ref{pr-dim-finie}.

En revanche, si $A$ est un anneau infini, il existe de nombreux foncteurs simples dont toutes les valeurs ne sont pas de dimension finie, voir l'appendice~\ref{apa-1}. Toutefois, lorsque $A$ est commutatif, on dispose du résultat suivant, qui permet d'étendre le champ d'application du 
théorème~\ref{thf-PA} à des foncteurs simples polynomiaux dont on sait seulement qu'ils prennent \emph{une} valeur de dimension finie non nulle.

\begin{pr}\label{ann-com-cr} Soient $A$ un anneau commutatif et $F$ un foncteur simple polynomial de degré $d$ de $\F(A,K)$. Les assertions suivantes sont équivalentes.
\begin{enumerate}
\item\label{it-com1} Le foncteur $F$ est à valeurs de dimensions finies.
\item\label{it-com2} Le foncteur $F$ prend une valeur de dimension finie non nulle.
\item\label{it-com3} Le corps ${\rm End}(F)$ est de dimension finie sur $K$.
\item\label{it-com4} Le foncteur $cr_d(F)$ est à valeurs de dimensions finies.
\end{enumerate}
Si on suppose de plus que $A$ est un anneau de type fini, alors $F$ vérifie automatiquement les assertions  précédentes.
\end{pr}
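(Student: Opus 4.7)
Mon plan est d'établir le cycle d'implications (2)$\Rightarrow$(3)$\Rightarrow$(4)$\Leftrightarrow$(1)$\Rightarrow$(2), puis d'utiliser le lemme de Zariski pour traiter la dernière assertion. Les implications (1)$\Rightarrow$(2) et (1)$\Leftrightarrow$(4) sont immédiates : (1)$\Rightarrow$(2) résulte simplement de la simplicité -- donc de la non-nullité -- de $F$ ; l'équivalence (1)$\Leftrightarrow$(4) s'obtient en remarquant que, par l'adjonction diagonale-effets croisés de la proposition~\ref{adj-diagcr} et la simplicité de $F$, ce dernier est un quotient de $\Delta_d^* cr_d(F)$, tandis que réciproquement les valeurs de $cr_d(F)$ se calculent comme noyaux à partir de celles de $F$ appliquées à des sommes directes.

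Pour (2)$\Rightarrow$(3), je choisirais un objet $x$ de $\mathbf{P}(A)$ tel que $F(x)$ soit de dimension finie non nulle : par le lemme de Yoneda, tout élément non nul induit un morphisme non nul $P_{\mathbf{P}(A)}^x\to F$, nécessairement surjectif par simplicité. Il s'ensuit une injection ${\rm End}(F)\hookrightarrow {\rm Hom}(P_{\mathbf{P}(A)}^x,F)\simeq F(x)$, d'où la finitude souhaitée.

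L'étape principale, et le véritable c\oe ur de la démonstration, est (3)$\Rightarrow$(4) : c'est elle qui utilise de manière essentielle la commutativité de $A$. La proposition~\ref{pr-fin-cr} identifie $cr_d(F)$ à une somme directe de conjugués $\sigma.S$ d'un foncteur $d$-additif simple $S$, avec ${\rm End}(cr_d(F))\simeq\M_n({\rm End}(S))^m$, et ${\rm End}(cr_d(F))$ est un ${\rm End}(F)$-espace vectoriel de dimension au plus $d!$. Sous l'hypothèse (3), ${\rm End}(cr_d(F))$, puis ${\rm End}(S)$, sont donc de dimensions finies sur $K$. Or, la variante multilinéaire de la proposition~\ref{additifs-bimodules}, obtenue par itération de l'équivalence d'Eilenberg-Watts, identifie $S$ à un module sur l'anneau $R:=K\otimes_\mathbb{Z}(A^{op})^{\otimes d}$ via $S\mapsto S(A,\dots,A)$ ; comme $A$ est commutatif, $R$ l'est également. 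La simplicité de $S$ fait alors de $S(A,\dots,A)$ un $R$-module simple, c'est-à-dire de la forme $R/\mathfrak{m}$ pour un idéal maximal $\mathfrak{m}$, lequel est son propre anneau d'endomorphismes. Ainsi $S(A,\dots,A)\simeq{\rm End}(S)$ est de dimension finie sur $K$, et la multi-additivité de $S$ donne $S(A^{n_1},\dots,A^{n_d})\simeq S(A,\dots,A)^{n_1\cdots n_d}$, de sorte que $cr_d(F)$ est à valeurs de dimensions finies.

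Pour la dernière assertion, si $A$ est un anneau de type fini, alors $R$ est une $K$-algèbre commutative de type fini, et le lemme de Zariski assure qu'un quotient par un idéal maximal de $R$ est une extension finie de $K$ ; l'argument précédent montre alors directement que (4) est automatiquement vérifiée, sans hypothèse supplémentaire sur $F$. La difficulté principale se concentre dans l'étape (3)$\Rightarrow$(4) : c'est la commutativité de $A$ qui transforme la finitude de l'anneau d'endomorphismes ${\rm End}(S)$ en celle du module $S(A,\dots,A)$ tout entier ; sans cette hypothèse, $R$ ne serait pas commutatif, et un $R$-module simple pourrait avoir une dimension sur $K$ bien supérieure à celle de son anneau d'endomorphismes, faisant s'effondrer l'argument.
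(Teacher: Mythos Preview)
Votre plan est correct et suit essentiellement la même démarche que la démonstration du papier : même cycle d'implications (le papier fait $4\Rightarrow 1\Rightarrow 2\Rightarrow 3\Rightarrow 4$, vous ajoutez l'observation triviale $1\Rightarrow 4$), même recours à la proposition~\ref{pr-fin-cr} pour l'étape clé $3\Rightarrow 4$, même identification de $\mathbf{Add}_d(A,K)$ avec les modules sur l'anneau commutatif $A^{\otimes d}\otimes_\mathbb{Z}K$, et même usage (implicite dans le papier) du lemme de Zariski pour la dernière assertion. Votre identification explicite du rôle de la commutativité est juste et correspond exactement à la remarque qui suit la proposition dans le papier.
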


\begin{proof} La proposition \ref{adj-diagcr} donne un morphisme $\Delta^*cr_d(F)\to F$  non nul, qui est un épimorphisme par simplicité de $F$. Ceci prouve l'implication \ref{it-com4}$\Rightarrow$\ref{it-com1}. L'implication \ref{it-com1}$\Rightarrow$\ref{it-com2} est immédiate, et \ref{it-com2}$\Rightarrow$\ref{it-com3} est une conséquence du corollaire \ref{simples-eval}.  Montrons \ref{it-com3}$\Rightarrow$\ref{it-com4}. La proposition~\ref{pr-fin-cr} et l'hypothèse sur ${\rm End}(F)$ donnent des extensions d'anneaux (non nécessairement commutatifs) de degrés finis
\[K\subset {\rm End}(F)\subset {\rm End}(cr_d(F))\simeq\M_n({\rm End}(S))^m\]
pour des entiers $n, m>0$ convenables (où $S$ est un sous-foncteur simple de $cr_dF$). Ainsi, ${\rm End}(S)$ est de degré fini sur $K$. Or $S$ est un objet simple de $\mathbf{Add}_d(A,K)\simeq (A^{\underset{\mathbb{Z}}{\otimes} d}\otimes_\mathbb{Z} K)\Md$, par une application itérée de la proposition~\ref{additifs-bimodules}. Comme l'anneau $A^{\underset{\mathbb{Z}}{\otimes} d}\otimes_\mathbb{Z} K$ est {\em commutatif}, un module simple sur celui-ci est un quotient qui est un corps \emph{commutatif} $L$, et le corps des endomorphismes de ce module simple s'identifie à $L$. Par conséquent, le fait que ${\rm End}(S)$ soit de degré fini sur $K$ implique que $S$ prend des valeurs de dimensions finies. Comme $cr_d(F)$ est une somme directe \emph{finie} de foncteurs du type $\sigma.S$ pour $\sigma\in\Si_d$ (proposition~\ref{pr-fin-cr}), on en déduit que $cr_d(F)$ prend également des valeurs de dimensions finies, ce qui achève la démonstration de \ref{it-com3}$\Rightarrow$\ref{it-com4}. Finalement si $A$ est de type fini, alors $A^{\underset{\mathbb{Z}}{\otimes} d}\otimes_\mathbb{Z} K$ est une $K$-algèbre commutative de type fini, donc tout module simple sur cette algèbre est de dimension finie. Ainsi $cr_d(F)$ est à valeurs de dimensions finies, car c'est une somme directe finie de simples de $\mathbf{Add}_d(A,K)\simeq (A^{\underset{\mathbb{Z}}{\otimes} d}\otimes_\mathbb{Z} K)\Md$.
\end{proof}

\begin{rem} Les implications \ref{it-com4}$\Rightarrow$\ref{it-com1}$\Rightarrow$\ref{it-com2}$\Rightarrow$\ref{it-com3} ne nécessitent pas la commutativité de $A$. En revanche, \ref{it-com3}$\Rightarrow$\ref{it-com1} est généralement fausse si $A$ n'est pas commutatif, même si $d=1$, car un $A^{\op}\otimes_\mathbb{Z}K$-module absolument simple peut être de dimension infinie sur $K$. Par exemple, si $k$ est un corps de dimension infinie sur son centre $K$, le $(k,k)$-bimodule $k$ est absolument simple mais de dimension infinie sur $K$.
Nous soupçonnons que l'implication \ref{it-com2}$\Rightarrow$\ref{it-com1} tombe en défaut pour certains anneaux non commutatifs $A$ mais n'en connaissons pas de contre-exemple.
\end{rem}

\begin{rem} Dans la démonstration de la proposition \ref{ann-com-cr}, la commutativité du corps des endomorphismes d'un foncteur simple de $\mathbf{Add}_d(A,K)$ joue un rôle important. On notera toutefois que le corps des endomorphismes d'un foncteur simple polynomial de $\F^{\df}(A,K)$ (avec $A$ et $K$ commutatifs) n'est pas nécessairement commutatif. Donnons-en un exemple quadratique dans $\F^{\df}(\mathbb{C},\mathbb{R})$ : on a un morphisme surjectif $\mathbb{C}\otimes_{\mathbb{R}}\mathbb{C}\to\mathbb{C}$ (donné par l'identité et la conjugaison) de $\mathbb{R}[\Si_2]$-algèbres, où l'action de $\Si_2$ est donnée par l'échange des facteurs à la source et la conjugaison au but. On en déduit un morphisme surjectif de $\mathbb{R}$-algèbres de $\Si_2\int\mathbb{C}$ (où $\mathbb{C}$ est vu comme $\mathbb{R}$-algèbre) sur l'algèbre tordue (relativement à l'action de conjugaison) du groupe $\Si_2$ sur $\mathbb{C}$. Or cette algèbre tordue est isomorphe à la $\mathbb{R}$-algèbre $\mathbb{H}$ des quaternions. Cela montre qu'il existe un $\Si_2\int\mathbb{C}$-module simple dont le corps d'endomorphismes est $\mathbb{H}$, d'où par prolongement intermédiaire un foncteur quadratique simple de $\F^{\df}(\mathbb{C},\mathbb{R})$ ayant la même propriété.
\end{rem}

\subsection{Structure des simples polynomiaux}

Les facteurs simples polynomiaux  $\pi_i^*E_i$ apparaissant dans le théorème \ref{thf-PA} sont relativement bien compris. En effet, on dispose de beaucoup d'informations sur les foncteurs élémentaires $E_i$, voir l'appendice \ref{app-elt}. D'autre part, la proposition \ref{additifs-bimodules} montre que les foncteurs additifs simples $\pi_i$ sont de la forme $\pi_i(P)=B_i\otimes_A P$, pour des $(K,A)$-bimodules simples $B_i$. Ces bimodules simples sont souvent accessibles à une classification complète, comme l'illustre le fait suivant, où $\mathbf{Ann}$ désigne la catégorie des anneaux.

\begin{pr}\label{pr-bimod-com}
Soit $A$ un anneau commutatif et $K$ un corps de décomposition de $\mathbf{P}(A)$. Pour tout morphisme d'anneaux $\phi:A\to K$ on note $K_\phi$ le groupe abélien $K$ muni de la structure de $(K,A)$-bimodule donnée par $\lambda\cdot x\cdot a:= \lambda x \phi(a)$. Les bimodules $(K_{\phi})_{\phi\in \mathbf{Ann}(A,K)}$ forment une famille de représentants des classes d'isomorphisme de $(K,A)$-bimodules simples de dimension finie.
\end{pr}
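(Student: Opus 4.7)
L'idée est de combiner l'équivalence d'Eilenberg--Watts (proposition~\ref{additifs-bimodules} et remarque~\ref{rq-df-cordec}) avec la commutativité de $A$ et l'hypothèse de corps de décomposition.

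Je commencerais par transférer le problème à une question de modules sur une $K$-algèbre. En vertu de la proposition~\ref{additifs-bimodules}, la catégorie des $(K,A)$-bimodules s'identifie à celle des modules à gauche sur $A^{op}\otimes_{\mathbb{Z}}K$ ; puisque $A$ est commutatif, cet anneau coïncide avec la $K$-algèbre commutative $R:=A\otimes_{\mathbb{Z}}K$. Sous cette équivalence, un $(K,A)$-bimodule simple de dimension finie correspond à un $R$-module simple de dimension finie sur $K$.

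Ensuite, $R$ étant commutatif, un tel $R$-module simple s'écrit $R/\mathfrak{m}$ pour un idéal maximal $\mathfrak{m}$ de $R$, et son anneau d'endomorphismes (en tant que $R$-module) est précisément le corps $R/\mathfrak{m}$, qui est une extension finie de $K$. L'hypothèse que $K$ est un corps de décomposition de $\mathbf{P}(A)$ (remarque~\ref{rq-df-cordec}) impose que ce corps d'endomorphismes vaille $K$, donc $R/\mathfrak{m}\simeq K$ comme $K$-algèbre : $\mathfrak{m}$ est le noyau d'un morphisme de $K$-algèbres $\widetilde{\phi}:R\to K$. Par la propriété universelle du produit tensoriel, la donnée de $\widetilde{\phi}$ équivaut à celle d'un morphisme d'anneaux $\phi:A\to K$, via $\widetilde{\phi}(a\otimes\lambda)=\lambda\phi(a)$, et le $R$-module $R/\ker(\widetilde{\phi})\simeq K$ correspond alors, en remontant l'équivalence d'Eilenberg--Watts, au $(K,A)$-bimodule $K_\phi$.

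Il restera deux vérifications de routine : d'une part, tracer soigneusement l'équivalence de la proposition~\ref{additifs-bimodules} pour s'assurer que la structure de $A\otimes_{\mathbb{Z}}K$-module obtenue à partir d'un $(K,A)$-bimodule $M$ envoie $a\otimes\lambda$ sur l'action de $\lambda\cdot(-)\cdot a$, ce qui confirme la formule $\lambda\cdot x\cdot a=\lambda x\phi(a)$ pour $K_\phi$ ; d'autre part, vérifier que la correspondance $\phi\mapsto [K_\phi]$ est injective, ce qui est immédiat puisque $\phi(a)$ se lit comme la multiplication à droite par $a$ sur $1\in K_\phi$. La principale étape conceptuelle, et à peu près la seule non formelle, est le passage de \og le corps $R/\mathfrak{m}$ est $K$-isomorphe à $K$ \fg{} (ce qui résulte de la définition de corps de décomposition) à l'existence d'un morphisme d'anneaux $A\to K$ correspondant, le reste relevant de manipulations standards.
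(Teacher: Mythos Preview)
Your proposal is correct and follows essentially the same approach as the paper: both identify $(K,A)$-bimodules with modules over the commutative $K$-algebra $B=A\otimes_{\mathbb{Z}}K$, use that a simple $B$-module is $B/\mathfrak{m}$ with endomorphism field $B/\mathfrak{m}$, and invoke the splitting-field hypothesis to force $B/\mathfrak{m}\simeq K$. The only cosmetic difference is that the paper fixes a $K$-\emph{linear} isomorphism $S\simeq K$ first and then corrects by the automorphism $\psi_K$ of $K$ (writing $\phi=\psi_K^{-1}\circ\psi_A$), whereas you work directly with the $K$-algebra isomorphism $R/\mathfrak{m}\simeq K$ and the universal property of the tensor product; your formulation is slightly cleaner but entirely equivalent.
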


\begin{proof}
Soient $B=A\otimes_\mathbb{Z}K$ et $S$ un $B$-module simple de dimension finie sur $K$. Alors $S\simeq B/I$ où $I$ est un idéal maximal de $B$, donc $S$ admet une structure de corps commutatif. L'hypothèse sur $K$ garantit alors que $S\simeq {\rm End}_B(S)$ est de dimension $1$ sur $K$. Fixons un isomorphisme $K$-linéaire $S\simeq K$. Alors la structure de $B$-module de $S$ est donnée par un morphisme d'anneaux $\psi:B\to K$, dont on note $\psi_A$ et $\psi_K$ les restrictions à $A$ et $K$. Donc $S$ est isomorphe à $K_\phi$ avec $\phi=\psi_K^{-1}\circ\psi_A$.
\end{proof}

\begin{ex}\label{ex-kuhn} Prenons $A=K=\FF_q$ avec $q=p^r$ ; $\FF_q$ est un corps de décomposition de $\mathbf{P}(\FF_q)$, et les endomorphismes de $\FF_q$ sont les $\mathrm{Fr}^i$ pour $0\le i\le r-1$, où $\mathrm{Fr}$ désigne le morphisme de Frobenius $x\mapsto x^p$. Si l'on note $F^{\{i\}}$ la précomposition d'un foncteur $F$ de $\F(\FF_q,\FF_q)$ par le foncteur additif associé au bimodule $(\FF_q)_{\mathrm{Fr}^i}$, le théorème dit donc que les simples de $\F(\FF_q,\FF_q)$ sont les produits  tensoriels $\bigotimes_{i=0}^{r-1}E_i^{\{i\}}$ où les $E_i$ sont des foncteurs élémentaires. Le théorème~\ref{thf-PA} fournit ainsi une troisième démonstration d'un théorème établi par Kuhn  par deux méthodes différentes (\cite[Thm~5.23]{Ku2} et \cite[Thm~7.11]{Ku-strat}).
\end{ex}

\subsection{Structure des simples antipolynomiaux}
La structure des foncteurs simples $p$-antipolynomiaux $S_p$ apparaissant dans le théorème \ref{thf-PA} est moins bien comprise que celle des foncteurs simples polynomiaux $\pi_i^*E_i$. 

Par définition, comprendre ces simples antipolynomiaux revient à comprendre les foncteurs simples de $\F(A,K)$ lorsque $A$ est un $p$-anneau fini avec $p$ inversible dans $K$. Cependant, même pour un cas aussi élémentaire que $A=\mathbb{Z}/p^2\mathbb{Z}$ et $K$ de caractéristique nulle, la classification de ces simples semble hors d'atteinte. En effet, par la proposition \ref{simples-sfin}, cette classification est équivalente à la classification des représentations $K$-linéaires simples des $\GL_n(\mathbb{Z}/p^2\mathbb{Z})$, qui est un problème sauvage d'après un théorème de Bondarenko-Nagornyi cité dans \cite[§\,4]{VS-survol}.

Pour contourner ce genre de difficulté, on peut s'intéresser plutôt à des propriétés qualitatives globales des foncteurs antipolynomiaux, par exemple à leurs fonctions de dimensions (définition~\ref{defi-fct-dim}). On sait que la fonction de dimensions d'un foncteur polynomial est polynomiale, et que celle d'un foncteur non polynomial croît plus vite que tout polynôme (proposition~\ref{pr-fct-dim-pol}) ; dans les cas usuels, on constate une croissance exponentielle. Nous conjecturons que cette dichotomie entre croissances polynomiale et exponentielle est générale :

\begin{conj}\label{conj-dim}
Soit $A$ un $p$-anneau fini. On suppose $\mathrm{car}(K)\neq p$. Alors pour tout foncteur simple $F$ de $\F(A,K)$, il existe une fonction polynomiale $f : \mathbb{Z}\to\mathbb{Z}$ telle que $\forall n\in\mathbb{N}\quad\mathrm{d}_F(n)=f(p^n)$.
\end{conj}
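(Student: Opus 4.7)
Le plan est d'utiliser la description des foncteurs simples par prolongement intermédiaire (proposition~\ref{simples-sfin}) pour ramener le calcul de $\dim_K F(A^n)$ à un dénombrement de points fixes sur des ensembles d'applications $A$-linéaires, qu'on vérifie être polynomial en $p^n$.

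D'après la proposition~\ref{simples-sfin}, $F$ est, à isomorphisme près, le prolongement intermédiaire d'un $K[G]$-module simple $M_0$, pour un objet $a_0$ de $\mathbf{P}(A)$ de groupe d'automorphismes $G := \mathrm{Aut}_{\mathbf{P}(A)}(a_0)$. Choisissons un entier $r$ tel que $a_0$ soit facteur direct de $A^r$, et utilisons le lemme~\ref{lm-rep-mono-grp} pour relever $M_0$ en un $K[\mathrm{End}_{\mathbf{P}(A)}(a_0)]$-module $\widetilde{M}_0$ (extension par zéro sur les endomorphismes non inversibles). Alors $F(A^n)$ s'écrit comme l'image du morphisme canonique
\[
K[{\rm Hom}_A(a_0, A^n)] \otimes_{K[\mathrm{End}_{\mathbf{P}(A)}(a_0)]} \widetilde{M}_0 \longrightarrow {\rm Hom}_{K[\mathrm{End}_{\mathbf{P}(A)}(a_0)]}(K[{\rm Hom}_A(A^n, a_0)], \widetilde{M}_0).
\]

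L'étape centrale est une identification caractère-théorique de $\dim_K F(A^n)$ comme combinaison linéaire à coefficients rationnels de cardinaux de points fixes $|\mathrm{Fix}(g, {\rm Hom}_A(a_0, A^n))|$, pour $g$ parcourant un système de représentants dans $G$ (classes de conjugaison dans le cas semi-simple ; $p'$-classes via les caractères de Brauer en général). Chacun de ces cardinaux est manifestement polynomial en $p^n$ : l'ensemble des $f\colon a_0 \to A^n$ tels que $f\circ g = f$ est constitué des applications $A$-linéaires se factorisant par $C_g := \mathrm{coker}(g - \mathrm{id}_{a_0})$, un $A$-module fini de cardinal une puissance de $p$. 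Comme $A^n$ est libre, on obtient
\[
|\mathrm{Fix}(g, {\rm Hom}_A(a_0, A^n))| = |{\rm Hom}_A(C_g, A^n)| = |{\rm Hom}_A(C_g, A)|^n = (p^n)^{d(g)},
\]
avec $d(g) := \log_p|{\rm Hom}_A(C_g, A)| \in \mathbb{Z}_{\geq 0}$, un monôme en $p^n$ ; une combinaison linéaire à coefficients rationnels de tels monômes est un polynôme en $p^n$, prenant en outre des valeurs entières puisque $\dim_K F(A^n)\in\mathbb{Z}$.

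L'obstacle principal réside dans cette identification caractère-théorique quand $K[G]$ n'est pas semi-simple. Bien que $p$ soit inversible dans $K$, l'ordre $|G|$ divise $|\mathrm{GL}_r(A)|$ et peut comporter des facteurs premiers divisant $\mathrm{car}(K)$, typiquement les facteurs $(p^i - 1)$ issus de $\mathrm{GL}_r(A/J(A))$. Lorsque $A$ est semi-simple, $G$ est un produit de groupes linéaires finis sur des corps finis de caractéristique $p$, et la table des caractères de $\mathrm{GL}_n(\mathbb{F}_q)$ combinée au théorème de Kuhn \cite{Ku-adv} fournit directement la formule polynomiale en $p^n$ : c'est le contenu de la proposition~\ref{pr-kud}. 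Dans le cas général, il faut travailler avec les caractères de Brauer et surtout vérifier que la dimension de \emph{l'image} du morphisme ci-dessus (contrairement à sa source et à son but, qui sont dimensionnellement polynomiaux en $p^n$ de manière immédiate) hérite de cette propriété. Ce contrôle modulaire du prolongement intermédiaire paraît exiger une idée véritablement nouvelle.
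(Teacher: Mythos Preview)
L'énoncé visé est une \emph{conjecture} : l'article ne la démontre pas. Il n'y a donc pas de preuve de l'article à laquelle comparer ta proposition. Ce que l'article établit, c'est uniquement le cas où $A$ est semi-simple (proposition~\ref{pr-kud}), et sa démonstration procède tout autrement que ton esquisse caractère-théorique : elle invoque directement les résultats structurels de Kuhn \cite{Ku-adv} sur $\F(\mathbb{F}_q,K)$ (qui donnent la fonction de dimensions comme combinaison de cardinaux de grassmanniennes, polynomiaux en $p^n$), puis se ramène à ce cas par équivalence de Morita pour les anneaux de matrices et par décomposition en produit pour un anneau semi-simple général, après extension éventuelle du corps $K$ (proposition~\ref{pr-corpassegro}). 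Aucun calcul de points fixes ni de caractères de $GL_n(\mathbb{F}_q)$ n'y intervient.

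Ta proposition est honnête sur son statut : tu identifies toi-même l'obstacle modulaire et conclus qu'une idée nouvelle est requise. C'est exactement la position de l'article. Deux remarques toutefois sur ton heuristique. Premièrement, même lorsque $|G|$ est inversible dans $K$ (par exemple $\mathrm{car}(K)=0$), ta formule de points fixes calcule le caractère du $K[G]$-module de permutation $K[{\rm Hom}_A(a_0,A^n)]$, ce qui donne la dimension de la \emph{source} du morphisme de prolongement intermédiaire (la multiplicité de $M_0$ dans ce module) ; mais tu affirmes sans justification que la dimension de l'\emph{image} s'exprime pareillement, alors que le morphisme passe par des tenseurs et des $\mathrm{Hom}$ sur le monoïde $\mathrm{End}(a_0)$ tout entier, pas seulement sur $G$. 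Il faudrait un argument supplémentaire (par exemple identifier l'image à un espace de formes bilinéaires $G$-invariantes, ou passer par la description en termes de monomorphismes scindés) pour valider ce point. Deuxièmement, ta réduction du cas semi-simple à la proposition~\ref{pr-kud} via <<~la table des caractères de $GL_n(\mathbb{F}_q)$~>> est circulaire : la proposition~\ref{pr-kud} ne passe pas par les caractères, et si ton approche par points fixes fonctionnait réellement dans ce cas, elle fournirait une démonstration \emph{alternative} de cette proposition, qu'il serait intéressant de détailler.
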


\begin{pr}\label{pr-kud} La conjecture~\ref{conj-dim} est vraie si l'anneau $A$ est semi-simple.
\end{pr}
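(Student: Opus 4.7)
Le plan est de se ramener successivement, par des réductions standard, au cas $A=\mathbb{F}_q$, pour lequel nous invoquerons le résultat annoncé de Kuhn \cite{Ku-adv}.

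Première étape: se ramener au cas où $K$ est algébriquement clos. Pour cela je considérerai une clôture algébrique $\bar K$ de $K$. Le foncteur $F\otimes_K\bar K$ de $\F(A,\bar K)$ prend encore des valeurs de dimensions finies (proposition \ref{pr-dim-finie}) et est donc de longueur finie. Comme $\dim_K F(A^n)=\dim_{\bar K}(F\otimes_K\bar K)(A^n)$ s'exprime comme somme des dimensions des facteurs de composition de $F\otimes_K\bar K$ évalués en $A^n$, et qu'une somme finie de fonctions de la forme $n\mapsto g(p^n)$ avec $g\in\mathbb{Q}[X]$ polynomial numérique est encore de cette forme, il suffira d'établir l'énoncé pour les foncteurs simples de $\F(A,\bar K)$.

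Deuxième étape: réduction au cas $A=\mathbb{F}_q$. Puisque $A$ est un $p$-anneau fini semi-simple, le théorème de Wedderburn-Artin fournit une décomposition $A\simeq\prod_{i=1}^r\M_{n_i}(\mathbb{F}_{q_i})$ avec chaque $q_i=p^{s_i}$. La catégorie additive $\mathbf{P}(A)$ s'identifie au produit $\prod_i\mathbf{P}(\M_{n_i}(\mathbb{F}_{q_i}))$. Une application itérée du corollaire \ref{cor-cas-sympa-tens} (licite puisque $K$ est désormais algébriquement clos) montre alors que tout simple $F$ de $\F^{df}(A,K)$ s'écrit comme produit tensoriel extérieur $F_1\boxtimes\dots\boxtimes F_r$, d'où l'identité multiplicative $\mathrm{d}_F(n)=\prod_i\mathrm{d}_{F_i}(\M_{n_i}(\mathbb{F}_{q_i})^n)$; un produit de fonctions polynomiales en $p^n$ étant encore polynomial en $p^n$, on se ramène au cas $A=\M_n(\mathbb{F}_q)$. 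L'équivalence de Morita classique $\mathbf{P}(\M_n(\mathbb{F}_q))\simeq\mathbf{P}(\mathbb{F}_q)$, qui envoie $\M_n(\mathbb{F}_q)^k$ sur $\mathbb{F}_q^{nk}$, induit par précomposition une équivalence $\F(\M_n(\mathbb{F}_q),K)\simeq\F(\mathbb{F}_q,K)$ préservant la simplicité, avec $\mathrm{d}_F(k)=\mathrm{d}_{\tilde F}(nk)$ où $\tilde F$ désigne le simple correspondant. Comme un polynôme en $q^{nk}=(p^{sn})^k$ est aussi polynomial en $p^k$, on sera ramené au cas crucial $A=\mathbb{F}_q$.

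Étape finale et obstacle principal: dans le cas $A=\mathbb{F}_q$ avec $\mathrm{car}(K)\neq p$, le résultat de Kuhn \cite{Ku-adv} décrit explicitement les foncteurs simples de $\F(\mathbb{F}_q,K)$ via un processus d'induction parabolique à partir des représentations cuspidales des groupes $GL_m(\mathbb{F}_q)$ pour $m$ variable. La formule de dimension d'un simple $\tilde F$ ainsi obtenu se lira, d'après la théorie des caractères de Green, comme l'évaluation d'un polynôme $g\in\mathbb{Q}[X]$ en $q^n$, conformément à l'énoncé souhaité. Le point le plus délicat résidera dans la traduction précise de la classification de Kuhn en la forme polynomiale en $q^n$ de la fonction de dimensions: l'existence du polynôme découle de ce que les dimensions des représentations irréductibles de $GL_m(\mathbb{F}_q)$ sont données par des polynômes en $q$ (via les formules de Green) et de la compatibilité de l'induction parabolique avec la dimension, mais il faudra veiller à ce que le paramètre $n$ intervienne comme exposant de $q$ et non via les paramètres combinatoires indexant les représentations cuspidales.
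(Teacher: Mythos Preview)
Votre stratégie est essentiellement celle de l'article : réduction au cas d'un corps fini via Wedderburn--Artin, équivalence de Morita, et décomposition tensorielle des simples sur un produit (après extension des scalaires pour rendre les simples absolument simples). Les deux premières étapes sont correctes ; la réduction à $K$ algébriquement clos fonctionne tout aussi bien que le passage à une extension finie qu'emploie l'article.

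La seule vraie différence porte sur la troisième étape, où vous anticipez vous-même un \og obstacle principal\fg. L'article évite entièrement cette difficulté en invoquant une forme plus directement exploitable du résultat de Kuhn : pour $A=\mathbb{F}_q$ et $\mathrm{car}(K)\ne p$, la fonction de dimensions de tout foncteur de type fini de $\F(\mathbb{F}_q,K)$ est combinaison linéaire des fonctions $n\mapsto |\mathrm{Gr}_i(\mathbb{F}_q^n)|$ (nombre de sous-espaces de dimension $i$ dans $\mathbb{F}_q^n$). Ces cardinaux sont des coefficients $q$-binomiaux, donc des polynômes en $q^n=p^{sn}$, a fortiori en $p^n$. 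Il n'y a alors plus rien à vérifier : nul besoin de passer par la classification des simples via l'induction parabolique ni par la théorie des caractères de Green. Votre reformulation en termes de cuspidales est plausible mais inutilement détournée, et votre hésitation (\og il faudra veiller à ce que le paramètre $n$ intervienne comme exposant de $q$\fg) disparaît dès que l'on utilise l'énoncé de Kuhn sous cette forme grassmannienne.
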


\begin{proof} Si $A$ est un \emph{corps}, les résultats de Kuhn \cite{Ku-adv} montrent que la fonction de dimensions d'un foncteur de type fini de $\F(A,K)$ est une combinaison linéaire de fonctions associant à $n$ le cardinal de l'ensemble des sous-espaces de dimension $i$ d'un $A$-espace vectoriel de dimension $n$, $i$ étant un entier fixé. Ce cardinal est bien polynomial en $p^n$. Par équivalence de Morita, elle vaut aussi dans le cas où $A$ est un anneau de matrices sur un corps fini de caractéristique $p$.

Dans le cas général, $A$ est isomorphe à un produit fini de tels anneaux $R_1,\dots,R_n$. Quitte à remplacer $K$ par une extension finie appropriée de ce corps (cf. proposition~\ref{pr-corpassegro}), on peut se ramener au cas où $F$ est \emph{absolument} simple : $F$ est alors isomorphe (via l'équivalence de catégories $\mathbf{P}(A)\simeq\mathbf{P}(R_1)\times\dots\times\mathbf{P}(R_n)$) à un produit tensoriel extérieur de foncteurs simples des catégories $\F(R_i,K)$, grâce aux propositions~\ref{simples-cat-prod} et~\ref{abs-semi-simple}, ce qui achève la démonstration.
\end{proof}

\begin{rem} Nous montrerons ultérieurement à la proposition \ref{corsam}, que dans la situation de la conjecture \ref{conj-dim}, la catégorie $\F(A,K)$ est localement finie. Ainsi, la conjecture équivaut au fait que pour tout foncteur $F$ \emph{de type fini}, on peut trouver une fonction polynomiale $f : \mathbb{Z}\to\mathbb{Z}$ telle que $\forall n\in\mathbb{N}\quad\mathrm{d}_F(n)=f(p^n)$. Nagpal a démontré \cite[théorème~1.1]{Nag}, à l'aide de méthodes différentes, un résultat analogue à la proposition~\ref{pr-kud} pour les foncteurs de type fini des espaces vectoriels de dimensions finies sur un corps fini \emph{avec injections} vers les espaces vectoriels sur un corps d'une autre caractéristique.
La dichotomie entre croissance polynomiale et exponentielle pour des foncteurs vérifiant des propriétés de finitude appropriées pourrait ainsi constituer un phénomène encore plus général que celui pressenti pour une source additive raisonnable.
\end{rem}

\section{Décompositions à la Steinberg des représentations des monoïdes de matrices}\label{sec-dec-tens-mon}

Nous fixons un corps commutatif $K$ et un anneau $A$, et nous désignons par $G_n(A)$ l'un des monoïdes $\M_n(A)$, $\GL_n(A)$, ou encore $\SL_n(A)$ si $A$ est commutatif. On parlera souvent de représentation de $G_n(A)$ pour désigner un $K[G_n(A)]$-module.
Pour tout foncteur $F$ de $\F(A,K)$, le $K$-espace vectoriel  $F(A^n)$ est naturellement une représentation de $\M_n(A)={\rm End}_A(A^n)$, donc une représentation de $G_n(A)$ par restriction.

\begin{defi}
Une représentation non nulle $M$ de $G_n(K)$ est dite \emph{élémentaire} si elle est isomorphe à une représentation de la forme $E(K^{n})$, où $E$ est un foncteur élémentaire. De manière équivalente (voir le début de l'appendice~\ref{app-elt}), il existe un $\Si_d$-module simple $S$ tel que $M$ soit isomorphe à l'image du morphisme de norme :
$$(K^n)^{\otimes d}\otimes_{\Si_d} S\xrightarrow[]{N} ((K^n)^{\otimes d}\otimes S)^{\Si_d}\;.$$
\end{defi}

Se donner un $(K,A)$-bimodule $B$ de dimension $d$ sur $K$ équivaut à se donner un morphisme d'anneaux $\phi_B:A\to \M_d(K)^{\op}\simeq\M_d(K)$. On en déduit pour tout $n$ strictement positif un morphisme d'anneaux (qui est le morphisme d'anneaux définissant le $(K,\M_n(A))$-bimodule $B^{\oplus n}$)
$$\M_n(\phi_B):\mathcal{M}_n(A)\to \mathcal{M}_n(\mathcal{M}_{d}(K))\simeq \mathcal{M}_{n d}(K)\;.$$
\begin{lm}
Le morphisme $\M_n(\phi_B)$ se restreint en un morphisme de monoïdes $G_n(\phi_B):G_n(A)\to G_{nd}(K)$.
\end{lm}
\begin{proof}
Pour $G_n=\M_n$ ou $\GL_n$ c'est évident. Si $A$ est commutatif et  $G_n=\SL_n$, cela suit de \cite[Thm 1]{Silv-det_bloc}.
\end{proof}

\begin{nota}\label{nota-pbck}
Si $B$ est un $(K,A)$-bimodule, on note $M^{[B]}$ la représentation de  $G_n(A)$ obtenue par restriction d'une représentation $M$ de  $G_{nd}(K)$ le long du morphisme de monoïdes $G_n(\phi_B)$.
\end{nota}

\begin{nota}\label{nota-pbckanneau}
Si $\phi:A\to K$ est un morphisme d'anneaux, on note $K_\phi$ le groupe abélien $K$ muni de la structure de bimodule donnée par $\lambda\cdot x\cdot a:= \lambda x\phi(a)$, comme dans la proposition \ref{pr-bimod-com}. Dans ce cas, $M^{[K_\phi]}$ est la restriction de $M$ le long du morphisme de monoïdes $G_n(\phi)$ et sera plus simplement notée $M^{[\phi]}$.
\end{nota}

\begin{defi} Soit $(B_i)_{i\in I}$ une famille complète de représentants des classes d'isomorphisme de $(K,A)$-bimodules simples de dimension finie sur $K$, et soit $d_i$ la dimension de $B_i$. 
On dit qu'une représentation $M$ de $G_n(A)$ \emph{admet une décomposition à la Steinberg} s'il existe des représentations élémentaires $M_i$ de $G_{n d_i}(K)$, toutes égales à $K$ sauf un nombre fini, et un isomorphisme:
\[ M\simeq \bigotimes_{i\in I}M_i^{[B_i]}\;.\] 
La décomposition tensorielle à la Steinberg est dite \emph{unique} si les représentations élémentaires $M_i$ sont uniques à isomorphisme près. 
\end{defi}

\begin{rem}\label{dec-St-dim1} Toute représentation élémentaire de dimension $1$ de $G_n(K)$ factorise par le déterminant, et est en particulier triviale sur $\SL_n(K)$. Il s'ensuit que, si $A$ est commutatif, la seule représentation de dimension $1$ de $\SL_n(A)$ admettant une décomposition à la Steinberg est la représentation triviale.
\end{rem}

On rappelle que $T_n:K[\M_n(A)]\Md\to \F(A,K)$ désigne le prolongement intermédiaire associé au foncteur d'évaluation sur $A^n$.
Le théorème suivant établit l'existence et l'unicité des décompositions à la Steinberg d'une représentation $M$ de $\M_n(A)$, sous réserve que le foncteur $T_n(M)$ soit polynomial à valeurs de dimensions finies.  Cette hypothèse sur $T_n(M)$ sera traduite purement en termes de la représentation $M$ dans les sections suivantes si $A$ est fini ou si $A$ est commutatif.

\begin{thm}\label{Mn}
Soient $A$ un anneau et $K$ un corps de décomposition de $\mathbf{P}(A)$. Pour toute représentation $M$ de $\M_n(A)$, les assertions suivantes sont équivalentes.
\begin{enumerate}
\item[(1)] La représentation $M$ est simple et le foncteur $T_n(M)$ est polynomial, à valeurs de dimensions finies.
\item[(2)] La représentation $M$ admet une décomposition à la Steinberg.
\end{enumerate}
De plus, si $M$ satisfait ces assertions alors $M$ est absolument simple, et sa décomposition à la Steinberg est unique.
\end{thm}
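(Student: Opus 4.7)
The plan is to transport the functorial decomposition of Theorem \ref{steinberg-pol-gal} across the adjoint pair formed by evaluation $\mathrm{ev}\colon\F(A,K)\to K[\M_n(A)]\Md$ and its intermediate extension $T$. The relevant formal properties are: $\mathrm{ev}\circ T\simeq\mathrm{Id}$ (Proposition \ref{pr-prolongement-interm}); $\mathrm{ev}$ is computed pointwise and hence preserves tensor products; and, by Corollary \ref{simples-eval}, $\mathrm{ev}$ sends a simple functor $F$ either to $0$ or to a simple $\M_n(A)$-representation with the same endomorphism field, in the latter case satisfying $F\simeq T(\mathrm{ev}(F))$. I treat $G_n(A)=\M_n(A)$ in detail and indicate at the end how the $GL_n(A)$ and $SL_n(A)$ variants follow.

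For the implication (1)$\Rightarrow$(2), $T(M)$ is a simple polynomial functor in $\F^{df}(A,K)$, so I apply Theorem \ref{steinberg-pol-gal} (whose hypothesis that $K$ is a decomposition field of $\mathbf{P}(A)$ is exactly the one in the theorem) to obtain
$$T(M)\simeq\bigotimes_i\pi_i^*\mathrm{E}_i$$
indexed by the isomorphism classes of simple additive functors $\pi_i$ in $\F^{df}(A,K)$, where the $\mathrm{E}_i$ are elementary endofunctors of $K\Md$. By Proposition \ref{additifs-bimodules} each $\pi_i$ has the form $P\mapsto B_i\otimes_A P$ for a simple $(K,A)$-bimodule $B_i$ of dimension $d_i$ over $K$; evaluating $\pi_i$ on $A^n$ yields $B_i^{\oplus n}\simeq K^{nd_i}$ with the $\M_n(A)$-action factoring through the ring morphism $\M_n(\phi_{B_i})$. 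Hence $\mathrm{E}_i(\pi_i(A^n))$ is precisely the representation $M_i^{[B_i]}$ with $M_i:=\mathrm{E}_i(K^{nd_i})$ elementary. Applying $\mathrm{ev}$ to the functorial decomposition and using that $\mathrm{ev}$ is monoidal yields the desired Steinberg decomposition of $M$.

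Conversely, given a decomposition $M\simeq\bigotimes_i M_i^{[B_i]}$ with each $M_i=\mathrm{E}_i(K^{nd_i})$ elementary, I form the functor
$$F:=\bigotimes_i\pi_{B_i}^*\mathrm{E}_i\in\F(A,K),\qquad\pi_{B_i}(P):=B_i\otimes_A P.$$
Since $K$ is a decomposition field of $\mathbf{P}(A)$, each $\pi_{B_i}$ is absolutely simple, so by Theorem \ref{thm-st} the functor $F$ is absolutely simple polynomial with finite-dimensional values. Evaluation gives $\mathrm{ev}(F)\simeq M$, which is therefore nonzero; Corollary \ref{simples-eval} then forces $M$ to be simple (in fact absolutely simple, with endomorphism field $K$), and Proposition \ref{pr-prolongement-interm} identifies $F$ with $T(M)$, so that $T(M)$ is polynomial with finite-dimensional values, as required.

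Uniqueness of the Steinberg decomposition follows from the uniqueness assertion in Theorem \ref{steinberg-pol-gal} together with Proposition \ref{additifs-bimodules}, via the bijection between isomorphism classes of additive simples $\pi$ and of simple $(K,A)$-bimodules $B$. The main obstacle of the argument is essentially notational: one must verify that at the level of the $\M_n(A)$-action, the pointwise value $\mathrm{E}_i(\pi_{B_i}(A^n))$ coincides with $M_i^{[B_i]}$, i.e.\ that restricting $\mathrm{E}_i(K^{nd_i})$ along $\M_n(\phi_{B_i})$ reproduces the natural action inherited from the additive functor $\pi_{B_i}$. The cases $G_n(A)=GL_n(A)$ and $SL_n(A)$ reduce to the $\M_n(A)$ case by restriction along $G_n(A)\hookrightarrow\M_n(A)$, using that the monoid maps $G_n(\phi_{B_i})$ take values in the appropriate submonoid $G_{nd_i}(K)$.
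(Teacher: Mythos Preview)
Your proof is correct and follows essentially the same route as the paper's: both directions go through the functor $F=\bigotimes_i\pi_i^*\mathrm{E}_i$, use that $\mathrm{ev}(F)\simeq M$ forces $F\simeq T(M)$ via Proposition~\ref{pr-prolongement-interm}, and invoke the functorial Steinberg decomposition (you cite Theorems~\ref{steinberg-pol-gal} and~\ref{thm-st}, the paper cites the summary Theorem~\ref{thf-PA}; these are equivalent here since $T(M)$ is polynomial). Your uniqueness argument is also the same, just phrased slightly more tersely.
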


\begin{proof}
Supposons (1) vérifié, alors $T_n(M)$ est simple d'après la proposition \ref{pr-prolongement-interm}, donc admet une décomposition tensorielle d'après le théorème \ref{thf-PA}. On en déduit par évaluation sur $A^n$ une décomposition tensorielle à la Steinberg de $T_n(M)(A^n)\simeq M$, ce qui montre (2). Réciproquement, supposons (2), notons $\pi_i:\mathbf{P}(A)\to K\Md$ le foncteur additif tel que $\pi_i(P)=B_i\otimes_A P$, et notons $E_i$ un foncteur élémentaire tel que $E_i(A^n)\simeq M_i$. Alors $M$ est isomorphe à l'évaluation sur $A^n$ de $\bigotimes_{i\in I}\pi_i^*E_i$. Ce dernier est un foncteur simple d'après le théorème \ref{thf-PA}, donc $M$ est simple d'après le corollaire \ref{simples-eval}. De plus $T_n(M)$ et $\bigotimes_{i\in I}\pi_i^*E_i$ sont deux foncteurs simples dont l'évaluation sur $A^n$ est isomorphe à $M$. D'après la proposition \ref{pr-prolongement-interm} ces deux foncteurs sont donc isomorphes, ce qui prouve que $T_n(M)$ est simple à valeurs de dimensions finies. On a donc prouvé (1). Enfin, si $M$ admet deux décompositions à la Steinberg données par des modules $M_i$ et $M'_i$, on note $E_i$ et $E_i'$ les foncteurs élémentaires correspondants. Par l'argument précédent $\bigotimes_{i\in I}\pi_i^*E_i$ et $\bigotimes_{i\in I}\pi_i^*E_i'$ sont tous deux isomorphes à  $T_n(M)$. L'unicité dans le théorème \ref{thf-PA} donne alors des isomorphismes $E_i\simeq E'_i$, donc des isomorphismes $M_i\simeq M'_i$.
\end{proof}

\section{Représentations des monoïdes et groupes de matrices finis}\label{s-tpts}

Dans cette section, nous supposons que $A$ est un $p$-anneau fini et fixons un corps commutatif $K$ de caractéristique $p$. On rappelle que $T_n:K[\M_n(A)]\to\F(A,K)$ désigne le prolongement intermédiaire associé à l'évaluation sur $A^n$.

Le lemme fondamental suivant nous permettra d'utiliser notre décomposition à la Steinberg polynomiale afin d'étudier les représentations des monoïdes $\M_n(A)$.

\begin{lm}
Si $K$ est de caractéristique $p$ et si $A$ est un $p$-anneau fini, alors pour toute représentation $M$ de $\M_n(A)$ de dimension finie, le foncteur $T_n(M)$ est polynomial, à valeurs de dimensions finies.
\end{lm}

\begin{proof}
Comme $M$ est de dimension finie, $T_n(M)$ est de type fini, donc à valeurs de dimensions finies d'après la proposition~\ref{pr-dim-finie}. L'hypothèse sur la caractéristique de $A$ assure ensuite que les foncteurs antipolynomiaux de $\F(A,K)$ sont constants, donc $T_n(M)$ est polynomial d'après le corollaire \ref{cor-tftcf}.
\end{proof}

Le lemme précédent permet d'appliquer le théorème~\ref{Mn} aux $p$-anneaux finis sous la forme suivante.
\begin{thm}\label{Mn-fini}
Soit $A$ un $p$-anneau fini, et $K$ un corps de décomposition de $\mathbf{P}(A)$, de caractéristique $p$. Une représentation de $\M_n(A)$ est simple si et seulement si elle admet une décomposition à la Steinberg.
Si c'est le cas, la représentation simple est absolument simple, et sa décomposition à la Steinberg est unique.
\end{thm}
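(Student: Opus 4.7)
The plan is to derive Theorem~\ref{Mn-fini} as an immediate consequence of Theorem~\ref{Mn} combined with the preceding lemma. The only auxiliary point to verify is that, under the present hypotheses, every simple representation of $\M_n(A)$ is automatically finite-dimensional over $K$, which is precisely the input required by the lemma.

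For the forward implication, I would argue as follows. Since $A$ is a finite $p$-ring, $\M_n(A)$ is a finite monoid, so $K[\M_n(A)]$ is a finite-dimensional $K$-algebra. Any simple $K[\M_n(A)]$-module is cyclic, hence a quotient of $K[\M_n(A)]$, and is therefore of finite dimension over $K$. Consequently, if $M$ is simple, the preceding lemma applies and shows that $T(M)$ is polynomial with finite-dimensional values. The implication (1)$\Rightarrow$(2) of Theorem~\ref{Mn} then furnishes the desired Steinberg decomposition of $M$.

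Conversely, if $M$ admits a Steinberg decomposition, the implication (2)$\Rightarrow$(1) of Theorem~\ref{Mn} (which does not use any prior assumption on $M$) gives directly that $M$ is simple. Finally, the absolute simplicity of a simple $M$ and the uniqueness of the Steinberg decomposition are read off from the corresponding statements in Theorem~\ref{Mn}.

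There is essentially no obstacle: Theorem~\ref{Mn-fini} is the specialization of Theorem~\ref{Mn} to the finite-ring, equal-characteristic setting, where both the polynomiality and finite-dimensionality hypotheses on $T(M)$ become automatic. The only minor subtlety is noting the automatic finite-dimensionality of simples, which reduces to the standard fact that a simple module over a finite-dimensional algebra is finite-dimensional.
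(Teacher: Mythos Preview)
Your proof is correct and follows exactly the paper's approach: the theorem is stated immediately after the lemma with the remark that the lemma allows rewriting Theorem~\ref{Mn} in this form. Your explicit observation that a simple $K[\M_n(A)]$-module is finite-dimensional (since $K[\M_n(A)]$ is) is the one small verification needed to feed the lemma, and it is handled correctly.
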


Nous pouvons déduire du théorème \ref{Mn-fini} l'existence d'une décomposition à la Steinberg pour les représentations de $\GL_n(A)$. Plus précisément, comme $A$ est \emph{fini}, l'inclusion $\iota:K[\GL_n(A)]\to K[\M_n(A)]$ possède une rétraction $r$ qui est un morphisme d'anneaux, voir le lemme \ref{lm-mon-eltr} et l'exemple \ref{ex-morannf}. Le corollaire suivant s'obtient en restreignant à $\GL_n(A)$ la décomposition à la Steinberg de la représentation simple $r^*M$.

\begin{cor}\label{cor-GLfin}
Soit $K$ un corps de décomposition de $\mathbf{P}(A)$. Toute représentation simple de $\GL_n(A)$ est absolument simple et admet une décomposition à la Steinberg. 
\end{cor}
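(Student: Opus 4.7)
The plan is to transfer the decomposition of Theorem~\ref{Mn-fini} from the monoid $\mathcal{M}_n(A)$ to the group $GL_n(A)$ by using the retraction $r:K[\mathcal{M}_n(A)]\to K[GL_n(A)]$ that was introduced just before the corollary. Since $A$ is a finite $p$-ring, the monoid $\mathcal{M}_n(A)$ is finite, so by Example~\ref{ex-morannf} condition (i) of Lemma~\ref{lm-mon-eltr} is satisfied, hence $r$ is indeed a ring morphism. Given a simple representation $M$ of $GL_n(A)$, I would first form $r^*M$, which is a representation of $\mathcal{M}_n(A)$, and invoke Lemma~\ref{lm-rep-mono-grp}(i) to conclude that $r^*M$ is simple.

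Next, applying Theorem~\ref{Mn-fini} to $r^*M$ would yield that $r^*M$ is absolutely simple and admits a (unique) decomposition à la Steinberg
\[r^*M\simeq\bigotimes_{i\in I} M_i^{[B_i]}\]
where the $M_i$ are elementary representations of $GL_{nd_i}(K)$, almost all equal to $K$. Since $r\circ\iota=\mathrm{id}$, restriction along $\iota$ gives $\iota^*r^*M=M$. To transport the decomposition to $GL_n(A)$, I would simply observe that for each $(K,A)$-bimodule $B_i$ the ring morphism $\mathcal{M}_n(\phi_{B_i}):\mathcal{M}_n(A)\to\mathcal{M}_{nd_i}(K)$ sends invertible elements to invertible elements, hence restricts to the group morphism $GL_n(\phi_{B_i}):GL_n(A)\to GL_{nd_i}(K)$ that defines the notation $M_i^{[B_i]}$ in the group case (see Notation~\ref{nota-pbck}). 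Consequently $\iota^*(M_i^{[B_i]})$ coincides with $M_i^{[B_i]}$ in the $GL_n$-sense, and restricting along $\iota$ yields the sought decomposition
\[M\simeq\bigotimes_{i\in I}M_i^{[B_i]}\]
in the category of $GL_n(A)$-representations.

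Finally, the absolute simplicity of $M$ would follow from the absolute simplicity of $r^*M$ combined with the full faithfulness of $r^*$ (Lemma~\ref{lm-rep-mono-grp}(i)): indeed this gives a ring isomorphism $\mathrm{End}_{K[GL_n(A)]}(M)\simeq\mathrm{End}_{K[\mathcal{M}_n(A)]}(r^*M)=K$.

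There is no serious obstacle: the entire proof is a soft reduction, the two substantive ingredients being (a) the existence of the ring-theoretic retraction $r$ (guaranteed by the finiteness of $\mathcal{M}_n(A)$) and (b) the fact that the bimodule-twist construction $(-)^{[B_i]}$ commutes with passage from the monoid to its group of units, which is tautological from the definitions. Note that one does \emph{not} get a uniqueness statement for the $GL_n(A)$-decomposition, consistent with the warning in the introduction (Example~\ref{ex-pas-unique}) that uniqueness can fail in the linear-group setting.
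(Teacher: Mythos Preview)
Your proof is correct and follows exactly the approach sketched in the paper: lift $M$ to the simple $\mathcal{M}_n(A)$-representation $r^*M$ via the ring retraction $r$, apply Theorem~\ref{Mn-fini}, and restrict back along $\iota$. Your extra justifications (compatibility of $(-)^{[B_i]}$ with passage to units, absolute simplicity via full faithfulness of $r^*$) are welcome but do not deviate from the paper's argument.
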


Dans le reste de la section, nous montrons réciproquement que les représentations de $\GL_n(A)$ admettant une décomposition à la Steinberg sont simples et étudions la question de l'unicité d'une telle décomposition.

Nous commençons par le cas où $A$ un corps fini de cardinal $q=p^r$.  
Rappelons qu'une partition $\lambda=(\lambda_1,\dots,\lambda_n)$ est \emph{$q$-restreinte} si on a $\lambda_n<q$ et $\lambda_i-\lambda_{i+1}<q$  pour tout $i<n$. Toute partition $q$-restreinte $\lambda$ se décompose de manière unique en une somme 
$\lambda=\lambda^0+p\lambda^1+\dots+p^{r-1}\lambda^{r-1}$
où les $\lambda^i$ sont des partitions $p$-restreintes. D'après la proposition~\ref{prop-L-p} de l'appendice~\ref{app-elt}, les foncteurs élémentaires sont isomorphes aux socles $\mathbb{L}_\lambda$ des foncteurs de Schur $\mathbb{S}_\lambda$ indexés par les partitions $p$-restreintes.

\begin{nota}\label{nota-qrest}
Soit $\FF_q$ un sous-corps de $K$ de cardinal $q=p^r$.
Soient $\lambda$ une partition $q$-restreinte et $\lambda^0+\dots+p^{r-1}\lambda^{r-1}$ sa décomposition à l'aide de partitions $p$-restreintes $\lambda^i$. On note:
\[\mathbb{L}_\lambda(K^n)=\mathbb{L}_{\lambda^0}(K^n)^{\{0\}}\otimes \dots \otimes \mathbb{L}_{\lambda^{r-1}}(K^n)^{\{r-1\}}\]
où l'exposant $^{\{k\}}$ indique la restriction d'une représentation de  module le long du morphisme $\FF_q\to K$, $x\mapsto x^{p^k}$. Ainsi $\mathbb{L}_\lambda(K^n)$ est une représentation de $\M_n(\FF_q)$. 
\end{nota}

\begin{lm}\label{lm-repMnFq}
Soit $\FF_q$ un sous-corps de $K$ de cardinal $q=p^r$. Les  restrictions à $\M_n(\FF_q)$ des représentations $\mathbb{L}_\lambda(K^n)$ indexées par les partitions $q$-restreintes $\lambda$ forment une famille complète de représentants des classes d'isomorphisme de représentations simples de $\M_n(\FF_q)$.

De plus, notons $\delta$ la représentation de dimension $1$ de $\M_n(\FF_q)$ sur laquelle les matrices inversibles agissent par l'identité et les matrices non inversibles par $0$ (cf. lemme~\ref{lm-mon-eltr}). Pour toute partition $q$-restreinte $\lambda=(\lambda_1,\dots,\lambda_n)$, posons $\mu=\lambda+(q-1,\dots,q-1)$ si $\lambda_n=0$ et $\mu=\lambda$ sinon. Alors on a un isomorphisme $L_{\lambda}(K^n)\otimes\delta \simeq L_\mu(K^n)$ en tant que représentations de $\M_n(\FF_q)$.
\end{lm}

\begin{proof}
La théorie de Galois des corps finis fournit un isomorphisme de $(K,\FF_q)$-bimodules 
$K\otimes_{\mathbb{Z}}{\FF_q}={}^{(0)}K\oplus \dots\oplus{}^{(r-1)}K$
où chaque ${}^{(i)}K$ désigne le $K$-espace vectoriel $K$, muni de l'action à droite de $\FF_q$ par $x\cdot a= xa^{p^i}$. En d'autres termes, les ${}^{(i)}K$ forment un système complet de représentants des $(K,\FF_q)$-modules simples. De plus, pour toute représentation $M$ de $\M_n(K)$, la représentation $M^{[{}^{(i)}K]}$ (notation \ref{nota-pbck}) est isomorphe à la restriction à $\M_n(\FF_q)$ de $M^{\{i\}}$ (notation \ref{nota-qrest}). La première partie du lemme découle donc directement du théorème \ref{Mn-fini}. La deuxième partie du lemme découle de la proposition \ref{pr-technique1} de l'appendice \ref{app-elt} et du fait que $\delta$ coïncide avec la restriction à $\M_n(\FF_q)$ de $\det^{q-1}$.
\end{proof}

\begin{thm}\label{GLn-fini-Fq}
Soit $\FF_q$ un sous-corps de $K$ de cardinal $q=p^r$. Alors une représentation de $\GL_n(\FF_q)$ est simple si et seulement si elle admet une décomposition à la Steinberg. 
De plus, les représentations simples sont absolument simples. 

Enfin, $\bigotimes_{0\le i< r}{M_i}^{\{i\}} $ et $\bigotimes_{0\le i< r}{M_i'}^{\{i\}}$  sont deux décompositions à la Steinberg de la même représentation simple si et seulement si ($M_i\otimes\det^{p-1}\simeq M_i'$ pour tout $i$), ou ($M_i\simeq M_i'\otimes\det^{p-1}$ pour tout $i$), ou encore ($M_i\simeq M_i'$ pour tout $i$).
\end{thm}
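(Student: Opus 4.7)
Le plan s'articule en deux temps. D'abord, l'existence d'une décomposition à la Steinberg pour une représentation simple de $GL_n(\mathbb{F}_q)$ ainsi que l'absolue simplicité de ces représentations résultent du corollaire~\ref{cor-GLfin}, applicable car, d'après l'exemple~\ref{excd}, $K$ est un corps de décomposition de $\mathbf{P}(\mathbb{F}_q)$.

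Pour la réciproque, soit $M=\bigotimes_i M_i^{[i]}$ une représentation de $GL_n(\mathbb{F}_q)$ admettant une décomposition à la Steinberg. Chaque foncteur élémentaire étant polynomial, il s'étend canoniquement en un foncteur $\M_n(K)\to K\Md$, et le morphisme d'anneaux $\M_n(\mathbb{F}_q)\to\M_n(K)$ obtenu en appliquant le Frobenius $x\mapsto x^{p^i}$ entrée par entrée fournit un prolongement naturel de chaque $M_i^{[i]}$ à $\M_n(\mathbb{F}_q)$. Le produit tensoriel $\widetilde M=\bigotimes_i\widetilde{M_i}^{[i]}$ est alors une représentation de $\M_n(\mathbb{F}_q)$ admettant une décomposition à la Steinberg au sens du monoïde $\M_n$, puisque les $(K,\mathbb{F}_q)$-bimodules simples sont précisément les $K^{[i]}$ pour $0\le i<r$. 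Par le théorème~\ref{Mn-fini}, $\widetilde M$ est simple. Le lemme~\ref{lm-repMnFq} garantit que $\widetilde M\otimes\delta$ est également simple (c'est soit $\widetilde M$, soit une autre représentation $L_\mu$), de sorte que le lemme~\ref{lm-rep-mono-grp}(ii) assure que la restriction $M=\iota^*\widetilde M$ est simple dans $K[GL_n(\mathbb{F}_q)]\Md$.

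Pour l'unicité, considérons deux décompositions $M\simeq\bigotimes_i M_i^{[i]}\simeq\bigotimes_i {M_i'}^{[i]}$ et leurs prolongements respectifs $\widetilde M$ et $\widetilde M'$ à $\M_n(\mathbb{F}_q)$, simples par l'argument précédent. Le lemme~\ref{lm-rep-mono-grp}(ii) implique que les prolongements à $\M_n(\mathbb{F}_q)$ d'une représentation simple de $GL_n(\mathbb{F}_q)$ sont, à isomorphisme près, au plus au nombre de deux et ne diffèrent que par un twist par $\delta$. Donc $\widetilde M'\simeq\widetilde M$ ou $\widetilde M'\simeq\widetilde M\otimes\delta$. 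Le premier cas donne $M_i\simeq M_i'$ pour tout $i$ par l'unicité du théorème~\ref{Mn-fini}. Dans le second cas, désignant par $\lambda$ et $\lambda'$ les partitions $q$-restreintes associées à $\widetilde M$ et $\widetilde M'$ via le lemme~\ref{lm-repMnFq}, on a soit $\lambda_n=0$ et $\lambda'=\lambda+(q-1,\dots,q-1)$, soit $\lambda_n'=0$ et $\lambda=\lambda'+(q-1,\dots,q-1)$. Dans le premier sous-cas, la décomposition $p$-adique $\lambda=\sum_i p^i\lambda^i$ donne ${\lambda'}^i=\lambda^i+(p-1,\dots,p-1)$ avec $\lambda^i_n=0$, et la proposition~\ref{pr-technique1} identifie alors $M_i'=\mathbb{L}_{{\lambda'}^i}(K^n)$ à $\mathbb{L}_{\lambda^i}(K^n)\otimes\det^{p-1}=M_i\otimes\det^{p-1}$ ; le sous-cas symétrique donne $M_i\simeq M_i'\otimes\det^{p-1}$.

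La direction réciproque de l'unicité résulte d'un calcul direct : si $M_i'\simeq M_i\otimes\det^{p-1}$ pour tout $i$, alors pour $g\in GL_n(\mathbb{F}_q)$ on a $\det(g^{p^i})^{p-1}=\det(g)^{(p-1)p^i}$, donc $\bigotimes_i{M_i'}^{[i]}(g)\simeq \bigotimes_i M_i^{[i]}(g)\otimes\det(g)^{\sum_i (p-1)p^i}=\bigotimes_i M_i^{[i]}(g)\otimes\det(g)^{q-1}$, qui coïncide avec $\bigotimes_i M_i^{[i]}(g)$ puisque $\det(g)\in\mathbb{F}_q^\times$ vérifie $\det(g)^{q-1}=1$ ; le cas symétrique est analogue. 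Le point le plus technique est la traduction, via la décomposition $p$-adique des partitions $q$-restreintes, du twist par $\delta$ sur $\M_n(\mathbb{F}_q)$ en twist simultané par $\det^{p-1}$ des facteurs Steinberg, qui repose crucialement sur les propriétés des foncteurs $\mathbb{L}_\lambda$ établies dans l'appendice~\ref{app-elt}.
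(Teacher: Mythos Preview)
Your proof is correct and follows essentially the same approach as the paper's own proof: existence via Corollary~\ref{cor-GLfin}, the converse by lifting to $\M_n(\mathbb{F}_q)$ and combining Theorem~\ref{Mn-fini} with Lemmas~\ref{lm-rep-mono-grp} and~\ref{lm-repMnFq}, and the uniqueness analysis via the $\delta$-twist and the $p$-adic decomposition of $q$-restricted partitions. The only minor presentational difference is that you invoke Proposition~\ref{pr-technique1} explicitly (iterated $p-1$ times) where the paper states the identification $\mathbb{L}_{\mu^i}(K^n)\simeq\mathbb{L}_{\lambda^i}(K^n)\otimes\det^{p-1}$ without naming it.
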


\begin{proof}
Le corollaire \ref{cor-GLfin} montre que toute représentation simple de $\GL_n(\FF_q)$ est absolument simple et admet une décomposition à la Steinberg. Réciproquement, soit $M$ une représentation de $\GL_n(\FF_q)$ admettant une décomposition à la Steinberg. Comme toute représentation élémentaire de $\GL_n(\FF_q)$ est la restriction d'une représentation élémentaire de $\M_n(\FF_q)$, on en déduit que $M$ est la restriction d'une représentation de $\M_n(\FF_q)$, qui est simple d'après le théorème \ref{Mn-fini}. Mais les lemmes \ref{lm-rep-mono-grp} et \ref{lm-repMnFq} montrent que la restriction à $\GL_n(\FF_q)$ préserve la simplicité, donc $M$ est simple. 

Il reste à montrer l'énoncé d'unicité des décompositions à la Steinberg. Deux décompositions à la Steinberg différentes d'une même représentation de $\GL_n(\FF_q)$ correspondent à deux représentations simples non isomorphes de $\M_n(\FF_q)$ qui ont même restriction à $\GL_n(\FF_q)$. 
D'après les lemmes \ref{lm-rep-mono-grp} et \ref{lm-repMnFq}, la seule possibilité est que l'une des représentations soit $\mathbb{L}_\lambda(K^n)$ pour une partition $q$-restreinte $\lambda$ telle que $\lambda_n=0$ et que l'autre soit $\mathbb{L}_\lambda(K^n)\otimes\delta=\mathbb{L}_\mu(K^n)$ avec $\mu=\lambda+(q-1,\dots,q-1)$. Ceci implique que $\mu^i=\lambda_i+(p-1,\dots,p-1)$ pour tout $i$, donc que $\mathbb{L}_{\mu^i}(K^n)=\mathbb{L}_{\lambda^i}\otimes\det^{p-1}$ pour tout $i$.
Réciproquement si pour tout $i$ on a $M_i\simeq M'_i\otimes\det^{p-1}$ alors  $M=\bigotimes_{0\le i< r}{M_i}^{\{i\}} $ est isomorphe au produit tensoriel de $M'=\bigotimes_{0\le i< r}{M_i'}^{\{i\}}$ par la représentation $\bigotimes_{0\le i< r}(\det^{p-1})^{\{i\}}=\det^{q-1}$ qui coïncide avec la représentation triviale de dimension $1$, donc $M\simeq M'$.
\end{proof}

Nous revenons maintenant au cas d'un $p$-anneau fini quelconque $A$. 
Comme $A$ est un $p$-anneau fini, le théorème d'Artin-Wedderburn (cf. \cite[§\,8, n°1, th.~1 et §\,9, n°2, prop.~5.c]{Bki}) donne un isomorphisme d'anneaux
$\Phi:A/{\rm Rad}\,A\xrightarrow{\simeq}\prod_{i=1}^m\M_{d_i}(\FF_{p^{r_i}})$.
On a un diagramme commutatif, où $\pi : A\twoheadrightarrow A/{\rm Rad}\,A$ désigne la projection :
\begin{equation}
\xymatrix{
K[\prod_{i=1}^m\M_{nd_i}(\FF_{p^{r_i}})]\Md \ar[rr]^-{\iota^*_3}\ar[d]^-{\M_n(\Phi)^*}_-{\simeq}
&& K[\prod_{i=1}^m \GL_{nd_i}(\FF_{p^{r_i}})]\Md \ar[d]^-{\GL_n(\Phi)^*}_-{\simeq}
\\
K[\M_n(A/{\rm Rad}\,A)]\Md \ar[rr]^-{\iota^*_2}\ar@{_{(}->}[d]^-{\M_n(\pi)^*}
&& K[\GL_n(A/{\rm Rad}\,A)]\Md \ar@{_{(}->}[d]^-{\GL_n(\pi)^*}
\\
K[\M_n(A)]\Md \ar[rr]^-{\iota^*_1}
&& K[\GL_n(A)]\Md
}\;.\label{diagr1}
\end{equation}
En raisonnant sur ce diagramme commutatif, on peut étendre le théorème \ref{GLn-fini-Fq} à tous les $p$-anneaux finis.

\begin{lm}\label{lm-reduc}
Soient $K$ un corps de décomposition de $\mathbf{P}(A)$ et $G_n=\GL_n$ ou $\M_n$.
Le foncteur $G_n(\pi)^*$ est pleinement fidèle et préserve la simplicité des représentations. Il induit une bijection entre les classes d'isomorphisme de représentations simples de $G_n(A/\mathrm{Rad}\,A)$ et celles de $G_n(A)$.
\end{lm}
\begin{proof}
Comme $G_n(\pi)$ est un morphisme surjectif, le foncteur de restriction associé est pleinement fidèle et préserve la simplicité des représentations. En particulier, il envoie deux représentations simples non isomorphes sur des représentations simples non isomorphes. 
Il reste à montrer que toute représentation simple de $G_n(A)$ s'obtient comme restriction le long de $G_n(\pi)$ d'une représentation (forcément simple) de $G_n(A/\mathrm{Rad}\,A)$. 
Pour cela, on rappelle d'abord que les $(K,A)$-bimodules simples s'obtiennent tous comme restriction de $(K,A/{\rm Rad}\,A)$-bimodules simples le long de $\pi: A\twoheadrightarrow A/{\rm Rad}\,A$. Ceci provient du fait que pour tout $p$-anneau fini $A'$ on a un isomorphisme 
$$K\otimes_\mathbb{Z}(A'/\mathrm{Rad}A')\simeq (K\otimes_\mathbb{Z}A')/\mathrm{Rad}(K\otimes_{\mathbb{Z}}A')\;.$$
(Un tel isomorphisme s'obtient de l'isomorphisme $A'/{\rm Rad}\,A'\simeq (A'/p)/\mathrm{Rad}(A'/p)$ induit par le morphisme d'anneaux $A'\twoheadrightarrow A'/p$, en appliquant le corollaire de 
\cite[§\,12.7 (page~220)]{Bki}). 
Ainsi si on note $B^{[\pi]}$ la restriction d'un $(K,A/{\rm Rad}\,A)$-bimodule le long de $\pi$, le théorème \ref{Mn-fini} et le corollaire \ref{cor-GLfin} montrent, que les représentations simples de $G_n(A)$ sont de la forme $\bigotimes_{i\in I}{M_i}^{[{B_i}^{[\pi]}]}= G_n(\pi)^*(\bigotimes_{i\in I}{M_i}^{[{B_i}]})$.
\end{proof}

\begin{lm}\label{lm-reductens} Soit $K$ un corps de décomposition de $\mathbf{P}(A)$, et $G_n=\GL_n$ ou $\M_n$.
Une représentation de $\prod_{1\le i\le m}G_{nd_i}(\FF_{p^{r_i}})$ est simple si et seulement si c'est un produit tensoriel de représentations simples des $G_{nd_i}(\FF_{p^{r_i}})$. Une telle décomposition tensorielle est unique.
\end{lm}
\begin{proof} D'après le théorème \ref{Mn-fini} et le corollaire \ref{cor-GLfin}, les représentations simples de $G_n(A)$ sont absolument simples. D'après le lemme \ref{lm-reduc} c'est aussi le cas des représentations de $G_n(A/{\rm Rad}\,A)$, donc des facteurs $G_{nd_i}(\FF_{p^{r_i}})$. Le lemme découle alors du corollaire \ref{cor-cas-sympa-tens}.
\end{proof}

\begin{thm}\label{classif-St}
Soit $A$ un $p$-anneau fini, et $K$ un corps de décomposition de $\mathbf{P}(A)$, de caractéristique $p$. Une représentation de $\GL_n(A)$ est simple si et seulement si elle admet une décomposition à la Steinberg.
Si c'est le cas, la représentation simple est absolument simple. Les représentations élémentaires apparaissant dans une décomposition à la Steinberg sont uniques à tensorisation par une puissance $(p-1)$-ième du déterminant près.
\end{thm}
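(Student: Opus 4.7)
The forward direction --- that every simple representation of $GL_n(A)$ is absolutely simple and admits a Steinberg decomposition --- is already the content of Corollary~\ref{cor-GLfin}. My plan is therefore to establish the converse and the uniqueness statement by using the commutative diagram~(\ref{diagr1}) to reduce to the case of a product of finite fields, where Theorem~\ref{GLn-fini-Fq} applies.

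For the converse, suppose $M\simeq\bigotimes_{i\in I}M_i^{[B_i]}$ is a Steinberg decomposition, where the $B_i$ form a complete system of simple $(K,A)$-bimodules of finite dimension and the $M_i$ are elementary representations of $GL_{nd_i}(K)$. The key observation is that every simple $(K,A)$-bimodule factors through $\pi:A\twoheadrightarrow A/\mathrm{Rad}\,A$: as noted in the proof of Lemma~\ref{lm-reduc}, the isomorphism $K\otimes_\mathbb{Z}(A/\mathrm{Rad}\,A)\simeq (K\otimes_\mathbb{Z}A)/\mathrm{Rad}(K\otimes_\mathbb{Z}A)$ induces a bijection between simple $(K,A)$-bimodules and simple $(K,A/\mathrm{Rad}\,A)$-bimodules. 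Consequently each morphism $\phi_{B_i}:A\to\M_{d_i}(K)$ factors through $\pi$, so $M=GL_n(\pi)^*M'$ for some representation $M'$ of $GL_n(A/\mathrm{Rad}\,A)$. Via the Wedderburn--Morita isomorphism $GL_n(\Phi):GL_n(A/\mathrm{Rad}\,A)\xrightarrow{\simeq}\prod_i GL_{nd_i}(\mathbb{F}_{p^{r_i}})$ appearing in~(\ref{diagr1}), the representation $M'$ becomes a tensor product of Steinberg decompositions over the factors $GL_{nd_i}(\mathbb{F}_{p^{r_i}})$. Theorem~\ref{GLn-fini-Fq} then asserts that each such tensor factor is simple over the corresponding $GL_{nd_i}(\mathbb{F}_{p^{r_i}})$; Lemma~\ref{lm-reductens} upgrades this to simplicity over the product; and Lemma~\ref{lm-reduc} transports simplicity back to $GL_n(A)$ through $GL_n(\pi)^*$, giving the simplicity of $M$.

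For uniqueness, any two Steinberg decompositions of a given simple $M$ translate, via the same chain of reductions, into two tensor product decompositions of a simple representation of $\prod_i GL_{nd_i}(\mathbb{F}_{p^{r_i}})$. Lemma~\ref{lm-reductens} determines the tensor factors indexed by the Wedderburn components $i$ uniquely up to isomorphism. Within each such factor, Theorem~\ref{GLn-fini-Fq} controls the remaining ambiguity: the elementary representations appearing in the Frobenius decomposition of that factor are determined up to simultaneous tensorization by $\det^{p-1}$ on $GL_{nd_i}(K)$. Transporting this ambiguity back along the isomorphisms of~(\ref{diagr1}) produces precisely the stated ``$(p-1)$-th power of determinant'' indeterminacy at the level of $GL_n(A)$.

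The principal technical point to be careful with is the translation of the uniqueness statement: the ambiguity provided by Theorem~\ref{GLn-fini-Fq} is a \emph{synchronized} shift by $\det^{p-1}$ inside each Wedderburn block, but these shifts are independent across blocks; one must verify that this compound ambiguity is what is meant by the loose phrase ``à tensorisation par une puissance $(p-1)$-ème du déterminant près'' in the statement of the theorem, and that under $GL_n(\phi_{B_i})$ the character $\det^{p-1}$ on $GL_{nd_i}(K)$ does pull back to the expected one-dimensional representation on $GL_n(A)$. All remaining verifications are routine once this bookkeeping is in place.
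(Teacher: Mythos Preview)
Your proposal is correct and follows essentially the same route as the paper: reduce via diagram~(\ref{diagr1}) to the product $\prod_i GL_{nd_i}(\mathbb{F}_{p^{r_i}})$, invoke Theorem~\ref{GLn-fini-Fq} and Lemma~\ref{lm-reductens} there, and transport back through $GL_n(\pi)^*$ using Lemma~\ref{lm-reduc}. The paper carries out the ``bookkeeping'' you flag at the end more explicitly: it constructs the bimodules $^{(j)}B_i$ (the Frobenius twists of the Wedderburn blocks) and verifies directly that $G_n(\pi)^*G_n(\Phi)^*$ takes a Steinberg decomposition over $\prod_i GL_{nd_i}(\mathbb{F}_{p^{r_i}})$ to a Steinberg decomposition over $GL_n(A)$, which is what makes the uniqueness transfer precise; your caveat about the synchronized-versus-independent $\det^{p-1}$ shifts across blocks is well taken and is not resolved beyond this in the paper either.
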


\begin{proof}
Il s'agit d'établir la réciproque du corollaire \ref{cor-GLfin}. Nous utilisons dans la suite les notations du diagramme~\eqref{diagr1}.

Commençons par montrer que les représentations de $\GL_n(A)$ admettant une décomposition à la Steinberg sont simples, ce qui est équivalent à montrer que $\iota_1^*$ préserve la simplicité.
D'après le théorème \ref{GLn-fini-Fq}, si $K$ est un corps de décomposition du corps fini $\FF_q$, le foncteur de restriction:
$\iota^*:K[\M_n(\FF_q)]\Md\to K[\GL_n(\FF_q)]\Md$
préserve la simplicité. D'après le lemme \ref{lm-reductens}, on en déduit que $\iota^*_3$ préserve la simplicité, puis en utilisant le lemme \ref{lm-reduc} que $\iota_1^*$ préserve la simplicité.

Il reste à établir la propriété d'unicité. D'après le théorème \ref{GLn-fini-Fq} et le lemme \ref{lm-reductens}, elle est vérifiée pour $\prod_{i=1}^m\GL_{nd_i}(\FF_{p^{r_i}})=\GL_{n}\big(\prod_{i=1}^n\M_{d_i}(\FF_{p^{r_i}})\big)$. Nous allons maintenant la démontrer pour $\GL_n(A)$.

Pour cela, notons $B_i$ le $(K,A)$-bimodule égal au $K$-espace vectoriel $K^{d_i}$ avec l'action à droite de $A$ donnée par le morphisme d'anneaux $A\twoheadrightarrow \M_n(\FF_{p^{r_i}})\subset \M_n(K)$ induit par l'isomorphisme $\Phi$ du théorème d'Artin-Wedderburn et l'inclusion de $\FF_{p^{r_i}}$ dans $K$ (comme $K$ est un corps de décomposition de $\mathbf{P}(A)$, il contient des corps finis de cardinal $p^{r_i}$ pour tout $i$ et on identifie $\FF_{p^{r_i}}$ à un sous-corps de $K$). On note $^{(j)}B_i$ le bimodule $K_{\phi^j}\otimes_{\mathbb{Z}}B_i$ où $\phi^j:K\to K$ est le morphisme de Frobenius d'élévation à la puissance $p^j$, et $K_{\phi^j}$ est le $(K,K)$-bimodule égal au groupe abélien $K$, muni de l'action $\lambda\cdot x\cdot \mu := \lambda x\phi^j(\mu)$.

\medskip
\noindent
{\em Fait : } les bimodules $^{(j)}B_i$, $1\le i\le m$, $0\le j<r_i$, forment une famille complète de représentants des $(K,A)$-bimodules simples.
\medskip

En effet, l'isomorphisme $\Phi$ du théorème  d'Artin-Wedderburn dit que $A$ possède $m$ modules à droite
simples $S_i$, que chaque $S_i$ a un corps d'endomorphismes $A$-linéaires isomorphe à $\FF_{p^{r_i}}$, et que $S_i$ est isomorphe en tant que
$(\FF_{p^{r_i}},A)$-bimodule au $\FF_{p^{r_i}}$-espace vectoriel ${\FF_{p^{r_i}}}^{\oplus d_i}$ avec action de $A$ donnée par la surjection $A\twoheadrightarrow A/{\rm Rad}\,A \twoheadrightarrow \M_n(\FF_{p^{r_i}})$. On sait alors que tous les $(K,A)$-bimodules simples sont les facteurs de composition des $K\otimes_{\mathbb{Z}}A$-modules $K\otimes_{\mathbb{Z}}S_i$, et on utilise l'isomorphisme de $(K,\FF_{p^r_i})$-bimodules $K\otimes_{\mathbb{Z}}\FF_{p^{r_i}}\simeq K\oplus {}^{(1)}K\oplus\dots \oplus {}^{(r_i-1)}K$ pour conclure que 
$K\otimes_{\mathbb{Z}}S_i\simeq B_i\oplus {}^{(1)}B_i\oplus\dots \oplus {}^{(r_i-1)}B_i$, d'où le fait.

Supposons maintenant que $\bigotimes_{0\le j<r_i}M_{i,j}^{\{j\}}$ est une décomposition tensorielle à la Steinberg d'une représentation $M_i$ de $\GL_{nd_i}(\FF_{p^{r_i}})$. Alors, par définition des bimodules $^{(j)}B_i$, on a
$$G_n(\pi)^*G_n(\Phi)^*M_i\simeq \bigotimes_{0\le j<r_i}M_{i,j}^{[^{(j)}B_i]}\;.$$
En d'autres termes, $G_n(\pi)^*G_n(\Phi)^*$ préserve les décompositions à la Steinberg, et le résultat d'unicité pour $\GL_n(A)$ se déduit donc de celui pour $\prod_{i=1}^m\GL_{nd_i}(\FF_{p^{r_i}})$.
\end{proof}

\section{Représentations EML-polynomiales}\label{seml}

\subsection{Généralités}\label{subsec-generalites}
Si le corps commutatif $K$ est infini, une représentation $V$ de dimension finie du monoïde multiplicatif $\M_n(K)$ est dite \emph{polynomiale} si le morphisme d'action $\rho:\M_n(K)\to {\rm End}_K(V)$ est un polynôme des coefficients des matrices. Les restrictions des représentations polynomiales de $\M_n(K)$ à $\GL_n(K)$ sont les \emph{représentations polynomiales de $\GL_n(K)$}. Cette notion, dont l'idée remonte à la thèse de Schur et dont l'ouvrage de Green \cite{Green} mène une étude systématique, a fait l'objet d'une littérature intensive. La définition suivante donne une notion plus générale et plus souple de représentation \emph{polynomiale à la Eilenberg-MacLane} (\emph{EML-polynomiale} en abrégé).

Dans la suite de cette section \ref{subsec-generalites}, nous fixons un anneau $A$, et nous désignons par $G_n(A)$ l'un des monoïdes $\M_n(A)$, $\GL_n(A)$ ou encore $\SL_n(A)$ si $A$ est commutatif. On parlera de représentation de $G_n(A)$ pour désigner un $K[G_n(A)]$-module.

\begin{defi}\label{def-rep-pol} Une représentation $M$ de $\M_n(A)$ est dite \emph{polynomiale à la Eilenberg-MacLane} si le morphisme d'action $\rho: \M_n(A)\to\mathrm{End}_K(M)$ est une fonction polynomiale au sens d'Eilenberg-MacLane, comme dans la section \ref{subsec-fctpol}. Une représentation de $G_n(A)$ est dite \emph{polynomiale à la Eilenberg-MacLane} si elle est restriction d'une représentation EML-polynomiale de $\M_n(A)$.
\end{defi}

La polynomialité à la Eilenberg-MacLane de $\rho$ fait intervenir la structure \emph{additive} des anneaux $\M_n(A)$ et $\mathrm{End}_K(V)$ (la structure multiplicative ne joue aucun rôle), tandis que le fait que $\rho$ est une représentation signifie la compatibilité à leur structure \emph{multiplicative} (la structure additive ne joue aucun rôle). La polynomialité à la Eilenberg-MacLane est donc une forme de propriété \emph{arithmétique} des représentations.

Si $A=K$ est un corps infini, toute représentation polynomiale de $\M_n(A)$ ou de $\GL_n(A)$ au sens classique \cite{Green} est EML-polynomiale. La proposition suivante permet de construire d'autres exemples à partir de celui-ci.

\begin{pr}\label{prel-eml}
\begin{enumerate}
\item Si $A$ est un $p$-anneau fini et $K$ est de caractéristique $p$, toute représentation de $\M_n(A)$ ou de $\GL_n(A)$ est EML-polynomiale.
\item Si $\phi : B\to A$ est un morphisme d'anneaux, la restriction le long du morphisme de monoïdes $G_n(\phi)$ d'une représentation EML-polynomiale de $G_n(A)$ est une représentation EML-polynomiale de $G_n(B)$.
\item La classe des représentations EML-polynomiales de $G_n(A)$ est stable par sous-quotient et par produit tensoriel.
\end{enumerate}
\end{pr}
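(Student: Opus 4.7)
The plan is to treat the three parts in sequence, in each case reducing to elementary properties of polynomial functions between abelian groups recalled in Section~\ref{subsec-fctpol}: precomposition (resp. postcomposition) by an additive map preserves polynomiality; combining two polynomial functions via a bilinear map is polynomial, with additive degree bound; and by Example~\ref{ex-fctpol}(iii), every function from a finite abelian $p$-group to an $\mathbb{F}_p$-vector space is automatically polynomial.

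For (1), under the hypothesis the additive group of $\M_n(A)$ is a finite abelian $p$-group and every $\mathrm{End}_k(M)$ is an $\mathbb{F}_p$-vector space, so Example~\ref{ex-fctpol}(iii) forces every action map $\rho : \M_n(A) \to \mathrm{End}_k(M)$ to be polynomial. For the $GL_n(A)$ case, the finiteness of $\M_n(A)$ places us in the setting of Example~\ref{ex-morannf} and Lemma~\ref{lm-mon-eltr}: the ring retraction $r : k[\M_n(A)] \to k[GL_n(A)]$ yields an extension of the given $GL_n(A)$-representation to an $\M_n(A)$-representation which is EML-polynomial by the preceding observation and restricts to the original on $GL_n(A)$. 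For (2), the ring morphism $\phi$ induces the additive map $\M_n(\phi) : \M_n(B) \to \M_n(A)$, and precomposition with it preserves polynomiality of any $\rho$, giving the $\M_n$ case; the $GL_n$ and $SL_n$ cases follow immediately from the definition by composing restrictions along $\M_n(\phi)$ and then along $G_n(B) \hookrightarrow \M_n(B)$.

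For (3) in the $\M_n$ case, tensor product stability follows from the bilinearity of the external tensor $\mathrm{End}_k(M_1) \times \mathrm{End}_k(M_2) \to \mathrm{End}_k(M_1 \otimes_k M_2)$ combined with the two polynomial action maps $\rho_1, \rho_2$, and subquotient stability follows from observing that $\rho$ factors through the additive subgroup $\{\phi : \phi(N) \subset N\}$ of $\mathrm{End}_k(M)$ for any submodule $N \subset M$, composed with the $k$-linear restriction to $\mathrm{End}_k(N)$ (resp. projection to $\mathrm{End}_k(M/N)$). For $G_n = GL_n$ or $SL_n$, tensor product closure is immediate by tensoring the $\M_n(A)$-extensions and restricting.

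The main obstacle will be subquotient stability for $G_n = GL_n$ or $SL_n$: given a $G_n(A)$-subrepresentation $N$ of an EML-polynomial $V$ with extension $\tilde V$ to $\M_n(A)$, $N$ need not be $\M_n(A)$-stable in $\tilde V$, and the $\M_n(A)$-subrepresentation of $\tilde V$ generated by $N$ typically restricts to a strictly larger $G_n(A)$-subrepresentation, so one cannot directly extract an $\M_n(A)$-extension of $N$ from $\tilde V$. The resolution must exploit the non-uniqueness of the extension $\tilde V$: one must show that for every $G_n(A)$-subrepresentation $N$ of $V$ there exists some EML-polynomial extension of the $G_n(A)$-action on $V$ for which $N$ becomes $\M_n(A)$-stable, equivalently that one can alter $\rho$ on the non-invertible matrices, while keeping it polynomial and multiplicative, so as to preserve $N$. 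This is the only step not immediate from formal properties of polynomial functions.
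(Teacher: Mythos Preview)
Your treatment of (1), (2), and the $\M_n$ case of (3) is essentially the paper's own. For (1), both you and the paper invoke Example~\ref{ex-fctpol}(iii) for $\M_n(A)$ and the ring retraction $r$ of Lemma~\ref{lm-mon-eltr} (via $r^*$) for $GL_n(A)$. For (2), both precompose with the additive map $\M_n(\phi)$. For the subquotient claim in (3) with $G_n=\M_n$, your factorisation of $\rho$ through the additive subgroup $\{\phi:\phi(N)\subset N\}\subset\mathrm{End}_k(M)$ followed by the linear restriction map is exactly the content of the paper's commutative square
\[
\xymatrix{\M_n(A)\ar[r]^{\rho_U}\ar[d]_{\rho_V} & \mathrm{End}_k(U)\ar[d]^{i_*}\\
\mathrm{End}_k(V)\ar[r]^{i^*} & \mathrm{Hom}_k(U,V),}
\]
the point being that $i_*$ is additive and injective.

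The issue you isolate --- subquotient stability when $G_n=GL_n$ or $SL_n$ --- is genuine, and your diagnosis is accurate: a $G_n(A)$-submodule $N$ of $V=\tilde V|_{G_n(A)}$ need not be $\M_n(A)$-stable in $\tilde V$, and passing to the $\M_n(A)$-submodule it generates can strictly enlarge its restriction. The paper does not supply the missing argument either: its proof disposes of the $GL_n$ and $SL_n$ cases of (2) and (3) in one phrase, ``on peut supposer $G_n=\M_n$, le cas de $GL_n$ et de $SL_n$ s'obtenant par restriction.'' That reduction is immediate for (2) and for the tensor product in (3), but it does not obviously cover subquotients. Your proposed fix (alter the extension $\tilde V$ on the singular matrices so that $N$ becomes $\M_n(A)$-stable) is stated but not carried out, so your write-up has a gap at precisely the place where the paper's own proof is silent. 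You have not misread anything; you have simply been more scrupulous than the paper about a step it treats as routine.
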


\begin{proof}
Pour la première assertion, toute représentation de $\M_n(A)$ est polynomiale d'après l'exemple \ref{ex-fctpol}, et toute représentation $M$ de $\GL_n(A)$ s'obtient comme restriction d'une représentation $r^*M$ de $\M_n(A)$ d'après le lemme \ref{lm-rep-mono-grp}.  

Pour les deux assertions suivantes, on peut supposer $G_n=\M_n$, le cas de $\GL_n$ et de $\SL_n$ s'obtenant par restriction.
La deuxième assertion résulte de ce que la précomposition d'une fonction polynomiale (au sens d'Eilenberg-MacLane) par une fonction additive est polynomiale.
Pour la troisième, montrons par exemple que si $U$ est une sous-représentation d'une représentation EML-polynomiale $V$ de $\M_n(A)$, alors $U$ est polynomiale. Notons $i : U\to V$ l'inclusion. Le diagramme
$$\xymatrix{\M_n(A)\ar[r]^{\rho_U}\ar[d]_-{\rho_V} & \mathrm{End}_K(U)\ar[d]^-{i_*}\\
\mathrm{End}_K(V)\ar[r]^-{i^*} & \mathrm{Hom}_K(U,V),
}$$
où $\rho_U$ et $\rho_V$ sont les morphismes d'action, commute. Comme $\rho_V$ est par hypothèse polynomiale, et $i^*$ additive, $i_*\circ\rho_U=i^*\circ\rho_V$ est polynomiale. La polynomialité de $\rho_U$ découle maintenant de ce que $i_*$ est une fonction additive {\it injective}. La stabilité par produit tensoriel provient de ce que, si $f : M\to U$ et $g : M\to V$ sont des fonctions polynomiales de même source $M$ (où $U$ et $V$ sont des $K$-espaces vectoriels), la fonction $M\to U\otimes V\quad v\mapsto f(v)\otimes g(v)$ est polynomiale.
\end{proof}

La proposition suivante donne le lien entre représentations EML-polynomiales et foncteurs polynomiaux. Nous notons $\pol(A,K)$ la sous-catégorie pleine des foncteurs polynomiaux de $\F(A,K)$, et $K[\M_n(A)]\Mpol$ la sous-catégorie pleine des $K[\M_n(A)]$-modules qui sont EML-polynomiaux.

\begin{pr}\label{pr-pol-pol}
Soit $A$ un anneau. L'évaluation sur $A^n$ induit un foncteur essentiellement surjectif
$$ev_n : \pol(A,K)\to K[\M_n(A)]\Mpol\;.$$
De plus, toute représentation EML-polynomiale simple de $\M_n(A)$ est l'évaluation sur $A^n$ d'un foncteur polynomial simple unique à isomorphisme près.  
\end{pr}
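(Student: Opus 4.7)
The plan is to build on the machinery of intermediate extensions already assembled in the paper. First, I would observe that $\mathrm{ev}_n$ does land in the polynomial subcategory on polynomial functors: if $F \in \pol(A,k)$ is polynomial of degree $\le d$, then the action map $\M_n(A) = \mathrm{End}_A(A^n) \to \mathrm{End}_k(F(A^n))$ is the component $F_{A^n,A^n}$, which is polynomial of degree $\le d$ by definition of polynomial functor. So $\mathrm{ev}_n$ restricts to a functor $\pol(A,k) \to k[\M_n(A)]\text{-}\pol$.

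For essential surjectivity, let $M$ be an EML-polynomial representation of $\M_n(A)$ of degree $\le d$, and let $T : k[\M_n(A)]\Md \to \F(A,k)$ be the intermediate extension associated to the reflexive functor $\mathrm{ev}_n$ (cf. the introduction to Part III, using Propositions \ref{restr-fct-refl} and \ref{pr-prolongement-interm}). By Proposition \ref{pr-prolongement-interm}(2), $\mathrm{ev}_n \circ T \simeq \mathrm{id}$, so $T(M)(A^n) \simeq M$ and the action of $\M_n(A)$ on $T(M)(A^n)$ coincides with $\rho$. By Corollary \ref{simples-eval}, $\{A^n\}$ is both a support and a co-support of $T(M)$. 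Then Lemma \ref{lm-sucosu} applied with $S = T = \{A^n\}$ and $d$ the EML-degree of $\rho$ gives that $T(M)$ is polynomial of degree $\le d$. Hence $T(M) \in \pol(A,k)$ and $\mathrm{ev}_n(T(M)) \simeq M$, proving essential surjectivity.

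For the second assertion, suppose $M$ is a simple EML-polynomial representation. Then existence follows from the first part: $T(M) \in \pol(A,k)$, and $T$ preserves simples by Proposition \ref{pr-prolongement-interm}(4), so $T(M)$ is a simple polynomial functor with $T(M)(A^n) \simeq M$. For uniqueness, let $F$ be any simple polynomial functor (so in particular simple in $\F(A,k)$, since $\pol(A,k)$ is a thick subcategory) with $F(A^n) \simeq M$. Since $M \neq 0$, Proposition \ref{pr-prolongement-interm}(1) applied to the reflexive functor $\mathrm{ev}_n$ yields $F \simeq T(\mathrm{ev}_n(F)) = T(F(A^n)) \simeq T(M)$, establishing uniqueness up to isomorphism.

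No step presents a genuine obstacle: the whole argument amounts to unpacking the support/co-support characterisation of the intermediate extension and plugging it into Lemma \ref{lm-sucosu}. The only subtlety to flag is the hypothesis check in that lemma, namely that the EML-polynomial condition on $\rho$ is exactly the polynomiality of $F_{A^n,A^n}$, which is precisely what is needed at the unique support/co-support object.
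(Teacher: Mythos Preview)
Your proof is correct and follows essentially the same approach as the paper: use the intermediate extension $T$ associated to $\mathrm{ev}_n$, observe that $\{A^n\}$ is a support and co-support of $T(M)$, invoke Lemma~\ref{lm-sucosu} to deduce polynomiality of $T(M)$, and then appeal to Proposition~\ref{pr-prolongement-interm} for essential surjectivity and uniqueness in the simple case. You are slightly more explicit than the paper in checking that $\mathrm{ev}_n$ lands in the polynomial subcategory and in the uniqueness argument, but the substance is identical.
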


\begin{proof}  Soit $T_n: K[\M_n(A)]\Md\to \F(A,K)$ le prolongement intermédiaire associé à l'évaluation sur $A^n$. Pour tout $K[\M_n(A)]$-module $M$, l'objet $A^n$ est un support et un co-support du foncteur $T_n(M)$. Si $M$ est EML-polynomial, la polynomialité du foncteur $T_n(M)$ s'ensuit grâce au lemme~\ref{lm-sucosu}. L'essentielle surjectivité d'$ev_n$ découle alors de l'isomorphisme $T_n(M)(A^n)\simeq M$ donné par la proposition~\ref{pr-prolongement-interm}. Celle-ci montre également que, si $M$ est simple, $T_n(M)$ est l'unique simple (à isomorphisme près) dont l'évaluation sur $A^n$ est $M$.
\end{proof}

\begin{rem}\label{rq-comp-pol} 
La proposition \ref{pr-pol-pol} est l'analogue du fait que si le corps $K$ est infini, les représentations polynomiales de $\M_n(K)$  au sens classique \cite{Green} s'obtiennent par évaluation sur $K^n$ des foncteurs \emph{strictement polynomiaux} de Friedlander et Suslin \cite{FS} (qui forment une sous-catégorie pleine de $\pol(K,K)$ si $K$ est un corps infini).
\end{rem}

\begin{rem}\label{polEML-ext} Bien que similaires et reliées par la proposition \ref{pr-pol-pol}, les notions de polynomialité pour les foncteurs et pour les représentations présentent des différences significatives. Ainsi, à l'inverse des sous-catégories $\pol_d(A,K)$ qui sont épaisses dans $\F(A,K)$ (voir la section \ref{paragraphe-pol}), une extension de deux représentations EML-polynomiales de $\M_n(A)$ n'est pas nécessairement EML-polynomiale. Ceci se voit déjà pour $n=1$, $A=K=\mathbb{R}$. Soit $\mathbb{R}$ la représentation tautologique du monoïde multiplicatif $\M_1(\mathbb{R})$. C'est une représentation EML-polynomiale qui possède des auto-extensions $V=(\mathbb{R}^2,\rho)$ qui ne sont pas EML-polynomiales. Il suffit de prendre $\rho:\M_1(\mathbb{R})\to\mathcal{M}_2(\mathbb{R})$ tel que $\rho(0)=0$ et  pour $x\ne 0$,  
$$\rho(x)=\left(\begin{array}{cc} x & x.\ln(|x|)\\
                                                0 & x
                                               \end{array}\right)\;.$$ 
\end{rem}

\subsection{Décompositions à la Steinberg}

On fixe jusqu'à la fin de la section un corps commutatif $K$ et un anneau \emph{commutatif} $A$. On rappelle que $T_n:K[\M_n(A)]\to\F(A,K)$ désigne le prolongement intermédiaire associé à l'évaluation sur $A^n$.

Le résultat suivant constitue la clef de l'utilisation des foncteurs pour l'étude de représentations simples de monoïdes ou groupes linéaires ; il motive l'introduction de la notion de polynomialité à la Eilenberg-MacLane pour ces représentations.

\begin{lm}\label{lm-vdf}
Si $K$ est un corps commutatif et $A$ est un anneau commutatif, alors pour toute représentation $M$ de $\M_n(A)$, EML-polynomiale, simple, et de dimension finie sur $K$, le foncteur $T_n(M)$  est polynomial à valeurs de dimensions finies.
\end{lm}

\begin{proof}
Les propositions~\ref{pr-prolongement-interm} et~\ref{pr-pol-pol} montrent la polynomialité et la simplicité de $T_n(M)$. Ce foncteur prend sur $A^n$ la valeur $M$, qui est de dimension finie non nulle ; la proposition~\ref{ann-com-cr} permet d'en déduire la finitude des dimensions de toutes les valeurs de $T_n(M)$.
\end{proof}

Le lemme précédent permet d'appliquer le théorème~\ref{Mn} aux anneaux commutatifs sous la forme suivante.

\begin{thm}\label{st-gl-ann-com} Soient $A$ un anneau commutatif et $K$ un corps de décomposition de $\mathbf{P}(A)$. Soit $M$ une représentation EML-polynomiale de $\M_n(A)$, de dimension finie sur $K$. 
Alors $M$ est simple si et seulement si elle admet une décomposition à la Steinberg. 
Si c'est le cas, alors $M$ est absolument simple, et sa décomposition à la Steinberg est unique.
\end{thm}

Nous pouvons déduire du théorème~\ref{st-gl-ann-com} l'existence de décompositions à la Steinberg pour les représentations EML-polynomiales simples de $\GL_n(A)$ ou $\SL_n(A)$. En effet une telle représentation $M$ est \emph{par définition} la restriction d'une représentation EML-polynomiale $\overline{M}$ de $\M_n(A)$ (nécessairement simple). Le résultat suivant s'obtient en restreignant la décomposition à la Steinberg de $\overline{M}$ à $\GL_n(A)$ ou $\SL_n(A)$.

\begin{cor}\label{cor-GLcom}
Soient $A$ un anneau commutatif et $K$ un corps de décomposition de $\mathbf{P}(A)$. Toute représentation EML-polynomiale simple de $\GL_n(A)$ ou $\SL_n(A)$, de dimension finie sur $K$, admet une décomposition à la Steinberg. 
\end{cor}

\begin{rem}\label{rem-Kth}
Le théorème~\ref{st-gl-ann-com} et le corollaire~\ref{cor-GLcom} peuvent tomber en défaut sur des anneaux commutatifs généraux, même sans quotient fini\,\footnote{Un quotient fini d'ordre inversible dans $K$ d'un anneau donne lieu à des représentations simples non polynomiales de dimensions finies de ses groupes linéaires, analogues aux foncteurs simples antipolynomiaux.}, si l'on omet l'hypothèse <<~EML-polynomiale~>>. En effet, on peut construire des représentations de dimension $1$ de $G_n(A)$ dont la restriction à $\SL_n(A)$ est non triviale -- elles  ne possèdent donc pas de décomposition à la Steinberg, par la remarque~\ref{dec-St-dim1} -- dès que $n$ est assez grand et qu'on dispose d'un caractère non trivial $\phi:SK_1(A)\to K^\times$ du groupe de Whitehead spécial de $A$. (Un tel $\phi$ existe par exemple si $K$ est un corps de caractéristique différente de $2$ et  si $A$ est la $\mathbb{R}$-algèbre des fonctions continues sur le cercle  \cite[Chap. III, ex. 1.4.5]{Weibel}.)

Pour la construction, il suffit de considérer le cas $G_n=\GL_n$, le cas de $\SL_n$ s'en déduit par restriction et celui de $\M_n$ s'en déduit en faisant agir les matrices singulières par $0$, comme dans le lemme \ref{lm-rep-mono-grp}. Comme $A$ est commutatif, on dispose \cite[Chap. III, ex. 1.1.1]{Weibel} d'un morphisme de groupes $\pi:\GL(A)\to SK_1(A)$ dont la restriction à $\SL(A)$ est surjective. Pour $n$ assez grand, $\phi\circ\pi$ se restreint alors en un caractère de $\GL_n(A)$ dont la restriction à $\SL_n(A)$ est non triviale.
\end{rem}

Les décompositions à la Steinberg des représentations de $\GL_n(A)$ sont loin d'être uniques en général, comme le montre l'exemple suivant.

\begin{ex}\label{ex-pas-unique}
Soit $\phi_a:K[X]\to K$ le morphisme de $K$-algèbres tel que $\phi_a(X)=a$. Les  $\phi_a$ induisent tous le \emph{même} isomorphisme $\GL_1(K[X])\to\GL_1(K)$, de sorte que si $K$ est la représentation tautologique de $\GL_1(K)$, les représentations EML-polynomiales simples $K^{[\phi_a]}$ sont toutes isomorphes.
\end{ex}

Dans la suite de la section, nous restreignons notre attention à $\SL_n(A)$, pour lequel le genre de pathologie de l'exemple précédent ne se produit pas. Nous allons compléter l'énoncé du corollaire \ref{cor-GLcom} pour obtenir un analogue du théorème \ref{st-gl-ann-com} pour $\SL_n(A)$. L'outil pour passer de $\M_n(A)$ à $\SL_n(A)$ est un résultat de densité que nous allons maintenant présenter. 
Nous nous placerons dans la situation suivante.

\begin{sit}\label{sit}
Soit $A$ un anneau commutatif, $K$ un corps commutatif, et $f:A\to K^d$ un morphisme d'anneaux (pour la structure d'anneau produit sur $K^d$) dont les composantes sont les morphismes d'anneaux notés $f_i:A\to K$, $1\le i\le d$. On suppose que l'une des deux hypothèses suivantes est vérifiée.
\begin{enumerate}
\item[(H1)] Le corps $K$ est de caractéristique $0$.
\item[(H2)] Le corps $K$ est de caractéristique $p>0$, et pour tout couple $i\ne j$ et tout $t\in\mathbb{N}$ on a $\phi^t\circ f_i\ne f_j$, où $\phi:K\to K$ désigne le morphisme de Frobenius ($\phi(x)=x^p$).
\end{enumerate}
\end{sit}
La proposition suivante est 
due\footnote{Borel et Tits énoncent la proposition \ref{lm-densite} lorsque $A$ est un corps infini, mais leur démonstration fonctionne pour un anneau quelconque.} à Borel et Tits \cite[Prop.~2.1]{Borel-Tits}. Elle est aussi un corollaire du théorème \ref{st-gl-ann-com} pour $n=1$ (voir la remarque \ref{rq-demo-densite}).

\begin{pr}\label{lm-densite}On se place dans la situation \ref{sit} et on suppose de plus que les ensembles $f_i(A)$ sont tous de cardinal infini.
Soit $P\in K[X_1,\dots,X_d]$. Si $P(f_1(x),\dots,f_d(x))=0$ pour tout $x\in A$, alors $P=0$.
\end{pr}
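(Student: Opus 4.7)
The plan is to deduce the density statement from Theorem~\ref{st-gl-ann-com} applied with $n=1$, which classifies one-dimensional EML-polynomial simple representations of the multiplicative monoid $\M_1(A)=A$. First I would extend $K$ to an algebraic closure $L$: hypotheses (H1) and (H2) are preserved (the characteristic does not change, and the inequalities $\phi^t\circ f_i\ne f_j$ remain valid in $L\supseteq K$), each $f_i(A)$ remains infinite, and $P=0$ in $L[X_1,\dots,X_d]$ implies $P=0$ in $K[X_1,\dots,X_d]$. So I may assume that $K$ is already a splitting field of $\mathbf{P}(A)$.

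For each multi-index $\alpha\in\mathbb{N}^d$, the pointwise product $f^\alpha:=f_1^{\alpha_1}\cdots f_d^{\alpha_d}:A\to K$ is a multiplicative EML-polynomial function of degree $|\alpha|$, hence a one-dimensional EML-polynomial simple representation of $\M_1(A)$. Under the identification of Proposition~\ref{pr-bimod-com} (simple $(K,A)$-bimodules of finite $K$-dimension correspond to ring morphisms $\phi:A\to K$ via $\phi\mapsto K_\phi$), the obvious factorisation of $f^\alpha$ as a pointwise product of the $f_i$ with multiplicities $\alpha_i$ is precisely its Steinberg decomposition in the sense of Theorem~\ref{st-gl-ann-com}. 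The uniqueness of that decomposition then implies that an identity $f^\alpha=f^\beta$ of multiplicative functions forces the corresponding multisets of ring morphisms to agree, whence $\alpha=\beta$, provided the $f_i$ are pairwise distinct as ring morphisms. This distinctness is automatic under (H2) by setting $t=0$ in the Frobenius condition, and is the implicit non-degeneracy requirement under (H1), since $f_i=f_j$ for some $i\ne j$ would immediately be defeated by the polynomial $X_i-X_j$. Having obtained that $(f^\alpha)_{\alpha\in\mathbb{N}^d}$ is a family of pairwise distinct multiplicative monoid morphisms $A\to K$ sending $1$ to $1$, I would invoke Dedekind's classical linear independence of characters: such a family is $K$-linearly independent in $\mathrm{Fun}(A,K)$. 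Writing $P=\sum_\alpha c_\alpha X^\alpha$, the hypothesis $P\circ(f_1,\dots,f_d)=0$ becomes $\sum_\alpha c_\alpha f^\alpha=0$ in $\mathrm{Fun}(A,K)$, which forces $c_\alpha=0$ for every $\alpha$, i.e.\ $P=0$.

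The main obstacle is the uniqueness step, which is precisely the content of Theorem~\ref{st-gl-ann-com} (equivalently Proposition~\ref{pia1}) in the one-dimensional case. The roles of (H1) and (H2) are to exclude exactly the coincidences that would defeat the distinctness of the $f^\alpha$: pairwise equality of the $f_i$, and, in positive characteristic, Frobenius-type relations $f_i=\phi^t\circ f_j$, which would otherwise yield identities such as $f_i=f_j^{p^t}$ of multiplicative functions and thus produce genuine coincidences $f^\alpha=f^\beta$ with $\alpha\ne\beta$. A direct proof in the spirit of Borel and Tits would instead proceed by induction on $d$, using the ring morphism property to compare $P$ with its twists $P(f_1(a)X_1,\dots,f_d(a)X_d)$ (which also satisfy the hypothesis, since $F(ax)=(f_1(a)f_1(x),\dots,f_d(a)f_d(x))$ and so $\tilde P_a-f^{\alpha_0}(a)P$ has strictly smaller support for any fixed $\alpha_0\in\mathrm{Supp}(P)$), reducing again to the distinctness question; the route via Theorem~\ref{st-gl-ann-com} has the advantage of handling (H1) and (H2) uniformly through a single representation-theoretic statement.
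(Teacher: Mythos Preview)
Your approach is precisely the one the paper outlines in Remark~\ref{rq-demo-densite} (the paper does not prove the proposition from scratch but cites Borel--Tits, while noting it also follows from Theorem~\ref{st-gl-ann-com} for $n=1$). In characteristic~$0$ your argument is complete, and you are right to flag that pairwise distinctness of the $f_i$ is implicitly needed under~(H1): without it $P=X_i-X_j$ is a counterexample.

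In characteristic~$p>0$, however, there is a gap in your second paragraph. You claim that ``the obvious factorisation of $f^\alpha$ as a pointwise product of the $f_i$ with multiplicities $\alpha_i$ is precisely its Steinberg decomposition''. This is false as soon as some $\alpha_i\ge p$: the elementary representations of $\M_1(K)$ are the characters $x\mapsto x^m$ with $0\le m<p$ only (Proposition~\ref{prop-L-p} for $\dim V=1$), so $f_i^{\alpha_i}$ with $\alpha_i\ge p$ is not of the form $M^{[f_i]}$ with $M$ elementary. The correct Steinberg decomposition is obtained via the $p$-adic expansion $\alpha_i=\sum_k a_{i,k}p^k$, yielding
\[
f^\alpha=\prod_{i,k}(\phi^k\circ f_i)^{a_{i,k}}\qquad(0\le a_{i,k}<p),
\]
and uniqueness in Theorem~\ref{st-gl-ann-com} recovers the $a_{i,k}$ \emph{provided} the ring morphisms $\phi^k\circ f_i$ are pairwise distinct. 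For $i\ne j$ this follows from~(H2) together with injectivity of $\phi$ on $K$; for $i=j$ and $k\ne k'$ it is exactly the infiniteness of $f_i(A)$ that is needed (otherwise $f_i(A)\subset\mathbb{F}_{p^t}$ for some $t>0$ and $\phi^t\circ f_i=f_i$). This is how Remark~\ref{rq-demo-densite} handles positive characteristic. Your final paragraph shows you sense the Frobenius issue, but you describe it as excluding coincidences among the $f_i$ rather than as the mechanism by which the Steinberg decomposition itself must be rewritten; the argument as stated does not go through for exponents $\alpha_i\ge p$.
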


\begin{cor}\label{cor-densite}
On se place dans la situation \ref{sit}.
La clôture de Zariski de $f(A)$ dans $K^d$ est égale au produit des clôtures de Zariski des $f_i(A)$ dans $K$.
\end{cor}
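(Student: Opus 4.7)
Le plan est de démontrer la double inclusion. L'inclusion $\overline{f(A)}\subset\prod_i\overline{f_i(A)}$ est immédiate puisque le produit des clôtures est fermé en topologie de Zariski et contient $f(A)$. Pour l'inclusion réciproque, on sépare les indices en $I_\infty:=\{i:f_i(A)\text{ est infini}\}$ et $I_f:=\{j:f_j(A)\text{ est fini}\}$. Dans le cas (H1) où $K$ est de caractéristique nulle, $f_j(A)$ contient $\mathbb{Z}$, donc $I_f=\emptyset$, et la proposition~\ref{lm-densite} s'applique directement à $f$, donnant $\overline{f(A)}=K^d$. Dans le cas (H2), pour $j\in I_f$, le sous-anneau fini intègre $E_j:=f_j(A)$ de $K$ est un corps fini $\mathbb{F}_{p^{r_j}}$.

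La première étape consistera à démontrer que le morphisme $\rho:A\to\prod_{j\in I_f}E_j$ de composantes $(f_j)_{j\in I_f}$ est surjectif. Son image $R$, sous-anneau fini réduit de $\prod_jE_j$, est de la forme $\prod_kF_k$ pour des corps finis $F_k$. L'inclusion $R\hookrightarrow\prod_jE_j$ détermine pour chaque $j\in I_f$ un indice $k(j)$ tel que $F_{k(j)}\simeq E_j$ (la projection canonique $R\to E_j$ étant surjective). Si $k(j)=k(j')$ pour $j\neq j'$, les morphismes $f_j$ et $f_{j'}$ sont obtenus comme compositions de la même projection $A\twoheadrightarrow R\twoheadrightarrow F_{k(j)}$ suivie de deux plongements de $F_{k(j)}$ dans $K$ qui diffèrent par un automorphisme de Frobenius, d'où $f_{j'}=\phi^t\circ f_j$ pour un $t$, en contradiction avec (H2). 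L'injectivité de $k$ et sa surjectivité (nécessaire pour l'injectivité de $R\hookrightarrow\prod_jE_j$) donnent $R=\prod_jE_j$.

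La surjectivité de $\rho$ permet, pour chaque $\mathbf{a}\in\prod_{j\in I_f}E_j$, d'écrire $B_\mathbf{a}:=\{x\in A:f_j(x)=a_j\;\forall j\in I_f\}$ sous la forme $y_\mathbf{a}+I$, où $I:=\bigcap_{j\in I_f}\ker(f_j)$. Le principal obstacle sera l'étape suivante : démontrer que $f_{I_\infty}(I)$ est Zariski-dense dans $K^{|I_\infty|}$, où $f_{I_\infty}=(f_i)_{i\in I_\infty}$. Pour $i\in I_\infty$, $f_i(I)$ est un idéal non nul de l'anneau intègre infini $f_i(A)$ (non-nullité car $\ker(f_i)$ est premier : $I\subset\ker(f_i)$ entraînerait qu'un $\ker(f_j)$ maximal y soit contenu, donc égal, contradisant la distinction entre images finies et infinies), donc infini. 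On considérera alors l'anneau commutatif unitaire $I^+:=I\oplus\mathbb{F}_p$ (de produit $(x,n)(y,m)=(xy+ny+mx,nm)$) et les morphismes d'anneaux $\tilde{f}_i:I^+\to K$, $\tilde{f}_i(x,n)=f_i(x)+n$, d'images infinies. La vérification de (H2) pour les $\tilde{f}_i$ reposera sur l'observation suivante : si $\phi^t\circ\tilde{f}_i=\tilde{f}_{i'}$ sur $I^+$, alors $\phi^t\circ f_i=f_{i'}$ sur $I$ ; pour $a\in A$ et $b\in I$, l'appartenance $ab\in I$ fournit l'identité $(\phi^t(f_i(a))-f_{i'}(a))f_{i'}(b)=0$, et comme $f_{i'}(I)\neq 0$, on obtient $\phi^t\circ f_i=f_{i'}$ sur tout $A$, en contradiction avec (H2). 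La proposition~\ref{lm-densite} appliquée à $\tilde{f}=(\tilde{f}_i)_{i\in I_\infty}$ donne la densité de $\tilde{f}(I^+)=f_{I_\infty}(I)+\mathbb{F}_p\cdot(1,\dots,1)$ dans $K^{|I_\infty|}$ ; cette réunion finie de translatés ne peut être dense dans $K^{|I_\infty|}$ irréductible que si chacun l'est, d'où la densité de $f_{I_\infty}(I)$.

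Enfin, pour chaque $\mathbf{a}$, $f_{I_\infty}(B_\mathbf{a})=f_{I_\infty}(y_\mathbf{a})+f_{I_\infty}(I)$ est Zariski-dense dans $K^{|I_\infty|}$, donc $\overline{f(B_\mathbf{a})}\supset\{\mathbf{a}\}\times K^{|I_\infty|}$ dans $K^d$. La réunion sur $\mathbf{a}\in\prod_{j\in I_f}E_j$ donne alors $\overline{f(A)}\supset\prod_{j\in I_f}E_j\times K^{|I_\infty|}=\prod_i\overline{f_i(A)}$, ce qui conclut.
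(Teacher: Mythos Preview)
Votre démonstration est correcte dans son ensemble et suit la même stratégie que celle de l'article : séparer les indices d'image finie et infinie, obtenir la surjectivité vers $\prod_{j\in I_f}E_j$ pour la partie finie, puis appliquer la proposition~\ref{lm-densite} à l'unitarisé de l'idéal $I$ pour traiter la partie infinie. Les différences de présentation --- structure des anneaux finis réduits au lieu du lemme chinois direct sur les idéaux maximaux, et argument d'irréductibilité de $K^{|I_\infty|}$ pour déduire la densité de $f_{I_\infty}(I)$ au lieu du polynôme translaté $Q_x$ de l'article --- sont des reformulations équivalentes de la même idée.

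Un point technique à corriger : votre anneau $I^+=I\oplus\mathbb{F}_p$ n'est bien défini par la formule $(x,n)(y,m)=(xy+ny+mx,nm)$ que si $pI=0$, ce qui n'est pas garanti (prendre par exemple $A=\mathbb{Z}[X]$, $K$ de caractéristique $p$ contenant un élément transcendant $t$, $f_1(X)=0$ d'image finie et $f_2(X)=t$ d'image infinie : alors $I=(p,X)$ n'est pas de $p$-torsion). Il faut poser $I^+=\mathbb{Z}\oplus I$ comme le fait l'article ; le reste de votre argument s'applique sans changement puisque l'image de $\mathbb{Z}$ dans $K$ est de toute façon $\mathbb{F}_p$, de sorte que votre décomposition $\tilde f(I^+)=\bigcup_{n\in\mathbb{F}_p}\bigl(f_{I_\infty}(I)+n\cdot(1,\dots,1)\bigr)$ reste valable.
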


\begin{proof}
Si tous les $f_i$ sont d'images infinies, la proposition~\ref{lm-densite} nous dit que $f(A)$ est Zariski-dense dans $K^d$, ce qui implique le résultat. Sinon,  $K$ est de caractéristique $p>0$, donc (H2) est satisfaite ; par ailleurs, quitte à permuter les coordonnées, on peut supposer que seules les $e>0$ premières coordonnées $f_i$ sont d'images finies. Pour $1\leq i\leq e$, les idéaux $\ker f_i$ sont maximaux (les images des $f_i$ sont des sous-anneaux finis du corps $K$, ce sont donc des corps), et comme l'hypothèse (H2) est vérifiée, les $\ker f_i$ sont deux à deux étrangers. D'après le lemme chinois, le morphisme d'anneaux $\alpha : A\to \prod_{1\le i\le e} \mathrm{Im}\, f_i$ dont les composantes sont les $f_i$ est surjectif. Ceci finit la démonstration si $d=e$. On suppose donc par la suite $0<e<d$.

Soient $I=\ker\alpha$ et $I^+=\mathbb{Z}\oplus I$ l'anneau obtenu en adjoignant formellement une unité à l'idéal $I$, et $\iota:I^+\to A$ le morphisme d'anneaux induit par l'inclusion $I\subset A$.

\medskip
\noindent
{\em Fait : } si $g,h:A\to K$ sont deux morphismes d'anneaux distincts d'images infinies, alors $g\circ\iota$ et $h\circ\iota$ sont également distincts et d'images infinies. 
\medskip

En effet, comme $g$ est d'image infinie et que $A/I$ est fini, il existe $b\in I$ tel que $g(b)\ne 0$. Supposons que $g$ et $h$ coïncident sur $\mathrm{Im}\,\iota\supset I$. Soit $a\in A$ : la relation $ab\in I$ implique $g(ab)=h(ab)$, d'où $g(a)=h(a)$ en divisant par $g(b)=h(b)\in K^\times$.

Considérons maintenant un polynôme $P\in K[X_1,\dots,X_d]$ s'annulant sur $f(A)$. Fixons $x\in \mathrm{Im}\,\alpha$. Soit $a\in A$ tel que $\alpha(a)=x$, notons $(x_1,\dots,x_d)=f(a)\in K^d$,
$$Q_x:=P(x_1,\dots,x_e,x_{e+1}+X_{e+1},\dots,x_d+X_{d})\in K[X_{e+1},\dots,X_d]\quad\text{et}$$
$$R_x:=\prod_{i=0}^{p-1}Q_x(X_{e+1}+i,\dots,X_d+i)\;.$$
Notons $\psi:I^+\to K^{d-e}$ le morphisme d'anneaux de coordonnées les $f_i\circ\iota$, $e<i\le d$. Les $f_i\circ \iota$ sont d'images infinies et satisfont (H2) grâce au fait ci-avant. De plus, $Q_x$ s'annule sur $\psi(I)$, car pour $t\in I$, $Q_x(\psi(t))=P\big(f_1(a+t),\dots,f_d(a+t)\big)=0$. Comme $K$ est de caractéristique $p$, $\psi$ se factorise par la projection $I^+=\mathbb{Z}\oplus I\twoheadrightarrow\mathbb{Z}/p\mathbb{Z}\oplus I$, donc il suit de ce qui précède que $R_x$ est nul sur $\psi(I^+)$. La proposition~\ref{lm-densite} donne alors $R_x=0$, d'où $Q_x=0$, puisque l'anneau $K[X_{e+1},\dots,X_d]$ est intègre. Ceci étant valide pour tout $x\in \mathrm{Im}\,\alpha$, on obtient que $P$ s'annule sur $\mathrm{Im}\,\alpha\times K^{d-e}$, d'où la proposition.
\end{proof}

\begin{cor}\label{cor-densite-SL}
On se place dans la situation \ref{sit}. La clôture de Zariski de l'image du morphisme de groupes
$$
\begin{array}{cccc}
\SL_n(f): & \SL_n(A) &\to & \SL_n(K^d)=\SL_n(K)^{d}\\
& [a_{k,\ell}] & \mapsto & \left(\,[f_i(a_{k,\ell})]\,\right)_{1\le i\le d}
\end{array}$$
est égale au produit des groupes $\SL_n(\overline{f_i(A)})$ où $\overline{f_i(A)}$ désigne la clôture de Zariski dans $K$ de l'image de $f_i$.
\end{cor}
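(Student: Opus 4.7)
Mon plan est de ramener le résultat au corollaire~\ref{cor-densite} en exploitant le fait classique que, pour $n\ge 2$, le groupe $SL_n(L)$ sur un corps $L$ est engendré par les matrices élémentaires $e_{k\ell}(x):=I_n+xE_{k\ell}$ ($k\ne\ell$, $x\in L$). Posons $L_i:=\overline{f_i(A)}$, qui vaut soit $K$ tout entier, soit un sous-corps fini de $K$ (lorsque $f_i(A)$ est fini). L'inclusion $\overline{SL_n(f)(SL_n(A))}\subseteq \prod_iSL_n(L_i)$ est immédiate : l'image de $SL_n(f)$ est contenue dans $\prod_iSL_n(L_i)$, et cet ensemble est fermé dans $SL_n(K)^d$ (chaque $SL_n(L_i)$ est soit $SL_n(K)$ entier, soit fini donc fermé).

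Pour l'inclusion réciproque, je fixerais un couple $k\ne\ell$ et considérerais le plongement affine
\[\phi_{k\ell}:K^d\longrightarrow SL_n(K)^d,\qquad (x_1,\dots,x_d)\longmapsto (e_{k\ell}(x_1),\dots,e_{k\ell}(x_d)).\]
Ce plongement est une immersion fermée, son image étant coupée par les équations linéaires fixant toutes les entrées de chaque matrice sauf celle en position $(k,\ell)$. Par conséquent, $\phi_{k\ell}$ transporte fidèlement les adhérences de Zariski ; appliqué à $\{(f_1(a),\dots,f_d(a)):a\in A\}$, le corollaire~\ref{cor-densite} fournit alors
\[\overline{\{SL_n(f)(e_{k\ell}(a)):a\in A\}}\;=\;\phi_{k\ell}\Bigl(\prod_iL_i\Bigr)\;=\;\prod_i\{e_{k\ell}(x):x\in L_i\}.\]
En choisissant ensuite une coordonnée $i$ et en annulant les autres ($0$ appartenant à chaque $L_i$), j'en déduirais que l'adhérence $\overline{SL_n(f)(SL_n(A))}$ contient chaque élément $(I_n,\dots,I_n,e_{k\ell}(x),I_n,\dots,I_n)$ avec $e_{k\ell}(x)$ en position $i$, pour tout $i\in\{1,\dots,d\}$, tout couple $k\ne\ell$ et tout $x\in L_i$.

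Comme l'adhérence de Zariski d'un sous-groupe reste un sous-groupe, elle contient alors le sous-groupe engendré par ces éléments supportés en une seule coordonnée, qui n'est autre que $\prod_iE_n(L_i)=\prod_iSL_n(L_i)$ (où $E_n(L)$ désigne le sous-groupe des matrices élémentaires, égal à $SL_n(L)$ pour tout corps $L$ et $n\ge 2$ -- résultat classique ; le cas $n\le 1$ étant trivial). Cela conclut. L'étape la plus délicate sera de vérifier que $\phi_{k\ell}$ est effectivement une immersion fermée, afin que le transfert des adhérences par le corollaire~\ref{cor-densite} soit licite ; mais ceci résulte d'une inspection immédiate des équations définissant l'image, et le reste du raisonnement n'est qu'une application standard de la théorie des matrices élémentaires sur un corps.
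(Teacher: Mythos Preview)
Your proof is correct and shares the same core idea as the paper's: reduce to Corollary~\ref{cor-densite} via the generation of $SL_n$ over a field by elementary matrices $e_{k\ell}(x)$. The organizational detail differs slightly. The paper considers, for every word $e^L(\lambda_1,\dots,\lambda_N)=\prod_k e_{(i_k,j_k)}^L(\lambda_k)$ in elementary matrices, the polynomial map $e^L:L^N\to SL_n(L)$, and shows directly that any polynomial $P$ vanishing on $SL_n(f)(SL_n(A))$ must vanish on its image (since $P\circ e^L$ vanishes on $f^N(A^N)$, hence on $L^N$ by an $N$-fold product version of Corollary~\ref{cor-densite}). You instead treat a \emph{single} elementary matrix at a time via the closed immersion $\phi_{k\ell}$, apply Corollary~\ref{cor-densite} in its original one-variable form, and then invoke the classical fact that the Zariski closure of a subgroup is a subgroup to get all products for free. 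Your route is arguably slightly cleaner in that it avoids the (easy but implicit) passage to $A^N$; the paper's route avoids quoting the group-theoretic closure fact. Both are short and equivalent in difficulty.
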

\begin{proof}
Pour tout anneau $B$ commutatif et tout couple d'entier $(r,s)$ tel que $1\le r\ne s\le n$, on note $e_{r,s}^B(\lambda)= 1_n+\lambda e_{r,s}\in\SL_n(B)$
où $e_{r,s}$ est la matrice avec $1$ en position $(r,s)$ et $0$ ailleurs. 
Soit $L=\prod_{1\le i\le d} \overline{f_i(A)}\subset K^d$. Les $e_{r,s}^L(\lambda)$, $\lambda\in L$, $1\le r\ne s\le n$, engendrent le sous-groupe $\SL_n(L)$ de $\SL_n(K^d)=\SL_n(K)^d$. Pour démontrer le corollaire, il suffit donc de montrer que tout polynôme $P$ défini sur l'espace vectoriel $\M_n(K)^d$ et s'annulant sur $\SL_n(f)(A)$, s'annule sur l'image de l'application $e^L:L^N\to\SL_n(L)\subset \M_n(K^d)$ définie par:
$$e^L(\lambda_1,\dots,\lambda_N)=\prod_{1\le k\le N}e_{(i_k,j_k)}^L(\lambda_k)\;,$$
et ceci pour tous les $N>0$ et tous les couples $(i_k,j_k)$.
Mais on a un carré commutatif (où $e^A$ est la \og même\fg~application que $e^L$, définie sur $A$) 
$$\xymatrix{
A^N\ar[d]^-{e^A}\ar[rr]^-{f^N}&& L^N\ar[d]^-{e^L}\\
\SL_n(A)\ar[rr]^-{\SL_n(f)}&&\SL_n(L)
}$$
de sorte que si $P$ s'annule sur l'image de $\SL_n(f)$, alors $P\circ e^L$ s'annule sur l'image de $f^N$.  Comme $P\circ e^L$ est un polynôme, il s'annule sur $L^N$ d'après le corollaire \ref{cor-densite}, donc $P$ s'annule sur l'image de $e^L$.
\end{proof}

Le cas spécial linéaire du résultat de Borel et Tits \cite[Cor.~10.4]{Borel-Tits} montre que si $A$ est un corps infini, toutes les représentations simples de dimension finie de $\SL_n(A)$ admettent une unique décomposition à la Steinberg. Le théorème suivant est plus général: l'anneau commutatif $A$ est quelconque, mais le prix à payer pour cette généralité est qu'il ne concerne que les représentations simples EML-polynomiales. 
\begin{thm}\label{th-SL}
Soient $A$ un anneau commutatif et $K$ un corps de décomposition de $\mathbf{P}(A)$. Soit $M$ une représentation EML-polynomiale de $\SL_n(A)$, de dimension finie sur $K$. 
Alors $M$ est simple si et seulement si elle admet une décomposition à la Steinberg. 
Si c'est le cas, alors $M$ est absolument simple, et sa décomposition à la Steinberg est unique.
\end{thm}
\begin{proof}
L'existence de la décomposition est le corollaire \ref{cor-GLcom}. 
Réciproquement, soit $M=M_1^{[\phi_1]}\otimes\dots\otimes M_m^{[\phi_m]}$, où les $\phi_i : A\to K$ sont des morphismes d'anneaux deux à deux distincts et les $M_i$ des représentations élémentaires de $\SL_n(K)$ ; montrons que $M$ est une représentation simple de $\SL_n(A)$. Supposons tout d'abord $\mathrm{car}(K)=0$. D'après la propriété de densité du corollaire \ref{cor-densite-SL}, il suffit de montrer que $M_1\otimes\dots\otimes M_m$ est simple comme représentation de $\SL_n(K)^m$. Pour cela, il suffit de montrer que chaque $M_k$ est absolument simple comme représentation de $\SL_n(K)$, ce qui suit du fait qu'en caractéristique nulle, les foncteurs élémentaires sont les foncteurs de Schur (proposition \ref{pr-car0}), et induisent donc des représentations simples de $\SL_n(K)$ (voir par exemple \cite[Chap. 6]{FH}). Si $K$ est de caractéristique $p>0$, on se réduit de même en utilisant le corollaire \ref{cor-densite-SL} à démontrer que si $\FF\subset K$ est un corps de cardinal supérieur à $p^r$, alors tout produit tensoriel 
$N_0^{[\phi^0]}\otimes\dots \otimes N_{r-1}^{[\phi^r]}$ où les $N_i$ sont des représentations élémentaires de $\SL_n(K)$, est encore simple après restriction à $\SL_n(\FF)$. Mais les représentations élémentaires du groupe algébrique $\GL_{n,K}$ sont des représentations de plus hauts poids $p$-restreints (cf. proposition \ref{pr-B3}), et donnent donc par restriction des representations simples de $\SL_{n,K}$ de plus hauts poids $p$-restreints. L'absolue simplicité du produit tensoriel découle alors du théorème de Steinberg \cite[Thm 7.4]{St-TPT} \cite[Thm 2.11]{Hum} décrivant la restriction des représentations simples du groupe algébrique $\SL_{n,K}$ aux groupes $\SL_n(\FF)$. 

Enfin, d'après le corollaire \ref{cor-densite-SL}, si on considère deux produits tensoriels $M=M_1^{[f_1]}\otimes\dots\otimes M_d^{[f_d]}$ et $N=N_1^{[f_1]}\otimes\dots\otimes  N_d^{[f_d]}$ où les $M_i$, $N_i$ sont des représentations polynomiales au sens classique du groupe algébrique $\GL_{n,K}$, alors la restriction le long de $f=\prod_{1\le i\le d} f_i:A\to L=\prod_{1\le i\le d} \overline{f_i(A)}$ induit un isomorphisme
$$\mathrm{Hom}_{K[\SL_n(L)]}(M_1\otimes\dots\otimes M_d, N_1\otimes\dots\otimes  N_d) \simeq \mathrm{Hom}_{K[\SL_n(A)]}(M,N)\;.$$
Si les $M_i$ et les $N_i$ sont de dimensions finies, alors le terme de gauche est lui-même isomorphe au produit tensoriel des $\mathrm{Hom}_{K[\SL_n(\overline{f_i(A)}]}(M_i,N_i)$. L'unicité de la décomposition de $M$ et son absolue simplicité en découlent.
\end{proof}

\begin{rem}
Le recours au théorème de Steinberg \cite[Thm 7.4]{St-TPT} dans la démonstration précédente n'est nécessaire que si des corps finis apparaissent comme quotients de $A$. Dans le cas contraire, on peut remplacer l'utilisation de ce théorème par l'observation élémentaire que sur un corps infini, les représentations (rationnelles) du groupe algébrique $\SL_{n,K}$ forment une sous-catégorie pleine stable par sous-quotient de $K[\SL_n(K)]\Md$. 
\end{rem}

\section{Fonctions polynomiales multiplicatives}\label{sta}

Nous déclinons maintenant les résultats de la section~\ref{seml} pour les représentations EML-polynomiales du monoïde multiplicatif sous-jacent à un anneau commutatif.

\begin{pr}\label{cor-arith1}
Soient $A$ un anneau commutatif et $\zeta : A\to K$ une fonction polynomiale (au sens d'Eilenberg-MacLane) de degré $d>0$ et multiplicative au sens où $\zeta(xy)=\zeta(x)\zeta(y)$ pour tous $x$, $y\in A$. Alors il existe une extension finie de corps commutatifs $K\subset L$ de degré au plus $d!$ et des morphismes d'anneaux $f_i : A\to L$, pour $1\leq i\leq d$, tels que
\[\forall x\in A\qquad\zeta(x)=\prod_{i=1}^d f_i(x).\]

De plus, si $g_1,\dots,g_r : A\to L$ sont des morphismes d'anneaux tels que
\[\forall x\in A\qquad\zeta(x)=\prod_{i=1}^r g_i(x),\]
alors $r=d$ et il existe une permutation $\sigma\in\Si_d$ telle que $g_i=f_{\sigma(i)}$ pour tout $i$ dans chacun des trois cas suivants :
\begin{enumerate}
\item[(i)] $\mathrm{car}(K)=0$ ;
\item[(ii)] $\mathrm{car}(K)=p>0$ et pour tout indice $i$, l'ensemble $\{j\,|\,g_j=g_i\}$ est de cardinal strictement inférieur à $p$ ;
\item[(iii)] $r\le d$.
\end{enumerate}
\end{pr}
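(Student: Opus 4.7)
The key observation is that $\varphi$ can be reinterpreted as a representation: since $\varphi(1)=\varphi(1)^2\in\{0,1\}$ and the hypothesis $d>0$ forces $\varphi\not\equiv 0$, one has $\varphi(1)=1$ and $\varphi(0)=0$. Thus $K_\varphi:=(K,\varphi)$ is a one-dimensional $K$-linear representation of the multiplicative monoid $\M_1(A)=A$, EML-polynomial of degree $d$, simple with $\mathrm{End}(K_\varphi)=K$, hence absolutely simple.

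I would then construct the factorization via the functorial Steinberg machinery. By Proposition~\ref{pr-pol-pol}, the intermediate extension $F:=T(K_\varphi)$ is a simple polynomial functor of degree $d$ in $\F(A,K)$; by Corollary~\ref{simples-eval} one has $\mathrm{End}(F)=\mathrm{End}(K_\varphi)=K$, so $F$ is absolutely simple. Theorem~\ref{st-pol-petit-corps} with Remark~\ref{rq-taille-corps} then provides a finite extension $K\subset L$ of degree at most $d!$ such that $F\otimes_K L$ admits a filtration with absolutely simple subquotients of the form $\pi_1^*E_1\otimes\cdots\otimes\pi_d^*E_d$. Absolute simplicity of $F$ and Proposition~\ref{abs-semi-simple} make $F\otimes_K L$ itself simple, so the filtration collapses to an isomorphism $F\otimes_K L\simeq\pi_1^*E_1\otimes\cdots\otimes\pi_d^*E_d$ with absolutely simple additive $\pi_i$ and elementary $E_i$. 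Since $A$ is commutative and $L$ a decomposition field, Proposition~\ref{pr-bimod-com} gives $\pi_i(P)=L_{\phi_i}\otimes_A P$ for ring homomorphisms $\phi_i:A\to L$.

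Evaluating the decomposition on $A\in\mathbf{P}(A)$ yields $L_\varphi\simeq\bigotimes_i E_i(L_{\phi_i})$. Since $L_{\phi_i}$ is one-dimensional, $\Si_{e_i}$ (with $e_i:=\deg E_i$, so that $\sum_i e_i=d$) acts trivially on $L_{\phi_i}^{\otimes e_i}$; hence $E_i(L_{\phi_i})$ is a one-dimensional $L$-subspace (nonzero because $L_\varphi\neq 0$) on which $a\in A$ acts by $\phi_i(a)^{e_i}$. Repeating each $\phi_i$ exactly $e_i$ times in an ordered list $f_1,\dots,f_d$ yields the desired identity $\varphi(x)=\prod_{j=1}^d f_j(x)$.

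For uniqueness, I would first reduce (iii) to (ii). Given $\varphi=\prod_{i=1}^r g_i$ with $r\leq d$, each ring homomorphism $g_i$ has EML-degree $1$, so $\deg\varphi\leq r$; combined with $\deg\varphi=d$ this forces $r=d$. If $\mathrm{car}(K)=p>0$ and some $g$ appeared with multiplicity $\geq p$, then Frobenius on $L$ being a ring homomorphism in characteristic $p$ gives $g^p=F_L\circ g$ as a ring homomorphism, allowing one to rewrite $\varphi$ as a product of $d-(p-1)<d$ ring homomorphisms of total EML-degree at most $d-(p-1)<d$, contradicting $\deg\varphi=d$. Hence (iii) implies the multiplicity bound of (ii). Uniqueness under (i) or (ii) then follows from Proposition~\ref{lm-densite} (Borel–Tits): grouping any two factorizations as $\varphi=\prod_\phi\phi^{n_\phi}=\prod_\phi\phi^{m_\phi}$ with $n_\phi,m_\phi<p$, one absorbs Frobenius twists (uniquely, because digits below $p$ give a genuine $p$-adic expansion), reducing to a set of distinct $\phi$'s satisfying the hypothesis (H2) of Situation~\ref{sit}; then the polynomial $\prod X_\phi^{n_\phi}-\prod X_\phi^{m_\phi}$ vanishes on the Zariski-dense image $\Phi(A)\subset L^S$ (Corollary~\ref{cor-densite}), hence identically, forcing $n_\phi=m_\phi$ for all $\phi$. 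The main obstacle is precisely this positive-characteristic bookkeeping: without the multiplicity bound, Frobenius-twisted relations between the $\phi_i$ produce genuinely distinct factorizations of $\varphi$, and one must exploit the bound $<p$ to unravel them through the $p$-adic expansion step.
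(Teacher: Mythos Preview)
Your existence argument coincides with the paper's: interpret $\varphi$ as a one-dimensional EML-polynomial representation of $\M_1(A)$, apply the intermediate extension, and invoke the functorial Steinberg machinery (via Theorem~\ref{st-pol-petit-corps} and Remark~\ref{rq-taille-corps}, since $K$ need not be a decomposition field). The paper phrases the collapse of the filtration as ``the functor takes a value of dimension $1$'' rather than via absolute simplicity of $T(K_\varphi)$, but these amount to the same thing here. The reduction (iii)$\Rightarrow$(ii) is also the same.

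For uniqueness under (i)--(ii) you take a genuinely different route. The paper first embeds $K$ into its algebraic closure, then observes that $S^{a}$ is an \emph{elementary} endofunctor whenever $a<p$ (or in characteristic~$0$), so that any factorisation $\varphi=\prod_i g_i^{a_i}$ with the $g_i$ pairwise distinct and $a_i<p$ \emph{is itself} a Steinberg decomposition $K^{[\varphi]}\simeq\bigotimes_i S^{a_i}(K)^{[g_i]}$; the uniqueness clause of Theorem~\ref{st-gl-ann-com} then identifies the two factorisations at once. Your approach via Corollary~\ref{cor-densite} is legitimate --- it is essentially the converse direction of Remark~\ref{rq-demo-densite} --- but it is longer, and your write-up has a small gap: Corollary~\ref{cor-densite} only yields Zariski density of $\Phi(A)$ in the \emph{product of the individual closures} $\prod_{\phi_0}\overline{\phi_0(A)}$, not in $L^S$, so the step ``hence identically'' fails whenever some $\phi_0$ has finite image $\mathbb{F}_{p^s}$. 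The argument can be repaired (after Frobenius-absorption the exponents lie in $[0,p^s-1]$, and two such monomials agree as functions on $\mathbb{F}_{p^s}$ only if their exponents coincide, using the value at $0$ to separate exponent $0$ from $p^s-1$), but the paper's route sidesteps this bookkeeping entirely by packaging it into the already-established Steinberg uniqueness.
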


\begin{proof}  
Le morphisme $\zeta$ définit une représentation absolument simple EML-polynomiale de dimension $1$ sur $K$ du monoïde multiplicatif $\M_1(A)$, notée $K^{[\phi]}$. Si $K$ est un corps de décomposition de $\mathbf{P}(A)$, alors d'après le théorème \ref{st-gl-ann-com}, $K^{[\phi]}$ se décompose comme un produit tensoriel $M_1^{[\phi_1]}\otimes\dots M_n^{[\phi_n]}$, où les $M_i$ sont des représentations élémentaires de degrés $d_i$ et de dimension $1$. Donc pour tout $a\in A$ on a $\zeta(a)=\phi_1(a)^{d_1}\dots \phi_n(a)^{d_n}$ avec $d=\sum d_i$, d'où l'existence. 

Dans le cas où $K$ n'est pas un corps de décomposition de $\mathbf{P}(A)$, la démonstration de l'existence de la décomposition est la même que dans le théorème~\ref{st-gl-ann-com} (pour $n=1$), en utilisant le théorème~\ref{st-pol-petit-corps} (et la proposition~\ref{pr-fin-cr} pour la borne sur le degré -- cf. remarque~\ref{rq-taille-corps}) au lieu du théorème~\ref{steinberg-pol-gal} (noter qu'il n'y a pas besoin de filtration car le foncteur considéré prend une valeur de dimension $1$).

Pour établir les résultats d'unicité partielle, quitte à plonger $K$ dans sa clôture algébrique, on peut supposer $K$ algébriquement clos. Supposons
$\zeta=\prod_{i=1}^m g_i^{a_i}$,
où les $g_i$ sont des morphismes d'anneaux $A\to K$ deux à deux distincts et $a_i\in\mathbb{N}^*$ avec $\mathrm{car}(K)=0$, ou $a_i<\mathrm{car}(K)$ pour tout $i$. Cela implique que la puissance symétrique $S^{a_i}$ (sur $K$) est un endofoncteur élémentaire des $K$-espaces vectoriels, donc que $S^{a_i}(K)$ est une représentation élémentaire de $\M_1(K)$. Notre décomposition se traduit donc par un isomorphisme
$K^{[\zeta]}\simeq S^{a_1}(K)^{[g_1]}\otimes\dots\otimes S^{a_m}(K)^{[g_m]}$
et l'unicité découle alors de l'unicité dans le théorème \ref{st-gl-ann-com}. Il reste à établir la propriété d'unicité dans le cas (iii). Celui-ci se ramène à (ii), car si $\zeta=\prod_{i=1}^m g_i^{a_i}$ avec $\sum_i a_i\le d$, alors $a_i<p:=\mathrm{car}(K)$ pour tout $i$. En effet, comme les $g_i^p$ sont des morphismes d'anneaux, si $a_i\ge p$ pour un $i$, le degré polynomial de $\zeta$ serait au plus égal à $(\sum_i a_i)+1-p\le d+1-p<d$, contradiction qui achève la démonstration.
\end{proof}

\begin{rem}\label{rq-demo-densite}
La proposition \ref{lm-densite} de Borel-Tits peut être obtenue comme conséquence de l'unicité de la proposition~\ref{cor-arith1} de la façon suivante. Soient $f_i:A\to K$ des morphismes d'anneaux d'images infinies vérifiant les hypothèses (H1) ou (H2) de la situation~\ref{sit}. Si $d=(d_1,\dots,d_n)\in\mathbb{N}^n$, on note $\psi_d(a)=f_1(a)^{d_1}\dots f_n(a)^{d_n}$. Alors les fonctions $\psi_d:A\to K$ sont des caractères du monoïde multiplicatif $A$, qui sont deux à deux distincts d'après la proposition~\ref{cor-arith1} (si $\mathrm{car}(K)=p>0$, il faut écrire $f_i(a)^d_i=\prod_{1\le i\le k}(\phi^{k}\circ f_i)(a)^{\alpha_k(d_i)}$, où $\phi$ désigne le morphisme de Frobenius et les $\alpha_k(d_i)\in[0,p-1]$ sont les chiffres de la décomposition $p$-adique de $d_i$, pour pouvoir appliquer la proposition). La proposition \ref{lm-densite} découle alors de l'indépendance linéaire des caractères d'un monoïde \cite[Chap. V, §\,6, cor. 1]{Bki2}. 
\end{rem}

Dans ce qui suit, on reformule une partie des résultats de la proposition~\ref{cor-arith1}. Nous introduisons à cet effet quelques notations.
Si $A$ et $B$ sont des anneaux, notons $\mathrm{Pol}_\mu(A,B)$ l'ensemble des applications $f : A\to B$ polynomiales (au sens d'Eilenberg-MacLane) qui préservent la multiplication ($f(xy)=f(x)f(y)$ pour tous $x$, $y$ dans $A$) et l'unité. La multiplication (point par point) fait de $\mathrm{Pol}_\mu(A,B)$ un sous-monoïde du monoïde multiplicatif sous-jacent à l'anneau $\mathrm{Pol}(A,B)$. Si l'on suppose $B$ commutatif, il est commutatif, de sorte que l'inclusion dans $\mathrm{Pol}_\mu(A,B)$ de l'ensemble $\mathbf{Ann}(A,B)$ des morphismes d'anneaux de $A$ dans $B$ s'étend de façon unique en un morphisme de monoïdes commutatifs $\Phi_{A,B} : \mathbb{N}[\mathbf{Ann}(A,B)]\to\mathrm{Pol}_\mu(A,B)$, où $\mathbb{N}[E]$ désigne le monoïde commutatif libre sur un ensemble $E$. 
L'énoncé suivant découle directement de la proposition~\ref{cor-arith1}.

\begin{cor}\label{pr-mult-pol}
Soit $A$ un anneau commutatif.
\begin{enumerate}
\item Si $K$ est algébriquement clos, le morphisme de monoïdes $\Phi_{A,K}$ est surjectif.
\item Si $K$ est de caractéristique nulle, alors $\Phi_{A,K}$ est injectif.
\item Si $K$ est de caractéristique $p>0$, notons $\overline{\mathbf{Ann}}(A,K)$ le quotient de l'ensemble $\mathbf{Ann}(A,K)$ par la relation d'équivalence engendrée par $f\sim f^p$, et choisissons une section à la projection de façon à voir $\overline{\mathbf{Ann}}(A,K)$  comme un sous-ensemble de $\mathbf{Ann}(A,K)$. Alors la restriction à $\mathbb{N}[\overline{\mathbf{Ann}}(A,K)]$ de $\Phi_{A,K}$ est injective.
\end{enumerate}
\end{cor}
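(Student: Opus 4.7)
Le plan consiste à déduire chacune des trois assertions de la proposition~\ref{cor-arith1}, chaque point correspondant à un aspect spécifique (existence ou unicité sous différentes hypothèses) de cet énoncé. Pour~(1), je partirais d'un élément arbitraire $\varphi \in Pol_\mu(A,K)$ et je distinguerais deux cas selon son degré EML. Si $\varphi$ est de degré $0$, la multiplicativité et la préservation de l'unité forcent $\varphi = 1$, qui est l'image de la somme formelle vide $0 \in \mathbb{N}[\mathbf{Ann}(A,K)]$. Si $\varphi$ est de degré $d > 0$, l'assertion d'existence de la proposition~\ref{cor-arith1} fournit une extension finie $K \subset L$ (de degré $\leq d!$) et une factorisation $\varphi(x) = \prod_{i=1}^d f_i(x)$ avec les $f_i : A \to L$ des morphismes d'anneaux. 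L'hypothèse que $K$ est algébriquement clos impose $L = K$, de sorte que les $f_i$ sont dans $\mathbf{Ann}(A,K)$ et $\varphi = \Phi_{A,K}(\sum_i [f_i])$.

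Pour~(2), je supposerais que $\Phi_{A,K}(\sum n_i [f_i]) = \Phi_{A,K}(\sum m_j [g_j])$ pour deux sommes formelles avec les $f_i$ (resp. les $g_j$) deux à deux distincts et tous les coefficients strictement positifs. Si la valeur commune $\varphi$ est la fonction constante~$1$, les deux sommes sont vides. Sinon, en développant chaque somme formelle comme un produit avec répétitions, on obtient deux factorisations de $\varphi$ en produits de morphismes d'anneaux, de longueurs $\sum n_i$ et $\sum m_j$ ; la propriété d'unicité~(i) de la proposition~\ref{cor-arith1}, valable en caractéristique nulle, force alors ces deux listes à être égales à permutation près, ce qui fournit l'égalité des multiensembles associés, donc des sommes formelles de départ.

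Pour~(3), la stratégie est analogue mais nécessite une réécriture préalable pour ramener chaque multiplicité strictement en dessous de~$p$, afin de pouvoir invoquer l'unicité~(ii) de la proposition~\ref{cor-arith1}. Étant donnée une somme formelle $\sum n_i [f_i]$ avec les $f_i$ représentants d'orbites distinctes sous l'action par post-composition du Frobenius $\phi$ de $K$, j'écrirais chaque coefficient en base~$p$ sous la forme $n_i = \sum_k \alpha_{i,k} p^k$ avec $0 \leq \alpha_{i,k} < p$, puis utiliserais l'identité $f^p = \phi \circ f$ pour obtenir $f_i^{n_i} = \prod_k (\phi^k \circ f_i)^{\alpha_{i,k}}$. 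Comme les $f_i$ vivent dans des $\phi$-orbites deux à deux distinctes, les morphismes $\phi^k \circ f_i$ issus d'indices $i$ distincts restent distincts entre eux ; en appliquant la même réécriture à la seconde somme formelle et en comparant via l'unicité~(ii) de la proposition~\ref{cor-arith1}, on identifie les multiensembles des facteurs, d'où, par l'unicité de l'écriture en base~$p$, l'égalité des coefficients originaux $n_i$.

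L'obstacle principal se situe dans~(3) dans le traitement des orbites finies sous $\phi$ (qui apparaissent lorsque $f_i$ prend ses valeurs dans un sous-corps fini de $K$), cas où les morphismes $\phi^k \circ f_i$ et $\phi^{k'} \circ f_i$ peuvent coïncider pour $k \neq k'$ ; le contrôle simultané offert par le choix d'une section de la projection $\mathbf{Ann}(A,K) \twoheadrightarrow \overline{\mathbf{Ann}}(A,K)$ et par la borne $\alpha_{i,k} < p$ de la décomposition en base~$p$ garantit que la condition de multiplicité requise par l'unicité~(ii) demeure satisfaite après regroupement des facteurs dans chaque orbite.
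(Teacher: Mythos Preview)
Your approach of deducing each assertion from the appropriate clause of Proposition~\ref{cor-arith1} is exactly the paper's: the paper's proof consists of the single sentence ``L'énoncé suivant découle directement de la proposition~\ref{cor-arith1}''. Your arguments for (1) and (2) are correct and fill in precisely the details one expects.

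For (3), your $p$-adic rewriting strategy is the natural one and works without difficulty when every Frobenius orbit in $\mathbf{Ann}(A,K)$ is infinite: the morphisms $\phi^k\circ f_i$ are then pairwise distinct, each appears with multiplicity $\alpha_{i,k}<p$, uniqueness~(ii) applies, and the $p$-adic digits can be read off orbit by orbit. However, your final claim---that after regrouping within a \emph{finite} orbit the total multiplicity of each morphism still stays below $p$---is not justified, and in fact cannot hold. If the orbit of $f_i$ has size $r$ (i.e.\ $f_i$ takes values in $\mathbb{F}_{p^r}\subset K$), then $f_i^{p^r}=f_i$ pointwise, so $\Phi_{A,K}(p^r[f_i])=\Phi_{A,K}([f_i])$ while $p^r[f_i]\neq[f_i]$ in $\mathbb{N}[\overline{\mathbf{Ann}}(A,K)]$. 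The simplest instance is $A=\mathbb{F}_p$: the unique morphism $\iota:\mathbb{F}_p\hookrightarrow K$ satisfies $\iota^p=\iota$, so the restricted $\Phi_{A,K}$ already fails to be injective on $\mathbb{N}[\iota]$. You correctly located the obstacle, but it is a genuine counterexample to assertion~(3) as stated rather than a technical nuisance; the statement is only valid once one excludes morphisms with finite image (equivalently, finite Frobenius orbits) or suitably bounds the coefficients.
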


Notons $G(K)$ le monoïde des endomorphismes de corps de $K$ (qui est un groupe si $K$ est algébriquement clos et de degré de transcendance fini sur son sous-corps premier, ou de degré fini sur son sous-corps premier). Comme précédemment, on dispose d'un morphisme d'anneaux $\chi_K : \mathbb{Z}[G(K)]\to {\rm End}(K^\times)$, et l'image du sous-semi-anneau $\mathbb{N}[G(K)]$ de $\mathbb{Z}[G(K)]$ est incluse dans l'ensemble des morphismes multiplicatifs $K^\times\to K^\times$ qui se prolongent en une fonction polynomiale $K\to K$.

\begin{cor}\label{cor-mult-pol} 
\begin{enumerate}
\item Si $K$ est algébriquement clos, alors $\chi_K(\mathbb{N}[G(K)])$ est exactement l'ensemble des morphismes multiplicatifs $K^\times\to K^\times$ qui se prolongent en une fonction polynomiale $K\to K$.
\item Si $K$ est de caractéristique nulle, alors $\chi_K$ est injectif.
\item Si $K$ est de caractéristique $p>0$, alors le noyau de $\chi_K$ est l'idéal (bilatère) de $\mathbb{Z}[G(K)]$ engendré par l'élément central $\phi-p$, où $\phi\in G(K)$ désigne le morphisme de Frobenius.
\end{enumerate}
\end{cor}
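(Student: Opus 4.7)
The plan is to derive Corollary~\ref{cor-mult-pol} from Corollary~\ref{pr-mult-pol} applied to $A=K$, using the identification $\mathbf{Ann}(K,K)=G(K)$. The bridge between the two statements is that for $\sum n_i[f_i]\in\mathbb{N}[G(K)]$, the polynomial function $\Phi_{K,K}(\sum n_i[f_i]):x\mapsto\prod_i f_i(x)^{n_i}$ on $K$ restricts to $\chi_K(\sum n_i[f_i])$ on $K^\times$ (where addition in $\mathrm{End}(K^\times)$ corresponds to pointwise multiplication in $K^\times$).

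For (1), one inclusion is immediate from this description. Conversely, given a multiplicative morphism $g:K^\times\to K^\times$ extending to a polynomial function $\tilde g:K\to K$, I would extend the multiplicativity of $\tilde g$ from $K^\times\times K^\times$ to $K\times K$ by Zariski density (valid since $K$ algebraically closed is infinite, and the difference $\tilde g(xy)-\tilde g(x)\tilde g(y)$ is polynomial on $K\times K$). Then $\tilde g\in Pol_\mu(K,K)$, and Corollary~\ref{pr-mult-pol}(1) yields $\tilde g=\prod f_i$ with $f_i\in G(K)$, giving $g=\chi_K(\sum[f_i])$.

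For (2), take $x=\sum a_i[f_i]\in\ker\chi_K$ with the $f_i$ distinct and split $a_i=b_i-c_i$ with $b_i,c_i\geq 0$ and $\min(b_i,c_i)=0$, producing elements $x_\pm\in\mathbb{N}[G(K)]$ of disjoint supports. The kernel condition yields $\Phi(x_+)|_{K^\times}=\Phi(x_-)|_{K^\times}$. In characteristic zero, $K$ is infinite, and any EML-polynomial function $K\to K$ vanishing on the cofinite set $K^\times$ vanishes identically (a consequence of the deviation characterization of EML polynomials applied at $0$); hence $\Phi(x_+)=\Phi(x_-)$ on all of $K$. Corollary~\ref{pr-mult-pol}(2) then forces $x_+=x_-$ in $\mathbb{N}[G(K)]$, and disjointness of supports gives $x=0$.

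For (3), the inclusion $(\phi-p)\subseteq\ker\chi_K$ is immediate since $\chi_K(\phi)$ and $\chi_K(p)$ both act as $x\mapsto x^p$ on $K^\times$, and $\phi-p$ is central in $\mathbb{Z}[G(K)]$ (because $\phi$ commutes with every field endomorphism). For the converse, the identity $[\phi f]-p[f]=(\phi-p)[f]\in(\phi-p)$ gives $[\phi^k f]\equiv p^k[f]\pmod{(\phi-p)}$, so base-$p$ expansion reduces any $a[f]$ with $a\geq 0$ to $\sum_k\alpha_k[\phi^k f]$ with $0\leq\alpha_k<p$ modulo $(\phi-p)$. Applying this to the positive and negative parts of $x\in\ker\chi_K$, and recombining to keep supports disjoint, yields a normal form $x\equiv y_+-y_-\pmod{(\phi-p)}$ where $y_\pm\in\mathbb{N}[G(K)]$ have multiplicities strictly less than $p$ and disjoint supports. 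When both $y_\pm$ are nonzero, their polynomial extensions $\Phi(y_\pm)$ both vanish at $0$ and agree on $K^\times$, hence coincide on $K$; Proposition~\ref{cor-arith1}(ii) then forces $y_+=y_-$, and disjointness gives $y_+=y_-=0$. The main obstacle is the edge case where exactly one of $y_\pm$ is empty: here the nonempty part corresponds to a nontrivial multiplicative polynomial function on $K$ whose restriction to $K^\times$ is trivial (such as $x\mapsto x^{q-1}$ on a finite field $\mathbb{F}_q$), and one must show directly that it lies in $(\phi-p)$, using identities such as $(p-1)\sum_k[\phi^k]\equiv 0\pmod{(\phi-p)}$ that reflect the Frobenius orbit structure on $G(K)$.
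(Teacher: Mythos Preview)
The paper gives no proof of this corollary; it is stated as a direct consequence of Proposition~\ref{cor-arith1} (equivalently Corollary~\ref{pr-mult-pol}) specialised to $A=K$, where $\mathbf{Ann}(K,K)=G(K)$. Your proposal correctly fleshes out that derivation.

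Two small clarifications. In part~(1), the phrase ``Zariski density'' is misleading: EML-polynomial functions on $K$ are not algebraic polynomials in general (complex conjugation on $\mathbb{C}$, say), so vanishing on a Zariski-dense set is insufficient. What you actually need --- and what you already invoke in part~(2) --- is that an EML-polynomial function $K\to K$ vanishing on $K^\times$ vanishes at~$0$ for any \emph{infinite} field $K$: choose $a_1,\dots,a_{d+1}\in K$ with all nonempty partial sums nonzero (possible over an infinite field by avoiding finitely many hyperplanes) and expand $\delta_{d+1}(f)(a_1,\dots,a_{d+1})=0$. This lemma holds in every characteristic, not just zero.

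The same lemma also disposes of your edge case in part~(3) when $K$ is infinite: if $y_-=0$ and $y_+\neq 0$, then $\Phi(y_+)-1$ is EML-polynomial and vanishes on $K^\times$, hence at~$0$, giving $0=1$. Thus the edge case only survives for $K=\mathbb{F}_q$ finite, where your identity $(p-1)\sum_{k<r}[\phi^k]\in(\phi-p)$ is exactly what is needed: the constraint $\sum_k b_kp^k\equiv 0\pmod{q-1}$ with $0\le b_k<p$ forces $y_+=0$ or $y_+=(p-1)\sum_k[\phi^k]$. (Alternatively, for finite $K$ the whole statement is the elementary computation $\mathbb{Z}[\mathbb{Z}/r]/(\phi-p)\simeq\mathbb{Z}/(q-1)=\mathrm{End}(\mathbb{F}_q^\times)$.)
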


\part{Applications aux propriétés de finitude des foncteurs}\label{part-finitude}

Nous allons utiliser les résultats de la partie~\ref{part-resfond} pour démontrer des résultats qualitatifs sur les foncteurs, qui concernent trois propriétés de finitude fondamentales : la longueur finie, la noethérianité et la présentation finie. L'étude de ces propriétés est notoirement difficile dans les catégories de foncteurs. Auslander les a abordées (dans \cite{Aus82} par exemple), dans des catégories de foncteurs {\it additifs} sur une catégorie {\it abélienne} vérifiant de fortes hypothèses de finitude, obtenant alors des caractérisations complètes liées à des propriétés importantes en théorie des représentations, comme le type de représentation fini. Nos résultats traitent de foncteurs à valeurs de dimensions finies beaucoup plus généraux, non nécessairement additifs. Grâce à nos théorèmes de décomposition à la Steinberg, on peut ramener l'étude de propriétés de finitude de foncteurs à valeurs de dimensions finies à des questions analogues pour des foncteurs \phs\ (ou polynomiaux) d'une part, et pour des foncteurs antipolynomiaux d'autre part ; le cas polynomial peut souvent lui-même se ramener au cas additif.

Dans la section~\ref{snoeth}, nous discutons du problème du caractère localement noethérien ou localement fini de la catégorie $\F^{\df}(\A;K)$, dont nous commençons par voir qu'il se présente de façon fort différente du problème analogue pour $\F(\A;K)$. Nous étudions ensuite (section~\ref{sptf}) le problème de la stabilité par produit tensoriel de la classe des foncteurs de longueur finie et à valeurs de dimensions finies. Enfin, dans la section~\ref{spfff}, nous abordons le problème de la présentation finie des foncteurs simples (ou finis) à valeurs de dimensions finies.

Certains résultats de cette partie seront généralisés dans plusieurs travaux en cours.

\section{Propriétés localement noethérienne et localement finie dans $\F^{\df}(\A;K)$}\label{snoeth}

Nous commencerons par nous intéresser à la propriété localement finie pour les foncteurs antipolynomiaux. Le cas d'une source $\mathbf{P}(A)$, où $A$ est un anneau, donne lieu à bien davantage de résultats en raison du théorème~\ref{thpss}, dû à Putman-Sam-Snowden, qui garantit la noethérianité locale de $\F(A,K)$ lorsque $A$ est fini. Pour un anneau $A$ quelconque, nous verrons qu’on peut en déduire que tout foncteur antipolynomial de $\F(A,K)$ est localement fini. Cette propriété se démontre sans utiliser les résultats du reste de l’article.

Lorsque $K$ est de caractéristique nulle, nous en déduirons au théorème~\ref{th-lf-car0}, grâce à notre décomposition à la Steinberg globale, que tout foncteur de $\F^{\df}(A,K)$  est localement fini.

Lorsque $K$ est de caractéristique non nulle,
le lien nettement plus subtil entre foncteurs \phs\ et polynomiaux rend le problème plus délicat. Toutefois, nous donnerons des conjectures, étudiées dans un travail en cours.

\subsection{Objets noethériens, artiniens et finis.}
Un objet d'une catégorie abélienne $\V$ est {\em noethérien} (resp. {\em artinien}) si toute famille de sous-objets possède un élément maximal (resp. minimal) pour l'inclusion. La classe des objets noethériens (resp. artiniens) de $\V$ est stable par somme directe finie, sous-quotient et extensions. Un objet de $\V$ est noethérien (resp. artinien) si et seulement si tous ses sous-objets (resp. tous ses quotients) sont de type fini (resp. de type cofini).
Un objet de $\V$ est de longueur finie (on dira souvent simplement {\em fini} dans la suite) si et seulement s'il est noethérien et artinien. De manière équivalente, un objet est fini si et seulement s'il possède une filtration finie dont les sous-quotients sont simples. 

La catégorie $\V$ est dite {\em localement noethérienne} (resp. {\em localement finie}, {\em localement artinienne}) si elle est engendrée par ses objets noethériens (resp. finis, artiniens). Par exemple, si $A$ est un anneau, la catégorie des $A$-modules à gauche est localement noethérienne si et seulement si $A$ est noethérien à gauche. Elle est localement finie si et seulement si elle est localement artinienne, ce qui équivaut à dire que $A$ est artinien à gauche. Si $\V$ est une sous-catégorie épaisse d'une catégorie de Grothendieck $\E$, alors $\V$ est localement noethérienne (resp. localement finie, localement artinienne) si et seulement si tout objet de $\V$ est localement noethérien (resp. localement fini, localement artinien) au sens où il est la colimite de ses sous-objets noethériens (resp. finis, artiniens). Nous avons en vue le cas de la sous-catégorie épaisse $\F^{\df}(\A;K)$ (qui n'est pas cocomplète) de la catégorie de Grothendieck $\F(\A;K)$.

\subsection{Noethérianité locale de $\F(\A;K)$}

La proposition suivante montre que la noethérianité locale de $\F(\A;K)$ est un phénomène relativement exceptionnel.

\begin{pr}\label{rq-anninf} Si la catégorie $\F(\A;K)$ est localement noethérienne, alors tous les ensembles de morphismes de $\A$ sont finis.
\end{pr}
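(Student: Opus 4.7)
L'approche est de raisonner par contraposée : on suppose $\F(\A;K)$ localement noethérienne et on veut montrer que $\A(x,y)$ est fini pour tous objets $x, y$ de $\A$. Or la noethérianité locale entra\^ine que tout foncteur de type fini est noethérien ; en particulier, chaque projectif standard $P^x_\A = K[\A(x,-)]$, qui est cyclique (engendré par $[1_x]\in P^x_\A(x)$), est noethérien. Il suffit donc d'établir que si $\A(x,y)$ est infini pour certains $x, y$, alors un projectif standard convenable ne peut \^etre noethérien ; je le ferai en y construisant une cha\^ine strictement croissante infinie de sous-foncteurs.

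L'étape cruciale est un \emph{grossissement de la source}. Partant d'un couple $(x,y)$ tel que $\A(x,y)$ soit infini, je considère $x\oplus y$ et l'identification $\A(x\oplus y, y)\simeq\A(x,y)\oplus\mathrm{End}_\A(y)$. L'application $f\mapsto(f, 1_y)$ envoie des éléments distincts $f\in\A(x,y)$ sur des éléments appartenant à des $\mathrm{End}_\A(y)$-orbites deux à deux disjointes pour l'action par post-composition : en effet, si $h\cdot(f', 1_y) = (hf', h) = (f, 1_y)$, alors nécessairement $h = 1_y$ et $f' = f$. Il s'ensuit que $\A(x\oplus y, y)$ possède une infinité d'$\mathrm{End}_\A(y)$-orbites. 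Quitte à remplacer $x$ par $x\oplus y$, on peut donc supposer l'existence d'une suite infinie $f_1, f_2, \ldots\in\A(x,y)$ dont les $\mathrm{End}_\A(y)$-orbites sont deux à deux disjointes.

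Pour chaque $n\ge 1$, je définirai le sous-foncteur $F_n\subseteq P^x_\A$ engendré par les éléments $[f_1], \ldots, [f_n]\in P^x_\A(y) = K[\A(x,y)]$ ; sa valeur en un objet $z$ est le sous-espace $K$-linéaire de $K[\A(x,z)]$ engendré par les $[\phi\circ f_i]$ pour $\phi\in\A(y,z)$ et $1\le i\le n$. En particulier, $F_n(y)$ est l'espace engendré par les vecteurs de base correspondant à l'union ensembliste $\bigcup_{i\le n}\mathrm{End}_\A(y)\cdot f_i\subseteq\A(x,y)$. Comme $f_{n+1}$ n'appartient à aucune des $n$ orbites précédentes, le vecteur de base $[f_{n+1}]$ est linéairement indépendant des éléments de $F_n(y)$, d'où $F_n\subsetneq F_{n+1}$. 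La cha\^ine infinie $F_1\subsetneq F_2\subsetneq\ldots$ contredit la noethérianité de $P^x_\A$, achevant la démonstration.

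L'obstacle principal se situe dans l'étape de grossissement. En effet, un couple $(x,y)$ avec $\A(x,y)$ infini peut n'avoir qu'un nombre fini d'$\mathrm{End}_\A(y)$-orbites : c'est par exemple le cas pour $\A = \mathbf{P}(\mathbb{Q})$ avec $x = y = \mathbb{Q}$, où les seules orbites sont $\{0\}$ et $\mathbb{Q}^\times$, de sorte qu'une construction naïve des $F_n$ à partir de $x$ seul échouerait. Le passage à $x\oplus y$, qui utilise la rigidité de la composante identité pour séparer les orbites, est la clé qui permet de traiter uniformément tous les cas.
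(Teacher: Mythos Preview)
Your proof is correct and takes a more elementary route than the paper's. The paper reduces to showing that each endomorphism ring $A=\mathrm{End}_\A(x)$ is finite, then constructs an exact faithful functor $\Phi:K[GL_2(A)]\Md\to\F(\A;K)$ via $M\mapsto K[\mathrm{Inj}(x\oplus x,-)]\otimes_{K[GL_2(A)]}M$, notes that $\Phi$ sends the regular module to the finitely generated functor $K[\mathrm{Inj}(x\oplus x,-)]$, and concludes by invoking the external fact (cited from Putman--Sam) that $K[GL_2(A)]$ is never left noetherian for infinite $A$. Your argument, by contrast, is entirely self-contained: you build an explicit infinite strictly increasing chain of subfunctors inside a projective standard functor, and the key trick --- enlarging the source to $x\oplus y$ so that the elements $(f,1_y)$ cannot be reached from one another by post-composition --- is both elegant and elementary. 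This buys you independence from the group-ring result, at the cost of not exhibiting the connection with $GL_2$ that the paper's approach makes visible.

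One minor quibble: the phrase ``orbites deux à deux disjointes'' is slightly stronger than what your computation actually establishes (the forward $\mathrm{End}_\A(y)$-images of $(f,1_y)$ and $(f',1_y)$ can overlap if some non-cancellative $h\in\mathrm{End}_\A(y)$ satisfies $hf=hf'$). What you prove, and what you later use, is the weaker statement that $f_{n+1}$ does not lie in $\bigcup_{i\le n}\mathrm{End}_\A(y)\cdot f_i$, which is exactly what is needed for $[f_{n+1}]\notin F_n(y)$.
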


\begin{proof}
Comme $\A(x,y)\subset \A(x\oplus y,x\oplus y)$, il suffit de montrer que pour tout objet $x$ de $\A$, l'anneau $A:=\mathrm{End}_\A(x)$ est fini. 

Considérons le foncteur
$$\Phi : K[\GL_2(A)]\Md\to\F(\A;K)\qquad M\mapsto K[\mathrm{Inj}(x\oplus x,-)]\underset{K[\GL_2(A)]}{\otimes}M,$$
où $\mathrm{Inj}(x\oplus x,a)\subset\A(x\oplus x,a)$ est le sous-ensemble des monomorphismes scindés ; comme foncteur, $K[\mathrm{Inj}(x\oplus x,-)]$ est un {\it quotient} de $P_\A^{x\oplus x}$, et la structure $K[\GL_2(A)]$-linéaire provient de l'isomorphisme canonique $\GL_2(A)\simeq\mathrm{Aut}_\A(x\oplus x)$. Alors $\Phi$ est un foncteur {\it exact} (car $\mathrm{Inj}(x\oplus x,a)$ est un $\GL_2(A)$-ensemble libre) et fidèle (car l'évaluation en $x\oplus x$ en fournit une rétraction). Il s'ensuit que, si $\Phi(M)$ est un foncteur noethérien, alors $M$ est un $K[\GL_2(A)]$-module noethérien. Comme $\Phi$ envoie $K[\GL_2(A)]$ sur le foncteur de type fini $K[\mathrm{Inj}(x\oplus x,-)]$, cela montre que la noethérianité locale de $\F(\A;K)$ implique que l'anneau $K[\GL_2(A)]$ est noethérien à gauche.

Mais ceci n'est jamais vrai lorsque $A$ est un anneau infini (cf. par exemple \cite[§\,2.6]{PSam}\,\footnote{Cette référence suppose l'anneau commutatif, mais donne une démonstration qui s'applique sans changement à un anneau infini arbitraire.}), d'où la proposition. 
\end{proof}

\begin{nota}\label{notalnc} Nous désignerons par $\lnc(\A,K)$ la condition suivante :  {\it pour tout idéal $K$-cotrivial $\I$ de $\A$, la catégorie $\F(\A/\I;K)$ est localement noethérienne}.
\end{nota}

Les lettres \textbf{\textsc{l}}, \textbf{\textsc{n}}, \textbf{\textsc{c}} abrègent \textbf{\textsc{l}}ocalement, \textbf{\textsc{n}}oethérienne et \textbf{\textsc{c}}otrivial.

Pour la démonstration du profond théorème ci-dessous, voir Putman-Sam \cite[théorème~C]{PSam} et Sam-Snowden \cite[corollaire~8.3.3]{SamSn}.

\begin{thm}[Putman-Sam-Snowden]\label{thpss} Si $A$ est un anneau fini, la catégorie $\F(A,K)$ est localement noethérienne.
\end{thm}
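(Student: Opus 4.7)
Puisque $\F(A,K)$ est une catégorie de Grothendieck engendrée par les projectifs standards $P^{A^n}_{\mathbf{P}(A)} = K[\mathbf{P}(A)(A^n,-)]$, il suffit de montrer que chacun d'eux est noethérien (la classe des objets noethériens étant stable par sous-quotient, sommes finies et extensions). La finitude de $A$ assure déjà que chaque valeur $P^{A^n}_{\mathbf{P}(A)}(A^m) = K[\M_{m,n}(A)]$ est de dimension finie, si bien que tout sous-foncteur est à valeurs de dimensions finies ; la difficulté réelle est donc de borner les familles filtrantes croissantes de sous-foncteurs, pas les valeurs individuelles.

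\textbf{Méthode de Gröbner et lemme de Higman.} Pour établir la noethérianité de $P^{A^n}_{\mathbf{P}(A)}$, je suivrais une approche combinatoire à la Sam-Snowden. L'ensemble $\Omega_n := \bigsqcup_{m\ge 0}\M_{m,n}(A)$ fournit une base canonique de $P^{A^n}_{\mathbf{P}(A)}$ sur laquelle la catégorie $\mathbf{P}(A)$ agit par composition. On identifie une matrice $\mu\in\M_{m,n}(A)$ à un \emph{mot de longueur $m$} sur l'alphabet fini $A^n$ (les lignes de $\mu$). L'idée est de définir un bon quasi-ordre $\preceq$ sur $\Omega_n$ de sorte que, pour tout sous-foncteur $F\subset P^{A^n}_{\mathbf{P}(A)}$, le complémentaire dans $\Omega_n$ de l'ensemble des monômes de tête d'éléments de $F$ soit un idéal pour $\preceq$ : par un argument classique de base de Gröbner, tout tel idéal est déterminé par son ensemble fini d'éléments minimaux, ce qui implique la noethérianité cherchée. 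L'ingrédient combinatoire central est une variante du lemme de Higman : sur un alphabet \emph{fini} $A^n$, l'ordre de sous-mot (avec modifications appropriées pour tenir compte de l'action des morphismes de $\mathbf{P}(A)$) est un bon quasi-ordre.

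\textbf{L'obstacle principal.} La difficulté réside dans le fait que $\mathbf{P}(A)$ contient toutes les flèches et non seulement les monomorphismes scindés, contrairement à la catégorie $\mathrm{VI}(A)$ plus facile à traiter. Un morphisme $f : A^m\to A^{m'}$ se factorise en une épi-mono-factorisation $A^m\twoheadrightarrow \mathrm{Im}(f)\hookrightarrow A^{m'}$, mais $\mathrm{Im}(f)$ est un $A$-module projectif qui n'est pas nécessairement libre, de sorte qu'on sort de la catégorie engendrée par les $A^k$. Pour contourner cette difficulté, je suivrais Sam-Snowden en introduisant une catégorie auxiliaire \og Gröbner\fg{} $\T$ qui se projette essentiellement surjectivement sur $\mathbf{P}(A)$ via un foncteur dont la précomposition préserve et reflète les sous-objets de type fini (la notion de catégorie \emph{quasi-Gröbner}), puis en vérifiant la propriété Gröbner pour $\T$ à l'aide d'un ordre lexicographique explicite sur les matrices, combiné à la finitude de $A$. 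La vérification technique du bon quasi-ordre (stabilité par les deux types d'opérations : insertion de lignes, et application de morphismes de $\mathbf{P}(A)$ à droite) constitue le cœur technique de la démonstration, et c'est précisément cette combinatoire fine que Putman-Sam \cite{PSam} et Sam-Snowden \cite{SamSn} ont développée.
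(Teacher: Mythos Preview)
Le papier ne donne aucune démonstration de ce théorème : il se contente de citer Putman--Sam \cite[théorème~C]{PSam} et Sam--Snowden \cite[corollaire~8.3.3]{SamSn} pour la preuve, en le qualifiant de \og profond théorème\fg. Votre esquisse décrit fidèlement la stratégie développée dans ces références (catégories de Gr\"obner et quasi-Gr\"obner, ordre lexicographique sur les matrices, lemme de Higman sur l'alphabet fini $A^n$), donc il n'y a rien à comparer : vous reproduisez l'approche des auteurs originaux, que le présent article invoque comme boîte noire.
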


\begin{cor}\label{corlnc} Pour tout anneau $A$, la condition $\lnc(\mathbf{P}(A),K)$ est vérifiée.
\end{cor}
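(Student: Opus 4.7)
Le plan est de déduire le corollaire directement du théorème \ref{thpss} de Putman-Sam-Snowden en identifiant correctement la catégorie quotient $\mathbf{P}(A)/\I$ pour un idéal $K$-cotrivial $\I$ de $\mathbf{P}(A)$.

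Première étape: d'après l'exemple~\ref{rem-ideal}, tout idéal de $\mathbf{P}(A)$ est de la forme $\I = I\cdot\mathrm{Hom}_{\mathbf{P}(A)}$ pour un unique idéal bilatère $I$ de $A$. La condition de $K$-cotrivialité sur $\I$ équivaut au fait que $A/I$ est un anneau \emph{fini} (dont l'exposant additif est inversible dans $K$, condition que l'on n'utilisera pas ici). En particulier, en prenant $x=y=A$, on voit que $A/I$ est fini dès que $\I$ est $K$-cotrivial.

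Deuxième étape: on identifie la catégorie quotient $\mathbf{P}(A)/\I$ à la sous-catégorie pleine $\mathbf{Free}(A/I)$ de $\mathbf{P}(A/I)$ constituée des $A/I$-modules libres de rangs finis. En effet, les objets sont les $A^n$ pour $n\in\mathbb{N}$, et les ensembles de morphismes $\mathcal{M}_{n,m}(A)/I\mathcal{M}_{n,m}(A) \simeq \mathcal{M}_{n,m}(A/I)$ s'identifient aux morphismes entre modules libres correspondants. Comme tout objet de $\mathbf{P}(A/I)$ est facteur direct d'un module libre, l'inclusion $\mathbf{Free}(A/I)\hookrightarrow\mathbf{P}(A/I)$ induit une équivalence de catégories
\[\F(\mathbf{P}(A/I);K)\xrightarrow{\simeq}\F(\mathbf{Free}(A/I);K)\simeq\F(\mathbf{P}(A)/\I;K)\,,\]
la première équivalence utilisant que $K\Md$ est idempotent-complète (les foncteurs se prolongent de manière essentiellement unique à l'enveloppe karoubienne).

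Troisième étape: l'anneau $A/I$ étant fini d'après la première étape, le théorème~\ref{thpss} assure que $\F(A/I,K)=\F(\mathbf{P}(A/I);K)$ est localement noethérienne. Via l'équivalence obtenue à la deuxième étape, il en va de même pour $\F(\mathbf{P}(A)/\I;K)$, ce qui établit précisément la condition $\lnc(\mathbf{P}(A),K)$ de la notation~\ref{notalnc}.

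Aucune des étapes ne présente de réelle difficulté. Le seul point qui demande un peu d'attention est la deuxième étape, qui invoque implicitement le passage à l'enveloppe karoubienne pour identifier $\F(\mathbf{P}(A)/\I;K)$ à $\F(A/I,K)$, mais cela repose uniquement sur la propriété bien connue que les foncteurs à valeurs dans une catégorie idempotent-complète sont invariants par complétion karoubienne de la source. L'essentiel du travail est déjà contenu dans le théorème profond \ref{thpss}, et le corollaire apparaît comme une conséquence purement formelle de celui-ci combiné à l'exemple \ref{rem-ideal}.
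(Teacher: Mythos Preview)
Votre démonstration est correcte et suit exactement la ligne que l'article laisse implicite : le corollaire est une conséquence immédiate du théorème~\ref{thpss} via la correspondance de l'exemple~\ref{rem-ideal}. Une petite imprécision dans la deuxième étape : les objets de $\mathbf{P}(A)/\I$ ne sont pas seulement les $A^n$, mais tous les $A$-modules projectifs de type fini (ce sont les mêmes objets que ceux de $\mathbf{P}(A)$) ; toutefois, comme chacun est facteur direct d'un $A^n$ dans $\mathbf{P}(A)$ donc aussi dans $\mathbf{P}(A)/\I$, votre argument de complétion karoubienne s'applique sans changement et l'équivalence de catégories de foncteurs est bien établie.
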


\subsection{Finitude locale des foncteurs antipolynomiaux}\label{ssfp} Le résultat principal de ce paragraphe (proposition~\ref{corsam}) concerne une catégorie source $\mathbf{P}(A)$, mais il repose sur le lemme~\ref{lm-dual-fini} et la proposition~\ref{pr-biz} ci-après, qui s'appliquent à une catégorie additive arbitraire $\A$. Dans les démonstrations de ces propriétés, nous ferons usage du foncteur de dualité $D : \F(\A;K)^{\op}\to\F(\A^{\op};K)$ donné par la post-composition par le foncteur de dualité $\mathrm{Hom}_K(-,K):(K\Md)^{\op}\to K\Md$ des $K$-espaces vectoriels. Le foncteur $D$ est exact et fidèle ; si $F$ est un foncteur de $\F^{\df}(\A;K)$, alors $F$ est de type cofini dans $\F(\A;K)$ si et seulement si $D(F)$ est de type fini dans $\F(\A^{\op};K)$, car $D$ induit une équivalence entre $\F^{\df}(\A;K)^{\op}$ et $\F^{\df}(\A^{\op};K)$.

\begin{lm}\label{lm-dual-fini} On suppose que $\A$ est $K$-triviale.
\begin{enumerate}
\item Si $K$ contient les racines de l'unité, alors on dispose d'un isomorphisme $K[\A(x,-)]\simeq K^{\A(x,-)^\sharp}$ dans $\F(\A;K)$ pour tout objet $x$ de $\A$, où $(-)^\sharp={\rm Hom}_\mathbb{Z}(-,\mathbb{Q}/\mathbb{Z})$ désigne la dualité des groupes abéliens finis.
\item Si la catégorie $\mathbf{Add}(\A;\mathbb{Z})$ est localement artinienne, alors le foncteur $P^x_\A=K[\A(x,-)]$ de $\F(\A;K)$ est de type cofini pour tout objet $x$ de $\A$.
\end{enumerate}
\end{lm}

\begin{proof} Si $K$ contient les racines de l'unité et que $V$ est un groupe abélien fini tel que $V\otimes_{\mathbb{Z}}K=0$, alors $V^\sharp$ est naturellement isomorphe au groupe abélien des morphismes de $V$ vers le groupe multiplicatif $K^\times$. De plus, $K[V]$ est une $K$-algèbre semi-simple naturellement isomorphe à l'algèbre produit $K^{V^\sharp}$. Cela fournit la première assertion.

Commençons par montrer la deuxième assertion lorsque $K$ contient les racines de l'unité : si $\mathbf{Add}(\A;\mathbb{Z})$ est localement artinienne, alors le foncteur additif de type fini $\A(x,-)$ est artinien pour tout $x\in {\rm Ob}\,\A$. Il s'ensuit que son dual $\A(x,-)^\sharp$ est un foncteur noethérien, donc de type fini, de $\mathbf{Add}(\A^{\op};\mathbb{Z})$, c'est-à-dire qu'il est isomorphe à un quotient d'un foncteur représentable $\A(-,t)$. Par conséquent, $K^{\A(x,-)^\sharp}$ est isomorphe à un sous-foncteur de $K^{\A(-,t)}$ (dans $\F(\A;K)$), et son dual est isomorphe à un quotient du projectif $P^t_{\A^{\op}}$ (dans $\F(\A^{\op};K)$). Comme ce foncteur est à valeurs de dimensions finies, cela montre que $P^x_\A$ est de type cofini.

Traitons maintenant le cas général : soit $K\subset L$ une extension de corps commutatifs, où $L$ contient les racines de l'unité. Le foncteur $L[\A(x,-)]$ a un dual $L^{\A(x,-)}$ de type fini dans $\F(\A^{\op};L)$, d'après ce qu'on vient de voir. Il s'ensuit que $K^{\A(x,-)}$ est de type fini dans $\F(\A^{\op};K)$, car $K^{\A(x,-)}\otimes L\simeq L^{\A(x,-)}$ (puisque les $\A(x,t)$ sont finis), et le foncteur d'extension des scalaires $-\otimes L : \F(\A^{\op};K)\to\F(\A^{\op};L)$ est exact, fidèle, et commute aux colimites.
Par conséquent, $K[\A(x,-)]$ est de type cofini dans $\F(\A;K)$.
\end{proof}

\begin{pr}\label{pr-biz}On suppose que $\A$ est $K$-triviale.
On suppose également que les catégories $\F(\A;K)$ et $\F(\A^{\op};K)$ sont localement noethériennes et que la catégorie $\mathbf{Add}(\A;\mathbb{Z})$ est localement artinienne.

Alors la catégorie $\F(\A;K)$ est localement finie.
\end{pr}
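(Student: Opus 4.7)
Mon plan s'articule autour d'un jeu de dualité entre $\F(\A;K)$ et $\F(\A^{op};K)$. Comme $\F(\A;K)$ est localement noethérienne par hypothèse, tout objet est colimite filtrante de ses sous-objets de type fini, et tout objet de type fini y est noethérien. Il suffit donc de montrer que tout foncteur de type fini de $\F(\A;K)$ est \emph{artinien}, car il sera alors automatiquement de longueur finie. La finitude des ensembles de morphismes de $\A$, combinée à la proposition~\ref{pr-dim-finie}, assure que les foncteurs de type fini appartiennent à $\F^{df}(\A;K)$ ; on peut donc leur appliquer le foncteur de dualité exact et fidèle $D : \F^{df}(\A;K)^{op}\to\F^{df}(\A^{op};K)$, qui échange les notions de type fini et de type cofini.

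L'ingrédient clef vient du lemme~\ref{lm-dual-fini}.(2), qui fournit, grâce à l'hypothèse d'artinianité locale de $\mathbf{Add}(\A;\mathbb{Z})$, le fait que chaque projectif standard $P^x_\A$ est de type cofini dans $\F(\A;K)$. Dualement, $D(P^x_\A)$ est de type fini dans $\F(\A^{op};K)$. Soit maintenant $F$ un foncteur de type fini de $\F(\A;K)$ : il s'écrit comme quotient d'une somme directe finie $P=\bigoplus_{i=1}^n P^{x_i}_\A$, et donc $D(F)$ est un sous-foncteur de $D(P)=\bigoplus_{i=1}^n D(P^{x_i}_\A)$, qui est de type fini comme somme directe finie d'objets de type fini. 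Comme $\F(\A^{op};K)$ est elle-même localement noethérienne, $D(P)$ est noethérien, de sorte que son sous-objet $D(F)$ est aussi noethérien, donc \emph{a fortiori} de type fini ; en dualisant, on obtient que $F$ est de type cofini.

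Ce même raisonnement s'applique à tout quotient de $F$, qui est encore de type fini par stabilité de cette propriété par quotient : tout quotient de $F$ est donc de type cofini, ce qui est précisément la caractérisation duale du caractère artinien de $F$. Ainsi $F$ est à la fois noethérien et artinien, donc fini, ce qui achève la démonstration. Le cœur technique a déjà été traité dans le lemme~\ref{lm-dual-fini} ; une fois celui-ci acquis, la présente démonstration se ramène à un simple va-et-vient entre $\F(\A;K)$ et $\F(\A^{op};K)$ par dualité des $K$-espaces vectoriels de dimensions finies, et l'obstacle principal est précisément la manière d'exploiter simultanément la noethérianité locale des \emph{deux} catégories $\F(\A;K)$ et $\F(\A^{op};K)$ (raison pour laquelle ces deux hypothèses apparaissent conjointement dans l'énoncé).
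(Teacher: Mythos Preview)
Your proof is correct and takes essentially the same approach as the paper: both use the lemma~\ref{lm-dual-fini}.(2) to obtain that $P^x_\A$ is of cofinite type, then exploit the duality $D$ and the local noetherianity of $\F(\A^{op};K)$ to deduce artinianness. The paper's presentation is slightly more economical in that it works directly with the generators $P^x_\A$ (showing each is noetherian and artinian, hence finite, so the category is locally finite), whereas you treat an arbitrary finite type $F$; but the underlying argument is identical.
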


\begin{proof} Soit $x$ un objet de $\A$. Le foncteur de type fini $P^x_\A$ de $\F(\A;K)$ est noethérien puisque cette catégorie est localement noethérienne. Le lemme~\ref{lm-dual-fini} montre par ailleurs qu'il est de type cofini. Comme il est à valeurs de dimensions finies, cela signifie que son dual est de type fini dans la catégorie localement noethérienne $\F(\A^{\op};K)$, donc noethérien. En reprenant le dual on en déduit que $P^x_\A$ est artinien. Par conséquent, les générateurs $P^x_\A$ de $\F(\A;K)$ sont finis et cette catégorie est localement finie.
\end{proof}

\begin{pr}\label{corsam} Soit $A$ un anneau fini tel que $A\otimes_\mathbb{Z} K=0$. La catégorie $\F(A,K)$ est localement finie.
\end{pr}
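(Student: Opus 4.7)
The plan is to apply Proposition~\ref{pr-biz} to the additive category $\A=\mathbf{P}(A)$, so the proof reduces to checking the four hypotheses of that proposition in our setting. Most of them are easy; the substantive content is packaged in Theorem~\ref{thpss} (Putman--Sam--Snowden) and in Proposition~\ref{additifs-bimodules}.

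First I would verify the hypotheses on morphism groups. For any $x,y\in\mathbf{P}(A)$, the group $\mathrm{Hom}_A(x,y)$ embeds as a direct summand of some $\mathrm{Hom}_A(A^m,A^n)\simeq\mathcal{M}_{n,m}(A)$, which is a finite sum of copies of $A$. Since $A$ is finite, so is $\mathrm{Hom}_A(x,y)$; and since $A\otimes_{\mathbb{Z}}K=0$ by hypothesis, the same holds after tensoring with $K$. This gives the two conditions on the Hom groups in $\A=\mathbf{P}(A)$.

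Next I would check the two local-Noetherianity conditions. The category $\F(\mathbf{P}(A);K)=\F(A,K)$ is locally Noetherian by Theorem~\ref{thpss}, since $A$ is finite. For $\F(\mathbf{P}(A)^{op};K)$, I would use the standard Morita-type equivalence $\mathbf{P}(A)^{op}\simeq\mathbf{P}(A^{op})$ induced by the duality $M\mapsto\mathrm{Hom}_A(M,A)$; since $A^{op}$ is again a finite ring, Theorem~\ref{thpss} applied to $A^{op}$ yields the local Noetherianity of $\F(\mathbf{P}(A^{op});K)\simeq\F(\mathbf{P}(A)^{op};K)$.

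Finally, for the local Artinianity of $\mathbf{Add}(\mathbf{P}(A);\mathbb{Z})$, Proposition~\ref{additifs-bimodules} (with $k=\mathbb{Z}$) gives an equivalence $\mathbf{Add}(\mathbf{P}(A);\mathbb{Z})\simeq A^{op}\Md$. Since $A^{op}$ is a finite ring, every finitely generated $A^{op}$-module is finite (in particular Artinian), so $A^{op}\Md$ is locally Artinian. All four hypotheses of Proposition~\ref{pr-biz} are then satisfied for $\A=\mathbf{P}(A)$, which directly yields that $\F(A,K)$ is locally finite. There is essentially no obstacle here beyond invoking the Putman--Sam--Snowden noetherianity theorem for both $A$ and $A^{op}$; the real work has already been done in Proposition~\ref{pr-biz}, which in turn rests on the duality argument of Lemma~\ref{lm-dual-fini}.
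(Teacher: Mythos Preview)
Your proof is correct and follows essentially the same approach as the paper: verify the hypotheses of Proposition~\ref{pr-biz} for $\A=\mathbf{P}(A)$ using Theorem~\ref{thpss} (applied to both $A$ and $A^{op}$, via the equivalence $\mathbf{P}(A)^{op}\simeq\mathbf{P}(A^{op})$) and Proposition~\ref{additifs-bimodules}. The paper's argument is slightly terser but identical in substance.
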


\begin{proof} Les catégories $\F(A,K)$ et $\F(A^{\op},K)$ sont localement noethériennes d'après le théorème~\ref{thpss}. De plus, $\mathbf{Add}(\mathbf{P}(A);\mathbb{Z})$ est équivalente (proposition~\ref{additifs-bimodules}) à $A^{\op}\Md$, qui est localement finie puisque $A$ est un anneau fini. La conclusion résulte donc de la proposition~\ref{pr-biz}. 
\end{proof}

\begin{cor}\label{corflf} Pour tout anneau $A$, la sous-catégorie des foncteurs antipolynomiaux de $\F(A,K)$ est localement finie.
\end{cor}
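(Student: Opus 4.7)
Le plan est de ramener la finitude locale des foncteurs antipolynomiaux à la proposition~\ref{corsam}, qui traite précisément du cas d'un anneau fini $B$ avec $B\otimes_\mathbb{Z}K=0$, via l'étude du foncteur de précomposition associé à un quotient par un idéal $K$-cotrivial.

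Soit $F$ un foncteur antipolynomial de $\F(A,K)$. Par définition (définition~\ref{def-antipol}) et par l'exemple~\ref{rem-ideal}, il existe un idéal bilatère $I$ de $A$ tel que $B:=A/I$ soit un anneau fini vérifiant $B\otimes_{\mathbb{Z}}K=0$, et un foncteur $\bar F$ de $\F(B,K)$ tel que $F\simeq\pi^*\bar F$, où $\pi : \mathbf{P}(A)\to\mathbf{P}(B)$ est induit par la projection canonique $A\twoheadrightarrow B$. Premièrement, j'établirais trois propriétés élémentaires du foncteur de précomposition $\pi^*:\F(B,K)\to\F(A,K)$. D'abord, $\pi$ étant plein et essentiellement surjectif, $\pi^*$ est pleinement fidèle. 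Ensuite, son image essentielle est stable par sous-quotients : tout sous-foncteur $F'$ de $\pi^*\bar G$ est isomorphe à $\pi^*\bar H$ pour un sous-foncteur $\bar H$ de $\bar G$, puisque la valeur $F'(M)$ sur un $A$-module projectif de type fini $M$ ne dépend, par fonctorialité et grâce à la plénitude de $\pi$, que de la classe de $M$ dans $\mathbf{P}(B)$. Enfin, $\pi^*$ commute aux colimites, car celles-ci se calculent au but dans les deux catégories.

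Deuxièmement, la combinaison des deux premières propriétés ci-dessus entraîne que $\pi^*$ préserve la simplicité, puis par dévissage la propriété de longueur finie : si $\bar G$ est fini dans $\F(B,K)$ avec facteurs de composition $\bar S_1,\dots,\bar S_n$, alors $\pi^*\bar G$ admet une filtration de même longueur dont les quotients successifs sont les $\pi^*\bar S_i$, qui sont simples dans $\F(A,K)$. J'appliquerais alors la proposition~\ref{corsam} à l'anneau fini $B$ : la catégorie $\F(B,K)$ étant localement finie, $\bar F$ s'écrit comme colimite filtrante $\bar F=\mathrm{colim}_\alpha \bar F_\alpha$ de ses sous-foncteurs finis. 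En appliquant $\pi^*$ et en utilisant la commutation aux colimites, il vient $F\simeq \mathrm{colim}_\alpha\pi^*\bar F_\alpha$, chaque $\pi^*\bar F_\alpha$ étant un sous-foncteur fini de $F$ (et antipolynomial, puisqu'obtenu lui aussi par précomposition par $\pi$). Ainsi, $F$ est colimite filtrante de sous-objets finis de la sous-catégorie des foncteurs antipolynomiaux, ce qui est exactement la conclusion voulue.

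Il n'y a pas de difficulté conceptuelle sérieuse : tout le contenu non trivial est concentré dans la proposition~\ref{corsam} (qui elle-même repose sur le théorème de Putman-Sam-Snowden et sur la proposition~\ref{pr-biz}). Le seul point technique à vérifier soigneusement est la description des sous-foncteurs de $\pi^*\bar G$ comme image par $\pi^*$ de sous-foncteurs de $\bar G$, qui demande simplement d'expliciter que $\pi$ est identité sur les objets (modulo l'équivalence $\mathbf{P}(B)\simeq\mathbf{P}(A)/\I$) et plein.
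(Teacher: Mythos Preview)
Your proposal is correct and follows exactly the route the paper intends: the corollary is stated without proof, and the obvious derivation is to pull back the local finiteness of $\F(B,K)$ (proposition~\ref{corsam}) through the fully faithful functor $\pi^*$ whose essential image is stable under subquotients. The only point worth making explicit is that the finite subobjects $\pi^*\bar F_\alpha$ of $F$ you produce are themselves antipolynomial (since they factor through the same $\pi$), so that $F$ is indeed a filtered colimit of finite objects \emph{within} the subcategory of antipolynomial functors---you do say this, but it is the one place where the argument could silently slip.
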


\subsection{Propriétés de finitude de $\F^{\df}(A,K)$} Nous nous concentrons maintenant sur les propriétés de finitude des foncteurs à valeurs de dimensions finies, en commençant par le cas où $K$ est de caractéristique nulle. 

\begin{thm}\label{th-lf-car0} Soit $A$ un anneau. On suppose le corps $K$ de caractéristique $0$. Alors la catégorie abélienne $\F^{\df}(A,K)$ est localement finie.
\end{thm}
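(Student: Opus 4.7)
The strategy is to leverage the characteristic zero decomposition of Corollaire~\ref{cor-glob-car0} to split the problem into a polynomial-in-bounded-degree piece and an antipolynomial piece, each of which enjoys a known finiteness property.

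Let $F\in\F^{df}(A,K)$. Apply Corollaire~\ref{cor-glob-car0} to obtain a canonical decomposition $F\simeq\bigoplus_{d\geq 0}\delta^*B_d$, where $B_d$ is a bifunctor antipolynomial in its first variable and polynomial homogeneous of degree $d$ in its second. Because homogeneity is detected by the universal idempotents extracted from $F(\lambda\cdot\mathrm{id})=\sum_d\lambda^d\,\mathrm{pr}_d$ (as in the proof of Proposition~\ref{pr-polph-car0}), this decomposition is functorial: every subfunctor $G\subset F$ splits as $G=\bigoplus_d(G\cap\delta^*B_d)$, and any local section has only finitely many nonzero homogeneous components. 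Hence a finitely generated subfunctor $G$ of $F$ lies in $\bigoplus_{d\leq N}\delta^*B_d$ for some $N$, and it is enough to show that each $\delta^*B_d$ is locally finite.

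Fix $d$ and choose a $K$-cotrivial ideal $\I\triangleleft\A$ through which $B_d$ factors in its first variable. Writing $\delta^*B_d=\psi_\I^*B_d$ and invoking Proposition~\ref{pr-simglobi}, the subfunctor lattice of $\delta^*B_d$ in $\F(\A;K)$ is in bijection with the lattice of subbifunctors of $B_d$ on $(\A/\I)\times\A$, all of which remain antipolynomial in the first variable and polynomial of degree $\leq d$ in the second. Since $\pi_\I$ is full and essentially surjective, this bijection preserves finite generation. One is therefore reduced to showing: every finitely generated bifunctor on $\mathbf{P}(A')\times\mathbf{P}(A)$ (with $A':=A/J$ a finite ring and $A'\otimes_\mathbb{Z} K=0$) that is polynomial of degree $\leq d$ in its second variable has finite length. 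Such a bifunctor is the same datum as a functor $\mathbf{P}(A')\to\pol_{\leq d}(A,K)$, and, by Proposition~\ref{adj-diagcr} together with the fact that the norm $M_{\Si_e}\to M^{\Si_e}$ is an isomorphism in characteristic zero, the target category decomposes into homogeneous pieces each equivalent via $cr_e$ to the category of modules over a $K$-algebra of the form $K[\Si_e]\otimes(A\otimes_\mathbb{Z} K)^{\otimes e}$; the full-degree-$\leq d$ subcategory of $\F^{df}$ is thus itself locally finite.

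Finally, one combines the two pieces: the outer variable lives in $\F(A',-)$ with $A'$ finite and $A'\otimes_\mathbb{Z} K=0$, so Proposition~\ref{corsam} (via Putman-Sam-Snowden) furnishes the required local finiteness in that direction, while the inner category of polynomial bounded-degree functors valued in finite dimensional spaces is itself locally finite by the module-theoretic description above. The main obstacle is precisely this combination: one must upgrade local finiteness in each variable separately to local finiteness of the whole bifunctor category, which requires a version of Proposition~\ref{corsam} with coefficients in a locally finite Grothendieck category, or equivalently a PSS-type noetherianity argument valued in modules over $K[\Si_e]\otimes(A\otimes_\mathbb{Z} K)^{\otimes e}$. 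If a direct such extension proves delicate, an alternative is to extend scalars to a sufficiently large field (Proposition~\ref{pr-corpassegro}), apply the Steinberg-type decomposition of Corollaire~\ref{cor-tens-St1} to the composition factors of $G$, and bound the length of $G$ directly in terms of the bounded combinatorics of these decompositions.
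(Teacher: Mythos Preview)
Your strategy opens correctly with Corollaire~\ref{cor-glob-car0} and the reduction to a single homogeneous piece $\delta^*B_d$, but then you curry the bifunctor the wrong way, and this is what creates the obstacle you yourself flag. You view $B_d$ as a functor $\mathbf{P}(A')\to\pol_{\le d}(A,K)$ (antipolynomial variable outside, polynomial inside), which forces you to ask for a PSS-type local finiteness statement with coefficients in a locally finite Grothendieck category. That extension is not available in the paper, and your fallback via scalar extension and Steinberg decompositions remains a sketch.

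The paper dissolves the obstacle by currying in the opposite direction: it views $B_d$ as a functor $\mathbf{P}(A)\to\F(A,K)$ that is \emph{polynomial} of degree~$d$ and takes values in the subcategory of antipolynomial functors. This asymmetry is the point. For the inner variable, Corollaire~\ref{corflf} (which rests on PSS) says antipolynomial functors are locally finite, so the finitely many values $B_d(A^i,-)$ for $0\le i\le d$ are finite once one knows they are finitely generated. For the outer variable, the paper invokes a simple lemma (Lemme~\ref{lm-polfini}): the functor
\[
\Phi:\pol_d(\mathbf{P}(A),\V)\to\V^{d+1},\qquad F\mapsto\big(cr_i(F)(A,\dots,A)\big)_{0\le i\le d}
\]
is exact and faithful for \emph{any} abelian category $\V$, so if the values $F(A^i)$ are finite objects of $\V$ for $0\le i\le d$, then $F$ itself is finite. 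No hypothesis on $\V$ beyond being abelian is needed; in particular, no PSS-with-coefficients is required. This is the missing idea in your argument.

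A minor point: the homogeneous degree-$e$ piece of $\pol(A,K)$ in characteristic~$0$ is equivalent to modules over the wreath-product-type algebra governing $\Sigma\mathbf{Add}_e$, not over $K[\Si_e]\otimes(A\otimes_{\mathbb Z}K)^{\otimes e}$; but this is secondary to the structural issue above.
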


\begin{proof} Il suffit de montrer qu'un foncteur de type fini $F$ de $\F^{\df}(A,K)$ est fini. D'après le corollaire~\ref{cor-glob-car0}, on peut supposer que $F$ est la précomposition par la diagonale $\mathbf{P}(A)\to\mathbf{P}(A)\times\mathbf{P}(A)$ d'un bifoncteur de $\F(\mathbf{P}(A)\times\mathbf{P}(A);K)$ polynomial par rapport à la première variable et antipolynomial par rapport à la seconde. Autrement dit, via l'isomorphisme de catégories $\F(\mathbf{P}(A)\times\mathbf{P}(A);K)\simeq\fct(\mathbf{P}(A),\F(A,K))$, ce foncteur est polynomial, et à valeurs dans la sous-catégorie abélienne des foncteurs antipolynomiaux de $\F(A,K)$. Comme $F$ est de type fini, ces valeurs sont également de type fini dans $\F(A,K)$, de sorte que le corollaire~\ref{corflf} montre qu'elles sont en fait finies. On conclut grâce au lemme~\ref{lm-polfini} ci-dessous.
\end{proof}

\begin{lm}\label{lm-polfini} Soient $A$ un anneau, $\V$ une catégorie abélienne et $F$ un foncteur de $\fct(\mathbf{P}(A),\V)$, polynomial de degré $d$. Si $F(A^i)$ est un objet fini de $\V$ pour tout entier $0\le i\le d$, alors $F$ est un foncteur fini de $\fct(\mathbf{P}(A),\V)$.
\end{lm}

\begin{proof} Le foncteur
\[\Phi : \pol_d(\mathbf{P}(A),\V)\to\V^{d+1}\qquad F\mapsto (cr_i(F)(A,\dots,A))_{0\le i\le d}\]
est exact et fidèle (la fidélité provient de la décomposition \eqref{eq-dec-cr}, page~\pageref{eq-dec-cr}, qui montre que l'annulation de $\Phi(F)$ entraîne celle de $F(A^n)$ pour {\it tout} $n\in\mathbb{N}$). Par conséquent, si $\Phi(F)$ est un objet noethérien (resp. artinien, fini) de $\V^{d+1}$, il en est de même pour $F$. La conclusion s'ensuit.
\end{proof}

Si $K$ est de caractéristique non nulle, l'absence de scindement par le degré des foncteurs \phs\ rend la situation beaucoup plus délicate, et les foncteurs à valeurs de dimensions finies ne sont généralement pas localement finis (voir les travaux de Powell \cite{GP-cow}). Nous conjecturons alors les résultats suivants :

\begin{conj}\label{conj-lf-carp} Supposons que $K$ est de caractéristique $p>0$. Soit $A$ un anneau.
\begin{enumerate}
\item La catégorie $\F^{\df}(A,K)$ est localement noethérienne.
\item Si $A$ ne possède aucun idéal à droite $I$ tel que $A/I$ soit un $p$-groupe fini non nul, alors la catégorie $\F^{\df}(A,K)$ est localement finie.
\end{enumerate}
\end{conj}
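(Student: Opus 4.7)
The plan is to reduce the conjecture, via the global Steinberg decomposition of Theorem~\ref{th-glob-tf}, to separately controlling the polyHomial and antipolynomial factors of a type-fini functor in $\F^{df}(A,K)$, exploiting that antipolynomial functors are already known to form a locally finite subcategory by Corollary~\ref{corflf}.

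For part~(2), first observe that the hypothesis on $A$ a fortiori forces the only $K$-cotrivial two-sided ideal of $A$ to be $A$ itself, since such an ideal would produce a quotient ring whose additive group is a finite $p$-group. By Corollary~\ref{cor-pasdecotr}, every type-fini functor $F$ in $\F^{df}(A,K)$ is then polyHomial. The strategy is then to establish (a) that such an $F$ is in fact polynomial of bounded degree, and (b) that polynomial functors of bounded degree in $\F^{df}(A,K)$ are of finite length. For (a) one invokes Lemma~\ref{lm-sucosu}: a polyHomial functor with finite support \emph{and} finite co-support is automatically polynomial. The support is given by the type-fini hypothesis, and one produces a co-support by duality — the functor $D(F)$ in $\F^{df}(A^{op},K)$ is polyHomial for the symmetric reason, and a support of $D(F)$ gives a co-support of $F$. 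For (b), one combines Pirashvili's diagonal/cross-effect adjunction (Proposition~\ref{adj-diagcr}) with the observation that $d$-multi-additive functors $\mathbf{P}(A)^d\to K\Md$ taking finite-dimensional values correspond, by iterated use of Proposition~\ref{additifs-bimodules}, to finite-dimensional modules over $K\otimes_\mathbb{Z}A^{\otimes d}$, hence to modules of finite length. An analog of Lemma~\ref{lm-polfini} (whose exactness-and-faithfulness argument carries over verbatim to the $K$-linear setting of cross-effects) then lifts finite length of the multi-additive pieces to finite length of $F$ itself.

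For part~(1) in full generality, one writes a type-fini $F$ as $\delta^*B$ where, by Theorem~\ref{th-glob-tf}, $B\in\F(\A\times\A;K)$ is antipolynomial in the first variable and polyHomial in the second. Via the natural isomorphism $\F(\A\times\A;K)\simeq\fct(\A,\F(A,K))$ one thinks of $B$ as a polyHomial functor $\mathbf{P}(A)\to\mathcal{A}_{\mathrm{ap}}$, where $\mathcal{A}_{\mathrm{ap}}$ is the full abelian subcategory of antipolynomial functors in $\F(A,K)$, which by Corollary~\ref{corflf} is locally finite. The task becomes: show that polyHomial functors of type fini with target a locally noetherian (here, locally finite) category are themselves noetherian. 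Combined with a sub-object correspondence for $F=\delta^*B$ analogous to Proposition~\ref{pr-simglobi}, this would yield the local noetherianity of $\F^{df}(A,K)$.

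The principal obstacle is the control of polyHomial — as opposed to genuinely polynomial — functors in positive characteristic. Unlike in characteristic zero, where Proposition~\ref{pr-polph-car0} splits any polyHomial functor into a direct sum of homogeneous polynomial pieces (so that Theorem~\ref{th-lf-car0} follows almost mechanically from the polynomial case), in characteristic $p$ the polyHomial condition allows the pointwise polynomial degree of $\A(x,y)\to\mathrm{Hom}_K(F(x),F(y))$ to grow with $x,y$, as already illustrated by Example~\ref{ex-ttph}. The duality-via-cosupport argument sketched above for part~(2) is the crux of that case; for part~(1), one likely needs new combinatorial input in the spirit of the Putman--Sam--Snowden techniques underlying Theorem~\ref{thpss}, to tame the interaction between unbounded polyHomial degrees and the locally finite antipolynomial target $\mathcal{A}_{\mathrm{ap}}$.
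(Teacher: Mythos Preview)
This statement is a \emph{conjecture}: the paper does not prove it, and explicitly presents it as open (only the special case $A\otimes_\mathbb{Z}K=0$ is settled, in Proposition~\ref{pr-conjlf}). There is thus no proof in the paper to compare against; your proposal should be read as a sketch of an attack, and as such it contains two concrete errors.

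First, your reduction for part~(2) is wrong. You claim that the hypothesis ``$A$ has no right ideal $I$ with $A/I$ a nonzero finite $p$-group'' forces $A$ to have no nontrivial $K$-cotrivial two-sided ideal. This reverses the arithmetic: when $\mathrm{car}(K)=p$, the condition $K\otimes_\mathbb{Z}A/I=0$ for a finite quotient $A/I$ means precisely that $|A/I|$ is \emph{coprime} to $p$, not that $A/I$ is a $p$-group. The hypothesis about $p$-group quotients therefore says nothing about $K$-cotrivial ideals. Concretely, $A=\mathbb{Z}[1/p]$ satisfies the hypothesis of part~(2) but has many $K$-cotrivial ideals (namely $nA$ for any $n>1$ coprime to $p$), so Corollary~\ref{cor-pasdecotr} is unavailable and antipolynomial factors genuinely persist.

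Second, even granting a polyHomial reduction, your step~(a) --- upgrading polyHomial to polynomial via Lemma~\ref{lm-sucosu} --- is circular. That lemma requires both a finite support and a finite co-support. Type fini supplies the support; you propose to obtain the co-support ``by duality'', arguing that a support of $D(F)$ yields a co-support of $F$. This is correct as far as it goes, but nothing ensures that $D(F)$ has a finite support: that is equivalent to $F$ being of type cofini, which is exactly what is not assumed. This circularity is the heart of why the characteristic-$p$ case is harder than the characteristic-$0$ case handled in Theorem~\ref{th-lf-car0}, where Proposition~\ref{pr-polph-car0} bypasses the issue via the weight decomposition. Your closing paragraph correctly diagnoses that genuinely new input is needed; the problem is that the specific steps you propose before that do not already bridge the gap.
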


Le cas particulier suivant découle des corollaires~\ref{cor-tfap} et~\ref{corflf}.

\begin{pr}\label{pr-conjlf} Si $A\otimes_\mathbb{Z}K=0$, la conjecture~\ref{conj-lf-carp} est vérifiée.
\end{pr}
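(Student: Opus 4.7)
The plan is to verify both assertions of Conjecture~\ref{conj-lf-carp} under the hypothesis $A\otimes_\mathbb{Z}K=0$ by combining Corollaries~\ref{cor-tfap} and~\ref{corflf}, following the hint. The essential observation is that the hypothesis transfers from the ring $A$ to every endomorphism group of $\mathbf{P}(A)$, placing us in the situation of Corollary~\ref{cor-tfap}.

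First, I check the hypothesis of Corollary~\ref{cor-tfap} for $\A=\mathbf{P}(A)$: for every object $x=A^n$ of $\mathbf{P}(A)$, the abelian group underlying $\mathrm{End}_A(A^n)$ is isomorphic to $A^{n^2}$, so $\mathrm{End}_A(A^n)\otimes_\mathbb{Z}K\cong (A\otimes_\mathbb{Z}K)^{n^2}=0$. Corollary~\ref{cor-tfap} then yields that every functor of finite type (or of cofinite type) of $\F^{df}(A,K)$ is antipolynomial. On the other hand, Corollary~\ref{corflf} asserts that the full subcategory of antipolynomial functors of $\F(A,K)$ is locally finite, and in particular every antipolynomial functor of finite type in $\F(A,K)$ is of finite length. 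Combining these two facts gives the key intermediate statement: every functor of finite type in $\F^{df}(A,K)$ is of finite length.

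Next, I will argue that this suffices to prove $\F^{df}(A,K)$ is locally finite. Any functor $F$ of $\F^{df}(A,K)$ is the directed union of its subfunctors generated by finitely many elements $\xi_i\in F(y_i)$; each such subfunctor is the image of a morphism $\bigoplus_i P^{y_i}_A\to F$, hence of finite type in $\F(A,K)$, and it lies in $\F^{df}(A,K)$ because any subfunctor of a finite-dimensional-valued functor is itself finite-dimensional-valued. By the intermediate statement, each such subfunctor has finite length. Hence every object of $\F^{df}(A,K)$ is a filtered colimit of its finite-length subobjects, which is exactly local finiteness of the thick subcategory $\F^{df}(A,K)$ of $\F(A,K)$. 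This establishes assertion~(2) of Conjecture~\ref{conj-lf-carp}, and assertion~(1) follows immediately since finite-length objects are noetherian.

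The final point to verify is that, when $\mathrm{car}(K)=p>0$, the hypothesis of part~(2) of Conjecture~\ref{conj-lf-carp} is automatically satisfied under $A\otimes_\mathbb{Z}K=0$: the identity $A\otimes_\mathbb{Z}K\cong(A/pA)\otimes_{\mathbb{F}_p}K$ shows that $A\otimes_\mathbb{Z}K=0$ forces $pA=A$, hence $p$ is invertible in $A$, so $A$ admits no non-zero finite $p$-group as a quotient. Consequently both parts of the conjecture hold unconditionally under our hypothesis. No serious obstacle arises: the proposition is essentially a clean package of the two cited corollaries, with the only care needed being the verification that $\F^{df}(A,K)$, although not cocomplete, inherits local finiteness from the stated finite-length property of its finite-type objects.
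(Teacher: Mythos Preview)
Your proof is correct and follows the same route as the paper, which simply says the result follows from Corollaries~\ref{cor-tfap} and~\ref{corflf}; you have spelled out the details. One minor remark: you verify the hypothesis of Corollary~\ref{cor-tfap} only for the free modules $A^n$, but every object of $\mathbf{P}(A)$ is a direct summand of some $A^n$, so its endomorphism ring is a direct summand (as abelian group) of $\M_n(A)$ and the vanishing of $\mathrm{End}_A(x)\otimes_\mathbb{Z}K$ follows at once.
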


\section{Produit tensoriel de foncteurs finis de $\F^{\df}(\A;K)$}\label{sptf}

\begin{pr}\label{ptens-pol}  Le produit tensoriel de deux foncteurs polynomiaux finis et à valeurs de dimensions finies de $\F(\A;K)$ est fini.
\end{pr}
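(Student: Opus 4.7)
The functor $F\otimes G$ is polynomial of degree at most $d+e$ (where $d=\deg F$, $e=\deg G$) and takes values in finite-dimensional $K$-vector spaces, so the only content of the proposition is that $F\otimes G$ has finite length. I would proceed by dévissage, reducing first to a tensor product of simple polynomial functors, and then invoke the Steinberg decomposition.

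First, the bi-exactness of $-\otimes_K -$ on $\F(\A;K)$ together with composition series for $F$ and $G$ reduces the problem to showing that $S\otimes T$ has finite length for simple polynomial $S, T\in\F^{df}(\A;K)$. A further base change via Proposition~\ref{pr-corpassegro} -- extending scalars to a finite extension $L\supseteq K$ over which $S$ and $T$ each acquire finite filtrations with absolutely simple polynomial subquotients, and using that $-\otimes_K L$ is exact, faithful, and compatible with tensor products -- reduces to the case of absolutely simple polynomial $S, T\in\F^{df}(\A;L)$. Then Theorem~\ref{thm-st} yields
\[S\otimes T\simeq\bigotimes_\pi\pi^*(\mathrm{E}_\pi\otimes\mathrm{F}_\pi),\]
a finite external tensor product indexed by absolutely simple additive functors $\pi\colon\A\to L\Md$, where $\mathrm{E}_\pi$ and $\mathrm{F}_\pi$ are elementary endofunctors of $L\Md$ (all but finitely many trivial).

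The remaining task is to show that each factor $\pi^*(\mathrm{E}_\pi\otimes\mathrm{F}_\pi)$ has finite length in $\F^{df}(\A;L)$ and that the external product over the finitely many active $\pi$ preserves finite length. For the first point, the tensor product $\mathrm{E}_\pi\otimes\mathrm{F}_\pi$ lies in the locally finite abelian category of strict polynomial endofunctors of $L\Md$ of bounded degree (equivalent via Schur--Weyl theory to modules over a Schur algebra, which is finite-dimensional), hence has a finite composition series whose subquotients are simple strict polynomial endofunctors of $L\Md$. Applying Theorem~\ref{thm-st} to these subquotients, one writes each as $\bigotimes_\sigma\sigma^*H_\sigma$ over absolutely simple additive endofunctors $\sigma$ of $L\Md$; crucially, each composition $\sigma\circ\pi\colon\A\to L\Md$ is again absolutely simple additive (verified using Proposition~\ref{additifs-bimodules} and the fact that twisting an absolutely simple bimodule by an endomorphism of the ground field preserves absolute simplicity), so by another application of Theorem~\ref{thm-st} the pullback $\pi^*(\bigotimes_\sigma\sigma^*H_\sigma)=\bigotimes_\sigma(\sigma\circ\pi)^*H_\sigma$ is of Steinberg form, hence simple in $\F^{df}(\A;L)$. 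For the external product across distinct $\pi$, one invokes Corollary~\ref{cor-cas-sympa-tens} iteratively, using that absolute simplicity propagates through external tensor products. The main obstacle is the combinatorial bookkeeping when several compositions $\sigma\circ\pi$ collide in the iteration, which requires regrouping tensor factors of elementary endofunctors of $L\Md$; this manipulation stays within a finite combinatorial framework and preserves finite length throughout.
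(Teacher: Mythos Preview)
Your overall plan---dévissage to simples, base change, Steinberg decomposition, then finite length of products of elementary functors---is the paper's. There is, however, a genuine gap in the second step. After Proposition~\ref{pr-corpassegro} you obtain a finite extension $L/K$ over which $S$ and $T$ acquire filtrations with absolutely simple polynomial subquotients, and you then invoke Theorem~\ref{thm-st}. But that theorem only characterises the simple polynomial functors satisfying one of the equivalent conditions (1)--(3); it does \emph{not} assert that an arbitrary absolutely simple polynomial functor of $\F^{df}(\A;L)$ is of this form. Remark~\ref{rq-abss-st} gives an explicit counterexample: the Hermitian-form functor in $\F(\mathbb{C},\mathbb{R})$ is absolutely simple and quadratic, yet admits no Steinberg decomposition because $\mathbf{Add}(\mathbf{P}(\mathbb{C});\mathbb{R})$ contains no absolutely simple object. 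Proposition~\ref{pr-corpassegro} gives no control over whether $L$ is a splitting field of $\A$ in the sense of Definition~\ref{def-corps-dec}, which is what Theorem~\ref{steinberg-pol-gal} would require.

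The paper bypasses this by citing Theorem~\ref{st-pol-petit-corps} instead: there Proposition~\ref{pr-corpassegro} is applied not to $S$ but to a simple subfunctor of the top cross-effect $cr_d(S)$, producing directly a filtration of $S\otimes_K L$ by functors already of Steinberg form $\pi_1^*E_1\otimes\dots\otimes\pi_r^*E_r$. Once both $S\otimes_K L$ and $T\otimes_K L$ are so filtered, their tensor product is filtered by functors $\bigotimes_\pi\pi^*(E_\pi\otimes F_\pi)$, and one concludes via the finite length of each $E_\pi\otimes F_\pi$ in $\F(L,L)$ given by Proposition~\ref{prop-L-p}(3) (or Proposition~\ref{pr-car0}(3) in characteristic zero), together with the exact faithfulness of $-\otimes_K L$. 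Your subsequent bookkeeping---Steinberg-decomposing the composition factors of $E_\pi\otimes F_\pi$ over $\mathbf{P}(L)$ and regrouping along the composites $\sigma\circ\pi$---is one way to flesh out what the paper leaves implicit; it is along the right lines, though the claim that each $\sigma\circ\pi$ is again absolutely simple additive (and that the resulting regrouped product is of Steinberg form when collisions occur) deserves an explicit justification rather than being waved through.
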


\begin{proof} Cela découle du théorème~\ref{st-pol-petit-corps}, de la proposition~\ref{prop-L-p} et du caractère exact et fidèle de tout foncteur d'extension des scalaires au but $-\otimes L : \F(\A;K)\to\F(\A;L)$, où $L$ est un surcorps de $K$.
\end{proof}

\begin{rem} Sans hypothèse de valeurs de dimensions finies, il n'est pas vrai qu'un produit tensoriel de foncteurs polynomiaux finis, même {\em additifs}, reste toujours fini. Considérons ainsi une extension de degré infini de corps commutatifs $K\subset L$ et notons $\pi$ le foncteur d'oubli de $\mathbf{P}(L)$ dans $K\Md$ : c'est un foncteur additif simple de $\F(L,K)$. Pourtant, $\pi^{\otimes 2}$ n'est pas fini. En effet, le bifoncteur biadditif $\pi^{\boxtimes 2}$ n'est pas fini (utiliser la proposition~\ref{additifs-bimodules} et le fait que la $K$-algèbre $L\otimes L$ n'est pas artinienne), et il est facteur direct de $cr_2(\pi^{\otimes 2})$, or $cr_2$ envoie un foncteur polynomial {\it fini} de degré $2$ sur un bifoncteur biadditif fini, d'après la proposition~\ref{pr-fin-cr}. 
\end{rem}

\begin{pr}\label{pr-pts} Si les catégories $\F(\A;K)$ et $\F(\A^{\op};K)$ sont localement noethériennes, alors le produit tensoriel de deux foncteurs de longueur finie et à valeurs de dimensions finies de $\F(\A;K)$ est de longueur finie.
\end{pr}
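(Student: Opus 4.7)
The plan is to show that $F\otimes G$ is both noetherian and artinian in the locally noetherian category $\F(\A;K)$, which then forces it to be of finite length.

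First I would record two preliminary observations. By Proposition~\ref{rq-anninf}, the hypothesis forces every hom-set $\A(x,y)$ to be finite, so Proposition~\ref{pr-dim-finie} ensures that every finite-type functor of $\F(\A;K)$ already lies in $\F^{df}(\A;K)$. Moreover, in any locally noetherian Grothendieck category, a functor is of finite length if and only if it is of finite type and artinian, and an object of finite type is automatically noetherian. The whole problem thus reduces to checking that $F\otimes G$ is of finite type and artinian.

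The finite-type assertion rests on the key calculation
\[
P_\A^x\otimes P_\A^y \;=\; K[\A(x,-)]\otimes_K K[\A(y,-)] \;\cong\; K[\A(x,-)\times\A(y,-)] \;\cong\; K[\A(x\oplus y,-)] \;=\; P_\A^{x\oplus y},
\]
which uses only the additivity of $\A$ and the identification $K[S]\otimes_K K[T]\cong K[S\times T]$. Since $F$ and $G$ are of finite length, they are in particular of finite type, hence quotients of finite direct sums of standard projectives. Tensoring termwise and applying the identity above exhibits $F\otimes G$ as a quotient of a finite direct sum of standard projectives; it is therefore of finite type, hence noetherian.

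For artinianness I would invoke the duality $D=\mathrm{Hom}_K(-,K)$, which is an exact anti-equivalence between $\F^{df}(\A;K)$ and $\F^{df}(\A^{op};K)$ preserving finite length. Since $F$ and $G$ take finite-dimensional values, the natural isomorphism $\mathrm{Hom}_K(V\otimes W,K)\simeq V^*\otimes W^*$ (for $V,W$ finite dimensional) yields $D(F\otimes G)\cong DF\otimes DG$ in $\F(\A^{op};K)$. Now $DF$ and $DG$ are of finite length in $\F(\A^{op};K)$, hence of finite type; applying the preceding step with $\A$ replaced by $\A^{op}$ (whose functor category is also locally noetherian by hypothesis) shows that $DF\otimes DG$ is of finite type, hence noetherian, in $\F(\A^{op};K)$. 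Transporting back through $D$, this means $F\otimes G$ is artinian in $\F(\A;K)$. Combined with the previous paragraph, $F\otimes G$ is of finite length.

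The one nontrivial input is the tensor identity for standard projectives; the symmetry of the hypothesis between $\A$ and $\A^{op}$ is exactly what makes the duality argument go through, so no real obstacle remains once that identity is in hand.
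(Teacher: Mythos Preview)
Your argument is correct and follows essentially the same route as the paper: the tensor identity $P_\A^x\otimes P_\A^y\simeq P_\A^{x\oplus y}$ shows that the tensor product of finite-type functors is of finite type (hence noetherian under the local-noetherianity hypothesis), and the duality $D$ together with the hypothesis on $\A^{op}$ handles artinianness. Your opening observation via Proposition~\ref{rq-anninf} (that the hom-sets must be finite) is valid but not actually used anywhere in the remainder of your proof---the fact that $F\otimes G$ takes finite-dimensional values is immediate from the hypothesis on $F$ and $G$, and that is all you need for $D(F\otimes G)\simeq DF\otimes DG$; you could safely drop that preliminary paragraph.
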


\begin{proof} Comme $\F(\A;K)$ est localement noethérienne, il y a équivalence entre noethérianité et type-finitude dans cette catégorie, de sorte que la proposition~\ref{pr-pttf} montre qu'un produit tensoriel de foncteurs noethériens y est également noethérien.

Un foncteur $F$ de $\F^{\df}(\A;K)$ est artinien si et seulement si son dual $D(F)$ (introduit au début du §\,\ref{ssfp}) est un foncteur noethérien de $\F^{\df}(\A^{\op};K)$. Par conséquent, en appliquant ce qui précède à la catégorie additive $\A^{\op}$, on voit que le produit tensoriel de deux foncteurs artiniens est artinien dans $\F^{\df}(\A;K)$.

La conclusion s'obtient du fait que les foncteurs finis sont les foncteurs à la fois artiniens et noethériens.
\end{proof}

\begin{thm}\label{th-ptens-fini} Si, pour tout idéal $K$-cotrivial $\I$ de $\A$, les catégories $\F(\A/\I;K)$ et $\F((\A/\I)^{\op};K)$ sont localement noethériennes, alors la classe des foncteurs de longueur finie de $\F^{\df}(\A;K)$ est stable par produit tensoriel.
\end{thm}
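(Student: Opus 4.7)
The plan is to reduce the statement, via Corollary~\ref{cor-tens-St1} (the global Steinberg-type decomposition), to the two already proven tensor-product stability results: Proposition~\ref{ptens-pol} for polynomial simples, and Proposition~\ref{pr-pts} applied to $\A/\I$ for antipolynomial simples. Since the pointwise tensor product over $K$ is exact in $\F(\A;K)$ and finite length is stable under extensions, taking composition series of $F$ and $G$ reduces the claim to showing $S \otimes T$ is of finite length for any simples $S, T \in \F^{df}(\A;K)$. A further finite extension of scalars $K \subset L$ (which is exact, faithful and compatible with tensor products; moreover the $K$-cotrivial and $L$-cotrivial ideals of $\A$ coincide for finite extensions, so the hypothesis of the theorem is preserved) then allows us to assume that $K$ is a non-additive decomposition field of every quotient $\A/\I$ by a $K$-cotrivial ideal.

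Under this assumption, Corollary~\ref{cor-tens-St1} writes $S \simeq S^{ap} \otimes S^p$ and $T \simeq T^{ap} \otimes T^p$ with $S^{ap}, T^{ap}$ antipolynomial simple and $S^p, T^p$ polynomial simple. Taking $\I$ to be the intersection of the two $K$-cotrivial ideals through which $S^{ap}$ and $T^{ap}$ respectively factor (still $K$-cotrivial by Proposition~\ref{prop-ev-cotriv}), we view $S^{ap}$ and $T^{ap}$ as simples of $\F(\A/\I;K)$. Since $\F(\A/\I;K)$ and $\F((\A/\I)^{op};K)$ are locally noetherian by hypothesis, Proposition~\ref{pr-pts} applied to $\A/\I$ shows that $S^{ap} \otimes T^{ap}$ has finite length in $\F(\A/\I;K)$, hence (via pullback along $\pi_\I$, which is exact and fully faithful) in $\F(\A;K)$. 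Proposition~\ref{ptens-pol} shows that $S^p \otimes T^p$ has finite length in $\F(\A;K)$.

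It remains to show that the tensor $U \otimes V$ of an antipolynomial finite-length functor $U$ (factoring through $\A/\I$) and a polynomial finite-length functor $V$ is itself of finite length, which is the main technical point of the argument. Writing $U \otimes V = \psi_\I^*(\bar U \boxtimes V)$, where $\bar U \in \F(\A/\I;K)$ corresponds to $U$ and the external tensor product $\bar U \boxtimes V$ on $\A/\I \times \A$ is polynomial in the second variable, Proposition~\ref{pr-simglobi} (fully faithfulness of $\psi_\I^*$ with subquotient-stable essential image on such bifunctors) reduces the claim to showing $\bar U \boxtimes V$ has finite length in $\F(\A/\I \times \A;K)$. Using exactness of the external tensor product, composition series of $\bar U$ and $V$ yield a filtration of $\bar U \boxtimes V$ whose quotients have the form $\sigma \boxtimes \tau$, with $\sigma$ simple in $\F^{df}(\A/\I;K)$ and $\tau$ simple polynomial in $\F(\A;K)$. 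Over our decomposition field $K$, both $\sigma$ and $\tau$ are absolutely simple (the polynomial case being Theorem~\ref{steinberg-pol-gal}), so by Proposition~\ref{abs-semi-simple} each $\sigma \boxtimes \tau$ is simple; hence $\bar U \boxtimes V$ has finite length. Without the passage to $\A/\I \times \A$ provided by Proposition~\ref{pr-simglobi}, a direct verification in the full bifunctor category $\F(\A/\I \times \A;K)$ would be delicate, since this category is not generally locally noetherian.
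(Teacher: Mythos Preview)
Your proof is correct and follows the same overall strategy as the paper: reduce to simples, enlarge $K$ by a finite extension, decompose each simple via Corollary~\ref{cor-tens-St1}, and treat the polynomial$\times$polynomial and antipolynomial$\times$antipolynomial cases by Propositions~\ref{ptens-pol} and~\ref{pr-pts} respectively. The difference lies entirely in how you handle the mixed case.

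For the tensor product of a finite antipolynomial $U$ with a finite polynomial $V$, you pass to the bifunctor category on $\A/\I\times\A$ via $\psi_\I^*$ and Proposition~\ref{pr-simglobi}, and then show that the external products $\sigma\boxtimes\tau$ of absolutely simple composition factors are simple. This is valid, but the paper takes a much shorter route: once $S^p\otimes T^p$ and $S^{ap}\otimes T^{ap}$ are known to be finite, their composition factors are again polynomial (resp.\ antipolynomial) simples of $\F^{df}(\A;K)$, and the ``if'' direction of Corollary~\ref{cor-tens-St1} says directly that any product (polynomial simple)$\otimes$(antipolynomial simple) is itself simple in $\F(\A;K)$. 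Hence $S\otimes T$ acquires a finite composition series without ever leaving $\F(\A;K)$. Your detour through $\psi_\I^*$ and Proposition~\ref{abs-semi-simple} essentially re-derives this half of Corollary~\ref{cor-tens-St1} (whose proof does go through Proposition~\ref{pr-simglobi}), so the extra work is redundant rather than wrong.

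One small omission: to invoke Theorem~\ref{steinberg-pol-gal} for the absolute simplicity of the polynomial composition factors $\tau$, you need $K$ to be a decomposition field of $\A$ itself, not only of the quotients $\A/\I$ by $K$-cotrivial ideals. You should add this to your base-change hypothesis; the paper sidesteps the issue by simply assuming $K$ contains all roots of unity.
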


\begin{proof} Il suffit de démontrer que le produit tensoriel de deux foncteurs simples à valeurs de dimensions finies est de longueur finie. Supposons d'abord le corps $K$ assez gros (par exemple, contenant toutes les racines de l'unité). Le corollaire~\ref{cor-tens-St1} permet de se ramener à démontrer deux cas distincts: (i) le cas où les deux foncteurs simples sont polynomiaux, et (ii) le cas où les deux foncteurs simples factorisent par un même idéal $K$-cotrivial $\I$. Le cas (i) est démontré dans la proposition \ref{ptens-pol}. D'autre part, comme $\F(\A/\I;K)$ s'identifie (via la restriction le long de $\pi_\I:\A\to \A/\I$) à une sous-catégorie de $\F(\A;K)$ stable par sous-quotient, on voit qu'un foncteur $F$ est fini dans $\F(\A/\I;K)$ si et seulement si $\pi_\I^*(F)$ est fini dans $\F(\A;K)$. Le cas (ii) se déduit donc de la proposition \ref{pr-pts}.

Le cas général se ramène au cas où $K$ est assez gros par un argument de changement de base au but (cf. proposition~\ref{pr-corpassegro}).
\end{proof}

En utilisant le corollaire~\ref{corlnc} et l'équivalence $\mathbf{P}(A)^{\op}\simeq\mathbf{P}(A^{\op})$, on en déduit :

\begin{cor}\label{cortensPA} Pour tout anneau $A$, la classe des foncteurs de longueur finie de $\F^{\df}(A,K)$ est stable par produit tensoriel.
\end{cor}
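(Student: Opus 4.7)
Le plan est direct : on applique le théorème~\ref{th-ptens-fini} à la catégorie additive $\A=\mathbf{P}(A)$. Il suffit donc de vérifier que, pour tout idéal $K$-cotrivial $\I$ de $\mathbf{P}(A)$, les deux catégories $\F(\mathbf{P}(A)/\I;K)$ et $\F((\mathbf{P}(A)/\I)^{op};K)$ sont localement noethériennes.

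La noethérianité locale de la première est précisément la condition $\lnc(\mathbf{P}(A),K)$ fournie par le corollaire~\ref{corlnc} (et donc, \emph{in fine}, par le théorème de Putman-Sam-Snowden~\ref{thpss}).

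Pour la seconde, j'utiliserais l'équivalence canonique $\mathbf{P}(A)^{op}\simeq\mathbf{P}(A^{op})$ induite par le foncteur de dualité $P\mapsto\mathrm{Hom}_A(P,A)$. Sous cette équivalence, l'idéal $\I^{op}$ de $\mathbf{P}(A)^{op}$ correspond à un idéal $\I'$ de $\mathbf{P}(A^{op})$ ; comme une équivalence de catégories préserve à isomorphisme près les groupes abéliens de morphismes, $\I'$ est encore $K$-cotrivial. En appliquant le corollaire~\ref{corlnc} à l'anneau $A^{op}$, on obtient la noethérianité locale de $\F(\mathbf{P}(A^{op})/\I';K)$, donc celle de la catégorie équivalente $\F((\mathbf{P}(A)/\I)^{op};K)$.

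La conclusion résulte alors immédiatement du théorème~\ref{th-ptens-fini}. Il n'y a pas d'obstacle technique propre à cette étape : toute la profondeur se concentre dans le résultat de noethérianité de Putman-Sam-Snowden utilisé pour démontrer le corollaire~\ref{corlnc}, et l'application de ce corollaire aux deux anneaux $A$ et $A^{op}$ fournit exactement les deux hypothèses requises par le théorème~\ref{th-ptens-fini}.
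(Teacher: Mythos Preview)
Votre démonstration est correcte et suit exactement l'approche du papier : appliquer le théorème~\ref{th-ptens-fini} en invoquant le corollaire~\ref{corlnc} et l'équivalence $\mathbf{P}(A)^{op}\simeq\mathbf{P}(A^{op})$. Vous explicitez simplement davantage le transport de l'idéal $K$-cotrivial par l'équivalence, ce que le papier laisse implicite.
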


\section{Présentation finie des foncteurs finis}\label{spfff}

L'étude de la présentation finie dans $\F(\A;K)$, qui est préservée par produit tensoriel, est grandement facilitée par nos décompositions à la Steinberg. Nous ramènerons ainsi, au théorème~\ref{th-pol-pf}, la présentation finie des foncteurs polynomiaux simples (ou finis) à valeurs de dimensions finies à celle de foncteurs additifs. Pour les simples antipolynomiaux, nous montrerons la présentation finie lorsque les foncteurs antipolynomiaux forment une catégorie localement noethérienne (cf. section~\ref{snoeth}) modulo une hypothèse de finitude sur les idéaux cotriviaux.

Nous obtiendrons notamment, au théorème~\ref{thapf}, une caractérisation complète des anneaux $A$ tels que tous les foncteurs simples (ou finis) à valeurs de dimensions finies de $\F(A,K)$ soient de présentation finie.

Nous ne nous intéresserons à la présentation finie que dans la catégorie $\F(\A;K)$ ou sa sous-catégorie de foncteurs additifs, mais pas dans leurs sous-catégories de foncteurs à valeurs de dimensions finies, qui ne sont pas des catégories de Grothendieck, même si les foncteurs que nous considérerons y appartiennent. 

\smallskip

On rappelle qu'un objet $X$ d'une catégorie de Grothendieck $\E$ est dit {\it de présentation finie }si pour tout foncteur $\Phi$ d'une petite catégorie filtrante $I$ vers $\E$, l'application canonique suivante est bijective :
\[\underset{I}{\col} \E(X,-)\circ\Phi\to \E(X,\underset{I}{\col}\Phi).\]

Les propriétés suivantes sont bien connues, on pourra se référer par exemple à Popescu \cite[§\,3.5, énoncés 5.9 à 5.13]{Pop} pour leur démonstration.

\begin{pr}\label{prpfclas}\begin{enumerate}
\item Tout objet de présentation finie de $\E$ est de type fini. Tout objet projectif de type fini de $\E$ est de présentation finie.
\item\label{itpfext} Si dans une suite exacte courte $0\to Y\to X\to Z\to 0$ de $\E$, $Y$ et $Z$ sont de présentation finie, alors $X$ est de présentation finie.
\item Supposons que $\E$ possède une famille génératrice $\G$ constituée d'objets projectifs de type fini. Étant donné un objet $X$ de $\E$, les assertions suivantes sont équivalentes :
\begin{enumerate}\label{itpfcns}
\item  $X$ est de présentation finie ;
\item il existe une suite exacte $Q\to P\to X\to 0$ de $\E$ dans laquelle $P$ et $Q$ sont des sommes directes finies d'éléments de $\G$ ;
\item $X$ est de type fini et pour toute suite exacte $0\to Z\to Y\to X\to 0$ avec $Y$ de type fini, $Z$ est de type fini ;
\item il existe une suite exacte $Z\to Y\to X\to 0$ avec $Y$ et $Z$ de présentation finie.
\end{enumerate}
\end{enumerate}
\end{pr}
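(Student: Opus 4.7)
The plan is to establish the three parts in order; the third requires a short cycle of implications. For part 1, the first assertion follows by specializing the commutation of $\E(X,-)$ with filtered colimits to directed colimits of monomorphisms, which is the definition of being of finite type. For the second assertion, let $P$ be a projective of finite type and $Y=\col Y_i$ a filtered colimit: writing $Y$ as the filtered union of the images of the canonical maps $Y_i\to Y$, the finite-type property of $P$ together with projectivity (allowing one to lift any morphism from $P$ into $Y$ to one of the $Y_i$) gives $\col\E(P,Y_i)\xrightarrow{\simeq}\E(P,Y)$.

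For part 2, apply the left-exact functors $\E(-,Y_i)$ and $\E(-,\col Y_i)$ to the short exact sequence $0\to Y\to X\to Z\to 0$. Exactness of filtered colimits in $\E$ yields two left-exact sequences related by a commutative diagram in which the vertical maps on the $\E(Y,-)$ and $\E(Z,-)$ columns are isomorphisms by hypothesis; the four-lemma then forces the middle vertical to be an isomorphism, giving that $X$ is of finite presentation.

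For part 3, I would prove the cycle $(b)\Rightarrow(a)\Rightarrow(b)$ and $(b)\Rightarrow(c)\Rightarrow(d)\Rightarrow(b)$. The implication $(b)\Rightarrow(a)$ is the main inference: applying $\E(-,\col Y_i)$ and $\col\E(-,Y_i)$ to the exact sequence $Q\to P\to X\to 0$, and using that $P,Q$ are finite sums of elements of $\G$ hence finitely presented by part 1, a four-lemma argument shows $X$ is finitely presented. For $(a)\Rightarrow(b)$, pick an epimorphism $\pi:P\twoheadrightarrow X$ with $P$ a finite sum of generators in $\G$, write $\ker\pi=\col K_i$ as the filtered union of its finitely generated subobjects so that $X=\col P/K_i$; finite presentation of $X$ makes $\mathrm{id}_X$ factor through some projection $X\to P/K_i$, which forces $K_i=\ker\pi$ to be finitely generated, and covering it by a finite sum of elements of $\G$ yields the desired presentation.

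For $(b)\Rightarrow(c)$, given $0\to Z\to Y\to X\to 0$ with $Y$ finitely generated, form the pullback $W:=Y\times_X P$: projectivity of $P$ splits the sequence $0\to Z\to W\to P\to 0$, so $W\simeq Z\oplus P$; on the other hand $W$ is an extension of $Y$ by $\ker(P\to X)=\mathrm{Im}(Q\to P)$, both finitely generated, hence $W$, and therefore its direct summand $Z$, is finitely generated. The implication $(c)\Rightarrow(d)$ is immediate by choosing $Y$ and $Z$ to be finite sums of generators surjecting onto $X$ and onto the (finitely generated) kernel respectively, both finitely presented by part 1. Finally $(d)\Rightarrow(b)$ follows by taking a presentation $P_1\to P_0\to Y\to 0$ of $Y$, composing $P_0\twoheadrightarrow Y\twoheadrightarrow X$, and observing that the kernel of $P_0\to X$ is an extension of a quotient of $Z$ (finitely generated) by the image of $P_1$ (finitely generated), hence itself covered by a finite sum of elements of $\G$. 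The main technical hinge is the pullback–splitting argument in $(b)\Rightarrow(c)$, which crucially uses projectivity of the generators in $\G$; the remaining implications are elementary colimit manipulations and four-lemma diagram chases, classical in any Grothendieck category.
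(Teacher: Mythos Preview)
The paper does not prove this proposition: it simply states that these properties are well known and refers to Popescu \cite[§\,3.5, énoncés 5.9 à 5.13]{Pop}. So there is no ``paper's own proof'' to compare with; you have supplied what the authors chose to outsource.

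Your argument is essentially correct and follows the standard lines one finds in Popescu. One small imprecision: in the implication $(a)\Rightarrow(b)$, the factorisation of $\mathrm{id}_X$ through some $P/K_i$ gives a section of $P/K_i\twoheadrightarrow X$, hence a splitting $P/K_i\simeq X\oplus(\ker\pi/K_i)$. This does not directly ``force $K_i=\ker\pi$''; rather it shows that $\ker\pi/K_i$ is a direct summand of the finitely generated object $P/K_i$, hence finitely generated, and then $\ker\pi$ is finitely generated as an extension of $\ker\pi/K_i$ by $K_i$. (Equivalently, since $\ker\pi/K_i$ is the filtered union of the $K_j/K_i$ and is itself finitely generated, one has $\ker\pi=K_j$ for some $j\geq i$.) With this adjustment, your cycle of implications goes through; the pullback-splitting argument for $(b)\Rightarrow(c)$ is clean and is exactly the standard trick.
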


Nous utiliserons à de nombreuses reprises cette proposition dans la catégorie $\F(\A;K)$, en prenant pour $\G$ la classe des foncteurs projectifs $P_\A^a$. Nous l'utiliserons également dans $\mathbf{Add}(\A;k)$ (avec $k=K$ ou $\mathbb{Z}$) en prenant pour $\G$ la classe des foncteurs $k\otimes_\mathbb{Z}\A(a,-)$. Comme cette classe est stable par somme directe finie, en utilisant la proposition et le lemme de Yoneda, on voit qu'un foncteur de présentation finie de $\mathbf{Add}(\A;\mathbb{Z})$ s'exprime comme conoyau du morphisme $\A(y,-)\xrightarrow{\alpha^*}\A(x,-)$ induit par une certaine flèche $\alpha\in\A(x,y)$.

\begin{nota} Nous considérerons les conditions suivantes : 
\begin{itemize}
\item[$\bullet$] $\pfa(\A,K)$ : tous les foncteurs simples à valeurs de dimensions finies de la catégorie $\mathbf{Add}(\A;K)$ sont de présentation finie dans $\mathbf{Add}(\A;K)$ ;
\item[$\bullet$] $\tfc(\A,K)$ : pour tout idéal $K$-cotrivial $\I$ et tout objet $x$ de $\A$, le foncteur $\I(x,-)$ est de type fini.
\end{itemize}
S'il n'y a pas d'ambigu\"ité possible, il nous arrivera de noter simplement $\pfa$ au lieu de $\pfa(\A,K)$, et de même pour $\tfc$ (et $\lnc$).
\end{nota}

Ces notations abrègent {\bf \textsc{p}}résentation {\bf \textsc{f}}inie, {\bf \textsc{t}}ype {\bf \textsc{f}}ini, et {\bf \textsc{a}}dditifs.

\begin{ex}\label{ex-pf-aus} Soit $A$ une algèbre de dimension finie sur un corps commutatif $k$. L'hypothèse $\pfa(A\md,K)$ est vérifiée, où $A\md$ est la catégorie des modules à gauche finis sur $A$, si $K$ est le sous-corps premier de $k$. Ceci est un résultat d'Auslander \cite[corollaire~3.4 ou théorème~3.5]{Aus82}.
\end{ex}

\begin{lm}\label{pfaddf} Un foncteur additif $F : \A\to K\Md$ est de présentation finie dans $\mathbf{Add}(\A;K)$ si et seulement s'il est de présentation finie dans $\F(\A;K)$.
\end{lm}

\begin{proof} Comme le foncteur d'inclusion $\mathbf{Add}(\A;K)\to\F(\A;K)$ est pleinement fidèle et commute aux colimites, si $F$ est de présentation finie dans $\F(\A;K)$, il l'est nécessairement dans $\mathbf{Add}(\A;K)$.

Pour la réciproque, il suffit de montrer que les générateurs projectifs de type fini $\A(x,-)\otimes_\mathbb{Z}K$ de $\mathbf{Add}(\A;K)$ sont de présentation finie dans $\F(\A;K)$, en raison de la proposition~\ref{prpfclas}.\ref{itpfcns}. Cela provient de la suite exacte de $\F(\A;K)$
\[P_\A^{x\oplus x}\to P_\A^x\to\A(x,-)\otimes_\mathbb{Z}K\to 0\]
qui s'obtient à partir de la suite exacte de $K$-espaces vectoriels
\[K[V\oplus V]\xrightarrow{[u,v]\mapsto [u]+[v]-[u+v]}K[V]\to V\otimes_\mathbb{Z}K\to 0\]
naturelle en le groupe abélien $V$.
\end{proof}

\begin{lm}\label{lm-ptpf} Le produit tensoriel de deux foncteurs de présentation finie de $\F(\A;K)$ est de présentation finie.
\end{lm}

\begin{proof} Cela résulte de l'exactitude en chaque variable du produit tensoriel, de l'isomorphisme \eqref{eq-ptproj} (page~\pageref{eq-ptproj}) et de la proposition~\ref{prpfclas}.\ref{itpfcns}.
\end{proof}

\begin{lm}\label{chgtbase-pf} Soit $L$ un surcorps de $K$. Un foncteur $F$ de $\F(\A;K)$ est de type fini (resp. de présentation finie) si et seulement si le foncteur $F\otimes L$ de $\F(\A;L)$ est de type fini (resp. de présentation finie).
\end{lm}

\begin{proof} Comme le foncteur d'extension des scalaires $-\otimes_K L : \F(\A;K)\to\F(\A;L)$ est exact et envoie les générateurs projectifs $K[\A(a,-)]$ de la source sur les générateurs projectifs $L[\A(a,-)]$ du but, il préserve les objets de type fini et les objets de présentation finie.

Comme ce foncteur est exact, fidèle et commute aux colimites, si $F$ est un foncteur de $\F(\A;K)$ tel que $F\otimes L$ est de type fini dans $\F(\A;L)$, alors $F$ est de type fini. Si maintenant $F$ est tel que $F\otimes L$ est de présentation finie dans $\F(\A;L)$, alors $F$ est de type fini, donc on peut trouver une suite exacte $0\to N\to P\to F\to 0$ de $\F(\A;K)$ avec $P$ projectif de type fini. La suite exacte $0\to N\otimes L\to P\otimes L\to F\otimes L\to 0$ de $\F(\A;L)$ montre que $N\otimes L$ est de type fini (par la proposition~\ref{prpfclas}.\ref{itpfcns}), donc $N$ est de type fini dans $\F(\A;K)$ d'après ce qui précède. Il s'ensuit que $F$ est de présentation finie, par la même proposition.
\end{proof}

\begin{lm}\label{postcomp-eltr} Soient $E$ un endofoncteur élémentaire des $K$-espaces vectoriels et $\pi$ un foncteur additif de $\F(\A;K)$, de présentation finie dans $\mathbf{Add}(\A;K)$. Alors $E\circ\pi$ est un foncteur de présentation finie de $\F(\A;K)$.
\end{lm}

\begin{proof} Soit $d\in\mathbb{N}$ le degré de $E$ : ce foncteur est donc un quotient de la $d$-ième puissance tensorielle $T^d$. De plus, ce quotient provient de la catégorie des foncteurs polynomiaux {\it stricts} sur $K$ (cf. appendice~\ref{app-elt}). Comme les foncteurs polynomiaux stricts forment une catégorie abélienne localement finie, on en déduit l'existence dans $\F(K,K)$ d'une suite exacte
\[\bigoplus_{i=1}^n\Gamma^d(-\otimes V_i)\to T^d\to E\to 0\]
où les $V_i$ sont des $K$-espaces vectoriels de dimensions finies, et $\Gamma^d$ désigne la $d$-ième puissance divisée (dans les $K$-espaces vectoriels).

On en déduit une suite exacte
\[\bigoplus_{i=1}^n\Gamma^d(\pi\otimes V_i)\to\pi^{\otimes d}\to E\circ\pi\to 0\]
dans $\F(\A;K)$. Les lemmes~\ref{pfaddf} et~\ref{lm-ptpf} montrent que $\pi^{\otimes d}$ est de présentation finie, il suffit donc (cf.  proposition~\ref{prpfclas}.\ref{itpfcns}) de montrer que les foncteurs $\Gamma^d(\pi\otimes V_i)$ sont de type fini.

Comme $\pi$ est de type fini dans $\mathbf{Add}(\A;K)$, c'est un quotient d'un foncteur du type $\A(x,-)\otimes_\mathbb{Z}K$, de sorte que $\Gamma^d(\pi\otimes V_i)$ est un quotient de $\Gamma^d(\A(t,-)\otimes_\mathbb{Z}K)$ (où $t$ est une somme directe de copies de $x$ en nombre égal à la dimension de $V_i$ sur $K$). Un tel foncteur est de type fini : si le corps $K$ est infini, c'est un quotient de $P^t_\A$, et si $K$ est fini, cela provient de ce que le foncteur $\Gamma^d(-\otimes_\mathbb{Z}K)$ de $\F(K,K)$ est fini.
Le lemme en découle. 
\end{proof}

\begin{thm}\label{th-pol-pf} La propriété $\pfa$ est vérifiée si et seulement si tous les foncteurs polynomiaux de longueur finie et à valeurs de dimensions finies de $\F(\A;K)$ sont de présentation finie.
\end{thm}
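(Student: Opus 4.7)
Pour $(\Leftarrow)$, je remarquerai d'abord qu'un foncteur additif simple à valeurs de dimensions finies est polynomial (de degré au plus $1$) et de longueur finie (étant simple), donc entre dans la classe supposée de foncteurs de présentation finie dans $\F(\A;K)$ ; par le lemme~\ref{pfaddf}, il est alors de présentation finie dans $\mathbf{Add}(\A;K)$, ce qui est $\pfa(\A,K)$. Pour la réciproque, je partirai de $\pfa(\A,K)$ et, grâce à la stabilité de la présentation finie par extensions (proposition~\ref{prpfclas}), je me ramènerai au cas d'un foncteur simple polynomial à valeurs de dimensions finies $F$ de $\F(\A;K)$.

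L'étape suivante consistera à invoquer le théorème~\ref{st-pol-petit-corps} pour obtenir une extension finie $K \subset L$ telle que $F \otimes_K L$ admette dans $\F(\A;L)$ une filtration finie dont les sous-quotients sont de la forme $\pi_1^* E_1 \otimes \cdots \otimes \pi_d^* E_d$, avec $\pi_i$ foncteurs additifs absolument simples à valeurs de dimensions finies de $\mathbf{Add}(\A;L)$ et $E_i$ foncteurs élémentaires sur $L$. Le lemme~\ref{chgtbase-pf} et la stabilité par extensions réduiront le problème à la présentation finie de chaque tel sous-quotient dans $\F(\A;L)$ ; les lemmes~\ref{lm-ptpf} et~\ref{postcomp-eltr} ramèneront à leur tour cette question à l'assertion $\pfa(\A,L)$, à savoir que tout foncteur additif absolument simple à valeurs de dimensions finies de $\mathbf{Add}(\A;L)$ est de présentation finie dans $\mathbf{Add}(\A;L)$.

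Le cœur de la preuve sera alors le transfert $\pfa(\A,K) \Rightarrow \pfa(\A,L)$, qui constitue la principale difficulté technique : la décomposition à la Steinberg ne fournit ses briques additives absolument simples qu'après passage à un surcorps, et il faut donc relier un absolument simple sur $L$ à un simple sur $K$. Étant donné $\pi$ absolument simple à valeurs de dimensions finies dans $\mathbf{Add}(\A;L)$, je choisirai $x$ tel que $\pi(x) \neq 0$ ; l'analogue additif du corollaire~\ref{simples-eval} identifie alors $\pi$ au prolongement intermédiaire $T_x^L(\pi(x))$. Le $K[\mathrm{End}_\A(x)]$-module $\pi(x)$ étant de longueur finie (puisque de dimension finie sur $K$ via $[L:K]<\infty$), il possède un sous-module simple $V$ ; le foncteur $\sigma := T_x^K(V)$ est un simple à valeurs de dimensions finies de $\mathbf{Add}(\A;K)$, de présentation finie par $\pfa(\A,K)$, et le changement de base (lemmes~\ref{pfaddf} et~\ref{chgtbase-pf}) montre que $\sigma \otimes_K L$ est de présentation finie dans $\mathbf{Add}(\A;L)$. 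L'extension $L$-linéaire de l'inclusion $V \hookrightarrow \pi(x)$ fournit une surjection $V \otimes_K L \twoheadrightarrow \pi(x)$ dans $L[\mathrm{End}_\A(x)]\Md$ d'où, en appliquant $T_x^L$ (qui préserve les épimorphismes et commute à l'extension finie de scalaires), une surjection $\sigma \otimes_K L \twoheadrightarrow \pi$. La proposition~\ref{pr-corpassegro} assure par ailleurs que $\sigma \otimes_K L$ est de longueur finie ; une récurrence sur cette longueur, utilisant que dans toute catégorie de Grothendieck à générateurs projectifs de type fini, deux termes d'une suite exacte courte déterminent la présentation finie du troisième pourvu que celui-ci soit au moins de type fini (conséquence classique de la proposition~\ref{prpfclas}), établira que tous les facteurs de composition de $\sigma \otimes_K L$, en particulier $\pi$, sont de présentation finie, ce qui achèvera la démonstration.
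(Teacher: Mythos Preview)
Your approach mirrors the paper's: reduce to simples, pass to a finite extension $L$ via the théorème~\ref{st-pol-petit-corps}, then invoke the lemmes~\ref{postcomp-eltr}, \ref{lm-ptpf} and~\ref{chgtbase-pf}. You rightly isolate the transfer $\pfa(\A,K)\Rightarrow\pfa(\A,L)$ as the point needing work --- the paper leaves it implicit --- and your construction of $\sigma$ together with the surjection $\sigma\otimes_K L\twoheadrightarrow\pi$ is correct.

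The flaw is in the final step. The \og deux sur trois\fg\ principle you state is false in one direction: in a short exact sequence $0\to N\to M\to S\to 0$, having $M$ and $S$ of finite presentation does \emph{not} force $N$ to be, even when $N$ is of finite type. (Over $R=K[x,y_1,y_2,\dots]/(xy_i)_{i\ge 1}$ with $I=(x)$, both $R$ and $R/I$ are finitely presented $R$-modules but $I$ is not.) Consequently your induction on the length of $\sigma\otimes_K L$ cannot recurse: after showing the simple top quotient is finitely presented, you cannot deduce the same for the kernel, so composition factors deeper in the filtration remain out of reach.

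The repair is immediate, because you do not need all composition factors: your $\pi$ is a simple \emph{quotient} of $\sigma\otimes_K L$, not merely a subquotient. Since $\sigma\otimes_K L$ is of finite length (its restriction to $K$ is $\sigma^{[L:K]}$, of length $[L:K]$ --- la proposition~\ref{pr-corpassegro} n'est pas la bonne référence ici, elle ne parle que de l'existence d'une certaine extension), the kernel of $\sigma\otimes_K L\twoheadrightarrow\pi$ is of finite type. The correct direction of the principle --- $M$ of finite presentation with kernel of finite type implies the quotient is of finite presentation, which follows from the characterisation~(b) of la proposition~\ref{prpfclas}.\ref{itpfcns} --- then yields $\pi$ of finite presentation directly, without any induction.
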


\begin{proof} Supposons $\pfa$ vérifiée. Il suffit de vérifier que tout foncteur simple et polynomial de $\F^{\df}(\A;K)$ est de présentation finie, par la proposition~\ref{prpfclas}.\ref{itpfext}. Pour un tel foncteur admettant une décomposition à la Steinberg, la conclusion découle des lemmes~\ref{postcomp-eltr} et~\ref{lm-ptpf}. Le cas général s'y ramène par le théorème~\ref{st-pol-petit-corps} et le lemme~\ref{chgtbase-pf}.

La réciproque découle du lemme~\ref{pfaddf}.
\end{proof}

En particulier (cf. exemple~\ref{ex-pf-aus}) :

\begin{cor} Soit $A$ une algèbre de dimension finie sur un corps commutatif $k$. Supposons que $K$ est le sous-corps premier de $k$. Alors tous les foncteurs polynomiaux simples à valeurs de dimensions finies de $\F(A\md;K)$ sont de présentation finie.
\end{cor}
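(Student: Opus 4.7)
The plan is to derive the corollary as a direct application of Theorem~\ref{th-pol-pf} specialized to the additive category $\A = A\md$. First I would verify that the hypothesis $\pfa(A\md, K)$ of the theorem is fulfilled, which amounts to checking that every simple additive functor $A\md \to K\Md$ with finite-dimensional values is of finite presentation in $\mathbf{Add}(A\md; K)$.

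The verification of $\pfa(A\md, K)$ is precisely the content of Example~\ref{ex-pf-aus}, which attributes the result to Auslander~\cite{Aus82} (Corollary~3.4 or Theorem~3.5 there) under exactly the hypothesis of the corollary: $A$ is a finite-dimensional $k$-algebra and $K$ is the prime subfield of $k$. Since this is cited as a known fact, no additional argument is required at this step: I would simply invoke the example.

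Once $\pfa(A\md, K)$ is established, Theorem~\ref{th-pol-pf} immediately yields that every polynomial functor of finite length with finite-dimensional values in $\F(A\md; K)$ is of finite presentation in $\F(A\md; K)$. Since a simple functor is tautologically of finite length, this covers in particular all polynomial simple functors with finite-dimensional values, which is the desired conclusion.

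There is no real obstacle in the argument: the corollary is a clean specialization of the main theorem. The substantive work has already been done upstream, namely in Theorem~\ref{th-pol-pf} (whose proof invokes the Steinberg decomposition of Theorem~\ref{st-pol-petit-corps}, the change-of-scalars Lemma~\ref{chgtbase-pf}, and the key Lemma~\ref{postcomp-eltr} on postcomposition of an elementary endofunctor with a finitely presented additive functor) and in Auslander's classical theorem producing $\pfa$ in the finite-dimensional algebra setting. The only role of the present corollary is to package these two inputs.
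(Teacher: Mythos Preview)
Your proposal is correct and matches the paper's own argument exactly: the corollary is stated right after Theorem~\ref{th-pol-pf} with the one-line justification ``En particulier (cf. exemple~\ref{ex-pf-aus})'', i.e.\ one checks $\pfa(A\md,K)$ via Auslander's result and then applies the theorem. There is nothing to add.
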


Nous abordons maintenant le problème de la présentation finie pour les foncteurs simples antipolynomiaux.

\begin{lm}\label{pf-linearis}
Un foncteur additif $\pi : \A\to\mathbf{Ab}$ est de type fini (resp. de présentation finie) dans la catégorie $\mathbf{Add}(\A;\mathbb{Z})$ si et seulement si $K[\pi]$ est de type fini (resp. de présentation finie) dans $\F(\A;K)$.
\end{lm}

\begin{proof} Si $\pi$ est de type fini, c'est un quotient d'un foncteur représentable $\A(x,-)$, donc $K[\pi]$ est un quotient de $P^x_\A$ et est par conséquent de type fini. Réciproquement, si $K[\pi]$ est de type fini, comme la post-composition par $K[-]$ commute aux colimites filtrantes et induit une fonction strictement croissante entre ensembles ordonnés de sous-foncteurs, $\pi$ est de type fini.

Si $\pi$ est de présentation finie, il existe une flèche $\alpha\in\A(x,y)$ et une suite exacte $\A(y,-)\xrightarrow{\alpha^*}\A(x,-)\to\pi\to 0$ dans $\mathbf{Add}(\A;\mathbb{Z})$, par la proposition~\ref{prpfclas}.\ref{itpfcns}. On en déduit, en utilisant \eqref{eqsers} (appendice~\ref{app-linearis}), une  suite exacte
\[K[\A(x\oplus y,-)]\to K[\A(x,-)]\to K[\pi]\to 0\]
de $\F(\A;K)$ qui constitue une présentation finie de $K[\pi]$.

Réciproquement, supposons $K[\pi]$ de présentation finie. Comme le foncteur $K[-] : \mathbf{Ab}\to K\Md$ commute aux colimites filtrantes, il en est de même pour le foncteur ${\rm Hom}(K[\pi],-)\circ K[-] : \mathbf{Add}(\A;\mathbb{Z})\to\F(\A;K)$. Mais $K[\pi]$, donc $\pi$, est de type fini, d'après ce qu'on a vu plus haut. Par conséquent, le lemme~\ref{lm-linearis} montre que ${\rm Hom}(K[\pi],-)\circ K[-]\simeq K[-]\circ {\rm Hom}(\pi,-)$. Comme le foncteur $K[-]$ commute aux colimites filtrantes et reflète les isomorphismes, la commutation de cette composée aux colimites filtrantes entraîne la même propriété pour $\mathrm{Hom}(\pi,-)$, c'est-à-dire que $\pi$ est de présentation finie.
\end{proof}

\begin{pr}\label{pr-pf-postcomp}
Soit $\Phi : \A\to\B$ un foncteur additif entre petites catégories additives. Supposons que, pour tout objet $b$ de $\B$, le foncteur $\Phi^*\B(b,-)$ est de présentation finie dans la catégorie $\mathbf{Add}(\A;\mathbb{Z})$. Alors le foncteur $\Phi^* : \F(\B;K)\to\F(\A;K)$ préserve les objets de présentation finie.
\end{pr}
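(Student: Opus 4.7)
Le plan est de partir d'un objet de présentation finie $F$ de $\F(\B;K)$ et de ramener la question à la présentation finie des images par $\Phi^*$ des projectifs standard $P_\B^b=K[\B(b,-)]$, ce qui est précisément l'hypothèse, à la traduction près fournie par le lemme~\ref{pf-linearis}.

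Plus précisément, je commencerais par utiliser la proposition~\ref{prpfclas}.\ref{itpfcns}\,: puisque $F$ est de présentation finie dans $\F(\B;K)$, il existe une suite exacte
\[Q\to P\to F\to 0\]
où $P$ et $Q$ sont des sommes directes finies de projectifs standard $P_\B^{b_i}$. Comme le foncteur de précomposition $\Phi^*$ est exact (les noyaux et conoyaux dans les catégories de foncteurs se calculant au but), on obtient par application une suite exacte
\[\Phi^*Q\to\Phi^*P\to\Phi^*F\to 0\]
dans $\F(\A;K)$.

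L'étape-clef consiste à observer que $\Phi^*P_\B^b=\Phi^*K[\B(b,-)]=K[\B(b,\Phi(-))]=K[\Phi^*\B(b,-)]$. Par hypothèse, le foncteur additif $\Phi^*\B(b,-)$ est de présentation finie dans $\mathbf{Add}(\A;\mathbb{Z})$, donc le lemme~\ref{pf-linearis} entraîne que $K[\Phi^*\B(b,-)]$ est de présentation finie dans $\F(\A;K)$. Comme $\Phi^*$ commute aux sommes directes finies, les foncteurs $\Phi^*P$ et $\Phi^*Q$ s'écrivent comme sommes directes finies d'objets de présentation finie, donc sont eux-mêmes de présentation finie (par stabilité par extensions, proposition~\ref{prpfclas}.\ref{itpfext}, appliquée itérativement à des suites courtes de somme directe).

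Enfin, je conclurais en invoquant la proposition~\ref{prpfclas}.\ref{itpfcns}, variante (d)\,: l'existence d'une suite exacte $\Phi^*Q\to\Phi^*P\to\Phi^*F\to 0$ avec $\Phi^*P$ et $\Phi^*Q$ de présentation finie assure que $\Phi^*F$ est de présentation finie dans $\F(\A;K)$. Aucune difficulté sérieuse ne se présente dans cet argument\,: le seul point délicat est l'équivalence entre présentation finie dans $\mathbf{Add}(\A;\mathbb{Z})$ et présentation finie de la linéarisation dans $\F(\A;K)$, déjà entièrement traitée par le lemme~\ref{pf-linearis}.
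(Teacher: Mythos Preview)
Votre démonstration est correcte et suit exactement la même approche que celle de l'article : montrer via le lemme~\ref{pf-linearis} que $\Phi^*P_\B^b=K[\Phi^*\B(b,-)]$ est de présentation finie dans $\F(\A;K)$, puis conclure par exactitude de $\Phi^*$ et la proposition~\ref{prpfclas}.\ref{itpfcns}. L'article condense simplement ces étapes en deux phrases.
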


\begin{proof} Le lemme~\ref{pf-linearis} montre que $\Phi^*$ envoie un projectif $P_\B^b$ sur un objet de présentation finie de $\F(\A;K)$. Comme $\Phi^*$ est exact, la conclusion s'ensuit, par la proposition~\ref{prpfclas}.\ref{itpfcns}.
\end{proof}

\begin{cor}\label{corpfcotr} Si la condition $\tfc$ est vérifiée, alors, pour tout idéal $K$-cotrivial $\I$ de $\A$, le foncteur de précomposition $\pi_I^* : \F(\A/\I;K)\to\F(\A;K)$ préserve les objets de présentation finie.
\end{cor}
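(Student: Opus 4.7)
The plan is to deduce this corollary directly from Proposition~\ref{pr-pf-postcomp}, applied to the additive functor $\Phi=\pi_\I : \A\to\A/\I$. That proposition reduces the conclusion to verifying a single hypothesis: for every object $b$ of $\A/\I$ (equivalently, every object $b$ of $\A$, since $\pi_\I$ is the identity on objects), the additive functor
\[\pi_\I^*\big((\A/\I)(b,-)\big)=(\A/\I)\big(b,\pi_\I(-)\big):\A\to\mathbf{Ab}\]
is of finite presentation in $\mathbf{Add}(\A;\mathbb{Z})$.

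To check this, I would start from the tautological short exact sequence of additive functors $\A\to\mathbf{Ab}$:
\[0\to\I(b,-)\to\A(b,-)\to\pi_\I^*\big((\A/\I)(b,-)\big)\to 0,\]
coming from the very definition of the quotient category $\A/\I$. The middle term $\A(b,-)$ is representable, hence a finitely presented projective in $\mathbf{Add}(\A;\mathbb{Z})$. Under hypothesis $\tfc$, the subfunctor $\I(b,-)$ is of finite type, so admits an epimorphism from a representable $\A(c,-)$ (for some object $c$ of $\A$ obtained, if necessary, as a direct sum of generators of $\I(b,-)$). Composing this epimorphism with the inclusion $\I(b,-)\hookrightarrow\A(b,-)$ produces an exact sequence
\[\A(c,-)\to\A(b,-)\to\pi_\I^*\big((\A/\I)(b,-)\big)\to 0\]
in $\mathbf{Add}(\A;\mathbb{Z})$ whose first two terms are representables. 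By the characterization of finite presentation in Proposition~\ref{prpfclas} (applied with $\G$ the class of representables $\A(x,-)$, which are precisely the projective generators of $\mathbf{Add}(\A;\mathbb{Z})$ of finite type), this shows that $\pi_\I^*((\A/\I)(b,-))$ is finitely presented, and Proposition~\ref{pr-pf-postcomp} then yields the corollary.

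No real obstacle is anticipated here: the argument is a direct application of the two key tools already available, namely Proposition~\ref{pr-pf-postcomp} (which isolates the only nontrivial input required to transfer finite presentation along a precomposition functor) and the hypothesis $\tfc$ itself, which was tailor-made so that the kernel $\I(b,-)$ of the canonical projection $\A(b,-)\twoheadrightarrow\pi_\I^*((\A/\I)(b,-))$ is controlled. The one point worth verifying carefully in a clean write-up is that ``of finite type'' for an additive functor in $\mathbf{Add}(\A;\mathbb{Z})$ really does mean quotient of a single representable (up to first replacing $c$ by a finite direct sum of objects of $\A$), but this is immediate from the additivity of $\A$ and the form of the projective generators.
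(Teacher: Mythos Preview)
Your proposal is correct and follows exactly the approach the paper intends: the corollary is stated without proof in the paper, as an immediate application of Proposition~\ref{pr-pf-postcomp} with $\Phi=\pi_\I$, the hypothesis being verified via the short exact sequence $0\to\I(b,-)\to\A(b,-)\to\pi_\I^*\big((\A/\I)(b,-)\big)\to 0$ and condition~$\tfc$. Your write-up makes this implicit reasoning explicit and is entirely in line with the paper's argument.
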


\begin{cor}\label{cotrivpf} Si les conditions $\lnc$ et $\tfc$ sont vérifiées, alors tout foncteur $F$ de type fini et antipolynomial de $\F(\A;K)$ est de présentation finie.
\end{cor}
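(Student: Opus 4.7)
The plan is to reduce the statement to a finite presentation problem inside $\F(\A/\I;K)$ for a suitable $K$-cotrivial ideal $\I$, and then transport the conclusion back to $\F(\A;K)$ via Corollary~\ref{corpfcotr}. Concretely, since $F$ is antipolynomial, by Definition~\ref{def-antipol} there is a $K$-cotrivial ideal $\I\triangleleft\A$ and a functor $G\in\F(\A/\I;K)$ with $F\simeq\pi_\I^*G$. The scheme is then: (1) show $G$ is of finite type in $\F(\A/\I;K)$; (2) invoke $\lnc$ to upgrade this to finite presentation of $G$; (3) apply Corollary~\ref{corpfcotr}, which uses $\tfc$, to conclude that $\pi_\I^*G=F$ is of finite presentation in $\F(\A;K)$.

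The key step is (1). Since $\pi_\I:\A\to\A/\I$ is full and the identity on objects, the precomposition functor $\pi_\I^*$ is fully faithful and its essential image is stable under subfunctors (this is essentially the mechanism used in the proof of Proposition~\ref{pr-simglobi}, which we can reuse freely here). Assume that $F$ is a quotient of $\bigoplus_{i=1}^n P_\A^{x_i}$. For each $i$, the image $H'_i$ of $P_\A^{x_i}\to F=\pi_\I^*G$ is a subfunctor of $\pi_\I^*G$, hence of the form $\pi_\I^*H_i$ for a unique subfunctor $H_i\subseteq G$. By the Yoneda lemma, the composite $P_\A^{x_i}\to\pi_\I^*H_i$ corresponds to an element of $H_i(x_i)$, and that element induces via Yoneda in $\F(\A/\I;K)$ a morphism $P_{\A/\I}^{x_i}\to H_i$ whose image is exactly $H_i$. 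Full faithfulness of $\pi_\I^*$ then gives $G=\sum_i H_i$, so $G$ is the sum of finitely many quotients of representables, hence of finite type in $\F(\A/\I;K)$.

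For step (2), the hypothesis $\lnc(\A,K)$ asserts that $\F(\A/\I;K)$ is locally noetherian. A finitely generated object of a locally noetherian Grothendieck category is noetherian, and a noetherian object is automatically of finite presentation (apply Proposition~\ref{prpfclas}.\ref{itpfcns}: given a short exact sequence $0\to N\to P\to G\to 0$ with $P$ a finite sum of noetherian generators, $N$ is a subfunctor of a noetherian object, hence of finite type). Therefore $G$ is of finite presentation in $\F(\A/\I;K)$.

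Step (3) is then immediate: Corollary~\ref{corpfcotr} (whose proof rests on Proposition~\ref{pr-pf-postcomp} and the hypothesis $\tfc$) says that $\pi_\I^*$ sends objects of finite presentation in $\F(\A/\I;K)$ to objects of finite presentation in $\F(\A;K)$, so $F\simeq\pi_\I^*G$ is of finite presentation. The main obstacle is really step (1): one needs to verify carefully that the image of each generator of $F$ lifts, through the fully faithful and sub-stable functor $\pi_\I^*$, to a genuine finitely generated subobject of $G$ downstairs; once this is done, $\lnc$ and $\tfc$ dispatch the rest mechanically.
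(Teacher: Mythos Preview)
Your proof is correct and follows exactly the same three-step strategy as the paper's proof: lift $F$ to $G$ in $\F(\A/\I;K)$, observe that $G$ is of finite type and hence of finite presentation by $\lnc$, then push back down via Corollary~\ref{corpfcotr}. The only difference is that the paper dispatches step~(1) in a single sentence (``Comme $F$ est de type fini, $X$ est de type fini''), whereas you spell out the argument via subfunctors and Yoneda; your justification is sound, and an even shorter route is to note that the left adjoint $L$ to $\pi_\I^*$ satisfies $L\pi_\I^*\simeq\mathrm{id}$ (by full faithfulness of $\pi_\I^*$) and sends $P_\A^x$ to $P_{\A/\I}^x$, so $G\simeq L(F)$ is immediately of finite type.
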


\begin{proof} Soit $\I$ un idéal $K$-cotrivial de $\A$ tel que $F\simeq\pi_\I^*(X)$ pour un certain foncteur $X$ de $\F(\A/\I;K)$. Comme $F$ est de type fini, $X$ est de type fini. L'hypothèse $\lnc$ montre alors que $X$ est de présentation finie (dans la catégorie localement noethérienne $\F(\A/\I;K)$). La conclusion provient donc du corollaire~\ref{corpfcotr}.
\end{proof}

Nous pouvons maintenant donner le résultat principal de cette section.

\begin{thm}\label{th-simples-pf}
 Si les propriétés $\lnc$, $\pfa$ et $\tfc$ sont vérifiées, alors tous les foncteurs finis et à valeurs de dimensions finies de $\F(\A;K)$ sont de présentation finie.
\end{thm}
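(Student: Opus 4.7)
The plan is to reduce to the case of a simple functor and then combine the Steinberg-type tensor decomposition with the two finite-presentation results already established in this section. By Proposition~\ref{prpfclas}.\ref{itpfext} and an induction on the length of a composition series, it suffices to show that every simple functor $F$ of $\F^{df}(\A;K)$ is finitely presented.

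To apply Corollary~\ref{cor-tens-St1} I would first pass to a finite field extension $K\subset L$ large enough that (i) by Proposition~\ref{pr-corpassegro}, $F\otimes_K L$ admits a finite filtration with absolutely simple subquotients in $\F^{df}(\A;L)$, and (ii) $L$ is a corps de décomposition non additif of $\A/\I$ for every $L$-cotrivial ideal $\I$ of $\A$. By Lemma~\ref{chgtbase-pf} and once more Proposition~\ref{prpfclas}.\ref{itpfext}, the problem then reduces to proving that each absolutely simple subquotient $F'$ of $F\otimes_K L$ is finitely presented in $\F(\A;L)$.

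For such an $F'$, Corollary~\ref{cor-tens-St1} gives a decomposition $F'\simeq S\otimes T$ with $S$ a simple polynomial functor of $\F^{df}(\A;L)$ and $T$ a simple antipolynomial functor of $\F(\A;L)$. Theorem~\ref{th-pol-pf}, applied with base field $L$, shows that $S$ is finitely presented; Corollary~\ref{cotrivpf}, applied with base field $L$, shows that $T$ is finitely presented; Lemma~\ref{lm-ptpf} then yields the finite presentation of $F'\simeq S\otimes T$, which finishes the proof.

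The main obstacle will be to verify that the three hypotheses $\lnc(\A,K)$, $\pfa(\A,K)$ and $\tfc(\A,K)$ persist after passing to the finite extension $L$. Since the $K$-cotrivial and $L$-cotrivial ideals of $\A$ coincide (both conditions only involve the characteristic of the base field), $\tfc$ transfers for free. For $\lnc$, one uses that restriction of scalars along $K\subset L$ is exact, faithful and reflects finite generation, while an $L$-linear standard projective $L[(\A/\I)(x,-)]$ becomes, after this restriction, a finite direct sum of copies of the $K$-linear $K[(\A/\I)(x,-)]$, so noetherianness of the latter implies noetherianness of the former. For $\pfa$, an absolutely simple $L$-linear additive functor with finite-dimensional values becomes, after restriction of scalars, a finite-length $K$-linear additive functor whose $K$-simple constituents are finitely presented by $\pfa(\A,K)$, and one transfers finite presentation back to $L$ by a change-of-scalars argument along the lines of Lemma~\ref{chgtbase-pf}.
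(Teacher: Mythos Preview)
Your overall strategy coincides with the paper's: reduce to a simple $F$, enlarge the base field so that Corollary~\ref{cor-tens-St1} applies, invoke Theorem~\ref{th-pol-pf} and Corollary~\ref{cotrivpf} on the two tensor factors, and descend via Lemma~\ref{chgtbase-pf}. Your checks that $\lnc$, $\pfa$ and $\tfc$ pass to a finite extension are correct and make explicit what the paper leaves implicit.

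The gap is in condition~(ii). Being a corps de décomposition non additif of $\A/\I$ is a \emph{global} condition on the category $\A/\I$: it demands that every simple of $\F^{df}(\A/\I;L)$ be absolutely simple, and in general no finite extension of $K$ achieves this, even for a single $\I$. For instance, with $\A=\mathbf{P}(\mathbb{Z})$, $K=\mathbb{Q}$ and $\I$ the ideal attached to $p\mathbb{Z}$, Proposition~\ref{simples-sfin} identifies the simples of $\F(\mathbf{P}(\mathbb{F}_p);L)$ with the simple $L[GL_n(\mathbb{F}_p)]$-modules for all $n\ge 0$; splitting all of these forces $L$ to contain primitive $(p^n-1)$-th roots of unity for every $n$, hence $[L:\mathbb{Q}]=\infty$. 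What the argument actually needs is much weaker: for each of the finitely many absolutely simple subquotients $F'$ of $F\otimes_K L$, the antipolynomial simple $S$ produced by Theorem~\ref{th-simglob} and Proposition~\ref{simples-cat-prod} should be absolutely simple, since then $S\boxtimes T$ is already simple by Proposition~\ref{abs-semi-simple} and hence $F'\simeq S\otimes T$. Applying Proposition~\ref{pr-corpassegro} once more, now to these finitely many $S$, produces a further finite extension over which this holds, and the rest of your argument then goes through unchanged.
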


\begin{proof} Si $K$ est assez gros (par exemple, contient les racines de l'unité), le résultat se déduit du théorème~\ref{th-pol-pf}, du corollaire~\ref{cotrivpf}, du lemme~\ref{lm-ptpf} et du corollaire~\ref{cor-tens-St1}. Le cas général s'y ramène grâce au lemme~\ref{chgtbase-pf}.
\end{proof}

Dans le cas d'une catégorie source $\mathbf{P}(A)$, on peut donner une condition {\em nécessaire et suffisante} à la présentation finie des foncteurs finis à valeurs de dimensions finies. Pour cela, nous aurons besoin d'une propriété valable dans un cadre plus général.

\begin{pr}\label{pr-recip-pfadd} Soit $\I\triangleleft\A$ un idéal $K$-cotrivial. Supposons que la catégorie $\F(\A/\I;K)$ est localement finie. Si tous les foncteurs $\pi_\I^*(S)$ sont de présentation finie dans $\F(\A;K)$, où $S$ est un foncteur simple de $\F(\A/\I;K)$, alors le foncteur $\I(x,-) : \A\to\mathbf{Ab}$ est de type fini pour tout objet $x$ de $\A$.
\end{pr}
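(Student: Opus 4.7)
Le plan est de se ramener à une propriété de finitude dans la catégorie $\mathbf{Add}(\A;\mathbb{Z})$ via la suite exacte courte canonique
\[0 \to \I(x,-) \to \A(x,-) \to (\A/\I)(x,-) \to 0\]
de foncteurs additifs. Dans cette catégorie, $\A(x,-)$ est projectif de type fini, donc si l'on parvient à montrer que $(\A/\I)(x,-)$ y est de présentation finie, la proposition~\ref{prpfclas}.\ref{itpfcns} fournira immédiatement la finitude de $\I(x,-)$, comme souhaité.

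D'après le lemme~\ref{pf-linearis}, cela revient à montrer que le foncteur $\pi_\I^* P^x_{\A/\I} \simeq K[(\A/\I)(x,-)]$ est de présentation finie dans $\F(\A;K)$. Pour cela, je commencerai par exploiter la finitude locale de $\F(\A/\I;K)$ pour établir que $P^x_{\A/\I}$ y est de longueur finie : un objet de type fini dans une catégorie de Grothendieck localement finie est nécessairement de longueur finie, puisqu'il est réunion filtrante de ses sous-objets de longueur finie. Ensuite, j'utiliserai le fait que $\pi_\I$ est plein et bijectif sur les objets pour en déduire que le foncteur $\pi_\I^*$ est exact, pleinement fidèle, et son image stable par sous-objets dans $\F(\A;K)$ (toute action de $\A(y,z)$ sur un sous-foncteur d'un $\pi_\I^* G$ se factorisant automatiquement par $(\A/\I)(y,z)$). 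Il s'ensuit que $\pi_\I^* P^x_{\A/\I}$ admet dans $\F(\A;K)$ une filtration finie à sous-quotients simples de la forme $\pi_\I^*(S_i)$ pour $S_i$ simples dans $\F(\A/\I;K)$ ; l'hypothèse garantissant que chacun de ces sous-quotients est de présentation finie dans $\F(\A;K)$, la stabilité de la présentation finie par extensions (proposition~\ref{prpfclas}.\ref{itpfext}) permettra de conclure.

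Le plan est relativement direct et ne présente pas d'obstacle majeur. La seule subtilité est la propagation de l'hypothèse de présentation finie des simples $\pi_\I^*(S_i)$ au projectif standard $\pi_\I^* P^x_{\A/\I}$, qui repose crucialement sur la finitude locale imposée à $\F(\A/\I;K)$ (sans laquelle $P^x_{\A/\I}$ pourrait être de type fini mais de longueur infinie, rendant l'argument d'extension inopérant). Le reste est purement formel, s'appuyant notamment sur le lemme~\ref{pf-linearis} qui établit la correspondance entre propriétés de finitude dans $\F(\A;K)$ et dans $\mathbf{Add}(\A;\mathbb{Z})$.
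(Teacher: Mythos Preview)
Votre démonstration est correcte et suit essentiellement la même stratégie que celle du papier : finitude de $P^x_{\A/\I}$ par finitude locale, propagation de la présentation finie aux simples vers $\pi_\I^* P^x_{\A/\I}$ par stabilité par extensions, puis traduction vers $\mathbf{Add}(\A;\mathbb{Z})$ via le lemme~\ref{pf-linearis} pour obtenir que $(\A/\I)(x,-)$ est de présentation finie et donc $\I(x,-)$ de type fini. Vous explicitez en outre la stabilité de l'image de $\pi_\I^*$ par sous-objets, que le papier sous-entend.
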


\begin{proof} Comme la classe des foncteurs de présentation finie est stable par extensions (proposition~\ref{prpfclas}.\ref{itpfext}), l'hypothèse implique que $\pi_\I^*(F)$ est un foncteur de présentation finie de $\F(\A;K)$ pour tout foncteur fini $F$ de $\F(\A/\I;K)$. Comme cette dernière catégorie est supposée localement finie, pour tout $x\in {\rm Ob}\,\A$, le foncteur de type fini $P^x_{\A/\I}$ est fini, de sorte que $\pi_\I^*(P^x_{\A/\I})=K[\A(x,-)/\I(x,-)]$ est de présentation finie dans $\F(\A;K)$. Le lemme~\ref{pf-linearis} montre alors que $\A(x,-)/\I(x,-)$ est de présentation finie dans $\mathbf{Add}(\A;\mathbb{Z})$, c'est-à-dire que $\I(x,-)$ est de type fini.
\end{proof}

\begin{cor}\label{cor-recip-pfadd} Soit $A$ un anneau. Si tous les foncteurs simples antipolynomiaux de $\F(A,K)$ sont de présentation finie, alors la propriété $\tfc(\mathbf{P}(A);K)$ est vérifiée.
\end{cor}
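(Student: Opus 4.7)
Le plan est de réduire la démonstration à une application directe de la proposition~\ref{pr-recip-pfadd}. Soit $\I$ un idéal $K$-cotrivial de $\A:=\mathbf{P}(A)$ ; par l'exemple~\ref{rem-ideal}, il est de la forme $\I=I.{\rm Hom}_{\mathbf{P}(A)}$ pour un unique idéal bilatère $I$ de $A$ tel que l'anneau $A/I$ soit fini et vérifie $K\otimes_\mathbb{Z}(A/I)=0$. Pour conclure via la proposition~\ref{pr-recip-pfadd}, il suffit de vérifier deux choses : d'une part que la catégorie $\F(\A/\I;K)$ est localement finie, d'autre part que, pour tout foncteur simple $S$ de $\F(\A/\I;K)$, le foncteur $\pi_\I^*(S)$ est de présentation finie dans $\F(\A;K)$.

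Pour le premier point, j'identifierais $\F(\A/\I;K)$ à $\F(A/I,K)$. En effet, la catégorie $\A/\I$ s'identifie à la sous-catégorie pleine de $\mathbf{P}(A/I)$ formée des modules libres $(A/I)^n$, et tout module projectif de type fini sur $A/I$ étant facteur direct d'un module libre, la restriction le long de cette inclusion induit une équivalence $\F(A/I,K)\xrightarrow{\simeq}\F(\A/\I;K)$, par extension karoubienne des foncteurs à but dans la catégorie $K\Md$ qui scinde les idempotents. La locale finitude résulterait alors directement de la proposition~\ref{corsam} appliquée à l'anneau $A/I$, qui vérifie bien ses hypothèses de finitude et d'annulation.

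Pour le second point, on observera que tout foncteur simple $S$ de $\F(\A/\I;K)$ donne par précomposition un foncteur simple antipolynomial $\pi_\I^*(S)$ de $\F(A,K)$ : l'antipolynomialité est immédiate par la définition~\ref{def-antipol}, et la simplicité est préservée car le foncteur $\pi_\I:\A\to\A/\I$ étant plein et essentiellement surjectif, $\pi_\I^*$ est pleinement fidèle d'image essentielle stable par sous-quotient (tout sous-quotient d'un foncteur annulé par $\I$ est encore annulé par $\I$). L'hypothèse de l'énoncé donne alors directement la présentation finie de $\pi_\I^*(S)$ dans $\F(\A;K)$. La proposition~\ref{pr-recip-pfadd} fournit alors la conclusion que $\I(x,-)$ est de type fini pour tout objet $x$ de $\A$, ce qui est précisément la condition $\tfc(\mathbf{P}(A);K)$. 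L'étape la plus substantielle de ce plan, au-delà des applications des résultats antérieurs, est la vérification soigneuse de l'équivalence $\F(\A/\I;K)\simeq\F(A/I,K)$, mais il ne s'agit que d'un argument de complétion karoubienne classique ne présentant pas de difficulté réelle.
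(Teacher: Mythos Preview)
Your proof is correct and follows essentially the same approach as the paper's. The paper's own proof is the one-liner ``combiner le corollaire~\ref{corflf} à la proposition~\ref{pr-recip-pfadd}'', which is exactly your argument with one level of packaging: the corollaire~\ref{corflf} already encapsulates the identification $\F(\A/\I;K)\simeq\F(A/I,K)$ together with the application of la proposition~\ref{corsam}, so your unpacking of this step via the Karoubi-completion argument is just making explicit what the paper leaves implicit.
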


\begin{proof} Combiner le corollaire~\ref{corflf} à la proposition~\ref{pr-recip-pfadd}.
\end{proof}

Pour une catégorie source $\mathbf{P}(A)$, nous pouvons améliorer le théorème~\ref{th-simples-pf} en donnant une condition nécessaire et suffisante à la présentation finie des simples à valeurs de dimensions finies.

La notion d'idéal cotrivial d'un anneau qui apparaît dans l'énoncé ci-dessous est celle de l'exemple~\ref{rem-ideal} page~\pageref{rem-ideal}.

\begin{thm}\label{thapf} Soit $A$ un anneau. Les foncteurs finis et à valeurs de dimensions finies de $\F(A,K)$ sont de présentation finie si et seulement si les deux conditions suivantes sont satisfaites :
\begin{enumerate}
\item tout $A^{\op}\otimes_\mathbb{Z}K$-module simple, de dimension finie sur $K$, est de présentation finie ;
\item tout idéal $K$-cotrivial de $A$ est de type fini comme idéal à droite.
\end{enumerate}
\end{thm}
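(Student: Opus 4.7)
The plan is to translate conditions~(1) and~(2) into the intrinsic properties $\pfa(\mathbf{P}(A), K)$ and $\tfc(\mathbf{P}(A), K)$ introduced in this section, and then apply Theorem~\ref{th-simples-pf} for the "if" direction and Corollary~\ref{cor-recip-pfadd} for the "only if" direction. I first set up this dictionary: Proposition~\ref{additifs-bimodules} gives an equivalence $\mathbf{Add}(\mathbf{P}(A); K)\simeq (A^{op}\otimes_\mathbb{Z}K)\Md$ via evaluation at $A$, under which simple modules of finite $K$-dimension correspond to simple additive functors with finite-dimensional values; combined with Lemma~\ref{pfaddf}, this identifies condition~(1) with $\pfa(\mathbf{P}(A), K)$. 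Dually, Example~\ref{rem-ideal} bijects $K$-cotrivial ideals of $\mathbf{P}(A)$ with $K$-cotrivial two-sided ideals $I$ of $A$, and under the companion equivalence $\mathbf{Add}(\mathbf{P}(A); \mathbb{Z})\simeq A^{op}\Md$, the additive functor $\I(A, -)$ corresponds to $I$ viewed as a right $A$-module; since every object of $\mathbf{P}(A)$ is a direct summand of some $A^n$, the functor $\I(x, -)$ is of finite type for every $x$ as soon as $\I(A, -)$ is, so $\tfc(\mathbf{P}(A), K)$ is equivalent to condition~(2).

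For the "if" direction, assuming (1) and (2) we have $\pfa$ and $\tfc$. Corollary~\ref{corlnc} provides $\lnc(\mathbf{P}(A), K)$ (via the deep Putman-Sam-Snowden noetherianity result), so Theorem~\ref{th-simples-pf} directly yields that every finite functor with finite-dimensional values in $\F(A, K)$ is of finite presentation.

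For the "only if" direction, suppose every such functor is of finite presentation. A simple $A^{op}\otimes_\mathbb{Z}K$-module of finite $K$-dimension corresponds under the dictionary to a simple additive functor in $\F^{df}(A, K)$, which remains simple in $\F(A, K)$ by the thickness of $\mathbf{Add}(\mathbf{P}(A); K)$ in $\F(A, K)$; by hypothesis it is of finite presentation in $\F(A, K)$, and Lemma~\ref{pfaddf} transfers this to finite presentation in $\mathbf{Add}$, yielding~(1). Similarly, every simple antipolynomial functor of $\F(A, K)$ has finite-dimensional values by Proposition~\ref{prel-antipol}, so it is of finite presentation by hypothesis; Corollary~\ref{cor-recip-pfadd} then produces $\tfc$, that is, condition~(2). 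The bulk of the work is already done in the preceding results, so no step should pose a serious obstacle; the only point requiring care is confirming that the $\mathbb{Z}$-linear Eilenberg-Watts equivalence sends $\I(A, -)$ precisely to $I$ as a right $A$-module, to guarantee that $\tfc$ really coincides with the condition that every $K$-cotrivial ideal is finitely generated as a right ideal.
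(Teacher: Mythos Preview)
Your proof is correct and follows essentially the same route as the paper, which invokes Theorems~\ref{th-simples-pf} and~\ref{th-pol-pf} together with Corollaries~\ref{corlnc} and~\ref{cor-recip-pfadd}. The only variation is that for the necessity of condition~(1) you argue directly via Lemma~\ref{pfaddf} (simple additive functors are simple, hence finite, in $\F(A,K)$) rather than through Theorem~\ref{th-pol-pf}, which amounts to the same thing.
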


\begin{proof} Cela découle des théorèmes~\ref{th-simples-pf} et \ref{th-pol-pf} ainsi que des corollaires~\ref{corlnc} et~\ref{cor-recip-pfadd}.
\end{proof}

\begin{cor}\label{cor-pfdf} Si les anneaux $A$ et $A\otimes_\mathbb{Z}K$ sont noethériens à droite, alors tout foncteur fini de $\F^{\df}(A,K)$ est de présentation finie dans $\F(A,K)$.
 \end{cor}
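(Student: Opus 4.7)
Le plan consiste à appliquer le théorème~\ref{thapf} et à vérifier ses deux conditions à partir des hypothèses de noethérianité. Ce théorème ramène en effet l'énoncé à montrer : (a) que tout idéal $K$-cotrivial de $A$ est de type fini comme idéal à droite, et (b) que tout $A^{op}\otimes_\mathbb{Z}K$-module simple de dimension finie sur $K$ est de présentation finie.

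Pour la condition (a), on invoquerait simplement l'hypothèse que $A$ est noethérien à droite : tous ses idéaux à droite, a fortiori ceux qui sont $K$-cotriviaux, sont de type fini.

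Pour la condition (b), on exploiterait la commutativité de $K$ pour identifier $A^{op}\otimes_\mathbb{Z}K$ à $(A\otimes_\mathbb{Z}K)^{op}$, de sorte que les modules à gauche sur le premier correspondent aux modules à droite sur le second. L'hypothèse que $A\otimes_\mathbb{Z}K$ est noethérien à droite devient alors l'assertion que $A^{op}\otimes_\mathbb{Z}K$ est noethérien à gauche. Comme tout module simple de dimension finie sur $K$ est, en particulier, de type fini sur $A^{op}\otimes_\mathbb{Z}K$, il s'écrit comme quotient d'un module libre de type fini, dont le noyau est à son tour de type fini par noethérianité à gauche : d'où la présentation finie voulue.

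Le principal point à surveiller -- mais mineur -- est l'identification correcte des catégories de modules via le passage à l'anneau opposé, qui exige bien la commutativité de $K$ pour garantir $K^{op}=K$. Une fois cette vérification faite, la conclusion est immédiate en combinant les deux observations précédentes avec le théorème~\ref{thapf}.
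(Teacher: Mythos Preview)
Your proposal is correct and follows exactly the approach intended by the paper: the corollary is stated immediately after théorème~\ref{thapf} without an explicit proof, and your verification of its two hypotheses via the noetherianity assumptions is precisely the intended argument.
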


\begin{ex} L'hypothèse du corollaire précédent est vérifiée dans chacun des deux cas suivants :
\begin{enumerate}
\item l'anneau $A$ est noethérien à droite et $K$ est une extension de type fini de son sous-corps premier ; 
\item $A$ est un anneau commutatif de type fini.
\end{enumerate}
\end{ex}

\appendix

\section{Foncteurs simples ne prenant pas des valeurs de dimensions finies}\label{apa}

\subsection{Foncteurs simples prenant une valeur non nulle de dimension finie}\label{apa-1}

On rappelle (corollaire~\ref{simples-eval}) que, si $F$ est un foncteur simple de $\F(\A;K)$ et $t$ est un objet de $\A$ tel que $F(t)$ soit non nul, alors $F$ est isomorphe au prolongement intermédiaire associé au foncteur d'évaluation en $t$ du $K[{\rm End}(t)]$-module $F(t)$. Cela permet théoriquement de disposer d'un contrôle sur les foncteurs simples de $\F(\A;K)$ qui prennent \emph{une} valeur non nulle de dimension finie, à partir des représentations simples de dimension finie sur $K$ des monoïdes multiplicatifs d'endomorphismes des objets de $\A$.

La propriété suivante, qui constitue un corollaire des résultats du §\,\ref{phdfx}, montre que, dans de nombreux cas, les foncteurs simples avec \emph{une} valeur non nulle de dimension finie ont \emph{toutes} leurs valeurs de dimensions finies si et seulement s'ils sont polynomiaux.

\begin{pr}\label{cor-rep-pol-cotr} Soient $A$ un anneau commutatif, $n\in\mathbb{N}$ et $M$ une représentation simple de dimension finie sur $K$ du monoïde $\M_n(A)$. On suppose que $A$ ne possède pas d'autre idéal $K$-cotrivial que $A$ (par exemple, que $A$ est un corps infini). Notons $S_M$ le foncteur simple de $\F(A,K)$ obtenu en appliquant à $M$ le prolongement intermédiaire associé au foncteur réflexif $\F(A,K)\to K[\M_n(A)]\Md$ d'évaluation en $A^n$ (cf. propositions~\ref{pr-prolongement-interm} et~\ref{restr-fct-refl}).

Les assertions suivantes sont équivalentes.
\begin{enumerate}
\item[{\rm (a)}] Le foncteur $S_M$ est polynomial.
\item[{\rm (b)}] Le foncteur $S_M$ est à valeurs de dimensions finies.
\item[{\rm (c)}] La représentation $M$ de $\M_n(A)$ est polynomiale à la Eilenberg-MacLane.
\end{enumerate}
\end{pr}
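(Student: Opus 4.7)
The plan is to establish the cyclic implications (c) $\Rightarrow$ (b) $\Rightarrow$ (a) $\Rightarrow$ (c). A preliminary observation is that, by Example~\ref{rem-ideal}, the hypothesis that the commutative ring $A$ has no $K$-cotrivial ideal other than $A$ itself is equivalent to saying that $\mathbf{P}(A)$ has no $K$-cotrivial ideal other than itself — exactly the hypothesis of Corollary~\ref{cor-pasdecotr}.

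The implication (c) $\Rightarrow$ (b) is nothing other than a direct application of Lemma~\ref{lm-vdf}: the hypotheses that $A$ is commutative, and that $M$ is simple, EML-polynomial and of finite dimension on $K$ are precisely what is required to conclude that $T(M) = S_M$ takes values of finite dimensions.

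For (b) $\Rightarrow$ (a), I recall from Corollary~\ref{simples-eval} that $S_M$ admits $\{A^n\}$ as both a support and a co-support; since $M = S_M(A^n)$ has finite dimension on $K$, this actually means that $S_M$ is a quotient of a finite direct sum of copies of $P_{\mathbf{P}(A)}^{A^n}$ (hence of finite type) and dually a subfunctor of a finite product of the type $K^{\mathbf{P}(A)(-,A^n)}$ (hence of cofinite type). Assumption~(b) places $S_M$ in $\F^{df}(A,K)$, and under our standing hypothesis on the ideals of $A$, Corollary~\ref{cor-pasdecotr} directly applies: any functor of $\F^{df}(A,K)$ that is simultaneously of finite type and of cofinite type is polynomial. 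Thus $S_M$ is polynomial.

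The implication (a) $\Rightarrow$ (c) is immediate from the definitions: if $S_M$ is polynomial, then in particular the function $\End_{\mathbf{P}(A)}(A^n) \to \mathrm{Hom}_K\bigl(S_M(A^n), S_M(A^n)\bigr)$ induced by the functoriality of $S_M$ is polynomial in the sense of Eilenberg–Mac Lane, and since $S_M(A^n) = M$ with its canonical action of $\M_n(A)$, this is precisely the assertion that $M$ is an EML-polynomial representation of $\M_n(A)$ (Definition~\ref{def-rep-pol}). One may equivalently invoke Proposition~\ref{pr-pol-pol}, which shows that $ev_n$ sends polynomial functors into EML-polynomial modules. There is no substantial obstacle: the proof simply threads together results already established in the paper, and the only point requiring care is the verification that $S_M$ inherits both the finite-type and cofinite-type properties from the hypothesis that $M$ has finite dimension, so that Corollary~\ref{cor-pasdecotr} can be applied in full strength.
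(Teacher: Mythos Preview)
Your proof is correct and follows essentially the same route as the paper. The only cosmetic difference is that for (c)$\Rightarrow$(b) you invoke Lemma~\ref{lm-vdf} (which itself packages Proposition~\ref{ann-com-cr} and the support/co-support argument via Lemma~\ref{lm-sucosu}), whereas the paper unpacks these ingredients directly, showing (c)$\Rightarrow$(a) via Lemma~\ref{lm-sucosu} and (a)$\Rightarrow$(b) via Proposition~\ref{ann-com-cr}; your (b)$\Rightarrow$(a) and (a)$\Rightarrow$(c) are identical to the paper's.
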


\begin{proof} La proposition~\ref{pr-prolongement-interm} montre que $S_M(A^n)\simeq M$ comme $K[\M_n(A)]$-modules. Il en résulte en particulier que (a) entraîne (c). La proposition~\ref{ann-com-cr} permet également d'en déduire que (a) implique (b).

Le lemme~\ref{lm-sucosu} montre que (c) entraîne (a) (en effet, $A^n$ constitue un support et un co-support de $S_M$). L'implication (b)$\Rightarrow$(a) provient quant à elle du corollaire~\ref{cor-pasdecotr}.
\end{proof}

\noindent
\textbf{Convention : dans toute la suite de cet appendice, $k$ désigne un corps commutatif infini.}

Nous nous concentrerons sur la catégorie $\F(k,K)$, où adviennent déjà de nombreux phénomènes intéressants.

 Pour $n>0$, il est facile de construire des représentations simples de $\M_n(k)$ de dimension finie sur $K$ qui ne sont pas polynomiales. Pour $n=1$, tout morphisme de groupes $k^\times\to K^\times$ dont le prolongement par zéro fournit une fonction non polynomiale $k\to K$ en donne un cas.

\begin{ex}\label{ex-q-dim-infinie} Prenons $k=K=\mathbb{Q}$. Les seuls foncteurs simples non constants de $\F^{\df}(\mathbb{Q},\mathbb{Q})$ non nuls sur $\mathbb{Q}$ sont les puissances symétriques $S^d$ pour $d\in\mathbb{N}^*$. Ils sont associés aux endomorphismes $x\mapsto x^d$ du groupe $\mathbb{Q}^\times$. Mais ce groupe possède de nombreux autres endomorphismes, comme $x\mapsto x^d$ pour $d\leq 0$ -- et encore bien d'autres puisque ce groupe abélien, isomorphe à $\mathbb{Z}/2\oplus\mathbb{Z}^{\oplus\mathbb{N}}$, a un anneau d'endomorphismes qui a la puissance du continu !
\end{ex}

En fait, on peut souvent décrire de façon assez explicite les foncteurs simples obtenus par ce procédé, à l'aide de la construction générale suivante, qui a été introduite et étudiée (dans le cas où $k=K$ est un corps fini) par Powell \cite{GP-cow}. Soient $n$ un entier naturel et $M$ une représentation $K$-linéaire de $\GL_n(k)$. On définit un foncteur $Q_M$ de $\F(k,K)$ par
\begin{equation*}
Q_M:=K[\mathrm{Inj}_k(k^n,-)]\underset{K[\GL_n(k)]}{\otimes}M
\end{equation*}
où $\mathrm{Inj}_k(k^n,V)$ désigne l'ensemble des injections $k$-linéaires $k^n\to V$ (on fait de $K[\mathrm{Inj}_k(k^n,-)]$ un foncteur de $\F(k,K)$ quotient de $P_{\mathbf{P}(k)}^{k^n}$ en tuant, dans $P_{\mathbf{P}(k)}^{k^n}(V)=K[{\rm Hom}_k(k^n,V)]$, les morphismes non injectifs $k^n\to V$).

Par ailleurs, si $M$ est une représentation de dimension finie $m$ sur $K$ du groupe $\GL_n(k)$, nous noterons encore $M$, par abus, la représentation du monoïde $\M_n(k)$ obtenue en prolongeant l'action de $\GL_n(k)$ par $0$ aux matrices singulières de $\M_n(k)$, et désignerons par $\phi : \M_n(k)\to\M_m(K)$ la fonction multiplicative correspondante obtenue en choisissant une $K$-base de $M$. Comme dans la proposition~\ref{cor-rep-pol-cotr}, $S_M$ désigne le foncteur de $\F(k,K)$ associé à $M$ par prolongement intermédiaire ; c'est naturellement un quotient de $Q_M$, et $S_M$ est simple si $M$ est une représentation simple de $\GL_n(k)$.

\begin{pr}\label{pr-simples-biz1} Soient $n, m\in\mathbb{N}^*$, $M$ une représentation simple de dimension finie $m$ sur $K$ de $\GL_n(k)$, $\phi : \M_n(k)\to\M_m(K)$ le morphisme associé comme ci-dessus et $\phi_1,\dots,\phi_m : \M_n(k)\to K^m$ les colonnes de $\phi$. Le foncteur $Q_M$ est simple (et donc isomorphe à $S_M$) si et seulement si la condition suivante est vérifiée : la famille des translatées $(\tau_a(\phi_i))_{1\leq i\leq m,a\in\M_n(k)}$ (où $\tau_a(\psi)(x):=\psi(a+x)$) est libre dans le $K$-espace vectoriel $K^{\M_n(k)}$.
\end{pr}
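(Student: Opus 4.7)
The plan is to reformulate the simplicity of $Q_M$ as the injectivity of the canonical natural transformation $\alpha : Q_M \to H_M$ into the right Kan extension, and then to test this injectivity on a universal example where it reduces to the linear independence of translates.

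Let $H_M\in\F(k,K)$ denote the right Kan extension of $M$ (viewed as $\M_n(k)$-module via $\varphi$, extended by $0$ on singular matrices) along evaluation at $k^n$, so that $H_M(V)=\mathrm{Hom}_{K[\M_n(k)]}(K[\mathrm{Hom}_k(V,k^n)],M)$, and let $\alpha:Q_M\to H_M$ be the canonical transformation induced by the identity on $M=Q_M(k^n)=H_M(k^n)$; one has $S_M=\mathrm{im}(\alpha)$. Since $Q_M$ is generated (as a functor on $\mathbf{P}(k)$) by its value $M$ at $k^n$ and $\alpha$ is an isomorphism at $k^n$, any proper subfunctor of $Q_M$ must be contained in $\ker\alpha$, and $\ker\alpha$ itself is a proper subfunctor. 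Therefore $Q_M$ is simple if and only if $\alpha$ is injective. By adjunction one has the explicit formula $\alpha_V([f]\otimes m)(g)=\varphi(g\circ f)\cdot m$ for $f\in\mathrm{Inj}(k^n,V)$, $m\in M$, $g\in\mathrm{Hom}(V,k^n)$.

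Next I would specialize to the universal test space $V=k^n\oplus k^n$ and consider the family of graph injections $f_a:k^n\to V$, $x\mapsto(x,ax)$, parameterized by $a\in\M_n(k)$. These lie in pairwise distinct $GL_n(k)$-orbits (the parameter $a=BA^{-1}$ is the invariant of an orbit whose first block $A$ is invertible), so they contribute a direct summand $\bigoplus_{a\in\M_n(k)}M$ of $Q_M(k^{2n})$. On this summand, $\alpha$ sends $(m_a)_a$ to the function $(g_1,g_2)\mapsto\sum_a\varphi(g_1+g_2 a)m_a$. Writing $m_a=\sum_i m_a^i e_i$ and restricting to the slice $g_2=\mathrm{id}_{k^n}$, one obtains $g_1\mapsto\sum_{a,i}m_a^i\,(\tau_a\varphi_i)(g_1)$, which directly couples the kernel of $\alpha$ on this summand to linear dependencies among the translates. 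The converse direction (simplicity $\Rightarrow$ linear independence) is then nearly immediate: a hypothetical nontrivial relation $\sum c_{a,i}\tau_a\varphi_i=0$ yields a candidate kernel element $q=\sum_a[f_a]\otimes m_a$ with $m_a=\sum_i c_{a,i}e_i$, and the slice computation shows $\alpha(q)$ vanishes at $g_2=\mathrm{id}_{k^n}$. Using the $\M_n(k)$-equivariance of $\alpha$ (substituting $g_2=s$ with $s\in GL_n(k)$ reindexes $a\mapsto s^{-1}a$ bijectively, and the multiplicative structure of $\varphi$ ensures the reindexed relation still vanishes) together with Zariski density on singular $g_2$ (permitted since $k$ is infinite), $\alpha(q)$ vanishes on all of $\M_n(k)^2$, contradicting injectivity of $\alpha_{k^{2n}}$.

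The hard part is the direct direction: assuming the translates are linearly independent, one must establish $\ker\alpha_V=0$ for every $V$, not merely on the graph summand at $V=k^{2n}$. My plan is (i) to reduce a general kernel element at $V$ to one whose support is concentrated on finitely many $n$-planes $W_0,\ldots,W_r\in\mathrm{Gr}_n(V)$, possibly after embedding $V$ into a larger space; (ii) for each $W_i$, to exhibit a separating map $g:V\to k^n$ whose restriction to $W_i$ is invertible while being singular on every other $W_j$ in the support, which is possible generically since $k$ is infinite (after enlarging $V$ if necessary); (iii) to combine this separation with the $GL_V(k)$-equivariance of $\alpha$ to propagate the graph-summand injectivity from step 3 across the full Grassmannian decomposition of $Q_M(V)$. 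Step (ii), constructing separating maps in sufficiently generic position despite potential incidence relations among the $W_i$, is the most delicate point and is where the infinitude of $k$ plays a crucial role.
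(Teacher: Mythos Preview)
Your reduction of simplicity to injectivity of $\alpha:Q_M\to H_M$ is correct and agrees with the paper's set-up. However, both halves of your argument diverge from the paper's and contain genuine gaps.

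\textbf{Your ``hard'' direction (independence $\Rightarrow$ simplicity).} Step~(ii) of your plan---finding, for each $W_i$ in the support, a map $g:V\to k^n$ that restricts to an isomorphism on $W_i$ and is singular on every other $W_j$---is generally impossible. Already for $n=1$ and three distinct lines $W_0,W_1,W_2$ in $V=k^2$, the kernel of any nonzero linear form $g:k^2\to k$ is a single line and cannot contain both $W_1$ and $W_2$; embedding $V$ into a larger $V'$ does not help, since the $W_j$ keep their incidence relations inside $V\subset V'$. The paper's argument is much simpler: since $k$ is infinite, one can find a \emph{single} $g:V\to k^n$ whose restriction to \emph{every} $W_i$ is an isomorphism (the bad locus for each $W_i$ is a proper linear subspace of $\mathrm{Hom}_k(V,k^n)$, and a finite union of proper subspaces is still a proper subset). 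Using this $g$ one normalizes to $V=k^n\oplus E$ with $f_i=(\mathrm{id},g_i)$ and pairwise distinct $g_i:k^n\to E$. The kernel condition then reads $\sum_i\varphi(u+vg_i)\xi_i=0$ for all $(u,v)$; choosing (by the same finite-union argument) a $v_0$ with the $v_0 g_i$ pairwise distinct in $\M_n(k)$ and setting $a_i=v_0 g_i$ directly exhibits the required linear dependence among the translates.

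\textbf{Your ``converse'' direction (simplicity $\Rightarrow$ independence).} The Zariski-density step is invalid: $\varphi$ equals $\rho$ on $GL_n(k)$ and is \emph{zero} on singular matrices, so the function $(g_1,g_2)\mapsto\sum_a\varphi(g_1+g_2 a)m_a$ is not a regular function of $(g_1,g_2)$, and vanishing on the Zariski-open set $\{g_2\in GL_n(k)\}$ does not propagate to singular $g_2$. Concretely, at $g_2=0$ the value is $\varphi(g_1)\sum_a m_a$, and nothing in your argument forces $\sum_a m_a=0$.
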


(Sous les hypothèses de la proposition, l'espace vectoriel $S_M(k^i)$ est donc de dimension finie non nulle si et seulement si $i=n$.)

\begin{proof} Explicitement, $S_M$ est l'image du morphisme $Q_M\to M^{\mathrm{Hom}_k(V,k^n)}$ donné sur l'espace vectoriel $V$ par
\[K[{\rm Inj}_k(k^n,V)]\underset{K[\GL_n(k)]}{\otimes} M\to M^{\mathrm{Hom}_k(V,k^n)}\qquad ([f]\otimes\xi)\mapsto (g\mapsto \phi(g.f).\xi).\]

Comme $\GL_n(k)$ opère librement sur $\mathrm{Inj}_k(k^n,V)$ et que les orbites sont classifiées par les sous-espaces de dimension $n$ de $V$ via l'image d'une telle injection, les éléments non nuls de $Q_M(V)$ sont exactement les images par la projection canonique des éléments $\sum_{i=1}^r [f_i]\otimes\xi_i$ de $K[\mathrm{Inj}_k(k^n,V)]\otimes M$, où $r\in\mathbb{N}^*$, les $\xi_i$ sont des éléments non nuls de $M$, les $E_i:=\mathrm{Im}\,f_i$ sont des sous-espaces vectoriels deux à deux distincs de $V$ et de dimension $n$. Comme $k$ est infini, et qu'une réunion finie de sous-espaces stricts d'un $k$-espace vectoriel en est donc toujours un sous-ensemble strict, il existe une application linéaire $V\to k^n$ telle que les composées $E_i\hookrightarrow V\to k^n$ soient des isomorphismes pour tout $i\in\{1,\dots,r\}$. Par conséquent, quitte à modifier les $f_i$ (et les $\xi_i$) par l'action d'éléments de $\GL_n(k)$, on peut supposer que $V$ est de la forme $k^n\oplus E$ et que la première composante $k^n\to k^n$ des $f_i$ est l'identité ; nous noterons $g_i : k^n\to E$ leur deuxième composante. Les $g_i$ sont donc deux à deux distincts. L'annulation de $\sum_{i=1}^r [f_i]\otimes\xi_i$ dans $M^{{\rm Hom}_k(V,k^n)}$ signifie que, pour toute application linéaire $g : V=k^n\oplus E\to k^n$, dont nous noterons $u : k^n\to k^n$ et $v : E\to k^n$ les composantes, on a
\[\sum_{i=1}^r\phi(u+vg_i).\xi_i=0\]
dans $M$. Utilisant encore qu'une réunion finie de sous-espaces stricts d'un $k$-espace vectoriel en est un sous-ensemble strict, on voit que le fait que les $g_i$ soient deux à deux distincts entraîne qu'il existe $v\in\mathrm{Hom}_k(E,k^n)$ tel que les $vg_i$ soient deux à deux distincts.

Autrement dit, la projection $Q_M\twoheadrightarrow S_M$ est un isomorphisme si et seulement s'il existe $r\in\mathbb{N}^*$, des éléments deux à deux distincts $a_1,\dots,a_r$ de $\M_n(k)$ et des éléments non nuls $\xi_1,\dots,\xi_r$ de $M\simeq K^m$ tels que
\[\sum_{i=1}^r\tau_{a_i}(\phi).\xi_i=0\]
ce qui termine la démonstration.
\end{proof}

\begin{ex}\label{ex-grasm-simples} La représentation triviale de $\GL_n(k)$ dans $K$ correspond à la fonction $\phi : \M_n(k)\to K$ qui vaut $1$ sur $\GL_n(k)$ et $0$ sur les matrices singulières. Celle-ci vérifie la condition d'indépendance linéaire des translatées requise dans l'énoncé précédent. Supposons en effet qu'on dispose d'une relation $\sum_{i=1}^r\lambda_i\tau_{a_i}(\phi)=0$ avec les $a_i\in\M_n(k)$ deux à deux distincts. Comme $k$ est infini, on peut trouver $x\in\M_n(k)$ telle que toutes les matrices $a_i+x$ soient inversibles, d'où $\sum_{i=1}^r\lambda_i=0$. On peut également trouver, pour tout indice $j$, $x\in\M_n(k)$ telle que $a_i+x$ soit inversible si et seulement $i\neq j$, ce qui fournit la relation $\sum_{i\neq j}\lambda_i=0$ et donc $\lambda_j=0$ pour tout $j$. Le foncteur simple qu'on en déduit est donné sur les objets par $V\mapsto K[\G r_n(V)]$, où $\G r_n(V)$ désigne l'ensemble des sous-espaces de dimension $n$ de $V$ ; ce foncteur envoie une flèche $f : V\to W$ sur l'application linéaire envoyant $[E]$, pour $E\in\G r_n(V)$, sur $[f(E)]$ si $f(E)$ est de dimension $n$, et sur $0$ sinon.

Si $k$ est un corps \emph{fini}, le foncteur $K[\G r_n]$ est encore simple \emph{si $K$ est de caractéristique différente de celle de $k$}, mais la démonstration précédente ne s'applique plus. La simplicité se déduit de résultats de Kuhn \cite{Ku-adv}. En revanche, si $k$ est fini et de même caractéristique que $K$, alors $K[\G r_n]$ n'est \emph{jamais simple} si $n>0$ -- on peut même montrer qu'il est de dimension de Krull au moins $n$ (et conjecturalement, de dimension de Krull exactement $n$ -- cf. \cite{GP-cow,Dja-gr}).
\end{ex}

\begin{ex} Prenons $K=k$ et considérons l'endomorphisme $x\mapsto x^d$ de $k^\times$, où $d$ est un entier strictement négatif. Son prolongement par $0$ fournit un morphisme multiplicatif $f : k\to k$ qui vérifie la condition d'indépendance linéaire de la proposition~\ref{pr-simples-biz1} par un argument de fractions rationnelles immédiat (on rappelle que $k$ est un corps commutatif \emph{infini}).

Pour $k=K=\mathbb{R}$, des arguments d'analyse élémentaires montrent que le prolongement par $0$ à $\mathbb{R}$ de tout endomorphisme \emph{continu} $f$ du groupe topologique $\mathbb{R}^\times$ vérifie également cette condition, hormis si ce prolongement est polynomial, c'est-à-dire si $f$ est de la forme $x\mapsto x^d$ avec $d\in\mathbb{N}^*$.
\end{ex}

\begin{rem} Nous ignorons s'il existe un morphisme de monoïdes $\M_n(k)\to\M_m(K)$ fournissant une représentation simple \emph{non EML-polynomiale} de $\M_n(k)$ et ne vérifiant pas la condition de la proposition~\ref{pr-simples-biz1}, même en nous restreignant à $n=m=1$.
\end{rem}

\begin{rem}\label{rem-comparaison-SL-GL} Le cas $n=m=1$ est sans doute le plus important dans les constructions précédentes, en vertu des résultats de Steinberg \cite{RSt} et de Bass-Milnor-Serre \cite{BMS} (déjà mentionnés dans la section~\ref{section-generique}) dont notre corollaire~\ref{cor-tens-St1} s'inspire. Ainsi, pour $k=\mathbb{Q}$, toute représentation complexe de dimension finie de $\SL_n(\mathbb{Q})$ est polynomiale, où $n\in\mathbb{N}$ est arbitraire, ce qui entraîne que toute représentation simple de dimension finie de $\GL_n(\mathbb{Q})$ s'exprime comme le produit tensoriel entre une représentation polynomiale et une représentation se factorisant par le déterminant, donc provenant d'un morphisme de groupes $\mathbb{Q}^\times\to\mathbb{C}^\times$ : ce sont ces morphismes qui font apparaître une <<~partie non polynomiale~>>, donnant lieu à des foncteurs dont les valeurs ne sont pas de dimensions finies.
\end{rem}

\subsection{Foncteurs simples qui ne prennent aucune valeur de dimension finie non nulle}\label{apa-2}

Il paraît très difficile de donner des résultats généraux sur les foncteurs simples dont aucune valeur non nulle n'est de dimension finie. Il est toutefois facile de voir qu'il en existe souvent beaucoup, dès lors que les ensembles de morphismes dans la catégorie source ne sont pas trop petits. L'énoncé suivant en donne une illustration.

\begin{pr}\label{simples-sans-val-df} Soient $A$ un anneau commutatif ou noethérien de caractéristique nulle et $M$ un $A$-module projectif de type fini dont $A^2$ est facteur direct. Il existe un foncteur simple $S$ de  $\F(A,K)$ tel que, pour tout objet $V$ de $\mathbf{P}(A)$ :
\begin{enumerate}
\item $\dim_K S(V)=\infty$ si $M$ est facteur direct de $V$ ;
\item $S(V)=0$ sinon.
\end{enumerate} 
\end{pr}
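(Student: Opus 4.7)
The plan is to realize $S$ as the intermediate extension of a carefully chosen simple $K[R]$-module, with $R=\mathrm{End}_A(M)$. Under the hypotheses on $A$, every finitely generated $A$-module is hopfian, so any non-invertible element of $R$ remains non-invertible after composition (on either side) with an arbitrary element of $R$. The equivalent conditions of Lemma~\ref{lm-mon-eltr} thus hold for $R$, and the retraction $r:K[R]\to K[\Gamma]$ with $\Gamma=\mathrm{Aut}_A(M)=R^\times$ is a well-defined ring homomorphism; on any module of the form $r^*\tilde N$, the non-invertible elements of $R$ act by zero.

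I first construct a simple $K[\Gamma]$-module $\tilde N$ of infinite $K$-dimension. Since $\mathrm{car}(A)=0$, the inclusion $\mathbb{Z}\hookrightarrow A$ gives $GL_2(\mathbb{Z})\hookrightarrow GL_2(A)$, and since $A^2$ is a direct summand of $M$ we have $GL_2(A)\hookrightarrow\Gamma$. By Sanov's theorem, $GL_2(\mathbb{Z})$ contains a free subgroup $F$ of rank $2$, and Formanek's primitivity theorem ensures that $K[F]$ admits a faithful simple module $V$ of infinite $K$-dimension. Set $W:=K[\Gamma]\otimes_{K[F]}V$, fix a nonzero $v\in V$, let $M_0:=K[\Gamma]\cdot(1\otimes v)\subseteq W$, pick (by Zorn's lemma) a maximal proper $K[\Gamma]$-submodule $M_0'\subset M_0$, and define $\tilde N:=M_0/M_0'$. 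Any proper $K[\Gamma]$-submodule of $M_0$ meets the $K[F]$-submodule $V\subset M_0$ trivially, for otherwise the simplicity of $V$ over $K[F]$ would force $1\otimes v$ into it and the submodule would coincide with $M_0$. Consequently $\tilde N$ contains an isomorphic copy of $V$ as a $K[F]$-submodule, whence $\dim_K\tilde N=\infty$.

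Setting $\tilde M:=r^*\tilde N$ and $S:=T_M(\tilde M)$, where $T_M$ is the intermediate extension associated to the reflexive evaluation functor at $M$ (Propositions~\ref{restr-fct-refl} and~\ref{pr-prolongement-interm}), yields a simple functor of $\F(A,K)$ with $S(M)=\tilde M$ of infinite $K$-dimension via the identity $\mathrm{ev}_M\circ T_M=\mathrm{id}$. If $M$ is a direct summand of $V$ with inclusion $i$ and retraction $p$, then $S(p)\circ S(i)=\mathrm{id}_{S(M)}$ displays $S(M)$ as a direct summand of $S(V)$, so $\dim_K S(V)=\infty$. Conversely, when $M$ is not a direct summand of $V$, $S(V)$ is the image in $\mathrm{Hom}_{K[R]}(K[\mathbf{P}(A)(V,M)],\tilde M)$ of the canonical map sending $[f]\otimes n\in K[\mathbf{P}(A)(M,V)]\otimes_{K[R]}\tilde M$ to the homomorphism $g\mapsto(gf)\cdot n$. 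For any $f:M\to V$ and $g:V\to M$, the composite $gf\in R$ cannot be invertible: otherwise $(gf)^{-1}g$ would be a retraction of $f$, contradicting the assumption that $M$ does not split off $V$. Hence each $gf$ is non-invertible and annihilates $\tilde M=r^*\tilde N$; the image of the canonical map is therefore zero, and $S(V)=0$.

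The main obstacle is the second paragraph, namely the production of an infinite-dimensional simple $K[\Gamma]$-module. This is far from automatic: infinite abelian groups over algebraically closed fields admit only one-dimensional simples, so it is essential to exploit that $\Gamma$ contains a non-abelian free subgroup via Sanov's theorem, and to invoke the primitivity of $K[F_2]$ due to Formanek, both of which are classical but non-trivial inputs from combinatorial and algebraic group theory.
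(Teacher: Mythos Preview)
Your proof is correct and follows the same strategy as the paper: embed a non-abelian free group into $\Gamma=\mathrm{Aut}_A(M)$ via $GL_2(\mathbb{Z})\subset GL_2(A)\subset\Gamma$, invoke Formanek to obtain an infinite-dimensional simple, extend the action by zero to the monoid $R=\mathrm{End}_A(M)$, and apply the intermediate extension. The paper's proof merely asserts the two steps you work out explicitly --- that an infinite-dimensional simple over the free subgroup yields one over $\Gamma$, and that the commutative-or-noetherian hypothesis permits the extension by zero --- so your version is a faithful expansion rather than a different argument.
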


\begin{proof} Comme $A^2$ est facteur direct de $M$, le groupe $G:={\rm Aut}_{\mathbf{P}(A)}(M)$ contient $\GL_2(A)$, qui contient $\GL_2(\mathbb{Z})$ (car $A$ est de caractéristique $0$), et donc un groupe libre non abélien $H$.  D'après Formanek \cite[théorème~5]{Form}, il existe un $K[H]$-module simple et fidèle, donc de dimension infinie sur $K$, ce qui entraîne l'existence d'un $K[G]$-module $W$ simple et de dimension infinie. Comme $A$ est commutatif ou noethérien, on peut prolonger l'action de $G$ par $0$ aux endomorphismes non inversibles de $M$ (obtenant une représentation simple de dimension infinie de ce monoïde) et obtenir par prolongement intermédiaire un foncteur simple $S$ de $\F(A,K)$ tel que $S(M)\simeq W$ comme $K[{\rm End}_{\mathbf{P}(A)}(M)]$-modules. Si $M$ est facteur direct de $V$, alors $S(M)$ est facteur direct de $S(V)$, qui est donc de dimension infinie. Si $M$ n'est pas facteur direct de $V$, le fait que les endomorphismes non inversibles de $M$ opèrent par $0$ sur $W$ et l'expression du prolongement intermédiaire entraînent que $S(M)$ est nul.
\end{proof}

\begin{rem}\label{rqdfa} La conclusion de la proposition vaut également dans d'autres cas, par exemple si $A$ est la clôture algébrique d'un corps fini et que $K$ est de caractéristique nulle, par Snider \cite{Sni83}.
\end{rem}

\subsection{Produits infinis de foncteurs simples}\label{apa-4}
 
Les produits infinis de copies d'un foncteur simple à valeurs de dimensions finies sont isomorphes à une somme directe de copies de ce même foncteur, par {\it endofinitude} (cf. par exemple Krause \cite[Th.~6.14]{Kra}, qui traite de catégories de modules, mais dont les arguments s'appliquent pareillement à notre contexte). En revanche, un produit de copies d'un foncteur simple $S$ qui n'est pas à valeurs de dimensions finies peut avoir de nombreux sous-quotients simples non isomorphes à $S$. En voici une illustration élémentaire dans une catégorie de foncteurs dont les simples à valeurs de dimensions finies sont décrits par nos résultats.
 
 \begin{ex}\label{ex-simple-gros-prod}
  Soit $n\geq 2$ un entier. Toute droite $D\in\mathbb{P}^{n-1}(k)$ du $k$-espace vectoriel $k^n$ (on rappelle que $k$ est un corps commutatif infini) fournit un morphisme $K[{\rm Hom}_k(k^n,-)]\to K[{\rm Hom}_k(D,-)]$ dans $\F(k,K)$, qui induit un morphisme $K[({\rm Hom}_k(k^n,-)\setminus\{0\})/k^\times]\to K[\G r_1]$ (selon la notation de l'exemple~\ref{ex-grasm-simples}); de plus, le morphisme
\[K[({\rm Hom}_k(k^n,-)\setminus\{0\})/k^\times]\to K[\G r_1]^{\mathbb{P}^{n-1}(k)}\]
  qu'on en déduit est \emph{injectif}. En effet, un élément non nul de $K[({\rm Hom}_k(k^n,V)\setminus\{0\})/k^\times]$ (où $V$ est un $k$-espace vectoriel de dimension finie sur $k$) s'écrit $x=\sum_{i=1}^r\lambda_i[f_i]$, où $r\in\mathbb{N}^*$, $\lambda_i\in K^\times$ et les $f_i : k^n\to V$ sont des applications linéaires deux à deux non proportionnelles. Par conséquent, pour $i\neq j$, l'ensemble $\{D\in\mathbb{P}^{n-1}(k)\,|\,f_i(D)\neq f_j(D)\}$ est un \emph{ouvert dense} de $\mathbb{P}^{n-1}(k)$ pour la topologie de Zariski, de même que 
 $\{D\in\mathbb{P}^{n-1}(k)\,|\,f_i(D)\neq 0\}$. Il s'ensuit qu'on peut trouver une droite $D$ de $k^n$ telle que les $f_i(D)$ soient des droites deux à deux distinctes de $V$, ce qui montre que l'image de $x$ par notre application est non nulle.
  
   Le foncteur $K[({\rm Hom}_k(k^n,-)\setminus\{0\})/k^\times]$ se surjecte sur $K[\G r_n]$, qui est donc un sous-quotient d'un produit de copies de $K[\G r_1]$. Or on a vu dans l'exemple~\ref{ex-grasm-simples} que tous les foncteurs $K[\G r_n]$ sont simples, et ils sont manifestement deux à deux non isomorphes.
 \end{ex}
 
 D'autres exemples contre-intuitifs par rapport à la situation où l'on ne se considère que des foncteurs à valeurs de dimensions finies proviennent des produits infinis de foncteurs simples \emph{deux à deux non isomorphes}. Remarquons déjà que, si $(S_i)_{i\in E}$ une famille infinie d'objets simples deux à deux non isomorphes de la catégorie de Grothendieck $\E$, l'objet $\underset{i\in E}{\prod}S_i$ de $\E$ est semi-simple si et seulement s'il coïncide avec son sous-objet $\underset{i\in E}{\bigoplus}S_i$ (qui constitue toujours son socle). Pour $\E=\F(\C;K)$, où $\C$ est une petite catégorie, si l'on suppose que les $S_i$ sont à valeurs de dimensions finies, cette condition équivaut à dire que  $\underset{i\in E}{\bigoplus}S_i$ (ou $\underset{i\in E}{\prod}S_i$) est à valeurs de dimensions finies.
 
 Lorsque cette condition n'est pas satisfaite, l'objet $\underset{i\in E}{\prod}S_i$ peut avoir de nombreux sous-quotients simples qui ne sont isomorphes à aucun des $S_i$. En voici une illustration.
 
 \begin{ex}\label{ex-inf-qq}
  Dans $\F(\mathbb{Q},\mathbb{Q})$, le foncteur projectif $P^\mathbb{Q}=\mathbb{Q}[-]$ a pour plus grand quotient polynomial de degré au plus $n$ la somme directe sur les entiers $0\leq i\leq n$ des foncteurs $S^i$ ($i$-ème puissance symétrique sur $\mathbb{Q}$). Chaque foncteur $S^i$ est simple. De plus, le morphisme $P^\mathbb{Q}\to\underset{i\in\mathbb{N}}{\prod}S^i$ qu'on en déduit est injectif, car l'idéal d'augmentation de l'algèbre sur un corps de caractéristique nulle d'un groupe abélien sans torsion est résiduellement nilpotent \cite[Chap.~VI, th.~2.26]{Passi}. Mais $P^\mathbb{Q}$ a plein de quotients simples non polynomiaux : on dispose d'une injection de l'ensemble (infini non dénombrable) des endomorphismes de $\mathbb{Q}^\times$ dont l'extension par $0$ à $\mathbb{Q}$ n'est pas une fonction polynomiale dans l'ensemble de ces quotients -- cf. exemple~\ref{ex-q-dim-infinie}.
 \end{ex}
 
 \begin{rem}\label{rq-cat-biloc} 
  La catégorie $\F(\mathbb{Q},\mathbb{Q})$ est engendrée par les foncteurs projectifs $P^{\mathbb{Q}^n}\simeq (P^\mathbb{Q})^{\otimes n}$ pour $n\in\mathbb{N}$, qui s'injectent grâce à l'exemple précédent dans un produit dénombrable de foncteurs polynomiaux finis et à valeurs de dimensions finies. On en déduit en particulier que la plus petite sous-catégorie \emph{bilocalisante} (c'est-à-dire épaisse et stable par limites) de $\F(\mathbb{Q},\mathbb{Q})$ engendrée par $\F^{\df}(\mathbb{Q},\mathbb{Q})$, ou par les foncteurs polynomiaux, est $\F(\mathbb{Q},\mathbb{Q})$. A contrario, la sous-catégorie localisante de $\F(\mathbb{Q},\mathbb{Q})$ engendrée par $\F^{\df}(\mathbb{Q},\mathbb{Q})$, ou par les foncteurs polynomiaux, est exactement la catégorie des foncteurs \emph{analytiques} (c'est-à-dire des colimites de foncteurs polynomiaux) de $\F(\mathbb{Q},\mathbb{Q})$, qui est classiquement équivalente à $\underset{n\in\mathbb{N}}{\prod}\mathbb{Q}[\Si_n]\Md$.
  
  Dans d'autres catégories de foncteurs, comme $\F(k,K)$, avec $\mathrm{car}(k)\neq\mathrm{car}(K)$, le corollaire~\ref{cor-dfcst} montre que la sous-catégorie bilocalisante de $\F(k,K)$ engendrée par $\F^{\df}(k,K)$ est réduite, comme la sous-catégorie bilocalisante engendrée par les foncteurs polynomiaux, aux foncteurs constants.
 \end{rem}

\section{Foncteurs élémentaires}\label{app-elt}

Cet appendice regroupe les principales propriétés des foncteurs élémentaires. Certaines sont établies dans la section \ref{sect-stein-equi}, nous les complétons avec des énoncés basiques provenant de la théorie des représentations des groupes algébriques et des foncteurs associés (les foncteurs \emph{strictement polynomiaux} \cite{FS}).

Soit $K$ un corps commutatif quelconque. On rappelle (cf. définition~\ref{df-felem}) qu'un endofoncteur $E$ des $K$-espaces vectoriels est dit \emph{élémentaire} s'il existe un entier $d\ge 0$ et un $K[\Si_d]$-module simple $M$ tel que $E$ soit isomorphe à l'image de la transformation naturelle $N$ donnée par la norme :
$$ (T^d\otimes M)_{\Si_d}\xrightarrow[]{N} (T^d\otimes M)^{\Si_d}\;,$$
où un élément $\sigma\in\Si_d$ agit via la transformation naturelle donnée par les endomorphismes $K$-linéaires de $T^d(V)\otimes M=V^{\otimes d}\otimes M$ qui envoient les éléments $v_1\otimes\dots\otimes v_d\otimes m$ sur les éléments $v_{\sigma^{-1}(1)}\otimes\dots\otimes v_{\sigma^{-1}(d)}\otimes \sigma m$. Dans la suite de cet appendice, il nous sera commode d'identifier les foncteurs élémentaires à leur restriction à la catégorie $\mathbf{P}(K)$ des $K$-espaces vectoriels {\em de dimensions finies} -- c'est-à-dire à les considérer comme des objets de $\F(K,K)$. Cet abus est anodin car les foncteurs élémentaires commutent aux colimites filtrantes. Dans ce cadre, les foncteurs élémentaires peuvent se voir comme l'image des représentations simples des groupes symétriques par des prolongements intermédiaires associés aux effets croisés (cf. lemme~\ref{lm-ident-printer}). 

Les faits suivants sont formels (cf. section~\ref{sect-stein-equi}).
\begin{enumerate}
\item[1.] Les foncteurs élémentaires sont absolument simples.
\item[2.] Pour tout $d\ge 0$, les classes d'isomorphisme de foncteurs élémentaires de degré $d$ sont en bijection avec les classes d'isomorphisme de $K[\Si_d]$-modules simples.
\item[3.] Les foncteurs élémentaires sont les quotients simples des $T^d$, $d\ge 0$.
\end{enumerate}

Nous nous attachons dans la suite à des considérations plus explicites sur les foncteurs élémentaires, en en donnant d'abord une description plus précise.

\smallskip

Si $K$ est de caractéristique nulle, la norme est un isomorphisme, si bien que les foncteurs élémentaires de degré $d$ sont isomorphes aux $(T^d\otimes M)_{\Si_d}$ pour les $K[\Si_d]$-modules simples $M$. Ces derniers  peuvent se décrire en termes de symétriseurs de Young \cite[Chap.~4.2]{FH} et les foncteurs élémentaires sont donc les foncteurs de Schur classiques \cite[Chap.~6.1]{FH}. On a ainsi le résultat suivant.
\begin{pr}\label{pr-car0} On suppose que $K$ est de caractéristique nulle.
\begin{enumerate}
\item Les foncteurs de Schur $\mathbb{S}_\lambda$, pour $\lambda$ une partition, forment un ensemble de représentants des classes d'isomorphisme de foncteurs élémentaires.
\item Si $\lambda=(\lambda_1,\dots,\lambda_n)$ est une partition avec $\lambda_n\ne 0$, alors $\mathbb{S}_\lambda(V)\ne 0$ si et seulement si $\dim V\ge n$.
\item Le produit tensoriel de deux foncteurs de Schur se décompose comme une somme directe finie de foncteurs de Schur.
\end{enumerate}
\end{pr}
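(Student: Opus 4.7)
The plan is to reduce the three assertions to the classical theory of Schur functors in characteristic zero, using that the norm map becomes invertible.

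First, observe that when $\mathrm{car}(K)=0$, the order $d!$ of $\Si_d$ is invertible in $K$, so the norm $N : (T^d\otimes M)_{\Si_d}\to (T^d\otimes M)^{\Si_d}$ is an isomorphism for every $K[\Si_d]$-module $M$ (its inverse is $\frac{1}{d!}$ times the projection onto coinvariants). Consequently an elementary functor of degree $d$ is, up to isomorphism, of the form $V\mapsto (V^{\otimes d}\otimes_K M)_{\Si_d}$ with $M$ a simple $K[\Si_d]$-module. By Maschke and the classical classification, simple $K[\Si_d]$-modules are parametrized up to isomorphism by partitions $\lambda$ of $d$ (via the Specht modules $S^\lambda$). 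For $M=S^\lambda$ one recovers exactly the Schur functor $\mathbb{S}_\lambda$ as defined in \cite[Chap.~6.1]{FH}. Together with the bijection from the general theory of elementary functors (absolute simplicity and classification of simples of $\Si_d$, cf. section~\ref{sect-stein-equi}), this yields (1).

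For (2), I would invoke the classical nonvanishing criterion: if $\lambda=(\lambda_1,\dots,\lambda_n)$ is a partition with $\lambda_n\neq 0$, then $\mathbb{S}_\lambda(V)$ has a basis indexed by semistandard Young tableaux of shape $\lambda$ with entries in $\{1,\dots,\dim V\}$ (see \cite[Chap.~6, Prop.~15.55]{FH} or Weyl's construction). Such tableaux exist if and only if $\dim V\geq n$, since the first column requires $n$ strictly increasing entries and conversely the column tableau $1,2,\dots,n$ extended canonically to $\lambda$ provides one as soon as $\dim V\geq n$.

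For (3), the tensor product $\mathbb{S}_\lambda\otimes\mathbb{S}_\mu$ is still polynomial and, being a direct summand of some $T^e$ (via the classical projectors coming from Young symmetrizers, which are well-defined in characteristic zero), it is semisimple in $\mathbf{Add}_e$-type arguments; more concretely, evaluating on a sufficiently large space $K^N$ gives a polynomial $GL_N(K)$-module which decomposes into Schur modules by the Littlewood--Richardson rule (cf. \cite[Appendix A]{FH}). Since this decomposition is stable under increasing $N$ and each functorial summand is determined by its value on a single sufficiently large $K^N$, one obtains an isomorphism of functors $\mathbb{S}_\lambda\otimes\mathbb{S}_\mu\simeq\bigoplus_\nu \mathbb{S}_\nu^{\oplus c^\nu_{\lambda\mu}}$ with $c^\nu_{\lambda\mu}$ the Littlewood--Richardson coefficients, which are nonzero only for finitely many $\nu$ (of size $|\lambda|+|\mu|$).

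The main conceptual step is (3), but once one knows that in characteristic zero all polynomial functors are semisimple and that the category of polynomial functors is equivalent, via evaluation on a large enough space, to the category of polynomial representations of $GL_N(K)$ (a classical consequence of $d!$ being invertible, the Schur--Weyl duality, and Schur's double centralizer theorem), the Littlewood--Richardson rule provides the finite decomposition at once. No new argument beyond standard characteristic-zero representation theory is required.
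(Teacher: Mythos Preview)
Your proposal is correct and follows the same route as the paper: the paper does not give a formal proof of this proposition, but simply observes in the preceding paragraph that in characteristic zero the norm is an isomorphism, so elementary functors are the $(T^d\otimes M)_{\Si_d}$ for $M$ simple, and then cites \cite[Chap.~4.2 and~6.1]{FH} for the identification with classical Schur functors. You have expanded the argument with the standard tableau basis for (2) and the Littlewood--Richardson rule for (3), which is exactly the content behind the citations.
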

 
Supposons maintenant que $K$ est de caractéristique $p>0$. Nous relions les foncteurs élémentaires aux foncteurs de Schur d'Akin, Buchsbaum et Weyman \cite{ABW}. 
Pour toute partition $\lambda=(\lambda_1,\dots,\lambda_n)$ de $d$, on note
\[\Lambda^\lambda:=\Lambda^{\lambda_1}\otimes\dots\otimes\Lambda^{\lambda_n}\quad\text{et}\quad S^\lambda:=S^{\lambda_1}\otimes\dots\otimes S^{\lambda_n}.\]
Soit $\widetilde{\lambda}$ la partition conjuguée et soit $\mathbb{S}_\lambda$ l'image de la transformation naturelle 
$$\Lambda^{\widetilde{\lambda}}\hookrightarrow T^d\xrightarrow{\simeq}T^d \twoheadrightarrow S^\lambda $$
où l'isomorphisme est la permutation des facteurs de $T^d$ qui envoie $v_1\otimes\dots\otimes v_d$ sur $v_{\sigma^{-1}(1)}\otimes\dots\otimes v_{\sigma^{-1}(d)}$, où $\sigma(i)$ est le $i$-ème terme dans la lecture en colonne  du tableau de forme $\lambda$ rempli en ligne par les nombres entiers $1$, $2$,\dots, $d$. (Le foncteur de Schur $\mathbb{S}_\lambda$ est noté $L_{\widetilde{\lambda}}$ dans  \cite{ABW}). On rappelle qu'une partition $\lambda=(\lambda_1,\dots,\lambda_n)$ est 
\emph{$p$-restreinte} si $\lambda_n<p$ et $\lambda_i-\lambda_{i+1}<p$ pour $1\le i<n$. 

\begin{pr}\label{prop-L-p} On suppose que $K$ est de caractéristique $p> 0$.
\begin{enumerate}
\item Pour toute partition $p$-restreinte $\lambda$, le foncteur $\mathbb{S}_\lambda$ a un socle simple $\mathbb{L}_\lambda$. Les foncteurs $\mathbb{L}_\lambda$ indexés par les partitions $p$-restreintes $\lambda$ forment une famille de représentants des classes d'isomorphisme de foncteurs élémentaires.
\item Si $\lambda=(\lambda_1,\dots,\lambda_n)$ est une partition $p$-restreinte avec $\lambda_n\ne 0$, alors $\mathbb{L}_\lambda(V)\ne 0$ si et seulement si $\dim V\ge n$.
\item Le produit tensoriel de deux foncteurs élémentaires est de longueur finie dans $\F(K,K)$.
\end{enumerate}
\end{pr}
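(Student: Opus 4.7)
The plan is to embed all the constructions into the category $\mathcal{P}_d(K)$ of degree-$d$ strict polynomial functors in the sense of Friedlander--Suslin. This category is equivalent to modules over the Schur algebra $S(n,d)$ for $n\ge d$, hence is an abelian category of finite length whose simple objects are the $\mathbb{L}_\mu$ indexed by partitions $\mu\vdash d$. All functors in the statement ($T^d$, $\Lambda^{\tilde\lambda}$, $S^\lambda$, $\mathbb{S}_\lambda$, and the elementary functors $E_M$) lift canonically to $\mathcal{P}_d(K)$, and the forgetful functor $\iota:\mathcal{P}_d(K)\to\F(K,K)$ is exact and faithful, and is moreover fully faithful when $K$ is infinite.

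For part (i), the identification $\mathrm{soc}(\mathbb{S}_\lambda)=\mathbb{L}_\lambda$ in $\mathcal{P}_d(K)$ is the classical statement that $\mathbb{S}_\lambda$ is a costandard (dual Weyl) module $\nabla(\lambda)$ in a highest-weight category (Akin--Buchsbaum--Weyman, Donkin); this transfers to $\F(K,K)$ because $\iota$ is exact and faithful. For the classification of elementary functors, introduce the Schur functor
\[\mathrm{sch}:\mathcal{P}_d(K)\to K[\Si_d]\Md,\quad F\mapsto F(K^d)_{(1,\dots,1)}.\]
It is exact with left adjoint $\mathrm{sch}^*(M)=(T^d\otimes M)_{\Si_d}$ and right adjoint $\mathrm{sch}^!(M)=(T^d\otimes M)^{\Si_d}$, hence reflexive in the sense of Definition~\ref{lm-2adj}. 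The canonical comparison morphism $\mathrm{sch}^*\to\mathrm{sch}^!$ is the norm $N$, so the construction $M\mapsto E_M$ is precisely the prolongement intermédiaire of Definition~\ref{df-prolint} associated to $\mathrm{sch}$. Proposition~\ref{pr-prolongement-interm} then gives that $E_M$ is simple for each simple $M$, that $M\mapsto E_M$ is injective on isomorphism classes, and that its image consists of exactly those simples of $\mathcal{P}_d(K)$ with nonzero Schur image. Classical Schur--James theory (see Green, Donkin) shows that $\mathrm{sch}(\mathbb{L}_\lambda)$ is nonzero if and only if $\lambda$ is $p$-restricted, and is then a simple $K[\Si_d]$-module; combined with the fact that simple $K[\Si_d]$-modules are indexed by $p$-regular partitions and that the number of $p$-regular and $p$-restricted partitions of $d$ agree, this yields the stated classification.

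Part (ii) follows from standard highest-weight arguments. For necessity, $\mathbb{L}_\lambda\hookrightarrow\mathbb{S}_\lambda$ as its socle, and $\mathbb{S}_\lambda$ is by definition a quotient of $\Lambda^{\tilde\lambda}=\Lambda^{\tilde\lambda_1}\otimes\cdots\otimes\Lambda^{\tilde\lambda_{\lambda_1}}$; since $\lambda_n\neq 0$ forces $\tilde\lambda_1=n$, the factor $\Lambda^n$ vanishes on $V$ of dimension strictly less than $n$, so $\mathbb{S}_\lambda(V)=0$ and hence $\mathbb{L}_\lambda(V)=0$. For sufficiency, $\mathbb{L}_\lambda(K^n)$ is (by construction of the classification) the simple polynomial $GL_n(K)$-representation of highest weight $\lambda$, nonzero whenever $\lambda$ is a dominant weight for $GL_n$, i.e.\ has at most $n$ parts.

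For part (iii), the tensor product $E_1\otimes E_2$ is a strict polynomial functor of degree $d+e$, hence has finite length in $\mathcal{P}_{d+e}(K)$ (a finite-length abelian category). The main obstacle is transferring this finite length to $\F(K,K)$: when $K$ is finite, the forgetful functor $\iota$ is not full, so subfunctors of $E_1\otimes E_2$ in $\F(K,K)$ might a priori escape detection in $\mathcal{P}_{d+e}(K)$. One resolves this by showing that any subfunctor of a strict polynomial functor of degree $\le N$ in $\F(K,K)$ is again strict polynomial of degree $\le N$: this can be argued via Corollary~\ref{cor-annul-pira} (characterization of polynomial functors through vanishing of Hom out of tensor powers) together with the cross-effect adjunction of Proposition~\ref{adj-diagcr}, which reduces to an analogous claim for multi-additive functors that is handled by a direct inspection.
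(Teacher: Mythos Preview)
Your overall strategy---work in $\mathcal{P}_d(K)$ and transfer along the forgetful functor $\iota$---is the same as the paper's, and the identification of $M\mapsto E_M$ as the prolongement interm\'ediaire for the Schur functor is a pleasant way to package the classification. The difficulties, as you correctly anticipate, are all concentrated in the finite-field case, and there your arguments have genuine gaps.

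\medskip

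\textbf{Part (i), socle.} Exactness and faithfulness of $\iota$ do \emph{not} suffice to transfer the socle identification. An exact faithful functor need not preserve simples (think of restriction of scalars along a field extension), and even when it does it can enlarge socles: a priori $\mathbb{S}_\lambda$ could acquire new simple subobjects in $\F(K,K)$ that are invisible in $\mathcal{P}_d(K)$. The paper handles this for finite $K$ by embedding $\mathbb{S}_\lambda\hookrightarrow T^d$ and computing directly that
\[
\dim_K \mathcal{P}_K(\mathbb{L}_\mu,T^d)=\dim_K \F(K,K)(\mathbb{L}_\mu,T^d)=\dim_K cr_d(\mathbb{L}_\mu)(K,\dots,K)
\]
via the cross-effect adjunction (Proposition~\ref{adj-diagcr}); a five-lemma argument then forces $\mathcal{P}_K(\mathbb{L}_\mu,\mathbb{S}_\lambda)\simeq\F(K,K)(\mathbb{L}_\mu,\mathbb{S}_\lambda)$ for every $p$-restricted $\mu$. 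Since the socle of $\mathbb{S}_\lambda$ in $\F(K,K)$ lies in $T^d$, hence consists of elementary simples, this yields the claim. Your proof is missing this step.

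\medskip

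\textbf{Part (iii), finite length.} Your proposed reduction is ill-posed: Corollaire~\ref{cor-annul-pira} characterises \emph{Eilenberg--Mac Lane} polynomiality, not strict polynomiality, and over a finite field the latter is extra structure rather than a property---distinct objects of $\mathcal{P}_K$ can become isomorphic in $\F(K,K)$ (e.g.\ $I$ and $I^{(1)}$ over $\mathbb{F}_p$), so the lattice of subobjects changes. The statement ``any subfunctor is again strict polynomial of degree $\le N$'' therefore does not give what you need, and the vague appeal to ``a direct inspection for multi-additive functors'' does not fill the gap. The paper instead cites the classical argument (Kuhn, Pirashvili; see also Lemme~\ref{lm-polfini}) that the functor $F\mapsto (cr_i(F)(K,\dots,K))_{0\le i\le N}$ from $\pol_N(K,K\Md)$ to $(K\Md)^{N+1}$ is exact and faithful, so that finite-dimensionality of all cross-effect values forces finite length in $\F(K,K)$. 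This is likely what you were reaching for, but you should state it in those terms and drop the spurious reference to strict polynomiality.
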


\begin{proof}
L'énoncé de la proposition est bien connu dans la catégorie $\mathcal{P}_K$ des foncteurs strictement polynomiaux de Friedlander et Suslin \cite{FS}. Plus précisément, la catégorie $\mathcal{P}_K$ peut être décrite en termes d'algèbres de Schur \cite[Thm~3.2]{FS}, c'est donc une catégorie de plus haut poids. Les foncteurs de Schur $\mathbb{S}_\lambda$ sont les objets costandard \cite[Remark~6.1]{TouzeRingel}, ils ont un socle simple $\mathbb{L}_\lambda$, et les $\mathbb{L}_\lambda$ forment un ensemble de représentants des simples de $\mathcal{P}_K$. Un théorème classique de Clausen et James \cite{Clausen,James} (voir aussi \cite[App.~B]{Touze} pour une démonstration fonctorielle) implique que les foncteurs $\mathbb{L}_\lambda$ apparaissant comme sous-foncteurs des $T^d$, ou dualement comme quotients des $T^d$, sont ceux indexés par les partitions $p$-restreintes. Ceci prouve le premier point. Pour le deuxième point, on a $\mathbb{L}_\lambda(V)\subset \mathbb{S}_\lambda(V)=0$ si $\dim V<n$ car $\mathbb{S}_\lambda(V)$ est un quotient de $\Lambda^{\widetilde{\lambda}}(V)$ qui est alors nul. Réciproquement, $\mathbb{L}_\lambda$ est un sous-foncteur de $S^\lambda$, donc par dualité un quotient de $\Gamma^\lambda$. Si $\dim V\ge n$, on a donc $\mathbb{L}_\lambda(V)\ne 0$ d'après \cite[Cor.~2.12]{FS}. Enfin le troisième point est une conséquence de \cite[Thm~3.2]{FS}.

Nous montrons que l'énoncé dans $\mathcal{P}_K$ implique la proposition \ref{prop-L-p}.
Il y a un foncteur d'oubli $\U:\mathcal{P}_K\to \F(K,K)$ fidèle et exact. \emph{Si $K$ est infini}, ce foncteur d'oubli identifie $\mathcal{P}_K$ avec la sous-catégorie pleine de $\F(K,K)$ dont les objets sont les foncteurs qu'on peut écrire comme des sous-quotients de sommes directes finies de foncteurs $T^d$, en particulier l'image de $\U$ est stable par sous-quotients et $\U$ préserve les simples. L'énoncé de la proposition \ref{prop-L-p} découle donc formellement de l'énoncé dans $\mathcal{P}_K$. 
Supposons \emph{$K$ fini}. Un argument classique de polynomialité (cf. \cite[Thm 4.14]{Ku1} ou \cite{Pira88}) montre que $\U$ préserve les foncteurs de longueur finie, d'où le troisième point. La définition des foncteurs élémentaires montre qu'ils admettent une structure de foncteur strictement polynomial et qu'ils sont des sous-foncteurs strictement polynomiaux de $T^d$. Pour démontrer les deux premiers points, il reste donc à vérifier que les $\mathbb{S}_\lambda$ indexés par des partitions $p$-restreintes ont un socle simple dans $\F(K,K)$. Comme $T^d$ est injectif dans $\mathcal{P}_K$, le monomorphisme $\mathbb{L}_\lambda\hookrightarrow T^d$ s'étend en un monomorphisme $\mathbb{S}_\lambda\hookrightarrow T^d$. Le socle de $\mathbb{S}_\lambda$ dans $\F(K,K)$ n'est donc constitué que de sous-foncteurs simples de $T^d$. Pour démontrer que le socle est simple, il suffit donc de montrer que pour toute partition $p$-restreinte $\mu$ de l'entier $d$, les espaces vectoriels $\mathcal{P}_K(\mathbb{L}_\mu,\mathbb{S}_\lambda)$ et  $\F(K,K)(\mathbb{L}_\mu,\mathbb{S}_\lambda)$ ont même dimension. Pour cela on considère le diagramme commutatif dont les lignes sont exactes et dont les injections verticales sont induites par le foncteur d'oubli :
\[\xymatrix{
0\ar[r]& \mathcal{P}_K(\mathbb{L}_\mu,\mathbb{S}_\lambda)\ar[r]\ar@{^{(}->}[d]^-{(1)} & \mathcal{P}_K(\mathbb{L}_\mu,T^d)\ar[r]\ar@{^{(}->}[d]^-{(2)} & \mathcal{P}_K(\mathbb{L}_\mu,T^d/\mathbb{S}_\lambda)\ar@{^{(}->}[d]\\
0\ar[r]& \F(K,K)(\mathbb{L}_\mu,\mathbb{S}_\lambda)\ar[r] &\F(K,K)(\mathbb{L}_\mu,T^d)\ar[r]& \F(K,K)(\mathbb{L}_\mu,T^d/\mathbb{S}_\lambda)
}\]
Le morphisme $(2)$ est un isomorphisme car sa source et son but ont même dimension, égale à la dimension de $cr_d(\mathbb{L}_\mu)(K,\dots,K)$, par la proposition \ref{adj-diagcr}. Le morphisme $(1)$ est donc un isomorphisme, ce qui achève la démonstration.
\end{proof}

Revenons au cas général où $K$ est un corps commutatif quelconque. Si $K$ est de caractéristique nulle, on note $\mathbb{L}_\lambda:= \mathbb{S}_\lambda$, pour toute partition $\lambda$. Si $K$ est de caractéristique $p>0$, les $\mathbb{L}_\lambda$ sont définis, seulement pour $\lambda$ $p$-restreinte, dans la proposition \ref{prop-L-p}. Dans tous les cas, la formule définissant les foncteurs $\mathbb{L}_\lambda$ (ou bien celle définissant les foncteurs élémentaires) montre que les $\mathbb{L}_\lambda(K^n)$ sont des représentations polynomiales du \emph{groupe algébrique} $\GL_{n,K}$ (nous utilisons une notation légèrement différente pour distinguer le groupe algébrique de son groupe des $K$-points, le groupe abstrait $\GL_n(K)$). Les représentations polynomiales (ou plus généralement rationnelles) du groupe algébrique $\GL_{n,K}$ sont classifiées par la théorie des plus hauts poids (voir par exemple \cite[Sections 2.1 et 2.2]{Hum}). Plus précisément, toute représentation rationnelle $V$ de $\GL_{n,K}$ admet une base $(v_1,\dots,v_N)$, telle que l'action du groupe $D_{n,K}$ des matrices diagonales sur un élément de cette base est donnée par une formule du type $\mathrm{diag}(x_1,\dots,x_n)\cdot v_i = x_1^{\lambda_1}\dots x_n^{\lambda_n}v_i$, pour un certain uplet $\lambda=(\lambda_1,\dots,\lambda_n)\in \mathbb{Z}^n$. Les uplets $\lambda$ apparaissant ainsi ne dépendent pas du choix de la base, ce sont les \emph{poids} de la représentation $V$. Si $V$ est simple, il existe un poids $\lambda$ de $V$ maximal pour l'ordre lexicographique, qu'on appelle le \emph{plus haut poids} de $V$, et qui caractérise $V$ à isomorphisme près.  
\begin{pr}\label{pr-B3}
Soit $\lambda$ une partition telle que $\mathbb{L}_\lambda(K^n)\ne 0$. Alors c'est une représentation simple du groupe algébrique $\GL_{n,K}$, de plus haut poids $\lambda$.
\end{pr}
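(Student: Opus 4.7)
The plan is to reduce the statement to the classical theory of the Schur algebra and polynomial representations of $GL_{n,K}$, via the framework of strict polynomial functors that already appears in the proof of Proposition~\ref{prop-L-p}. First, I recall that $\mathbb{L}_\lambda$ admits a natural structure of strict polynomial functor of degree $d := |\lambda|$ (it is defined entirely by tensor, symmetric and exterior power constructions) and that, as established in the proof of Proposition~\ref{prop-L-p}, it is a \emph{simple} object of the category $\mathcal{P}_{d,K}$ of strict polynomial functors of degree $d$. The Friedlander--Suslin theorem \cite[Thm~3.2]{FS} identifies $\mathcal{P}_{d,K}$ with the category of modules over the Schur algebra $S_K(N,d)$ for $N \ge d$, via the evaluation functor $F \mapsto F(K^N)$.

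Second, for general $n$ the evaluation $\mathrm{ev}_n : \mathcal{P}_{d,K} \to S_K(n,d)\mathrm{-Mod}$ is a quotient functor whose image is the category of polynomial representations of $GL_{n,K}$ of homogeneous degree $d$; its kernel consists exactly of the strict polynomial functors that vanish on $K^n$. The classical classification of simple $S_K(n,d)$-modules (see \cite{Green}) asserts that they are parameterized by the partitions $\mu$ of $d$ with at most $n$ nonzero parts, and that the simple indexed by $\mu$ is the simple polynomial $GL_{n,K}$-module $L(\mu)$ of highest weight $\mu$. Hence $\mathrm{ev}_n$ induces a bijection between simple strict polynomial functors of degree $d$ not annihilated by $\mathrm{ev}_n$ and simples of $S_K(n,d)\mathrm{-Mod}$; in particular, as we assume $\mathbb{L}_\lambda(K^n) \ne 0$, the $GL_{n,K}$-module $\mathbb{L}_\lambda(K^n)$ is simple.

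Third, it remains to identify its highest weight as $\lambda$. For this I use the inclusion $\mathbb{L}_\lambda \hookrightarrow \mathbb{S}_\lambda$ and the defining epimorphism $\mathbb{S}_\lambda \twoheadrightarrow S^\lambda$, which show that the weights of $\mathbb{L}_\lambda(K^n)$ appearing under the action of the diagonal torus $D_{n,K}$ are weights of $S^\lambda(K^n) = S^{\lambda_1}(K^n) \otimes \cdots \otimes S^{\lambda_n}(K^n)$, whose weights are all the $\nu = (\nu_1, \dots, \nu_n) \in \mathbb{N}^n$ with $\nu_1 + \cdots + \nu_n = d$ obtainable as a sum of $\lambda_i$-weight monomials (in particular $\nu_1 \le \lambda_1 + \cdots$, etc.). A direct inspection shows that the lexicographically largest weight reachable this way is exactly $\lambda$, its weight space in $S^\lambda(K^n)$ is one-dimensional, and a preimage of it in $\Lambda^{\widetilde\lambda}(K^n)$ is the obvious product of standard wedge products on the first rows of a Young diagram. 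Thus $\lambda$ is a weight of $\mathbb{S}_\lambda(K^n)$ with one-dimensional weight space; being the highest, it necessarily lies in the simple socle $\mathbb{L}_\lambda(K^n)$ and is its highest weight.

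The only point requiring some care is the identification in the second paragraph between the simple strict polynomial functors and the simple polynomial $GL_{n,K}$-modules, but this is a completely standard consequence of classical Schur--Weyl duality and requires no new argument beyond the theorem of Friedlander--Suslin already cited in the preceding proofs.
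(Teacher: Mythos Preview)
Your simplicity argument (second paragraph) is correct and in fact more direct than the paper's: you use that evaluation at $K^n$ is an exact quotient functor from $\mathcal{P}_{d,K}$ onto $S_K(n,d)\text{-Mod}$, so a simple strict polynomial functor not killed by evaluation is sent to a simple module. The paper instead argues via self-duality of $\mathbb{L}_\lambda$, reducing simplicity to the statement $\dim_K\mathrm{End}_{GL_{n,K}}(\mathbb{L}_\lambda(K^n))\le 1$, which it checks by embedding this space into $\mathrm{Hom}_{GL_{n,K}}(\Gamma^\lambda(K^n),\mathbb{L}_\lambda(K^n))\simeq\mathcal{P}_K(\Gamma^\lambda,\mathbb{L}_\lambda)$, of dimension~$1$ by \cite[Cor.~2.12]{FS}.

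Your highest-weight argument (third paragraph), however, contains a genuine gap. First, a minor slip: $\mathbb{S}_\lambda\to S^\lambda$ is an \emph{inclusion}, not an epimorphism. More seriously, the highest weight of $S^\lambda(K^n)=S^{\lambda_1}(K^n)\otimes\cdots\otimes S^{\lambda_n}(K^n)$ is $(d,0,\dots,0)$, not $\lambda$: the vector $e_1^{\lambda_1}\otimes e_1^{\lambda_2}\otimes\cdots$ already has weight $(d,0,\dots,0)$. So bounding weights of $\mathbb{L}_\lambda(K^n)$ by those of $S^\lambda(K^n)$ tells you nothing. What you want is the \emph{surjection} $\Lambda^{\widetilde\lambda}\twoheadrightarrow\mathbb{S}_\lambda$: the highest weight of $\Lambda^{\widetilde\lambda}(K^n)$ genuinely is $\lambda$, which bounds the weights of $\mathbb{S}_\lambda(K^n)$ and hence of $\mathbb{L}_\lambda(K^n)$ above by~$\lambda$. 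Even with this fix, your final step --- that the highest weight vector of $\mathbb{S}_\lambda(K^n)$ ``necessarily lies in the simple socle'' --- is not automatic: a module can perfectly well have a one-dimensional highest weight space that does not meet its socle (e.g.\ a Weyl module $\Delta(\lambda)$). The paper avoids this issue entirely: since $\mathbb{L}_\lambda\subset S^\lambda$, by duality $\mathbb{L}_\lambda$ is a quotient of $\Gamma^\lambda$, and \cite[Cor.~2.12]{FS} then shows directly that the $\lambda$-weight space of $\mathbb{L}_\lambda(K^n)$ is nonzero. Combined with the upper bound, this gives highest weight~$\lambda$.
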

\begin{proof}
La base de l'espace vectoriel $\mathbb{S}_\lambda(K^n)$ \cite[Thm~II.2.16]{ABW} montre que c'est une représentation de plus haut poids $\lambda$. Comme $\mathbb{L}_\lambda\subset S^\lambda$ ou par dualité $L_\lambda$ est un quotient de $\Gamma^\lambda$, \cite[Cor 2.12]{FS} montre que $\lambda$ est un poids de $\mathbb{L}_\lambda(K^n)$. Ainsi $L_\lambda(K^n)$ est une représentation de plus haut poids $\lambda$. Il reste à montrer qu'elle est simple.
Comme $\mathbb{L}_\lambda$ est auto-dual (cf. par exemple \cite[section 2.2]{Touze}), la représentation $\mathbb{L}_\lambda(K^n)$ est auto-duale (pour la dualité utilisant la transposition de $\GL_{n,K}$). Pour montrer que $\mathbb{L}_\lambda(K^n)$ est simple, il suffit donc de montrer que $\mathrm{End}_{\GL_{n,K}}(\mathbb{L}_\lambda(K^n))$ est de dimension (inférieure ou égale à) $1$ sur $K$. Mais cet espace vectoriel est un sous-espace vectoriel $\mathrm{Hom}_{\GL_{n,K}}(\Gamma^\lambda(K^n),\mathbb{L}_\lambda(K^n))$. D'après \cite[Lm~6.6]{TouzeENS} ce dernier est isomorphe à $\mathcal{P}_K(\Gamma^\lambda,\mathbb{L}_\lambda)$, qui est de dimension $1$ sur $K$ d'après \cite[Cor.~2.12]{FS}. La simplicité de $\mathbb{L}_\lambda(K^n)$ en découle.
\end{proof}

Dans l'énoncé suivant, $\det$ désigne la représentation de dimension $1$ sur $K$ de $\M_n(K)$ donnée par le déterminant.

\begin{pr}\label{pr-technique1}
Soit $\lambda=(\lambda_1,\dots,\lambda_n)$ une partition telle que $\lambda_n\ne 0$. Si $\nu=(\lambda_1-1,\dots,\lambda_n-1)$, on a des isomorphismes de $K[\mathcal{M}_n(K)]$-modules
\[\mathbb{L}_\lambda(K^n)\simeq \mathbb{L}_\nu(K^n)\otimes \det \;,\qquad\mathbb{L}_\lambda(K^n)\simeq \mathbb{L}_\lambda(K^n)\otimes \delta\;,\]
où $\delta$ désigne le $K[\M_n(K)]$-module $K$ sur lequel les matrices inversibles (resp. singulières) opèrent par l'identité (resp. par $0$).
\end{pr}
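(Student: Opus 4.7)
Pour démontrer la proposition, ma stratégie est d'établir d'abord la première isomorphie par la théorie des plus hauts poids, puis d'en déduire la seconde comme conséquence immédiate. Je commencerais par observer que $\mathbb{L}_\lambda(K^n)$ et $\mathbb{L}_\mu(K^n)\otimes\det$ sont toutes deux des représentations polynomiales de $\M_n(K)$ de même degré $d=|\lambda|$. En effet, $\mathbb{L}_\lambda$ est un foncteur strictement polynomial de degré $d$, ce qui munit $\mathbb{L}_\lambda(K^n)$ d'une action polynomiale de $\M_n(K)$ ; de même, $\mathbb{L}_\mu$ est strictement polynomial de degré $|\mu|=d-n$, et $\det$ s'identifie à l'évaluation en $K^n$ du foncteur $\Lambda^n$ de degré $n$, de sorte que le produit tensoriel est polynomial de degré $d$.

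Ensuite, je vérifierais que $\mathbb{L}_\mu(K^n)$ est non nul (ce qui est immédiat par les propositions \ref{pr-car0} et \ref{prop-L-p}, puisque $\mu$ a au plus $n$ parts) et que $\mu$ est encore $p$-restreinte si $\lambda$ l'est (les inégalités $\mu_i-\mu_{i+1}=\lambda_i-\lambda_{i+1}<p$ et $\mu_n=\lambda_n-1<p$ sont évidentes). Par la proposition \ref{pr-B3}, $\mathbb{L}_\mu(K^n)$ est alors une représentation simple de $GL_{n,K}$ de plus haut poids $\mu$, et tensoriser par la représentation de dimension $1$ donnée par $\det$ préserve la simplicité tout en décalant le plus haut poids de $(1,\dots,1)$ ; ainsi $\mathbb{L}_\mu(K^n)\otimes\det$ est simple de plus haut poids $\mu+(1,\dots,1)=\lambda$, tout comme $\mathbb{L}_\lambda(K^n)$. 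La classification des représentations polynomiales simples de $\M_n(K)$ de degré $d$ par leurs plus hauts poids fournit alors la première isomorphie comme $K[\M_n(K)]$-modules. Pour la seconde, j'observerais que $\det(g)=0$ pour toute matrice singulière $g$, de sorte que la première isomorphie montre que les matrices singulières agissent par $0$ sur $\mathbb{L}_\lambda(K^n)$ ; tensoriser par $\delta$ n'affecte donc pas la structure de $\M_n(K)$-module, puisque l'action est inchangée sur les matrices inversibles et nulle sur les matrices singulières dans les deux cas.

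L'obstacle principal réside dans le passage d'un $GL_{n,K}$-isomorphisme, garanti par la théorie des plus hauts poids pour le groupe algébrique, à un isomorphisme de $\M_n(K)$-modules. Pour le surmonter, il sera crucial de travailler dans la catégorie des représentations polynomiales de $\M_n(K)$ de degré $d$ (ou, de manière équivalente, des foncteurs strictement polynomiaux restreints aux espaces de dimension $n$), dont les objets simples sont également classifiés par leurs plus hauts poids. Pour $K$ infini, on pourrait aussi invoquer la densité de Zariski de $GL_n(K)$ dans $\M_n(K)$ jointe au caractère polynomial des actions, mais l'approche par les foncteurs strictement polynomiaux a l'avantage de fonctionner uniformément, indépendamment du cardinal de $K$.
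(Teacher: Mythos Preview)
Your proposal is correct and follows essentially the same route as the paper: establish the first isomorphism via highest weights for $GL_{n,K}$, extend it to $\M_n(K)$ using that both sides are polynomial (algebraic) representations, and deduce the second isomorphism from the vanishing of $\det$ on singular matrices. The only cosmetic difference is that the paper phrases the passage from $GL_{n,K}$ to $\M_n(K)$ via Zariski density of the group scheme in the monoid scheme (hence uniformly in $K$), whereas you prefer to invoke the classification of simples in the Schur-algebra/strict-polynomial category; these are two faces of the same coin, and you yourself note the Zariski-density alternative.
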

\begin{proof}
Les représentations $\mathbb{L}_\lambda(K^n)$ et $\mathbb{L}_\nu(K^n)\otimes \det$ sont deux représentations simples de même plus haut poids de $\GL_{n,K}$, elles sont donc isomorphes. Par Zariski-densité, elle sont isomorphes comme représentations du monoïde algébrique $\mathcal{M}_n$, donc comme représentations du monoïde des $K$-points $\mathcal{M}_n(K)$, ce qui donne le premier isomorphisme. Pour le deuxième isomorphisme, il suffit de remarquer que $\det\otimes\delta=\det$.
\end{proof}

\section{Un lemme de linéarisation}\label{app-linearis}

Commençons par rappeler le fait suivant. Soient $k$ un anneau et
\begin{equation}\label{eqseap}Z\xrightarrow{q} Y\xrightarrow{p} X\to 0\qquad\text{(resp. }0\to X\xrightarrow{u}Y\xrightarrow{v}Z\text{)}
\end{equation}
une suite exacte de groupes abéliens. Alors la suite de $k$-modules
\begin{equation}\label{eqsers}k[Y\oplus Z]\xrightarrow{\alpha}k[Y]\xrightarrow{k[p]}k[X]\to 0\qquad\text{(resp. }0\to k[X]\xrightarrow{k[u]}k[Y]\xrightarrow{\beta}k[Y\oplus Z]\text{)}
\end{equation}
où $\alpha([(y,z)])=[y+q(z)]-[y]$ (resp. $\beta([y])=[(y,v(y))]-[(y,0)]$) est exacte. Comme elle est naturelle en la suite exacte initiale, cela vaut également lorsque \eqref{eqseap} est une suite exacte de {\em foncteurs} vers les groupes abéliens.

Le lemme suivant, qui est bien connu des experts, n'interviendra, dans le présent article, que pour démontrer le théorème~\ref{th-simples-pf}, mais il présente un intérêt intrinsèque.

\begin{lm}\label{lm-linearis} Soient $k$ un anneau, $A, B : \A\to\mathbf{Ab}$ des foncteurs additifs. Le morphisme canonique de $k$-modules (où le premier ${\rm Hom}$ est pris dans la catégorie $\mathbf{Add}(\A;\mathbb{Z})$ et le deuxième dans la catégorie $\F(\A;k)$)
\[\gamma_{A,B} : k[{\rm Hom}(A,B)]\to {\rm Hom}(k[A],k[B])\]
est injectif. Il est bijectif si $A$ est de type fini.
\end{lm}

\begin{proof} 
Si $A$ est un foncteur représentable $\A(t,-)$, le lemme de Yoneda entraîne que $\gamma_{A,B}$ est un isomorphisme. Supposons maintenant que $A$ est une somme directe $\underset{i\in I}{\bigoplus}\A(t_i,-)$ de tels foncteurs. On a donc ${\rm Hom}(A,B)\simeq\underset{i\in I}{\prod} B(t_i)$. Soit $x\neq 0$ un élément de $k[{\rm Hom}(A,B)]\simeq k[\underset{i\in I}{\prod} B(t_i)] $ : $x$ peut s'écrire comme $\sum_{n=1}^m\lambda_n[\xi_n]$ où $m\in\mathbb{N}^*$, $\lambda_n\in k\setminus\{0\}$ et les $\xi_n$ sont des éléments deux à deux distincts de $\underset{i\in I}{\prod} B(t_i)$. Il existe donc une partie \emph{finie} $J$ de $I$ telle que les projections des $\xi_n$ dans $\underset{i\in J}{\prod} B(t_i)$ soient deux à deux distinctes. Ainsi, l'image de $x$ dans $k[\mathrm{Hom}(A',B)]$, où $A':=\underset{i\in J}{\bigoplus}\A(t_i,-)\simeq\A(\underset{i\in J}{\bigoplus}t_i,-)$, par l'application induite par l'inclusion $A'\hookrightarrow A$ est non nulle. Comme $\gamma_{A',B}$ est injective d'après ce qui précède, la naturalité en $A$ de $\gamma_{A,B}$ implique que l'image de $x$ par $\gamma_{A,B}$ est non nulle : $\gamma_{A,B}$ est donc injective.

On en déduit l'injectivité de $\gamma_{A,B}$ pour \emph{tout} foncteur additif $A$ en écrivant $A$ comme quotient d'une somme directe de foncteurs additifs représentables et en utilisant la préservation des monomorphismes par le foncteur $k[-]$.

Si $A$ est de type fini, on peut trouver une suite exacte $Q\to P\to A\to 0$ de $\mathbf{Add}(\A;\mathbb{Z})$ avec $P$ représentable. Utilisant \eqref{eqsers}, on en déduit un diagramme commutatif aux lignes exactes
\[\xymatrix{0\ar[r] & k[{\rm Hom}(A,B)]\ar[r]\ar[d]_-{\gamma_{A,B}} & k[{\rm Hom}(P,B)]\ar[r]\ar[d]_-{\gamma_{P,B}} & k[{\rm Hom}(P\oplus Q,B)]\ar[d]_-{\gamma_{P\oplus Q,B}} \\
0\ar[r] & {\rm Hom}(k[A],k[B])\ar[r] & {\rm Hom}(k[P],k[B])\ar[r] & {\rm Hom}(k[P\oplus Q],k[B]).
}\]
D'après le début de la démonstration, $\gamma_{P,B}$ est bijectif et $\gamma_{P\oplus Q,B}$ est injectif, ce qui implique que $\gamma_{A,B}$ est bijectif, comme souhaité.
\end{proof}

\bibliographystyle{plain}
\bibliography{bib-simples}
\end{document}